\documentclass[a4paper,reqno]{amsart}

\usepackage[a4paper,width={16cm},left=2.5cm,bottom=3cm, top=3cm]{geometry}
\usepackage{enumerate}
\usepackage{yfonts}

\usepackage[english]{babel}

\usepackage{a4wide}
\usepackage[dvipsnames]{xcolor}
\usepackage{amsmath}
\usepackage{amssymb}
\usepackage{amsfonts}
\usepackage{amsthm,graphicx}
\usepackage{comment}
\usepackage{csquotes}

\usepackage{todonotes}

\usepackage{hyperref}
\hypersetup{
	linktocpage=true,
	colorlinks=true,
	linkcolor=blue!70!black,
	citecolor=orange!40!red,
	urlcolor=magenta!80!black,
}

\usepackage{mathtools}
\mathtoolsset{showonlyrefs}

\numberwithin{equation}{section}
\newtheorem{theorem}{Theorem}[section]
\newtheorem{lemma}[theorem]{Lemma}
\newtheorem{corollary}[theorem]{Corollary}

\newtheorem{proposition}[theorem]{Proposition}
\theoremstyle{definition}

\newtheorem{example}[theorem]{Example}
\newtheorem{remark}[theorem]{Remark}

\newcommand{\mc}{\mathcal}
\newcommand{\mb}{\mathbb}

\newcommand{\la}{\lambda}
\newcommand{\norm}[1]{\left\lVert#1\right\rVert}
\newcommand{\pd}[2]{\frac{\partial#1}{\partial#2}}

\newcommand{\R}{\mb{R}}
\newcommand{\C}{\mb{C}}
\newcommand{\N}{\mb{N}}

\newcommand{\e}{\varepsilon}

\newcommand{\dive}{\mathop{\rm{div}}}

\newcommand{\Rea}{\mathop{\rm{Re}}}
\newcommand{\Ima}{\mathop{\rm{Im}}}
\newcommand{\di}{\mathop{\rm{dist}}}

\newcommand{\capa}[2]{\textnormal{\rm{Cap}}_{#1}(#2)}

\usepackage{subfig}
\usepackage{tikz}
\usepackage{xcolor}
\usepackage{pgfplots}
\pgfplotsset{compat=1.18}
\usepgfplotslibrary{fillbetween}
\usepackage{mathrsfs}
\usetikzlibrary{arrows}
\usetikzlibrary{shadings}
\makeatletter

\begin{document}
\title[Local regularity for anisotropic magnetic operators]{Local regularity for anisotropic magnetic 
operators with general codimension
singularities}

\subjclass[2020] {35Q60, 78A30, 81Q70, 35B65, 35B53}
\keywords{Anisotropic magnetic Schr\"odinger operator; Aharonov–Bohm model; Schauder estimates; Hardy potential}

\author{Giovanni Siclari}
\address{Giovanni Siclari\newline\indent
Centro di Ricerca Matematica Ennio De Giorgi
\newline\indent
Scuola Normale Superiore di Pisa
\newline\indent
Piazza dei Cavalieri 3, 56126, Pisa, Italy}
\email{giovanni.siclari@sns.it}

\author{Stefano Vita}
\address{Stefano Vita\newline\indent
Dipartimento di Matematica ``Felice Casorati''
\newline\indent
Universit\`a degli Studi di Pavia
\newline\indent
Via Ferrata 5, 27100, Pavia, Italy}
\email{stefano.vita@unipv.it}

\date{\today}

\begin{abstract}
We study local regularity properties of solutions to stationary anisotropic magnetic Schr\"odinger equations in $\mathbb{R}^d$, $d \ge 2$, arising from singular magnetic potentials concentrated along manifolds of general codimension $2 \le n \le d$.

The magnetic interaction is modeled through a covariant gradient of the form
\[
\nabla_m u = (iM\nabla + A)u,
\]
where $M^T M$ is a uniformly elliptic matrix encoding anisotropy and $A$ is a magnetic potential with critical Hardy-type scaling along the $n$-codimensional singular set $\Sigma_0$; that is, $A\sim \mathrm{dist}(\cdot,\Sigma_0)^{-1}$.

We establish local H\"older $C^{0,\alpha}$ and Schauder $C^{1,\alpha}$ estimates for weak solutions via a blow-up analysis adapted to the magnetic structure. The regularity is deeply influenced by the combined effect of anisotropy and the singular magnetic potential, which determines the spectrum of the limiting spherical Laplace-Beltrami operator arising in the blow-up at the singular set.

Our model is motivated by the study of magnetic potentials generated by shrinking solenoids onto an axis $\Sigma_0$, in the three-dimensional setting $d=3$, $n=2$, leading to Aharonov--Bohm-type (AB) models. In this framework, we show that the geometry of the solenoidal loops plays a crucial role: in particular, any deviation from planar cross-sections orthogonal to $\Sigma_0$ induces a twofold effect. On the one hand, it breaks the ideal AB configuration, in the sense that the magnetic field outside the solenoid is no longer vanishing. On the other hand, it yields an unexpected regularizing mechanism on the wave functions, through a positive shift in the eigenvalues of the asymptotic spectral problem. This purely three-dimensional effect is consistent with our $C^{1,\alpha}$ regularity estimates, and motivates even higher Schauder regularity results,
in contrast with the two-dimensional AB model, where the optimal regularity is limited to $C^{0,\alpha}$, for some $\alpha \in (0,1/2]$.
\end{abstract}

\maketitle

\section{Introduction}\label{sec_intro}
Let $d \ge 2$ and $2\le n \le d$ be two integers. We are interested in the study of the local regularity properties of solutions to stationary anisotropic magnetic Schr\"odinger equations in $\mathbb{R}^d$, arising from singular magnetic fields concentrated along manifolds of general codimension $n$.

The magnetic interaction is modeled through a covariant gradient of the form
\[
\nabla_m u=(iM\nabla + A)u,
\]
where $A$ is the magnetic potential and $M$ is a $d\times d$-dimensional matrix.

Since our analysis is local in nature, we can confine our problem in a ball $B_R\subset\R^d$ of radius $R>0$ and centered at $0$. Let us denote the space variable by $z=(x,y) \in \R^{d-n}\times \R^n$. Then, the magnetic potential $A$ has the form
\begin{equation}\label{def_A}
A(z):=\frac{a\big(x, \frac{y}{|y|}\big)}{|y|}+ b(z),
\end{equation}
where $a\in L^\infty(B'_R\times\mathbb S^{n-1}, \R^d)$, $b \in L^d(B_R, \R^d)$ and $B_R'$ is the ball in $\R^{d-n}$ of radius $R>0$ and centered at $0$. The first term models the strong singularity of the magnetic potential along a flat thin manifold 
\begin{equation}\label{def_Sigma}
\Sigma_0:=\{(x,y) \in \R^d: |y|=0\}
\end{equation}
of dimension $d-n$ (codimension $n$). Notice that $|y|=\mathrm{dist}(z,\Sigma_0)$, and then the scaling of this term is critical (of Hardy-type). The second term instead acts as a more regular magnetic potential; that is, with subcritical scaling.

Moreover, we allow for an anisotropic diffusion, which is encoded by the presence of the matrix $M=(m_{i,j})_{i,j=1,\dots,d}$, which is invertible and with bounded measurable coefficients. We assume  that $N:=M^TM$ is uniformly elliptic; that is, there exist two constants $0<\la\leq\Lambda$ such that
\begin{equation}\label{hp_M_elliptic}
\la |\xi|^2\le N\xi\cdot \overline{\xi} \le \Lambda |\xi|^2,\qquad\forall \ \xi \in \C^d.
\end{equation}

Then, we consider the model equation
\begin{equation}\label{eq_magnetic_strong}
L_mu=0\quad  \text{ in } B_R,
\end{equation}
involving the anisotropic Schr\"odinger magnetic operator
\begin{equation}\label{def_L_Af}
L_mu:= (iM\nabla +A)^2 u= -\dive(M^TM \nabla u)+2i M A\cdot \nabla  u +i M \nabla \cdot A u+ |A|^2 u,
\end{equation}
where
\begin{equation}
 M \nabla \cdot A = \sum_{i,j=1}^d m_{i,j}\pd{A_i}{z_j}.
\end{equation}

A solution $u:B_R\to\C$ of \eqref{eq_magnetic_strong} is an element of a suitable magnetic Sobolev space $H^1_m(B_R)$, see Section \ref{subsec_magnetic_capa_Sobolev}, and must be intended in a weak sense; that is,
\begin{equation}
\int_{B_R} \nabla_{m} u \cdot \overline{\nabla_{m} \phi} \, dz=0, \quad  \text{ for any } \phi \in  C^\infty_c(B_R,\mb{C}).
\end{equation}
See Section \ref{sec_weak_sol_bounded} for a precise definition.
The variable $u$ represents the wave function of a non-relativistic charged particle in the magnetic field $B = d A$, with $|u|^2$ describing the associated probability density. Notice that $dA$ denotes the $2$-form given by the exterior derivative of $A$, seen as a $1$-form; i.e
\begin{equation}
B=dA = \sum_{1 \le i < j \le d}
\left(\frac{\partial A_j}{\partial z_i} - \frac{\partial A_i}{\partial z_j}\right)
\, dz_i \wedge dz_j,\qquad A = \sum_{i=1}^d A_i\, dz_i.  
\end{equation}

To establish an appropriate functional framework, we require certain Hardy-type inequalities, see Section \ref{subsec_Hardy}. These inequalities rely on some compatibility conditions between $a$ and $M$. The first one is the following anisotropic transversality condition
\begin{equation}\label{hp_transversality}
M^{-1} a\left(x, \frac{y}{|y|}\right) \cdot (0,y)=0,\qquad\mathrm{in} \ B_R.
\end{equation}
In order to state the other condition, let us introduce a general magnetic potential $P:\R^n \to \R^n$ of the form 
\begin{equation}\label{magneticPy}
P(y)=\frac{p\big(\frac{y}{|y|}\big)}{|y|},
\end{equation}
where $p \in L^\infty(\mb{S}^{n-1},\mb{\R}^n)$, $p\not \equiv 0$ and $h \in L^\infty(\mb{S}^{n-1})$ with $h \ge 0$. Then, let us consider
\begin{equation}\label{def:mu1}
\mu_1(p,h)=\inf_{\psi \in H^1(\mb{S}^{n-1},\mb{C}),\psi \neq 0}
\frac{\int_{\mb{S}^{n-1}} |i\nabla_{\mb{S}^{n-1}} \psi +p \psi|^2  +h |\psi|^2 d \sigma}{\int_{\mb{S}^{n-1}}  |\psi|^2 d \sigma}.
\end{equation}
The second compatibility condition regards the non-degeneracy of the constant
\begin{equation}\label{hp_hardy}
 c_{n,a,M}:=\la \left(\left(\frac{n-2}{2}\right)^2+\inf_{x \in B_R'}\mu_1(a_M''(x,\cdot),0)\right)>0,
\end{equation}
where $a_M:=M^{-1}a $, and $a_M''$ denotes the last $n$ components of the vector $a_M$ (i.e. $p=a_M''$ and $h=0$). The constant $c_{n,a,M}$ is precisely the Hardy constant in Proposition \ref{prop_hardy}. Notice that, when $n=2$, the non-degeneracy means $\inf_{x \in B_R'}\mu_1(a_M''(x,\cdot),0)>0$.

There is a vast literature on magnetic operators. We just mention a few papers and lines of investigation: a comprehensive book \cite{fournais2010spectral}, existence of solutions and qualitative properties for nonlinear equations \cite{AS_exi_non_lin,BMS_3dmodel,CS_exi_reg,C_exi,CS_semi_cla,EL_exi_non_lin}, spectral theory and inequalities \cite{AFJT,AHS_II,AHS_III,R_asym,RES_ineq,RMW_poly,LW,MOR_neg_spectr,S_density}, 
unique continuation principles \cite{NZ_unique,FFT,K_unique}. There are also many results concerning spectral stability for a special class of magnetic potentials, the Aharonov-Bohm  potentials \cite{AFaharonov,AF-SIAM,AFL,AFLeigenvar,BNHHO,BNNT_AB_eigen,FNOS_multi,FNS,FRS_mag,L2015,NMT_AB_boun} (see Appendix \ref{sec:solenoid}), also in view of its connection with the problem of optimal partitions \cite{HHOT_minimal_part_magn}.

More closely related to the present work we mention \cite{kurata1997}, where regularity theory, e.g. boundedness and continuity   for potentials in a Kato class, is discussed. Furthermore, together with unique continuation principles by monotonicity  formulas, also $C^{0,\alpha}$ regularity for magnetic potentials with a Hardy-type singularity is obtained in \cite{FFT}. The regularity results of \cite{FFT} can be seen as a special case of Theorem \ref{theorem_C0alpha_main}  for $n=d$. We also acknowledge that the unique continuation in case of potentials with general codimensional singularities was studied in \cite{FFT_multi} for the Laplacian.

\subsection{Main results and techniques}
The main target of this study is the local Schauder theory for weak solutions to \eqref{eq_magnetic_strong}, in the sense of estimates of the H\"older norms of $u$ in a ball $B_r$ in terms of its $L^2$ norm and appropriate norms of the data $M,A$ in the bigger ball $B_R$ where the equation is satisfied ($0<r<R$).

As a first step, just requiring the boundedness of variable coefficients $M$, one can prove the $L^2-L^\infty$ estimate for $u$ by a Caccioppoli inequality and Moser iterations, see Proposition \ref{prop_caccio} and Corollary \ref{corollary_bound_Harn}.

Then, we provide the H\"older $C^{0,\alpha}$ and Schauder $C^{1,\alpha}$ estimates by scaling, in the spirit of Simon's work \cite{S_reg}. To our knowledge, this approach is new in the magnetic setting.

As a first step, we establish regularity in the case of subcritical magnetic potentials; that is, $a\equiv0$ and hence $A=b$, see Section \ref{sec_a=0}. When the coefficients $M$ are at least continuous, the $C^{0,\alpha}$ regularity for $u$ is proved in two steps: a priori estimates by a contradiction argument involving a blow-up procedure, and a regularization-approximation scheme with sequences of solutions with regularized data via standard mollifiers. Here $A$ needs to belong to $L^{d+\e}$, since the magnetic potential is treated as a lower order (yet subcritical) term that disappears in the blow-up procedure. In particular, $A$ acts both as a potential term and a drift term, as clarified in \eqref{def_L_Af}. The $C^{1,\alpha}$ regularity is deduced similarly, this time requiring $A,M\in C^{0,\alpha}$. 

We would like to remark that the blow-up argument relies on the validity of classic polynomial Liouville theorems for harmonic functions with sublinear or subquadratic growth, which are applied to both the real and imaginary parts of the limiting blow-up profile.

Once the regularity for the regular case is established, we also take care of the appearance of the singular term in the magnetic potential; that is, we deal with the case $a\not\equiv0$.

For what follows, it is convenient to write the matrix $N$ in blocks form as 
\begin{equation}\label{hp_M_structure}
N=
\begin{bmatrix}
N_1& N_2 \\
N_2^T & N_3
\end{bmatrix},
\quad 
N_2(x,0)=0,
\end{equation}
where $N_1$ is $(d-n) \times (d-n)$-dimensional, $N_2$ is $(d-n) \times n$-dimensional and $N_3$ is $n\times n$-dimensional. Notice that we require the vanishing on the $N_2$-block at $\Sigma_0$.

To achieve regularity, we apply a regularization-approximation scheme that works at the domain level, in the spirit of \cite{CFV,fio}. The local magnetic Sobolev capacity of $\Sigma_0$ is infinite if $n=2$ or zero if $n>2$, see Proposition \ref{prop_capa_ranges}. In both cases, solutions to \eqref{eq_magnetic_strong} can be approximated with families of solutions of the problem
\begin{equation}\label{prob_ue_total_intro}
\begin{cases}
L_{m} u_\e=0, & \text{ on } B_R \setminus \Sigma_{\e,N},\\
u_\e=0, & \text{ on } \partial\Sigma_{\e,N},\\
\end{cases}
\end{equation}
where the domain has been perforated around $\Sigma_0$, being
\begin{equation}
\Sigma_{\e,N}:=\{z=(x,y) \in \R^{d-n}\times \R^n:N_3^{-1}(x,y) y \cdot y \le \e^2\},
\end{equation}
and a homogeneous Dirichlet boundary condition has been prescribed on the boundary of the perforation
\begin{equation}
\partial \Sigma_{\e,N}:=\{z=(x,y) \in \R^{d-n}\times \R^n:N_3^{-1}(x,y) y \cdot y =\e^2\}.
\end{equation}
Notice that it is convenient to perforate the domain with holes whose geometry is determined by the lower-right block $N_3$ of $N$, as in \cite{CFV}.

The approximation with solutions to \eqref{prob_ue_total_intro} is possible since the same vanishing is expected on $\Sigma_0$ for the solutions to the original \eqref{eq_magnetic_strong}. In fact, this is particularly evident from an energetic point of view when the magnetic capacity of $\Sigma_0$ is infinite, but it is also true in the case of zero capacity, since any continuous solution must annihilate on $\Sigma_0$, a posteriori. Then, the regularity proved in the first step ($a\equiv0$) - extended up to the boundary $\partial\Sigma_{\e,N}$ - ensures regularity estimates for solutions to \eqref{prob_ue_total_intro} for any given fixed $\e>0$. Notice that far from the singular set the full potential $A$ behaves as the regular $b$ term. Then, one needs to prove that the constants in the regularity estimates are actually uniform as the parameter $\e\to0$ and the hole $\Sigma_{\e,N}$ shrinks onto $\Sigma_0$. This is done carefully in Section \ref{sec_Holder_estimates}. Both the $C^{0,\alpha}$ and $C^{1,\alpha}$ uniform estimates of Theorems \ref{theorem_reg_C0alpha} and \ref{theorem_reg_C1alpha} are proved by a very delicate blow-up procedure, and rely on the validity of the magnetic Liouville Theorem \ref{theor_liou_y}, which classifies the entire solutions to some Schr\"odinger equation having a bound on the growth rate at infinity. Recall that the singular term of $A$ possesses a critical scaling, and thus it persists after the blow-up. Then, the combined presence of $a$ and of the anisotropy $M$ dictates the admissible growth rates of the entire solutions, in terms of Laplace--Beltrami-type magnetic eigenvalues on $\mathbb S^{n-1}$. This phenomenon strongly affects the H\"older exponents in the local regularity estimates of the solutions.

To introduce our main results, let us consider
\begin{equation}\label{def_tildeax}
\tilde{a}(x,y/|y|):=\frac{N^{\frac{1}{2}}(x,0)(M^{-1}(x,0) a)(x,N_{3}^{\frac{1}{2}}(x,0)y/|y|)}{{|N_3^{\frac{1}{2}}(x,0)y/|y||}},
\end{equation}
and the characteristic exponent
\begin{equation}\label{def_gamma}
\gamma_1(M,a):=\inf_{x \in B_R'}\left\{-\frac{n-2}{2}+
\sqrt{\left(\frac{n-2}{2}\right)^2+\mu_1(\tilde{a}''(x,\cdot),|\tilde{a}'(x,\cdot)|^2)}\right\},
\end{equation}
where $\mu_1(\tilde{a}''(x,\cdot),|\tilde{a}'(x,\cdot)|^2)$ is as in \eqref{def:mu1}, with $p=\tilde{a}''(x,\cdot)$ and $h=|\tilde{a}'(x,\cdot)|^2$. The quantities $\tilde{a}'$ and $\tilde{a}''$ are, respectively, the first $d-n$ and the last $n$ components of the field $\tilde{a}$. 

Our main results can be stated as follows.

\begin{theorem}[H\"older $C^{0,\alpha}$ estimates]\label{theorem_C0alpha_main}
Let $M$ be a $(d\times d)$-dimensional matrix satisfying \eqref{hp_M_elliptic}, \eqref{hp_M_structure}. Let $A$ be as in \eqref{def_A}. Assume that \eqref{hp_transversality} and \eqref{hp_hardy} hold. Let $p>d$. Suppose that $\gamma_1(M,a) \ge \gamma_1^*$ for some $\gamma_1^*>0$ and  let 
\begin{equation}
\alpha\in (0,\gamma_1^*) \cap (0, 1-d/p].
\end{equation}
Assume that $M\in C^{0,\omega_1}(B_R,\R^{d,d})$ for some modulus of continuity $\omega_1$, $a \in C^{0,\omega_2}(B'_R\times\mathbb S^{n-1},\R^d)$ for some modulus of continuity $\omega_2$ and it satisfies \eqref{hp_transversality}, and that $b \in  L^q(B_R,\R^d)$. Let  $u$ be   a weak solution of \eqref{eq_magnetic_strong}.

Then $u \in C^{0,\alpha}_{loc}(B_R,\mb{C})$ and
\begin{equation}\label{eq_C0alpha_u0_main}
u=0 \quad \text{ on }\ \Sigma_0\cap B_R.
\end{equation}
Furthermore, if
\begin{equation}
\norm{M}_{C^{0,\omega_1}(B_R,\R^{d,d})}+\norm{a}_{C^{0,\omega_2}(B'_R\times\mathbb S^{n-1},\R^{d})}+\norm{b}_{L^q(B_R,\R^d)}\le L,
\end{equation}
then for any $r \in (0,R)$ there exists $C>0$, depending only on  $n,d, \la, \Lambda,q, L$, $r,R,\alpha,$ and $ \gamma_1^*$,  such that 
\begin{equation}
\norm{u}_{C^{0,\alpha}(B_r,\mb{C})} \le C \norm{u}_{L^2(B_R,\mb{C})}.
\end{equation}
\end{theorem}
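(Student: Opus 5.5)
The proof runs through the regularization–approximation scheme sketched in the introduction. Fix $\e>0$ small and consider the perforated problems \eqref{prob_ue_total_intro} on $B_R\setminus\Sigma_{\e,N}$. By Proposition \ref{prop_capa_ranges}, the weak solution $u$ can be approximated in $H^1_m(B_{R'})$, $R'<R$, by solutions $u_\e$ with $u_\e=0$ on $\partial\Sigma_{\e,N}$ and $u_\e=u$ on $\partial B_{R'}$.
\begin{itemize}
\item If $n=2$, the capacity of $\Sigma_0$ is infinite, so $u$ vanishes there in an energetic sense; truncation by cut-offs shows $u_\e\to u$.
\item If $n>2$, the capacity is zero, so removing $\Sigma_{\e,N}$ is a vanishing perturbation, and convergence again follows from the Hardy inequality (Proposition \ref{prop_hardy}) with constant $c_{n,a,M}>0$, which gives coercivity uniformly in $\e$.
\end{itemize}
Away from $\Sigma_{\e,N}$ the potential $A$ is as regular as $b\in L^q$ plus a bounded, uniformly continuous term. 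Hence the $a\equiv0$ theory of Section \ref{sec_a=0}, extended up to the Dirichlet boundary $\partial\Sigma_{\e,N}$, makes each $u_\e$ locally $C^{0,\alpha}$ in $B_{R'}$, with constants that a priori depend on $\e$. The $L^2$–$L^\infty$ bound of Corollary \ref{corollary_bound_Harn} holds uniformly in $\e$. The crux is a uniform estimate $[u_\e]_{C^{0,\alpha}(B_r)}\le C\norm{u_\e}_{L^2(B_R)}$ with $C$ independent of $\e$. Given this, Ascoli–Arzelà yields $u\in C^{0,\alpha}_{loc}$ together with the estimate, and $u=0$ on $\Sigma_0$ follows because $u_\e$ vanishes on $\partial\Sigma_{\e,N}$ and these sets shrink to $\Sigma_0$ with equicontinuity.

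The uniform bound is proved by contradiction via Simon-type blow-up. Suppose there are $\e_k$, data with norms at most $L$, and solutions $u_k$ with $\norm{u_k}_{L^2}\le1$ but a localized seminorm $[\eta u_k]_{C^{0,\alpha}}\to\infty$, where $\eta$ is a cut-off. Pick points $z_k,\zeta_k$ nearly realizing the seminorm, set $r_k=|z_k-\zeta_k|\to0$, and define
\[
w_k(z)=\frac{\eta u_k(z_k+r_kz)-\eta u_k(z_k)}{r_k^{\alpha}[\eta u_k]_{C^{0,\alpha}}}.
\]
Compare $w_k$ with the unperturbed rescalings $\bar w_k$ without the cut-off, whose difference tends to zero by the $L^\infty$ bound. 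These are uniformly $C^{0,\alpha}$ on compacts and normalized to have oscillation $1$ between $0$ and $(\zeta_k-z_k)/r_k$. Several regimes depend on $\di(z_k,\Sigma_0)/r_k$ and $\e_k/r_k$.
\begin{itemize}
\item If $\di(z_k,\Sigma_0)/r_k\to\infty$, the singular term rescales away in the limit, as does $b$ since $q>d$ and $\alpha\le1-d/q$. The limit then solves a constant-coefficient elliptic equation $-\dive(N(\bar z)\nabla w)=0$ on $\R^d$ with growth $|z|^\alpha$, $\alpha<1$, so it is constant by the classical Liouville theorem, contradicting the normalized oscillation.
\item If the distance stays bounded, recentre at the projection onto $\Sigma_0$. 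The coefficients freeze at $(\bar x,0)$, where $N_2(\bar x,0)=0$, and the singular part persists by criticality. When $\e_k/r_k\to\infty$, the hole swallows everything and gives $w\equiv0$. When $\e_k/r_k\to\ell\in[0,\infty)$, the limit solves the frozen magnetic equation outside a possibly degenerate elliptic cylinder, vanishing on it.
\end{itemize}

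In the frozen regime, apply the linear change of variables by $N^{1/2}(\bar x,0)$ and $N_3^{1/2}$ behind \eqref{def_tildeax}. This turns the operator into $(i\nabla+\tilde a(\bar x,y/|y|)/|y|)^2$, where $\tilde a$ is independent of $x$ and transversal. The $x$-dependence can then be handled by difference quotients in $x$, which are again solutions. The magnetic Liouville Theorem \ref{theor_liou_y} then says that entire solutions with growth $|z|^{\alpha}$, $\alpha<\gamma_1^*\le\gamma_1(M,a)$, vanish. This covers the case with a cylindrical hole too, via extension by zero, or via the Dirichlet analogue, whose first growth exponent is at least as large.

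The hard step is the blow-up itself: the compactness of $w_k$ near the moving holes, and the passage to the limit in the weak formulation with the singular drift $2iMA\cdot\nabla$ and potential $|A|^2$. This needs the uniform Hardy inequality to control $\int|A|^2|w_k|^2$ locally, plus a Caccioppoli estimate (Proposition \ref{prop_caccio}) that is uniform in $\e_k$. The continuity moduli $\omega_1,\omega_2$ are used precisely here, to freeze $M$ and $a$ at the limit point.
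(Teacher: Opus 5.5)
Your proposal has a genuine gap: you run the perforation scheme directly for the given $M\in C^{0,\omega_1}$. The holes $\Sigma_{\e,N}=\{N_3^{-1}(z)y\cdot y\le\e^2\}$ are defined through $N_3^{-1}$, which is then only continuous, so $\partial\Sigma_{\e,N}$ is merely a level set of a continuous function. Two things break.

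First, the up-to-the-boundary regularity you borrow from Section \ref{sec_a=0} is unavailable: Proposition \ref{prop_blow_up_holder_a=0_Diri} requires a $C^{1,\omega}$ boundary. So you do not even know that $[\eta u_\e]_{C^{0,\alpha}}<\infty$, which the contradiction argument needs.

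Second, your case split omits the regime where $z_k$ lies at distance $O(r_k)$ from $\partial\Sigma_{\e_k,N}$ while $|y_k|/r_k\to\infty$; this forces $\e_k/r_k\to\infty$. Your first bullet treats it as a whole-space regime, but the limit only solves the equation on the limit domain with zero Dirichlet data. That domain is a half-space, where reflection plus Liouville concludes as in the paper's Case 2, only if the rescaled hole boundaries flatten. Take a boundary point $z_k^0$, subtract $\e_k^2$ in the defining equation and divide by $r_k\e_k$. The rescaled boundary then differs from a hyperplane by a term of size $\frac{\e_k}{r_k}|N_3^{-1}(z_k^0+r_kz)-N_3^{-1}(z_k^0)|$, which can be as large as $\omega_1(r_k|z|)\,\e_k/r_k$. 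This does not tend to zero in general. It does when $N_3^{-1}$ is $C^1$ with a uniform modulus, which is exactly the extra hypothesis $N_3^{-1}\in C^{1,\omega_3}$ of Theorem \ref{theorem_reg_C0alpha}. On non-flat limit domains, nontrivial solutions vanishing on the boundary can grow like $|z|^\kappa$ with $\kappa<1$, so the Liouville step fails for $\alpha$ near $1$. Conversely, your subcase ``$\e_k/r_k\to\infty$, the hole swallows everything'' is vacuous. When $|y_k|\le Cr_k$ and $z_k\notin\Sigma_{\e_k,N}$, you get $\e_k<|y_k|/\sqrt{\la}\le Cr_k$.

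This is why the paper does not proceed as you describe. The $\e$-uniform estimate, Theorem \ref{theorem_reg_C0alpha}, is proved only under $N_3^{-1}\in C^{1,\omega_3}$, and its constant depends on that norm. The proof of the main theorem therefore goes in three stages:
\begin{enumerate}
\item Take $M$ smooth first, and use the perforated estimate only \emph{qualitatively}, to obtain $u\in C^{0,\alpha}$ and $u=0$ on $\Sigma_0$.
\item Obtain the quantitative bound from a separate a priori blow-up on the \emph{unperforated} problem, Proposition \ref{proposition_reg_C0alpha_matrix}. It applies to solutions already known to be $C^{0,\alpha}$ and vanishing on $\Sigma_0$, and its constants depend only on the $C^{0,\omega_1}$, $C^{0,\omega_2}$ and $L^q$ norms.
\item Remove the smoothness of $M$ by approximating with smooth $M_k$, via Lemma \ref{lemma_approximation_smooth}.
\end{enumerate}
Passing to the limit in the perforated estimate, as you propose, would at best yield a constant depending on the $C^1$ regularity of $N_3^{-1}$, which the statement does not allow. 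What is missing is this decoupling of the qualitative step, which uses perforation with smooth coefficients, from the quantitative a priori estimate, together with the coefficient smoothing.
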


\begin{theorem}[Schauder $C^{1,\alpha}$ estimates]\label{theorem_C1alpha_main}
Let $M$ be a $(d\times d)$-dimensional matrix satisfying \eqref{hp_M_elliptic}, \eqref{hp_M_structure}. Let $A$ be as in \eqref{def_A}. Assume that \eqref{hp_transversality} and \eqref{hp_hardy} hold.
Suppose that $\gamma_1(M,a) \ge \gamma_1^*$ for some $\gamma_1^*>1$ and let 
\begin{equation}
\alpha  \in (0,  \gamma_1^*-1)\cap (0,1).
\end{equation}
Assume that $M \in C^{0,\alpha}(B_R,\R^{d,d})$, $a \in C^{0,\bar \alpha}(B'_R\times\mathbb S^{n-1},\R^d)$ for some $\bar\alpha>\alpha$ and it satisfies \eqref{hp_transversality}, and that $b \in  C^{0,\alpha}(B_R,\R^d)$. Let $u$ be a weak solution of \eqref{eq_magnetic_strong}.

Then $u \in C^{1,\alpha}_{loc}(B_R,\mb{C})$ and 
\begin{equation}
u=0 \quad\text{ and } \quad \nabla u=0 \quad \text{ on } \ \Sigma_0\cap B_R.
\end{equation}
Furthermore, if
\begin{equation}
\norm{M}_{C^{0,\alpha}(B_R,\R^{d,d})}+\norm{a}_{C^{0,\bar \alpha}(B'_R\times\mathbb S^{n-1},\R^{d})}+ \norm{b}_{C^{0,\alpha}(B_R,\R^{d})} \le L,
\end{equation}
then for any $r \in (0,R)$ there exists $C>0$, depending only on   $n,d, \la, \Lambda, L, r,R,\alpha,\overline\alpha$, and  $\gamma_1^*$,  such that
\begin{equation}
\norm{u}_{C^{1,\alpha}(B_r,\mb{C})} \le C \norm{u}_{L^2(B_R,\mb{C})}.
\end{equation}
\end{theorem}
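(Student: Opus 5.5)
The strategy is the regularization--approximation scheme described in the introduction, with Theorem \ref{theorem_C1alpha_main} obtained from uniform-in-$\e$ Schauder estimates for the perforated problems \eqref{prob_ue_total_intro}. I first fix a weak solution $u$ and a slightly smaller ball. For each small $\e>0$ I let $u_\e$ solve $L_m u_\e=0$ in $B_{R'}\setminus\Sigma_{\e,N}$, with $u_\e=0$ on $\partial\Sigma_{\e,N}$ and $u_\e=u$ (suitably cut off) on $\partial B_{R'}$. By the Hardy inequality of Proposition \ref{prop_hardy}, which uses \eqref{hp_transversality} and \eqref{hp_hardy}, the magnetic energy is coercive, so $u_\e$ exists and is bounded in $H^1_m$ uniformly in $\e$. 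The capacity dichotomy of Proposition \ref{prop_capa_ranges} (infinite for $n=2$, zero for $n>2$) then gives $u_\e\to u$ in $H^1_m$ (extending $u_\e$ by zero inside the hole). Away from $\Sigma_0$ the potential $A$ is $C^{0,\alpha}$, because $a\in C^{0,\bar\alpha}$ and $|y|\ge c\e$. Hence the subcritical theory of Section \ref{sec_a=0}, extended up to the smooth boundary $\partial\Sigma_{\e,N}$, gives $u_\e\in C^{1,\alpha}$ up to $\partial\Sigma_{\e,N}$ for each fixed $\e$, with constants that may blow up as $\e\to0$. The $L^2$--$L^\infty$ bound of Corollary \ref{corollary_bound_Harn} holds uniformly in $\e$.

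The core step is the uniform estimate $\|u_\e\|_{C^{1,\alpha}(B_r\setminus\Sigma_{\e,N})}\le C\|u_\e\|_{L^2(B_{R'})}$ with $C$ independent of $\e$. I argue by contradiction in the spirit of Simon \cite{S_reg}. Suppose there are sequences $\e_k$, data $M_k,a_k,b_k$ with norms bounded by $L$, and solutions $u_k$ with $L^2$ norm $1$ whose $C^{1,\alpha}$ seminorms (localized with a cutoff $\eta$) diverge. Pick points $z_k,\zeta_k$ almost realizing $[\nabla(\eta u_k)]_{C^{0,\alpha}}$, set $r_k=|z_k-\zeta_k|\to0$, and blow up with
\[
w_k(z)=\frac{\eta(\hat z_k+r_kz)u_k(\hat z_k+r_kz)-P_k(z)}{r_k^{1+\alpha}[\nabla(\eta u_k)]_{C^{0,\alpha}}},
\]
where $P_k$ is the first-order Taylor polynomial, or zero if the centre sits near the hole. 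I then split into cases according to the behaviour of $\di(\hat z_k,\Sigma_0)/r_k$ and $\e_k/r_k$.

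\begin{enumerate}[(i)]
\item If the distance to $\Sigma_0$ over $r_k$ tends to infinity, the singular term scales out. The limit is then a global solution of a constant-coefficient elliptic equation with $C^{1,\alpha}$ growth, so it is a polynomial of degree at most $1$ by the classical Liouville theorem. Since its $C^{1,\alpha}$ seminorm is $1$, this is a contradiction.
\item If the ratio stays bounded, I centre on $\Sigma_0$ and use \eqref{hp_M_structure} ($N_2(x,0)=0$) together with the change of variables built into $\tilde a$ in \eqref{def_tildeax}. The limit $w$ solves, in a possibly perforated region (a half-space complement of a cylinder if $\e_k/r_k$ stays bounded, all of $\R^d$ otherwise), the magnetic equation with potential $\tilde a(x_0,y/|y|)/|y|$, vanishes on the hole, and satisfies $|w(z)|\le C(1+|z|)^{1+\alpha}$.
\end{enumerate}

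In case (ii), because $1+\alpha<\gamma_1^*\le\gamma_1(M,a)$, the magnetic Liouville Theorem \ref{theor_liou_y} forces $w\equiv0$. For the perforated limit I would use the analogous statement, or a comparison that shows the growth must still be at least $\gamma_1$. This contradicts the normalization.

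Passing to the limit in the blow-up is where I expect the main difficulty to lie. I need local compactness of $w_k$ in $C^{1}$ uniformly up to the moving boundaries $\partial\Sigma_{\e_k,N}$. I also need the lower-order terms $b_k$, $M\nabla\cdot A$ and the oscillation of $M_k$ and $a_k$ to vanish after scaling; this is where $\bar\alpha>\alpha$ and the H\"older regularity of $M$ and $b$ enter. Finally, I must control the Taylor polynomials $P_k$ near $\Sigma_0$, so that the limit really inherits the sublinear-plus-$\alpha$ growth and does not keep a nonzero linear part. A linear part would not be a solution of the singular limit equation, and I would exclude it by testing against the Hardy inequality.

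With the uniform estimate in hand, Arzel\`a--Ascoli lets $u_\e\to u$ in $C^{1}_{loc}$, which gives $u\in C^{1,\alpha}_{loc}$ with the stated bound. Since $u_\e=0$ on $\partial\Sigma_{\e,N}$, which shrinks onto $\Sigma_0$, and the gradients are uniformly H\"older, we get $u=0$ on $\Sigma_0$. For $\nabla u=0$ on $\Sigma_0$: a nonzero gradient at $z_0\in\Sigma_0$ would make the blow-up $u(z_0+rz)/r$ converge to a nonzero linear function solving the limit magnetic equation. That equation admits no entire solution of growth $1<\gamma_1$ by Theorem \ref{theor_liou_y}, so the gradient must vanish.
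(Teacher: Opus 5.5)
Your architecture (perforation, an $\e$-uniform estimate by blow-up, the magnetic Liouville theorem, then $\e\to0$) is the paper's, but the blow-up analysis has a genuine gap.

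\textbf{The missing half-space regime.} Your case split misses the regime $|y_k|/r_k\to\infty$ with $\di(z_k,\partial\Sigma_{\e_k,N_k})/r_k$ bounded. There $\e_k/r_k\to\infty$, so the rescaled hole does not disappear. Its boundary, of curvature of order $1/\e_k$, flattens at bounded distance, and the limit lives on a half-space $\Pi_\nu$, not on $\R^d$ as your case (i) asserts. Conversely, in your case (ii) one has $\e_k<\la^{-1/2}|y_k|\le Cr_k$, so the alternative ``all of $\R^d$'' never occurs there. On a half-space, growth $|z|^{1+\alpha}$ does not force a solution to be affine (think of $\Rea\, w^{1+\alpha}$ on a half-plane) unless it vanishes on $\partial\Pi_\nu$. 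For your normalized sequence this vanishing is not automatic. Centre at the projection $z_k^0\in\partial\Sigma_{\e_k,N_k}$, subtract the Taylor polynomial, and set $E_k:=[\nabla(\eta u_k)]_{C^{0,\alpha}}$. Then on the rescaled boundary $w_k(z)=-\nabla(\eta u_k)(z_k^0)\cdot z/(r_k^{\alpha}E_k)$. This tends to zero only by combining the flatness estimate $|z\cdot\nu_k|\le Cr_k/\e_k$ with a bound $|\nabla u_k|\le CE_k\e_k^{\alpha}$ on $\partial\Sigma_{\e_k,N_k}$.

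\textbf{The missing lemma.} That bound is the missing idea, Lemma \ref{lemma_gradient_u}. A $C^{1,\beta}$ function vanishing on $\partial\Sigma_{\e,N}$ satisfies $|\nabla u|\le C[\nabla u]_{C^{0,\beta}}\e^{\beta}$ there. The reason is that every coordinate $y$-direction is tangent to the tube at some point of the same $x$-slice, at distance $O(\e)$, while the $x$-directions are almost tangent since $|\nu'|\le C\e$. The lemma is also what makes your other two cases work.
\begin{enumerate}[(i)]
\item In the interior case the singular potential does not simply scale out. The Taylor polynomial produces terms like $A_k(z_k+r_kz)\,u_k(z_k)/(r_k^{\alpha}E_k)$ with $|A_k|\sim|y_k|^{-1}$. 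Even after subtracting constant vectors, these vanish only because $|u_k(z_k)|\le CE_k|y_k|^{1+\alpha}$ and $|\nabla u_k(z_k)|\le CE_k|y_k|^{\alpha}$ (consequences of the lemma), together with $\bar\alpha>\alpha$.
\item In the axis-centred case the lemma gives $|\nabla w_k|\le C(\e_k/r_k)^{\alpha}\le C$ at a point of the rescaled hole boundary, hence the $C^1$ compactness you need. Your remark on excluding a linear part via Hardy concerns only the identification of a limit, not its existence.
\end{enumerate}

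\textbf{Two further gaps.} First, at the sharp exponent the term $|M_k(\tilde z_k+r_kz)-M_k(\tilde z_k)|\,|\nabla u_k(\tilde z_k)|/(r_k^{\alpha}E_k)$ is only $O(|z|^{\alpha}\norm{\nabla u_k}_{L^\infty}/E_k)$, so H\"older continuity of $M$ does not kill it. The paper first proves the estimate for every $\beta<\alpha$ to obtain $\norm{\nabla u_k}_{L^\infty}=o(E_k)$. Second, $\partial\Sigma_{\e,N}$ is not smooth when $M$ is only $C^{0,\alpha}$, since then $N_3^{-1}$ is only $C^{0,\alpha}$. Both the fixed-$\e$ boundary regularity and the flatness estimate need $N_3^{-1}\in C^{1,\alpha}$. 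The paper treats smooth $M$ first and then concludes via Lemma \ref{lemma_approximation_smooth} and the a priori estimate of Proposition \ref{proposition_reg_C1alpha_matrix}.

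Your final blow-up argument for $\nabla u=0$ on $\Sigma_0$ (a nonzero linear limit contradicting Theorem \ref{theor_liou_y}, since $1<\gamma_1$) is a valid alternative to the paper's use of \eqref{ineq_nabla_C1alpha}.
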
 
More general equations can be treated, including lower-order terms in the right-hand side of \eqref{eq_magnetic_strong} and electric potentials with critical Hardy scaling. We omit this level of generality for the sake of readability, as it does not introduce additional difficulties.

\subsection{Physical motivations and applications}
We would like to end this introduction with an application to our theory, which is actually one of the main motivations for the present study. In the particular case $d=3$ and $n=2$, these equations arise naturally when studying three-dimensional Aharonov–Bohm-type (AB) models (see the classical papers \cite{AT,AB_orig}, and also the three-dimensional cylindrical model in \cite{bonheure2016nonlinear}), computing the physically relevant magnetic potential generated by a shrinking solenoid onto the axis $\Sigma_0$, with variable surface current density, and general geometry of the loops where the current circulates, see Appendix \ref{sec:A1}.  Additionally, anisotropy is natural when the solenoid windings are coiled along non-flat curves, see Appendix \ref{sec:A2}.

In such cases, due to asymmetries, anisotropies and variable currents, the problem cannot be reduced to/simplified with a two-dimensional model and therefore requires a general treatment. Moreover, as we shall show, the asymmetry has a twofold effect. On the one hand, it breaks the ideal AB configuration, in the sense that the magnetic field outside the solenoid is no longer vanishing. On the other hand, it yields an unexpected regularizing mechanism on the wave functions.

Let us review the construction of the ideal AB model: the infinite solenoid is made of loops, coiled around the axis $\Sigma_0=\{y_1=y_2=0\}$, and lying on planes which are orthogonal to $\Sigma_0$; that is, $\{x=t\}$ for any $t\in\R$. The circulating current is constant. Then, the AB effect concerns the fact that the magnetic field $B$ is confined inside the solenoid, and it is zero outside, even if the magnetic potential $A$ is not vanishing there, and moreover, it is strongly singular along $\Sigma_0$ as the loops shrink onto the axis. In this particular case, solutions to the two-dimensional AB model, constantly extended in the third variable $x\in\R$, are solutions to the three-dimensional model. Then, as proved in \cite{FFT} for the two-dimensional model, the optimal regularity of solutions is $C^{0,\alpha}$, for some $\alpha\in(0,1/2]$ which depends on the geometry of the loops and it is exactly $1/2$, and yet maximal, in the case of circular loops with centers on $\Sigma_0$, see also \cite{HHOHOO_circ12}.

Then, in Example \ref{ex_big_frequence}, we construct a particular infinite solenoid made of circular loops that lie on planes that are tilted from the orthogonal cross-sections by an angle $\beta\in(0,\pi/2)$. The circulating current is still constant. 

\begin{center}
\includegraphics[page=1,scale=1]{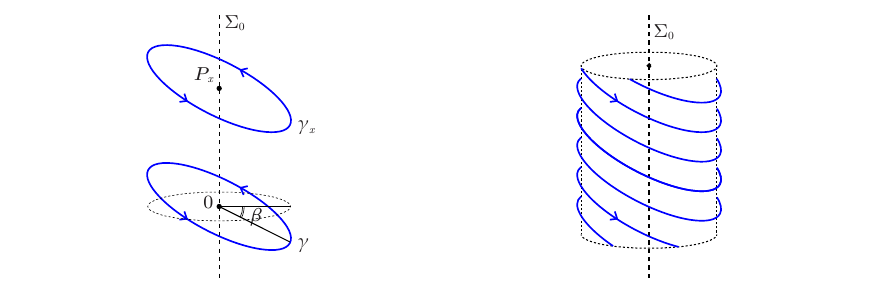}
\captionof{figure}{The picture describes the particular case of an infinite solenoid obtained by translating a planar circular loop $\gamma$ along the axis $\Sigma_0$. For each $x\in\R$, the circle $\gamma_x$ turns around $P_x=(x,0,0)$ and lies in a plane transverse to $\Sigma_0$, and tilted by an angle $\beta\in(0,\pi/2)$ with respect to the plane orthogonal to $\Sigma_0$.}\label{fig:0}
\end{center}

This significant example exhibits the main features that distinguish the asymmetric model from the ideal symmetric configuration. The exterior magnetic field is not vanishing, so the configuration is no longer in the ideal AB regime, and the regularizing mechanism is visible explicitly: the eigenvalue $\mu_1$ undergoes a positive shift, which is consistent with our $C^{1,\alpha}$ estimate in Theorem \ref{theorem_C1alpha_main}, and suggests that even higher Schauder regularity is expected in some cases. In fact, for any $k\in\mathbb N$, there exists $\beta\in(0,\pi/2)$ such that $\gamma_1(\beta)>k$.

\section{Functional Setting}\label{sec_functional_setting}
\subsection{Notations and preliminaries}
Let us define
\begin{equation}\label{def_nabla_A}
\nabla_mu=\nabla_{M,A} u:=iM\nabla u +Au, \quad \nabla_x u:=\left(\pd{u}{x_1},\dots, \pd{u}{x_{d-n}}\right), \quad \nabla_y u:=\left( \pd{u}{y_1},\dots, \pd{u}{y_{n}}\right).
\end{equation}
In the case $M={\rm{Id}_d}$, we write $\nabla_{A} u$ in place of $\nabla_mu$.
Let us fix some more notation. For any vector field $F=(f_1, \dots, f_d)$, we denote its components by
\begin{align}\label{def_'_''}
F'=(f_1, \dots, f_{d-n}),  \quad \text{ and }\quad F''=(f_{d-n+1}, \dots, f_d).
\end{align}
Let $\theta=(\theta',\theta'')$ be the  variable in $\mb{S}^{d-n-1}\times \mb{S}^{n-1} \subset  \R^{d-n}\times \R^n$, 
that is,    $(\theta',\theta''):=(x/|x|,y/|y|)$.
Furthermore, let  $g$  be  the metric on $\mb{S}^{n-1}$ induced by the flat metric on $\R^n$ and  $\nabla_{\mb{S}^{n-1}}$ the correspondent metric gradient.

For any $R>0$ and any $z=(x,y) \in \R^{d-n}\times \R^n$, let 
$B_R'(x)$, $B''_R(y)$, and $B_R(z)$ be the open balls of center $x,y$ or $z$ in  $\R^{d-n}$, $\R^n$ or $\R^d$ respectively.

In all the statement throughout the paper we will always assume the validity of \eqref{def_A},  \eqref{hp_M_elliptic}, \eqref{hp_transversality} and \eqref{hp_hardy} without further notice.

\begin{remark}\label{remark_ineq_magnetic_nablas}
We can easily compare the anisotropic and classical magnetic gradient pointwise.
 Indeed, defining $N=M^TM$, one has 
\begin{equation}
|iM\nabla \varphi+ A \varphi|^2=N (\nabla \varphi+ M^{-1}A \varphi) \cdot \overline{(\nabla \varphi+ M^{-1}A \varphi)}.
\end{equation}
Then 
\begin{equation}
\la  |i\nabla \varphi+ M^{-1}A \varphi|^2\le |iM\nabla \varphi+A \varphi|^2 
\le \Lambda |i\nabla \varphi+ M^{-1}A \varphi|^2.
\end{equation}  
\end{remark}

Thanks to Remark \ref{remark_ineq_magnetic_nablas}, we can prove an anisotropic version of the classical \emph{Diamagnetic inequality}, see for example \cite[Lemma A.1]{FFT}. We also state some useful algebraic identities.

\begin{proposition}[Diamagnetic inequality]\label{prop_ineq_dia}
In a pointwise sense
\begin{equation}\label{ineq_ani_dia}
\sqrt{\la}|\nabla |v||\le|M \nabla |v||\le |\nabla_{M,A}v|.
\end{equation}
Furthermore,
\begin{align}
&\nabla |v|=\Rea\left(\frac{\overline{v}}{|v|}\nabla v\right), &&\Rea(\overline{v} \nabla v)=|v|\nabla |v|,\label{eq_Re_nabla}\\
& \Ima(\overline{v}\nabla v)=\Rea(i(v \nabla \overline{v}- \overline{v} \nabla v)),
&&|iM\nabla v+Av|^2=|\nabla v|^2 -2 \Ima(\overline{v}M\nabla v) \cdot A+|A|^2 v^2.\label{eq_grad_magnetic}
\end{align}
\end{proposition}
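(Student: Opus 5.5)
The identities are pointwise computations on the set where $v\neq 0$. The diamagnetic inequality itself comes from splitting $\nabla_{M,A}v$ into its component along $v$ and using that $A$ is real. I would first prove the algebraic identities \eqref{eq_Re_nabla} and \eqref{eq_grad_magnetic}, and then deduce \eqref{ineq_ani_dia} from them.

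\textbf{The identities in \eqref{eq_Re_nabla}.} Writing $|v|^2=v\overline v$ and differentiating where $v\ne0$ gives
\[
2|v|\nabla|v|=\overline v\nabla v+v\nabla\overline v=2\Rea(\overline v\nabla v).
\]
Dividing by $|v|$ gives the first identity, and the second is the same relation before dividing. For the imaginary part, note that $\overline v\nabla v-v\nabla\overline v=2i\Ima(\overline v\nabla v)$. Hence
\[
\Rea\bigl(i(v\nabla\overline v-\overline v\nabla v)\bigr)=\Rea\bigl(2\Ima(\overline v\nabla v)\bigr)=\Ima(\overline v\nabla v).
\]

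\textbf{The expansion in \eqref{eq_grad_magnetic}.} Expand $|iM\nabla v+Av|^2$, using that $M$ and $A$ are real:
\[
|iM\nabla v+Av|^2=|M\nabla v|^2+|A|^2|v|^2+2\Rea\bigl(iM\nabla v\cdot A\overline v\bigr),
\]
and $\Rea(i\,\overline v M\nabla v)\cdot A=-\Ima(\overline vM\nabla v)\cdot A$. This gives the stated formula, where the first term should be read as $|M\nabla v|^2$ and the last as $|A|^2|v|^2$; these reduce to what is written when $M=\mathrm{Id}$.

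\textbf{The inequality \eqref{ineq_ani_dia}.} The first inequality is immediate from $|M\xi|^2=N\xi\cdot\xi\ge\la|\xi|^2$ applied to the real vector $\xi=\nabla|v|$. For the second, on $\{v\ne0\}$ I use the identity
\[
\Rea\Bigl(\frac{\overline v}{|v|}\bigl(iM\nabla v+Av\bigr)\Bigr)=-\frac{\Ima(\overline vM\nabla v)}{|v|}+A|v|,
\]
which does not directly produce $M\nabla|v|$. I therefore take the imaginary part instead. Since $A|v|$ is real,
\[
\Ima\Bigl(\frac{\overline v}{|v|}\bigl(iM\nabla v+Av\bigr)\Bigr)=\Rea\Bigl(\frac{\overline v}{|v|}M\nabla v\Bigr)=M\nabla|v|,
\]
by \eqref{eq_Re_nabla} and the linearity of the real matrix $M$. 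Then
\[
|M\nabla|v||\le\Bigl|\frac{\overline v}{|v|}\nabla_{M,A}v\Bigr|=|\nabla_{M,A}v|.
\]

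\textbf{The zero set and the main difficulty.} The main obstacle is justifying the inequality on $\{v=0\}$ and in the Sobolev setting. For $v$ in the magnetic Sobolev space, $|v|\in H^1_{loc}$ and $\nabla|v|=0$ a.e. on $\{v=0\}$, by the standard Stampacchia-type argument. To make this rigorous I would approximate $|v|$ by $|v|_\e:=\sqrt{|v|^2+\e^2}-\e$, for which
\[
\nabla|v|_\e=\frac{\Rea(\overline v\nabla v)}{\sqrt{|v|^2+\e^2}}.
\]
The same imaginary-part argument gives $|M\nabla|v|_\e|\le|\nabla_{M,A}v|$ a.e. Letting $\e\to0$ then yields \eqref{ineq_ani_dia} a.e., as in \cite[Lemma A.1]{FFT}.
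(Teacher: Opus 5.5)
Your proof of \eqref{ineq_ani_dia} is correct and is the paper's argument. The paper writes $M\nabla|v|=\Rea\bigl(\frac{\overline v}{|v|}(M\nabla v-iAv)\bigr)$ and uses $|\Rea w|\le|w|$. Since $M\nabla v-iAv=-i\,\nabla_{M,A}v$ and $\Rea(-iw)=\Ima w$, this is exactly your imaginary-part identity. Your regularization $|v|_\e=\sqrt{|v|^2+\e^2}-\e$ on the zero set is a justification the paper leaves implicit. Your reading of the second formula in \eqref{eq_grad_magnetic}, with $|M\nabla v|^2$ and $|A|^2|v|^2$, is also the right one.

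There is one false step. For the identity for $\Ima(\overline v\nabla v)$ in \eqref{eq_grad_magnetic}, you correctly obtain $i(v\nabla\overline v-\overline v\nabla v)=2\Ima(\overline v\nabla v)$. You then assert $\Rea\bigl(2\Ima(\overline v\nabla v)\bigr)=\Ima(\overline v\nabla v)$. But $2\Ima(\overline v\nabla v)$ is already a real vector, so its real part is itself, not half of it. For instance, $v=e^{i\theta}$ with $\theta$ real gives $\Ima(\overline v\nabla v)=\nabla\theta$, while $\Rea\bigl(i(v\nabla\overline v-\overline v\nabla v)\bigr)=2\nabla\theta$.

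So that identity, as printed, is false by a factor $2$, and no proof of it can be correct. The right statement is $\Ima(\overline v\nabla v)=\frac12\Rea\bigl(i(v\nabla\overline v-\overline v\nabla v)\bigr)$. You should flag it as a misprint, as you did for the other formula, rather than bending the computation to match. Neither your proof of \eqref{ineq_ani_dia} nor the paper's uses this identity, so the diamagnetic inequality itself is unaffected.
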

\begin{proof}
Both \eqref{eq_Re_nabla} and \eqref{eq_grad_magnetic}
can be proved by direct computations. Furthermore,
\begin{align}
\la |\nabla |v||^2 \le|M \nabla |v||^2&=\left|M\Rea\left(\frac{\overline{v}}{|v|}\nabla v\right)\right|^2\\
&=\left|\Rea\left(\frac{\overline{v}}{|v|}(M\nabla v-iAu)\right)\right|^2 
\le |M\nabla v-iAu|^2=|\nabla_{M,A} v|^2
\end{align}
and so we have proved \eqref{ineq_ani_dia}.
\end{proof}

\subsection{Hardy-Maz'ya inequalities}\label{subsec_Hardy} 
In order to establish a functional setting for \eqref{eq_magnetic_strong}, we need some  Hardy-Maz'ya-type  inequalities  with boundary terms in the spirit of \cite[Lemma 3.1]{FFT} or \cite[Subsection 2.1.6, Corollary 3]{M_book} with respect to the anisotropic magnetic gradient.
Let
\begin{equation}\label{def_aM}
a_M:=M^{-1}a.
\end{equation}
We make use of polar coordinates, hence we need the following lemma.
\begin{lemma}\label{lemma_nabla_polar}
The anisotropic magnetic gradient with respect to the variable $y$ can be expressed in polar coordinates $(r,\theta'')$  of $\R^n$ as follows
\begin{equation}\label{eq_nabla_polar}
\left|i\nabla_y v +|y|^{-1}a_M''\left(x, \theta''\right)v\right|^2=\left|\pd{v}{r} \right|^2 +\frac{1}{r^2}\left|i\nabla_{\mb{S}^{n-1}}v  +  a_M''\left(x, \theta''\right)   v \right|^2.
\end{equation}
\end{lemma}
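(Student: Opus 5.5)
The plan is to use the standard orthogonal decomposition of the flat gradient in $\R^n\setminus\{0\}$ into radial and tangential parts, with $x$ treated as a frozen parameter. We write $y=r\theta''$ with $r=|y|>0$ and $\theta''\in\mb{S}^{n-1}$. For any smooth (or $H^1_{loc}$) function $v$,
\begin{equation}
\nabla_y v=\pd{v}{r}\,\theta''+\frac{1}{r}\nabla_{\mb{S}^{n-1}}v,
\end{equation}
where $\nabla_{\mb{S}^{n-1}}v$ is tangent to the sphere, so $\nabla_{\mb{S}^{n-1}}v\cdot\theta''=0$. Substituting this into the left-hand side gives
\begin{equation}
i\nabla_y v+|y|^{-1}a_M''(x,\theta'')v=i\pd{v}{r}\,\theta''+\frac{1}{r}\Big(i\nabla_{\mb{S}^{n-1}}v+a_M''(x,\theta'')v\Big).
\end{equation}

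The key step is to show that the bracketed vector is tangent to $\mb{S}^{n-1}$, i.e. orthogonal to $\theta''$. For the gradient term this holds by construction. For the potential term we use the transversality condition \eqref{hp_transversality}. Since $(0,y)$ has zero first $d-n$ components, that condition reads $a_M''(x,y/|y|)\cdot y=0$. Dividing by $|y|$ gives $a_M''(x,\theta'')\cdot\theta''=0$ for every $\theta''$, so the magnetic term is purely tangential.

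With this orthogonality in hand, the two summands are orthogonal with respect to the Hermitian product $\xi\cdot\overline{\eta}$ on $\C^n$. Indeed, $\theta''$ and $a_M''$ are real, so the cross term is
\begin{equation}
\frac{2}{r}\Rea\Big(i\pd{v}{r}\,\theta''\cdot\overline{\big(i\nabla_{\mb{S}^{n-1}}v+a_M''v\big)}\Big),
\end{equation}
and it vanishes because $\theta''\cdot\nabla_{\mb{S}^{n-1}}\overline v=0$ and $\theta''\cdot a_M''=0$ componentwise. Since $|\theta''|=1$, Pythagoras then gives $|\pd{v}{r}|^2+r^{-2}|i\nabla_{\mb{S}^{n-1}}v+a_M''v|^2$, which is \eqref{eq_nabla_polar}.

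The only point needing care is this translation of \eqref{hp_transversality} into tangency of $a_M''$. Everything else is the routine polar decomposition, valid for $y\neq0$, which is enough because $\Sigma_0$ has measure zero.
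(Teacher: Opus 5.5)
Your proposal is correct and follows essentially the same route as the paper. Both arguments rest on the radial/tangential splitting $\nabla_y v=\pd{v}{r}\theta''+r^{-1}\nabla_{\mb{S}^{n-1}}v$ and on using \eqref{hp_transversality} to get $a_M''\cdot\theta''=0$, so that the radial derivative decouples from the magnetic term. The paper expands the square in terms of $\Rea v$ and $\Ima v$ and then reassembles it, whereas you express the same cancellation as orthogonality of two vectors in $\C^n$ and apply Pythagoras, which is slightly cleaner.
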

\begin{proof}
A direct computation yields, by \eqref{hp_M_elliptic} and \eqref{hp_transversality},
\begin{align}
&\left|i \nabla_y v +|y|^{-1}a_M''\left(x, \theta''\right)v\right|^2\\
&=|\nabla_y v|^2 +\frac{\left|a_M''\left(x, \theta''\right)\right|^2}{|y|^2}|v|^2+2\left(\Ima v  \nabla_y \Rea v  \cdot |y|^{-1}a_M''\left(x, \theta''\right)  -\Rea v  \nabla_y \Ima v  \cdot |y|^{-1}a_M''\left(x, \theta''\right)\right)\\
&=\left|\pd{v}{r} \right|^2  +\frac{2}{r}\left(\Ima v  \nabla_{\mb{S}^{n-1}} \Rea v  \cdot r^{-1}a_M''\left(x,  \theta''\right)  -\Rea v  \nabla_{\mb{S}^{n-1}} \Ima v  \cdot r^{-1}a_M''\left(x,  \theta''\right)\right) \\
&+\frac{\left|a_M''(x, \theta'')\right|^2}{|r|^2}|v|^2 +\frac{1}{r^2}|\nabla_{\mb{S}^{n-1}}v|^2 =\left|\pd{v}{r} \right|^2 +\frac{1}{r^2}\left|i\nabla_{\mb{S}^{n-1}}v  +  a_M''\left(x, \theta''\right)   v \right|^2.
\end{align}
Indeed, \eqref{hp_transversality} implies that 
\begin{equation}
 \nabla_y \Rea v  \cdot a_M''\left(x, \theta''\right)= (\partial_r\Rea v\,  \theta''+ r^{-1} \nabla_{\mb{S}^{n-1}} \Rea v)  \cdot a_M''\left(x, \theta''\right)=r^{-1} \nabla_{\mb{S}^{n-1}} \Rea v  \cdot a_M''\left(x, \theta''\right)
\end{equation}
and the same holds for $ \nabla_y \Ima v  \cdot a_M''\left(x, \theta''\right)$.

\end{proof}

Let $c_{n,a,M}$ be as in \eqref{hp_hardy} and recall that we are assuming $c_{n,a,M}>0$. This is particularly relevant for the next propositions, which otherwise would be trivial.

\begin{proposition}[Hardy inequality]\label{prop_hardy}
For any $\varphi \in  C_c^\infty(B_R,\mb{C})$
\begin{equation}\label{ineq_Hardy}
 c_{n,a,M}\int_{B_R}\frac{|\varphi|^2}{|y|^2} \, dz 
\le \int_{B_R} \left|iM\nabla \varphi +|y|^{-1}a(x,y/|y|)\varphi\right|^2\, dz.
\end{equation}
\end{proposition}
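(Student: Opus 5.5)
The plan is to reduce the anisotropic inequality to an isotropic one in the $y$-variables and then, for fixed $x$, apply a magnetic Hardy inequality in $\R^n$ written in polar coordinates.

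\textbf{Step 1: remove the anisotropy.} By Remark \ref{remark_ineq_magnetic_nablas}, with $A=|y|^{-1}a$ and $a_M=M^{-1}a$, we have pointwise
\begin{equation}
\left|iM\nabla \varphi +|y|^{-1}a\varphi\right|^2\ge \la\left|i\nabla\varphi+|y|^{-1}a_M\varphi\right|^2.
\end{equation}
Discarding the first $d-n$ components gives the further lower bound
\begin{equation}
\left|i\nabla\varphi+|y|^{-1}a_M\varphi\right|^2\ge \left|i\nabla_y\varphi+|y|^{-1}a_M''(x,\theta'')\varphi\right|^2.
\end{equation}
It therefore suffices to prove, for each fixed $x\in B_R'$ and each $\varphi(x,\cdot)\in C^\infty_c$ of the corresponding slice of $B_R$ (extended by zero to $\R^n$), the estimate
\begin{equation}
\left(\Big(\tfrac{n-2}{2}\Big)^2+\mu_1(a_M''(x,\cdot),0)\right)\int_{\R^n}\frac{|\varphi|^2}{|y|^2}\,dy\le \int_{\R^n}\left|i\nabla_y\varphi+|y|^{-1}a_M''\varphi\right|^2dy.
\end{equation}
Once this holds, we take the infimum over $x$ of the bracket, multiply by $\la$, and integrate in $x$ via Fubini; this yields \eqref{ineq_Hardy} with the constant $c_{n,a,M}$.

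\textbf{Step 2: polar coordinates.} By Lemma \ref{lemma_nabla_polar}, which uses \eqref{hp_transversality}, the right-hand side equals
\begin{equation}
\int_0^\infty\!\!\int_{\mb{S}^{n-1}}\left(|\partial_r\varphi|^2+r^{-2}\left|i\nabla_{\mb{S}^{n-1}}\varphi+a_M''\varphi\right|^2\right)r^{n-1}\,d\sigma\,dr .
\end{equation}
For the angular term, the definition \eqref{def:mu1} with $h=0$, applied for each fixed $r$, gives the bound $\mu_1(a_M''(x,\cdot),0)\int r^{n-3}|\varphi|^2$. For the radial term, the one-dimensional Hardy inequality along each ray $\theta''$,
\begin{equation}
\int_0^\infty|\partial_r f|^2r^{n-1}dr\ge\Big(\tfrac{n-2}{2}\Big)^2\int_0^\infty |f|^2r^{n-3}dr ,
\end{equation}
valid for $f$ compactly supported in $[0,\infty)$ and trivial when $n=2$, gives the term $\big(\tfrac{n-2}{2}\big)^2\int|\varphi|^2|y|^{-2}$. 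Adding the two bounds yields the Step 1 estimate.

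\textbf{Main obstacle.} The delicate point is technical rather than structural: $\varphi$ need not vanish near $\{y=0\}$, and $a_M''$ is only bounded. Lemma \ref{lemma_nabla_polar} must be applied pointwise on $y\neq0$, which has full measure, and each integral is finite or can be set to $+\infty$ consistently. If $\int|\varphi|^2|y|^{-2}=\infty$ (possible when $n=2$), the angular lower bound forces the right-hand side to be infinite as well, so the inequality holds trivially. For $n\ge3$ the radial Hardy inequality holds for smooth $f$ without any vanishing condition at $r=0$, since the boundary term $r^{n-2}|f|^2$ vanishes there. For $n=2$ only the angular term is needed.
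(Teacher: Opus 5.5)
Your proposal is correct and follows essentially the same route as the paper: ellipticity via Remark~\ref{remark_ineq_magnetic_nablas} to pass to $a_M=M^{-1}a$, discarding the $x$-components, polar coordinates in $y$ via Lemma~\ref{lemma_nabla_polar}, then the one-dimensional Hardy inequality on rays together with the definition of $\mu_1(a_M''(x,\cdot),0)$ on each sphere, and finally integration in $x$. Your radial constant $\big(\tfrac{n-2}{2}\big)^2$ is the right one (the paper's intermediate display omits the square), and your discussion of the case $\int|\varphi|^2|y|^{-2}\,dz=+\infty$ for $n=2$ makes explicit a point the paper leaves implicit.
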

\begin{proof}
The integrals can be computed in $\R^d$, but are zero outside the support of $\varphi \in  C_c^\infty(B_R,\mb{C})$. By Remark \ref{remark_ineq_magnetic_nablas} we may estimate, considering only the last $n$ components of the anisotropic magnetic gradient,  
\begin{multline}
\int_{\R^d} \left|iM\nabla \varphi+|y|^{-1}a(x, y/|y|)\varphi \right|^2\, dz\\
\ge\la  \int_{\R^d} |i\nabla \varphi+ |y|^{-1}a_M(x, y/|y|) \varphi|^2 \, dz\\
\ge \la \int_{\R^{d-n}}\int_{\R^n} |i\nabla_y \varphi+ |y|^{-1}a''_M(x, y/|y|) \varphi|^2  \, dy \, dx.
\end{multline}
By Lemma \ref{lemma_nabla_polar}
\begin{multline}
\int_{\R^n} \left|i\nabla_y \varphi+|y|^{-1}a_M''(x, y/|y|)\varphi \right|^2\, dy  \\
 =\int_0^{+\infty} r^{n-1}\int_{\mb{S}^{n-1}}\left(\left|\pd{\varphi}{r} \right|^2\,
+\frac{1}{r^2}\left|i\nabla_{\mb{S}^{n-1}}\varphi  +  a_M''\left(x, \theta''\right)   \varphi \right|^2\right)\, d \sigma \, dr \\
\end{multline}
and as in \cite[Lemma 3.1]{FFT}
\begin{equation}
\int_0^{+\infty} r^{n-1}\int_{\mb{S}^{n-1}}\left|\pd{\varphi}{r} \right|^2 \, dr 
\ge  \frac{n-2}{2}\int_{\R^n}\frac{|\varphi|^2}{|y|^2} \, dz.
\end{equation}
Furthermore, letting $\mu_1(a_M''(x,\cdot),0)$ be as in \eqref{def:mu1},
\begin{multline}
\int_0^{+\infty} r^{n-3}\int_{\mb{S}^{n-1}}\left|i\nabla_{\mb{S}^{n-1}}\varphi + a_M''\left(x, \theta''\right) \varphi \right|^2   \,d \sigma \, dr\\
\ge\mu_1(a_M''(x,\cdot),0)
\int_0^{+\infty} r^{n-3}\int_{\mb{S}^{n-1}}|\varphi|^2 d \sigma \, dr
=\mu_1(a_M''(x,\cdot),0)\int_{\R^n}\frac{|\varphi|^2}{|y|^2} \, dy.
\end{multline}
Hence, integrating over $\R^{d-n}$, we have proved \eqref{ineq_Hardy}.  
\end{proof}

We can prove a version of the Hardy inequality on the $(d-1)$-dimensional unit sphere $\mb{S}^{d-1}$. 
\begin{proposition}[Hardy inequality on spheres]\label{prop_hardy_sphere}
For any $\varphi \in   C^\infty(\mb{S}^{d-1},\mb{C})$
\begin{equation}\label{ineq_Hardy_sphere}
 c_{n,a,M}\int_{\mb{ S}^{d-1}}\frac{|\varphi|^2}{|\theta''|^2} \, d\sigma 
\le \la \left(\frac{d-2}{2}\right)^2 \int_{\mb{S}^{d-1}}|\varphi|^2 \, d\sigma 
+\la\int_{\mb{S}^{d-1}}\left|i \nabla_{\mb{S}^{d-1}}\varphi  +   |\theta''|^{-1}a_M \,   \varphi \right|^2 d \sigma.
\end{equation}
\end{proposition}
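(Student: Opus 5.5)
The plan is to derive the spherical inequality from the flat Hardy inequality, Proposition \ref{prop_hardy}, applied to functions of separated form in $d$-dimensional polar coordinates, and then to optimize over the radial profile. Take $\varphi\in C^\infty(\mb{S}^{d-1},\mb{C})$ and $f\in C^\infty_c((0,+\infty),\R)$, and set $v(z)=f(|z|)\varphi(z/|z|)$. We may assume that $v$ is supported in $B_R$; otherwise we rescale, which is harmless because the singular part of the potential is $0$-homogeneous in $y$ after division by $|y|$. Writing $z=\rho\theta$, we have $|y|=\rho|\theta''|$, so
\begin{equation}
|y|^{-1}a_M(x,y/|y|)=\rho^{-1}|\theta''|^{-1}a_M(x,\theta''/|\theta''|).
\end{equation}

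\textbf{Step 1: lower bound for the left-hand side.} By Proposition \ref{prop_hardy},
\begin{equation}
c_{n,a,M}\int_0^\infty f^2\rho^{d-3}\,d\rho\int_{\mb{S}^{d-1}}\frac{|\varphi|^2}{|\theta''|^2}\,d\sigma\le\int |iM\nabla v+Av|^2\,dz,
\end{equation}
where $A$ here denotes only the singular part $|y|^{-1}a$.

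\textbf{Step 2: upper bound for the right-hand side.} By Remark \ref{remark_ineq_magnetic_nablas}, the right-hand side is at most $\Lambda\int|i\nabla v+M^{-1}Av|^2$. The natural constant in the target inequality is $\la$, and this is the delicate point: we cannot simply take $\Lambda$ there. Instead, we will revisit the proof of Proposition \ref{prop_hardy}. That proof actually shows
\begin{equation}
c_{n,a,M}\int\frac{|v|^2}{|y|^2}\le\la\int|i\nabla v+|y|^{-1}a_Mv|^2,
\end{equation}
since only the step $\la|i\nabla\varphi+M^{-1}A\varphi|^2\le|\nabla_m\varphi|^2$ was used and everything afterwards concerns $a_M''$ alone. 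So we apply this sharper intermediate inequality directly. In polar coordinates, using $x$-independence only through the pointwise bound $\mu_1(a_M''(x,\cdot),0)\ge\inf_x$, we obtain a splitting of the radial and tangential parts. The tangential component of $a_M$ is taken care of by the spherical magnetic gradient. The radial component of $a_M$, namely $a_M\cdot z/|z|=a_M'\cdot x/|z|$ (by \eqref{hp_transversality}), contributes a cross term.

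To avoid computing that cross term, we will instead use the orthogonal decomposition
\begin{equation}
|i\nabla v+Wv|^2=|i\partial_\rho v+W_\rho v|^2+\rho^{-2}\,|i\nabla_{\mb{S}^{d-1}}\varphi+\rho W_{\tan}\varphi|^2 f^2 .
\end{equation}
The radial part is real-gauge removable: since $W_\rho$ is $(-1)$-homogeneous, multiplying by a phase $e^{i\,g(\theta)\log\rho}$ is not clean. So, alternatively, we will simply bound the radial part by
\begin{equation}
\int|f'|^2\rho^{d-1}+\int W_\rho^2 f^2,
\end{equation}
where the $f'\,\mathrm{Im}$ cross term vanishes after averaging. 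This holds because $\Rea(\overline{v}\,i W_\rho v)=0$, so the cross term is $2\Rea(i f'\varphi\,\overline{W_\rho f\varphi})=0$ when $f$ is real. The term $W_\rho^2|\varphi|^2$ together with the tangential part combines into the full $|i\nabla_{\mb{S}^{d-1}}\varphi+|\theta''|^{-1}a_M\varphi|^2$, because the full vector $a_M$ splits orthogonally into tangential and radial parts on the sphere. Hence
\begin{equation}
c_{n,a,M}\int_0^\infty f^2\rho^{d-3}\int_{\mb{S}^{d-1}}\frac{|\varphi|^2}{|\theta''|^2}\le\la\int_0^\infty|f'|^2\rho^{d-1}\int_{\mb{S}^{d-1}}|\varphi|^2+\la\int_0^\infty f^2\rho^{d-3}\int_{\mb{S}^{d-1}}\Big|i\nabla_{\mb{S}^{d-1}}\varphi+|\theta''|^{-1}a_M\varphi\Big|^2 .
\end{equation}

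\textbf{Step 3: optimization over $f$.} Finally, we choose $f$ so as to make
\begin{equation}
\frac{\int|f'|^2\rho^{d-1}}{\int f^2\rho^{d-3}}
\end{equation}
arbitrarily close to the sharp one-dimensional Hardy constant $\left(\frac{d-2}{2}\right)^2$. The standard choice is truncations of $\rho^{-(d-2)/2}$ on $(\e,1/\e)$ with log cutoffs, and dividing through yields \eqref{ineq_Hardy_sphere}.

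The main obstacle is Step 2: checking that the sharper form of Proposition \ref{prop_hardy} (with $\la$ outside and $a_M$ inside) holds for such non-compactly supported $v$ concentrating near the origin, and that the singularity of $|\theta''|^{-1}$ at $\theta''=0$ is integrable. The latter requires $n\ge 2$, or alternatively an approximation of $\varphi$ vanishing near $\{\theta''=0\}$ followed by Fatou's lemma.
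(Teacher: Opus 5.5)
Your proposal is correct and follows the paper's proof. The paper likewise tests the intermediate form of Proposition~\ref{prop_hardy} (with $\la$ in front and $a_M$ inside, obtained ``from the proof of'' that proposition) on $f(\rho)\varphi(\theta)$, passes to polar coordinates in $\R^d$, and concludes by the optimality of $\left(\frac{d-2}{2}\right)^2$ for the radial quotient $\int_0^\infty |f'|^2\rho^{d-1}\,d\rho\big/\int_0^\infty f^2\rho^{d-3}\,d\rho$. Your Step 2 only spells out what the paper leaves implicit: the radial cross term vanishes because $f$ is real, and the radial component of $a_M$ recombines with the tangential one into the spherical magnetic gradient. Exactly as in the paper, factoring the $\rho$-integral tacitly treats $a_M$ as independent of $\rho$; and taking $\mathrm{spt} f\subset(0,R)$ already gives $v\in C^\infty_c(B_R\setminus\{0\})$ while the quotient is dilation invariant, so neither the rescaling nor the concentration concern in your last paragraph is needed.
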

\begin{proof}
Let $\varphi \in   C^\infty(\mb{S}^d,\mb{C})$ and $f \in C^\infty_c((0,R), \R)$, $f \neq 0$ and $u(z):=u(r,\theta)=\varphi(\theta) f(r)$.
From the proof of \eqref{ineq_Hardy},  we obtain, considering polar coordinates as in Lemma \ref{lemma_nabla_polar},
\begin{multline}
 c_{n,a,M}\int_0^{+\infty} r^{d-3}|f|^2\, dr \int_{\mb{ S}^{d-1}}\frac{|\varphi|^2}{|\theta''|^2} \, d\sigma 
\le  \la \int_0^{+\infty} r^{d-1}|f'|^2 \, dr \int_{\mb{S}^{d-1}}|\varphi|^2 \, d\sigma \\
+\la \int_0^{+\infty} r^{d-3}|f|^2 \, dr \int_{\mb{S}^{d-1}}\left|i\nabla_{\mb{S}^{d-1}}\varphi  
+ |\theta''|^{-1} a_M \, \varphi \right|^2 d \sigma.
\end{multline}
Since the above inequality holds for any  $f \in C^\infty_c((0,R), \R)$, by the optimality of the classical Hardy constant, 
we obtain \eqref{ineq_Hardy_sphere}.
\end{proof}

\begin{proposition}[Hardy inequality with boundary term]\label{prop_hardy_ball}
For any $\varphi \in C^\infty(\overline{B_R},\mb{C})$ 
\begin{equation}\label{ineq_Hardy_ball}
 c_{n,a,M}\int_{B_R}\frac{|\varphi|^2}{|y|^2} \, d\sigma 
\le \frac{\la(d-2)}{2R} \int_{\partial B_R}|\varphi|^2 \, d\sigma 
+\int_{B_R}\left|iM\nabla\varphi  +  |y|^{-1}a(x,y/|y|) \,   \varphi \right|^2 dz.
\end{equation}
\end{proposition}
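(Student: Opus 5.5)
We follow the strategy of \cite[Lemma 3.1]{FFT}: integrate the Hardy inequality on spheres, Proposition \ref{prop_hardy_sphere}, radially, and control the radial derivative by a one-dimensional Hardy inequality with a boundary term. The boundary term is what replaces the compact support assumption used in Proposition \ref{prop_hardy}.

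\textbf{Step 1 (polar decomposition).} Write $\varphi\in C^\infty(\overline{B_R},\mb{C})$ in polar coordinates $z=r\theta$ with $r\in(0,R)$ and $\theta\in\mb{S}^{d-1}$. Since $|y|=r|\theta''|$ and $a(x,y/|y|)$ is $0$-homogeneous in $y$, Remark \ref{remark_ineq_magnetic_nablas} gives
\begin{equation}
\int_{B_R}\left|iM\nabla\varphi+|y|^{-1}a\varphi\right|^2dz\ge \la\int_0^R r^{d-1}\int_{\mb{S}^{d-1}}\left(|\partial_r\varphi|^2+\frac{1}{r^2}\left|i\nabla_{\mb{S}^{d-1}}\varphi+|\theta''|^{-1}a_M\varphi\right|^2\right)d\sigma\,dr.
\end{equation}
The cross terms between the radial and angular parts vanish because $\nabla\varphi=\partial_r\varphi\,\theta+r^{-1}\nabla_{\mb{S}^{d-1}}\varphi$ and $\Rea$ of the mixed product reduces to $\partial_r\varphi\,\theta$ paired with $a_M\varphi$. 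We need
\begin{equation}
a_M\cdot\theta=a_M'\cdot\theta'+a_M''\cdot\theta''=0.
\end{equation}
Hypothesis \eqref{hp_transversality} only gives $a_M''\cdot\theta''=0$. So I expect this step to be the main obstacle: it requires checking whether the $x$-components $a_M'$ produce a cross term $\Ima(\overline{\varphi}\,\partial_r\varphi)\,a_M'\cdot\theta'/|y|$.

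If such a term does not vanish, I would avoid the full polar split. Instead, I would keep $\la\int|\nabla\varphi+\dots|^2$ and use $|v|^2\ge|\theta\cdot v|^2+|P_\theta^\perp v|^2$ only after adding the Hardy term. Alternatively, I would apply the diamagnetic inequality, Proposition \ref{prop_ineq_dia}, to the radial part only, since $|\partial_r|\varphi||\le|\theta\cdot(i\nabla\varphi+\dots)|$ holds when the magnetic term is real. Concretely, the pointwise bound
\begin{equation}
|i\nabla\varphi+|y|^{-1}a_M\varphi|^2\ge|\partial_r|\varphi||^2+r^{-2}\left|i\nabla_{\mb{S}^{d-1}}\varphi+|\theta''|^{-1}a_M\varphi\right|^2_{\mathrm{tan}}
\end{equation}
is what I would aim for. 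It follows from splitting the vector into its normal and tangential components and applying the diamagnetic estimate to the normal one.

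\textbf{Step 2 (angular part).} For each fixed $r$, apply Proposition \ref{prop_hardy_sphere} to $\varphi(r,\cdot)$. This yields
\begin{equation}
c_{n,a,M}\int_{B_R}\frac{|\varphi|^2}{|y|^2}dz\le\la\left(\frac{d-2}{2}\right)^2\int_{B_R}\frac{|\varphi|^2}{|z|^2}dz+\la\int_0^R r^{d-3}\int_{\mb{S}^{d-1}}\left|i\nabla_{\mb{S}^{d-1}}\varphi+|\theta''|^{-1}a_M\varphi\right|^2d\sigma\,dr.
\end{equation}

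\textbf{Step 3 (radial Hardy with boundary).} For $g=|\varphi|(\cdot\,\theta)$, the classical one-dimensional identity, obtained by expanding $\int_0^R r^{d-1}|g'+\frac{d-2}{2r}g|^2\,dr\ge0$ and integrating by parts, gives
\begin{equation}
\left(\frac{d-2}{2}\right)^2\int_0^R r^{d-3}g^2\,dr\le\int_0^R r^{d-1}|g'|^2\,dr+\frac{d-2}{2}R^{d-2}g(R)^2.
\end{equation}
Integrating over $\theta\in\mb{S}^{d-1}$ turns the last term into $\frac{d-2}{2R}\int_{\partial B_R}|\varphi|^2d\sigma$. Multiplying by $\la$ and combining with Steps 1 and 2 gives \eqref{ineq_Hardy_ball}.

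For $d=2$ the boundary and $(d-2)$ terms vanish, and the estimate reduces directly to Step 2. For $d\ge3$, the behaviour at $r=0$ is harmless, since $r^{d-2}g^2\to0$ as $r\to0$.
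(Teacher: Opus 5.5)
Your primary route (Steps 1--3 with the cross-term-free polar splitting) is exactly the paper's proof. The paper applies Proposition~\ref{prop_hardy_sphere} on each sphere, then the radial Hardy inequality with boundary term. In its last inequality it bounds $\la\int_0^1\rho^{d-3}\int_{\mb{S}^{d-1}}\bigl(\rho^2|\partial_\rho\varphi|^2+|i\nabla_{\mb{S}^{d-1}}\varphi+|\theta''|^{-1}a_M\varphi|^2\bigr)$ by $\int_{B_1}|iM\nabla\varphi+|y|^{-1}a\varphi|^2$ without comment. Your suspicion about that splitting is well founded. With $V:=i\nabla\varphi+|y|^{-1}a_M\varphi$ one has exactly
\[
|V|^2=|\partial_r\varphi|^2+r^{-2}\bigl|i\nabla_{\mb{S}^{d-1}}\varphi+|\theta''|^{-1}a_M\varphi\bigr|^2-2|y|^{-1}(a_M\cdot\theta)\,\Ima(\overline{\varphi}\,\partial_r\varphi).
\]
Hypothesis \eqref{hp_transversality} only gives $a_M''\cdot\theta''=0$, so $a_M\cdot\theta=a_M'\cdot\theta'$. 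This is nonzero in the case the paper cares most about ($a'\neq0$ in Example~\ref{ex_big_frequence}). The integrated cross term then has no sign, since it is affine in $t$ under $\varphi\mapsto\varphi e^{it\psi}$. So following the paper does not remove the obstacle.

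Your fallback does not close the argument as written, and this is the genuine gap. The pointwise bound you aim for is correct, but it controls only the angular energy of the tangential part $\tilde a:=a_M-(a_M\cdot\theta)\theta$. Proposition~\ref{prop_hardy_sphere} is proved with test functions $f(r)\varphi(\theta)$, $f$ real, for which no cross term arises; it therefore involves the full $a_M$, and
\[
\bigl|i\nabla_{\mb{S}^{d-1}}\varphi+|\theta''|^{-1}a_M\varphi\bigr|^2=\bigl|i\nabla_{\mb{S}^{d-1}}\varphi+|\theta''|^{-1}\tilde a\varphi\bigr|^2+|\theta''|^{-2}(a_M\cdot\theta)^2|\varphi|^2 .
\]
So Step 2 requires more than Step 1 delivers, by $\la\int_{B_R}|y|^{-2}(a_M\cdot\theta)^2|\varphi|^2\,dz$. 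Keeping the normal magnetic term does not help: with $\varphi=|\varphi|e^{i\psi}$ and $b=|y|^{-1}a_M\cdot\theta$,
\[
|i\partial_r\varphi+b\varphi|^2=|\partial_r|\varphi||^2+|\varphi|^2(\partial_r\psi-b)^2,
\]
which can be smaller than $|\partial_r|\varphi||^2+b^2|\varphi|^2$.

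There are two ways to repair this.
\begin{itemize}
\item The missing ingredient is Proposition~\ref{prop_hardy_sphere} with $\tilde a$ in place of $a_M$, and it holds with the same constant. The $y$-component of $\tilde a$ is $a_M''$ plus a real multiple of $y$. On each $y$-slice that multiple is absorbed by the diamagnetic inequality in the $|y|$-direction, while Lemma~\ref{lemma_nabla_polar} handles $a_M''$, so the proof of Proposition~\ref{prop_hardy} goes through. With this version, your fallback chain closes.
\item A shorter fix avoids $\mb{S}^{d-1}$ altogether. Use $|V|\ge|i\nabla_y\varphi+|y|^{-1}a_M''\varphi|$. On each slice $\{x\}\times B''_{\varrho(x)}$, with $\varrho(x)=\sqrt{R^2-|x|^2}$, apply Lemma~\ref{lemma_nabla_polar}, the lower bound by $\mu_1$, and the $n$-dimensional radial Hardy inequality with boundary term for $|\varphi|$. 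Since $d\sigma=\frac{R}{\varrho(x)}\,d\sigma_y\,dx$ on $\partial B_R$, the slice boundary terms sum to $\frac{n-2}{2R}\int_{\partial B_R}|\varphi|^2\,d\sigma$. Multiplying by $\la$ gives the statement, even with $n-2\le d-2$ in place of $d-2$, using only \eqref{hp_transversality}.
\end{itemize}
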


\begin{proof}
It is not restrictive to prove \eqref{ineq_Hardy_ball} in the case $R=1$.
Let $\varphi \in C^\infty(\overline{B_1}, \mb{C})$. Passing in polar coordinates, by  Lemma \ref{lemma_nabla_polar},  the  Hardy inequality with boundary terms, and Proposition \ref{prop_hardy_sphere}
\begin{multline}
 c_{n,a,M}\int_{B_1}\frac{|\varphi|^2}{|y|^2} \, dz 
= c_{n,a,M} \int_0^1 \rho^{d-3} \int_{\mb{S}^{d-1}}\frac{|\varphi(\rho\theta)|^2}{|\theta '|^2} \, d\sigma d \rho \\
\le \la   \int_0^1 \rho^{d-3} \int_{\mb{S}^{d-1}}  \left(\left(\frac{d-2}{2}\right)^2|\varphi|^2 
+  \left|i\nabla_{\mb{S}^{d-1}}\varphi +|\theta''|^{-1}a_M \,   \varphi \right|^2 \right) d \sigma d \rho \\
=  \la \left(\frac{d-2}{2}\right)^2\int_{B_1}\frac{|\varphi|^2}{|z|^2} \, dz 
+ \la \int_0^1 \rho^{d-3} 
\int_{\mb{S}^{d-1}}\left(\left|i\nabla_{\mb{S}^{d-1}}\varphi+|\theta''|^{-1}a_M\,\varphi \right|^2\right) d \sigma d\rho\\
\le\la\frac{d-2}{2} \int_{\mb{S}^{d-1}} |\varphi|^2 \, d\sigma+\\
\la \int_0^1 \rho^{d-3} 
\int_{\mb{S}^{d-1}} \left(\rho^2 \left|\pd{\varphi}{\rho} \right|^2+
\left|i\nabla_{\mb{S}^{d-1}}\varphi+|\theta''|^{-1}a_M\,\varphi \right|^2\right) d \sigma d \rho \\
\le\la \frac{d-2}{2} \int_{\mb{S}^{d-1}} |\varphi|^2 \, d\sigma
+\int_{B_1}\left|iM\nabla\varphi+|y|^{-1}a(x,y/|y|)\,\varphi \right|^2  \, dz,
\end{multline}
thus proving \eqref{ineq_Hardy_ball}.
\end{proof}

\begin{proposition}[Equivalence of norms]\label{prop_equiv_norms}
There exist two positive constant $C_1,C_2>0$, depending only on $a,b,\la,\Lambda,d,n$ and $R$ such that for any $\varphi \in C^\infty(\overline{B_R}, \mb{C})$ 
\begin{equation}\label{ineq_equiv_norms}
C_1\int_{B_R} \left(|\nabla\varphi|^2+\frac{|\varphi|^2}{|y|^2}\right) \, dz 
\le \int_{B_R}\left( |\nabla_{M,A} \varphi |^2 +|\varphi|^2 \right)\, dz
\le  C_2\int_{B_R}  \left(|\nabla\varphi|^2+\frac{|\varphi|^2}{|y|^2}\right)\, dz.
\end{equation}
\end{proposition}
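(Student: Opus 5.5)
We plan to prove the two inequalities in \eqref{ineq_equiv_norms} separately, writing $A=|y|^{-1}a(x,y/|y|)+b$ and using the pointwise bound $|\nabla_{M,A}\varphi|^2\le 2|iM\nabla\varphi+|y|^{-1}a\varphi|^2+2|b|^2|\varphi|^2$ together with its reverse form $|iM\nabla\varphi+|y|^{-1}a\varphi|^2\le 2|\nabla_{M,A}\varphi|^2+2|b|^2|\varphi|^2$.

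\emph{Upper bound.} Since $M$ is bounded, $|M\nabla\varphi|^2\le\Lambda|\nabla\varphi|^2$. The singular term satisfies $|y|^{-2}|a|^2|\varphi|^2\le\|a\|_\infty^2|\varphi|^2/|y|^2$. For the $b$ term we use H\"older and Sobolev: $\int_{B_R}|b|^2|\varphi|^2\le\|b\|_{L^d}^2\|\varphi\|_{L^{2^*}}^2$ when $d\ge3$. Combining Sobolev on the ball with a Poincar\'e-type control, this is bounded by $C\int_{B_R}(|\nabla\varphi|^2+|\varphi|^2)$. We then bound $|\varphi|^2\le R^2|\varphi|^2/|y|^2$ on $B_R$. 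When $d=2$ the condition $b\in L^d$ should be read as $L^{2+\e}$ or handled in the same way with the embedding $H^1\subset L^p$ for every $p<\infty$. Finally, $\int|\varphi|^2\le R^2\int|\varphi|^2/|y|^2$ gives the right-hand inequality with a constant $C_2$.

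\emph{Lower bound.} We first control $\int|\varphi|^2/|y|^2$. Proposition \ref{prop_hardy_ball} gives $c_{n,a,M}\int|\varphi|^2/|y|^2\le \frac{\la(d-2)}{2R}\int_{\partial B_R}|\varphi|^2+\int|iM\nabla\varphi+|y|^{-1}a\varphi|^2$. The boundary term is handled by the trace inequality $\int_{\partial B_R}|\varphi|^2\le \delta\int|\nabla|\varphi||^2+C_\delta\int|\varphi|^2$, followed by the diamagnetic inequality (Proposition \ref{prop_ineq_dia} applied with the potential $|y|^{-1}a$) to get $\int|\nabla|\varphi||^2\le\la^{-1}\int|iM\nabla\varphi+|y|^{-1}a\varphi|^2$. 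This yields
\[
\int_{B_R}\frac{|\varphi|^2}{|y|^2}\le C\int_{B_R}\left(|iM\nabla\varphi+|y|^{-1}a\varphi|^2+|\varphi|^2\right).
\]
Next, $|\nabla\varphi|^2\le\la^{-1}|M\nabla\varphi|^2\le 2\la^{-1}\left(|iM\nabla\varphi+|y|^{-1}a\varphi|^2+\|a\|_\infty^2|\varphi|^2/|y|^2\right)$, so $\int(|\nabla\varphi|^2+|\varphi|^2/|y|^2)\le C\int(|iM\nabla\varphi+|y|^{-1}a\varphi|^2+|\varphi|^2)$.

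It remains to replace the $a$-gradient by the full $\nabla_{M,A}$, which requires absorbing $\int|b|^2|\varphi|^2$; this is the main obstacle, since $b\in L^d$ has critical scaling. We split $b=b\chi_{\{|b|>K\}}+b\chi_{\{|b|\le K\}}$. The bounded part contributes at most $K^2\int|\varphi|^2$. For the large part we use H\"older and Sobolev applied to $|\varphi|$:
\[
\int_{B_R}|b\chi_{\{|b|>K\}}|^2|\varphi|^2\le\|b\chi_{\{|b|>K\}}\|_{L^d}^2\,C_S\int_{B_R}\left(|\nabla|\varphi||^2+|\varphi|^2\right).
\]
By the diamagnetic inequality, $\int|\nabla|\varphi||^2\le\la^{-1}\int|\nabla_{M,A}\varphi|^2$. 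This uses the full potential $A$, so no circularity arises. Choosing $K$ large makes $\|b\chi_{\{|b|>K\}}\|_{L^d}$ small, so this term is bounded by a small multiple of $\int(|\nabla_{M,A}\varphi|^2+|\varphi|^2)$ plus $C\int|\varphi|^2$. Combining the displays gives the left-hand inequality. The constants depend on $b$ through its distribution function rather than only its norm, which is consistent with the stated dependence on $a$ and $b$.
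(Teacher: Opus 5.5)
Your proposal is correct and follows essentially the paper's route. You bound $\int|\varphi|^2/|y|^2$ by the Hardy inequality with boundary term, the trace inequality and the diamagnetic inequality; you bound $\int|\nabla\varphi|^2$ by ellipticity; and you bound $\int|b|^2|\varphi|^2$ by H\"older--Sobolev together with the diamagnetic inequality for the full potential $A$.

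The one real difference is your truncation of $b$, and it is unnecessary. That term is bounded by $\int(|\nabla_{M,A}\varphi|^2+|\varphi|^2)$, not absorbed into it, so no smallness is needed. The paper therefore gets $\int|b|^2|\varphi|^2\le C\|b\|_{L^d}^2\int(|\nabla_{M,A}\varphi|^2+|\varphi|^2)$ directly, and the constant depends on $b$ only through $\|b\|_{L^d}$, not through its distribution function. Your $d=2$ caveat, that one needs $b\in L^{2+\e}$, is a fair correction of the paper's ``similar but simpler'' remark.
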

\begin{proof}
In view of \eqref{hp_M_elliptic}
\begin{equation}
\int_{B_R} |\nabla\varphi|^2 \, dz \le 
2\int_{B_R}\left( \Lambda|iM\nabla\varphi+A\varphi|^2 +|a|^2\frac{|\varphi|^2}{|y|^2}+|b|^2|\varphi|^2 \, dz\right).
\end{equation}
By \eqref{ineq_Hardy_ball}, classical Sobolev trace theory, and \eqref{ineq_ani_dia} 
\begin{multline}
 c_{n,a,M}\int_{B_R} 
\frac{|\varphi|^2}{|y|^2}\, dz
\le \int_{B_R} \left|iM\nabla\varphi  +  |y|^{-1}a(x,y/|y|) \,   \varphi \right|^2 \, dz +\frac{\la(d-2)}{2} \int_{\partial B_R}|\varphi|^2 \, d\sigma\\
\le 2\int_{B_R} (|\nabla_{M,A}\varphi|^2 +|b|^2 |\varphi|^2)\, dz +\frac{\la(d-2)}{2} \int_{\partial B_R}|\varphi|^2 \, d\sigma
\le C_1  \int_{B_R} (|\nabla_{M,A}\varphi|^2 +(|b|^2+1) |\varphi|^2)\, dz,
\end{multline}
for some constant $C_1>0$. Furthermore, by the classical Sobolev embedding theorem  and \eqref{ineq_ani_dia}, for $d>2$
\begin{multline}
\int_{B_R}|b|^2 |\varphi|^2 \, dz \le \left(\int_{B_R}|b|^d \, dz\right)^{\frac{d}{2}} 
\left(\int_{B_R}|\varphi |^{\frac{2d}{d-2}}\, dz\right)^\frac{d-2}{2}\\
\le C_2 \left(\int_{B_R}|b|^d \, dz\right)^{\frac{d}{2}} \int_{B_R}(|\nabla_{M,A} \varphi |^2 +|\varphi|^2)\, dz,
\end{multline}
for some positive constant $C_2>0$, while if $d=2$ we can prove a similar but simpler estimate.
Summing up, we have proved the first inequality in \eqref{ineq_equiv_norms}. The proof of the second inequality is similar.
\end{proof}

\subsection{Magnetic capacity and Sobolev Spaces}\label{subsec_magnetic_capa_Sobolev}
Let us define for any $R>0$ and any closed set $K$ the magnetic Sobolev capacity
\begin{equation}\label{def_capa_magnetic}
\capa{M,A}{K \cap B_R, \Omega }:=\inf\left\{\int_{\Omega}|\nabla_{M,A}\varphi|^2\, dz 
:\varphi \in C^\infty_c(\Omega,\mb{C}), |\varphi|=1 \text{ on }  K \cap B_R\right\},
\end{equation}
where $\Omega$ is any domain such that   $K \cap B_R \subset \Omega$.
Let us also define 
\begin{equation}\label{def_capa_magnetic_simple}
\overline{\mathop{\rm Cap}}(K \cap B_R, \Omega):=\inf\left\{\int_{\Omega}\left(|\nabla\varphi|^2+\frac{|\varphi|^2}{|y|^2}\right)\, dz 
:\varphi \in C^\infty_c(\Omega,\mb{C}), |\varphi|=1 \text{ on }  K \cap B_R\right\},
\end{equation}
and denote with 
\begin{equation}\label{def_capa_class}
\capa{}{K \cap B_R, \Omega }:=\inf\left\{\int_{\Omega}|\nabla \varphi|^2\, dz 
:\varphi \in C^\infty_c(\Omega), \varphi=1 \text{ on }  K \cap B_R\right\},  
\end{equation}
the classical Sobolev capacity in $\Omega$. 
By Proposition \ref{prop_equiv_norms}, the classical Poincaré inequality and \eqref{ineq_ani_dia}
\begin{equation}
C_1 \overline{\mathop{\rm Cap}}(K \cap B_R, \Omega)\le  \capa{M,A}{K \cap B_R, \Omega } \le C_2 \overline{\mathop{\rm Cap}}(K \cap B_R, \Omega)
\end{equation}
for some constants $C_1,C_2>0$.

\begin{proposition}\label{prop_capa_ranges}
Letting $\Sigma_0$ be as in   \eqref{def_Sigma},
\begin{enumerate}[(i)]
\item if $n = 2$ then $\capa{M,A}{\Sigma_0 \cap B_R, \Omega }=+\infty$,
\item if $n >2$  then $ \capa{M,A}{\Sigma_0 \cap B_R, \Omega}=0$.
\end{enumerate}
\end{proposition}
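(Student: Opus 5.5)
Both items follow by reducing to the simplified capacity $\overline{\mathop{\rm Cap}}$, using the two-sided comparison $C_1 \overline{\mathop{\rm Cap}}\le \capa{M,A}{\cdot} \le C_2\overline{\mathop{\rm Cap}}$ stated just above. That comparison comes from Proposition \ref{prop_equiv_norms} together with a Poincaré inequality on $\Omega$ for compactly supported functions. So it suffices to show that $\overline{\mathop{\rm Cap}}(\Sigma_0\cap B_R,\Omega)$ is $+\infty$ for $n=2$ and $0$ for $n>2$.

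\emph{Case $n=2$.} Take any admissible $\varphi\in C^\infty_c(\Omega,\mb{C})$ with $|\varphi|=1$ on $\Sigma_0\cap B_R$. By continuity, for every $x$ with $(x,0)\in \Sigma_0\cap B_R$ there is $\delta_x>0$ such that $|\varphi(x,y)|\ge 1/2$ for $|y|<\delta_x$. Then
\[
\int_{\{|y|<\delta_x\}}\frac{|\varphi(x,y)|^2}{|y|^2}\,dy\ge \frac14\int_0^{\delta_x} 2\pi\frac{dr}{r}=+\infty,
\]
because $n=2$. Integrating over a set of such $x$ of positive $(d-2)$-dimensional measure, the term $\int_\Omega |\varphi|^2|y|^{-2}dz$ is infinite. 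Hence every admissible competitor has infinite energy and the infimum is $+\infty$. Equivalently, $|y|^{-2}$ is not locally integrable near a codimension-2 set, so $\int|\varphi|^2/|y|^2<\infty$ forces $\varphi=0$ on $\Sigma_0$, which is incompatible with $|\varphi|=1$ there.

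\emph{Case $n>2$.} I will build explicit competitors whose energy tends to $0$. Fix $\eta\in C^\infty_c(\Omega)$ with $0\le\eta\le1$ and $\eta=1$ on a neighbourhood of $\Sigma_0\cap \overline{B_R}$; this requires $\Sigma_0\cap B_R\Subset\Omega$, or at least that the relevant portion is compactly contained, which I take as implicit in the definition. Take a cutoff $\chi_\e(y)$ in the transversal variable with $\chi_\e=1$ on $|y|\le\e$ and $\chi_\e=0$ for $|y|\ge 2\e$, smooth, with $|\nabla\chi_\e|\le C/\e$. Set $\varphi_\e(x,y)=\eta(x,y)\chi_\e(y)$. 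This competitor is admissible and real-valued. Its energy splits into three terms.

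The gradient term $|\nabla\chi_\e|^2$ integrates to at most $C\e^{-2}\cdot\e^{n}$ over the $x$-bounded support, which is $O(\e^{n-2})$. The Hardy term $\int \chi_\e^2|y|^{-2}\le \int_{|y|<2\e}|y|^{-2}dy$ is $O(\e^{n-2})$, since $n>2$ makes the integral converge. The terms involving $\nabla\eta$ are supported in $|y|\le2\e$ with bounded integrand, so they are $O(\e^n)$. Letting $\e\to0$ gives $\overline{\mathop{\rm Cap}}=0$, and hence the magnetic capacity is $0$.

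The main obstacle is justifying the comparison constants, in particular applying Proposition \ref{prop_equiv_norms} on the general domain $\Omega$ rather than on $B_R$. For compactly supported $\varphi$ I would instead argue directly through Proposition \ref{prop_hardy}. For the upper bound I would use $|\nabla_{M,A}\varphi|^2\le C(|\nabla\varphi|^2+|a|^2|y|^{-2}|\varphi|^2+|b|^2|\varphi|^2)$, bounding the $b$ term by Hölder and Sobolev. For the lower bound I would use Proposition \ref{prop_hardy} together with $\int|b|^2|\varphi|^2\le \|b\|_{L^d}^2\|\varphi\|_{L^{2^*}}^2$, controlled via the diamagnetic inequality \eqref{ineq_ani_dia}. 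In case (ii) only the upper bound is needed, and in case (i) only the lower bound, which keeps the dependence on the constants harmless.
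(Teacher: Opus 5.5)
Your proof is correct. For (i) you follow the paper's argument. Every competitor has $\int_\Omega|\varphi|^2|y|^{-2}\,dz=+\infty$, because $|y|^{-2}$ is not integrable near the origin of $\R^2$. The Hardy inequality of Proposition \ref{prop_hardy} then transfers this divergence to the magnetic energy; this is exactly where the standing non-degeneracy $c_{n,a,M}>0$ from \eqref{hp_hardy} is used. No quantitative control of the $b$-term is needed at this step. For a fixed bounded, compactly supported competitor, $\int|b|^2|\varphi|^2$ is simply finite, which also spares you the Sobolev step (unavailable as stated when $d=n=2$).

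For (ii) your route is genuinely different. The paper absorbs the weighted term via the Hardy--Maz'ya inequality (available since $n>2$) to get $\int|\nabla_{M,A}\varphi|^2\le c\int|\nabla\varphi|^2$. It then quotes the classical fact that a set with $\mathcal{H}^{d-2}=0$ has zero Newtonian capacity. You instead test the explicit competitors $\eta\chi_\e$ and compute both the Dirichlet term and the Hardy term directly, each being $O(\e^{n-2})$. Your version is self-contained and gives an explicit rate. It needs no Hardy--Maz'ya inequality, since the weight is integrated on the competitor itself. The paper's version is shorter and rests on standard capacity theory; in this flat setting your computation is essentially the proof of the fact it cites.

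Your caveat about $\Omega$ is also right. If, e.g., $\Omega=B_R$ and $d>n$, no $\varphi\in C^\infty_c(B_R)$ can satisfy $|\varphi|=1$ on all of $\Sigma_0\cap B_R$, so the infimum would be $+\infty$. Statement (ii) therefore tacitly requires $\overline{\Sigma_0\cap B_R}\subset\Omega$, and the paper's proof assumes this too.
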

\begin{proof}
If $n = 2$ and $|\varphi|=1 \text{ on }  \Sigma_0 \cap B_R$, then it is easy to check, using polar coordinates in $\R^n$, that 
\begin{equation}
\int_{\Omega} \frac{|\varphi|^2}{|y|^2}  \, dz =+\infty.
\end{equation}
Hence, by \eqref{ineq_Hardy_ball}, also 
\begin{equation}
\int_{\Omega}|\nabla_{M,A}\varphi|^2\, dz =+\infty
\end{equation}
which implies  $\capa{M,A}{\Sigma_0 \cap B_R, \Omega }=+\infty$.

On the other hand, if $n > 2$,  then, by  \eqref{ineq_ani_dia}, \eqref{ineq_equiv_norms}, the classical Poincaré and Hardy-Maz'ya inequalities (see \cite[Subsection 2.1.6, Corollary 3]{M_book}),
\begin{equation}
 \int_{\Omega} |\nabla_{M,A} \varphi |^2\, dz\le  c\int_{\Omega} |\nabla\varphi|^2\, dz,   
\end{equation}
for a positive constant $c>0$ that do not depend on $\varphi$.
Clearly
\begin{equation}
\inf\left\{\int_{\Omega}|\nabla \varphi|^2\, dz :\varphi \in C^\infty_c(\Omega,\mb{C}), |\varphi|=1 \text{ on }  \Sigma_0 \cap B_R\right\} 
\le   \capa{}{\Sigma_0 \cap B_R, \Omega}.
\end{equation}
Since  $\mc{H}^{d-2}(\Sigma_0 \cap B_R)=0$ it is a classical fact that also $\capa{}{\Sigma_0 \cap B_R, \Omega }=0$, for example arguing as in \cite[Theorem 4.16]{EG_book}. Hence, we conclude that $ \capa{M,A}{\Sigma_0 \cap B_R, \Omega}=0$.
\end{proof}

For any $\varphi \in C^\infty(\overline{B_R}, \mb{C})$ let 
\begin{equation}\label{def_norm_H1A}
\norm{\varphi}_{H^1_{M,A}(B_R)}^2:=\int_{B_R}\left(|\varphi|^2+|\nabla_{M,A}\varphi|^2\right) \, dz
\end{equation}
and let $H^1_m(B_R)=H^1_{M,A}(B_R)$ be the completion of 
\begin{equation}
\{\varphi \in C^\infty(\overline{B_R},\mb{C}): \norm{\varphi}_{ H^1_{M,A}(B_R)}<+\infty\}
\end{equation}
with respect to the norm in \eqref{def_norm_H1A}. 

Motivated by Proposition \ref{prop_equiv_norms}, let us also define the space
$\tilde{H}^1(B_R,\mb{C})$ as  the completion of 
\begin{equation}
\left\{\varphi \in C^\infty(\overline{B_R},\mb{C}): \norm{\varphi}_{\tilde{H}^1(B_R,\mb{C})}<+\infty\right\}
\end{equation}
with respect to the norm 
\begin{equation}\label{def_norm_H10Sigma}
\norm{\varphi}_{\tilde{H}^1(B_R,\mb{C})}^2:=\int_{B_R}\left(\frac{|\varphi|^2 }{|y|^2}+|\nabla\varphi|^2\right) \, dz.
\end{equation}

\begin{proposition}\label{prop_H1MA_H1}
For any $n \ge 2$
\begin{equation}\label{eq_H1MA_H1}
H^1_{M,A}(B_R)=\tilde{H}^1(B_R,\mb{C}).
\end{equation}
Furthermore,
\begin{align}
\tilde{H}^1(B_R,\mb{C})=H^1(B_R,\mb{C}), & \quad \text{ if } n>2,\\
\tilde{H}^1(B_R,\mb{C})\subset H^1(B_R,\mb{C}), & \quad \text{ if } n=2.
\end{align}
\end{proposition}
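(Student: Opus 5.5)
The identity \eqref{eq_H1MA_H1} is an immediate consequence of Proposition \ref{prop_equiv_norms}. Both $H^1_{M,A}(B_R)$ and $\tilde{H}^1(B_R,\mb{C})$ are completions of subsets of $C^\infty(\overline{B_R},\mb{C})$. By \eqref{ineq_equiv_norms} the two norms are equivalent on every smooth function. Hence the two pre-completion sets coincide: $\varphi$ has finite $H^1_{M,A}$-norm if and only if it has finite $\tilde H^1$-norm. The same inequality shows that a sequence is Cauchy for one norm if and only if it is Cauchy for the other. The completions are therefore the same space, with equivalent norms. Here we use that the constants $C_1,C_2$ depend only on $a,b,\la,\Lambda,d,n,R$ and not on $\varphi$.

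For the comparison with $H^1$, the inclusion $\tilde{H}^1\subset H^1$ for every $n\ge2$ is straightforward. The $\tilde H^1$-norm dominates $\int_{B_R}|\nabla\varphi|^2\,dz$. It also dominates $\int_{B_R}|\varphi|^2\,dz$, because $|y|\le R$ on $B_R$ gives $\int_{B_R}|\varphi|^2\,dz\le R^2\int_{B_R}|\varphi|^2|y|^{-2}\,dz$. So the identity map extends continuously from $\tilde H^1$ into $H^1$. Injectivity of this extension follows because the $L^2$ limits of the approximating sequences coincide: if $\varphi_k\to0$ in $H^1$ and $(\varphi_k)$ is $\tilde H^1$-Cauchy, then, up to a subsequence, $\varphi_k/|y|$ converges in $L^2$ and also a.e. to $0$. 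Hence the $\tilde H^1$-limit is $0$. This settles the case $n=2$.

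For $n>2$ we need the reverse inclusion. The key tool is the Hardy-type inequality with boundary term in the $y$-directions. The classical Hardy inequality $\frac{(n-2)^2}{4}\int_{\R^n}|\varphi|^2|y|^{-2}\,dy\le\int_{\R^n}|\nabla_y\varphi|^2\,dy$ holds for $n\ge3$; this is the content of \cite[Subsection 2.1.6, Corollary 3]{M_book}. We use its Hardy–Maz'ya version on the ball, applied slice-wise in $x$, in the same way as in the proof of Proposition \ref{prop_hardy_ball} but with $a\equiv0$. Its $a\equiv 0$ counterpart reads
\[
\int_{B_R}\frac{|\varphi|^2}{|y|^2}\,dz\le C\Bigl(\int_{B_R}|\nabla\varphi|^2\,dz+\int_{\partial B_R}|\varphi|^2\,d\sigma\Bigr)
\]
for every $\varphi\in C^\infty(\overline{B_R},\mb{C})$, with $C$ depending only on $n,d,R$. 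The hypothesis $c_{n,a,M}>0$ is not needed here, since $((n-2)/2)^2>0$. Combining this with the trace inequality $\int_{\partial B_R}|\varphi|^2\,d\sigma\le C\|\varphi\|^2_{H^1(B_R)}$ shows that the $H^1$ and $\tilde H^1$ norms are equivalent on $C^\infty(\overline{B_R},\mb{C})$. Finally, $C^\infty(\overline{B_R},\mb{C})$ is dense in $H^1(B_R,\mb{C})$ because the ball is a Lipschitz domain. The two completions therefore coincide.

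The only step needing care is the Hardy inequality with boundary term in the last paragraph. It is not the exact statement of Proposition \ref{prop_hardy_ball}, since there the magnetic gradient appears and a positive constant must come from $c_{n,a,M}$. I would rerun that polar-coordinate proof with $a\equiv 0$, where the constant $\la\bigl((n-2)/2\bigr)^2$ is positive precisely when $n>2$. Alternatively, one can cite Maz'ya directly and combine it with an extension operator from $B_R$ to $\R^d$.
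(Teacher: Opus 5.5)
Your proposal is correct and follows the same route as the paper. The paper's proof only cites Proposition \ref{prop_equiv_norms} for \eqref{eq_H1MA_H1} and the classical Hardy--Maz'ya inequality for $n>2$; you supply what it leaves implicit, namely the injectivity of the map $\tilde H^1\to H^1$, the $a\equiv 0$ Hardy inequality with boundary term, and the trace and density steps. The paper also remarks that for $n=2$ the inclusion is strict, since the constant function $1$ lies in $H^1(B_R,\mb{C})$ but not in $\tilde H^1(B_R,\mb{C})$ (as $\int_{B_R}|y|^{-2}\,dz=+\infty$); this is a one-line addition you could include.
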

\begin{proof}
 Notice that if $n=2$ the constant function $1$ belongs to $H^1(B_R,\mb{C})$ but not to $\tilde{H}^1(B_R,\mb{C})$ and therefore the inclusion is strict.
All other claims are an easy consequence of Proposition \ref{prop_equiv_norms} and the classical Hardy-Maz'ya inequality, see for example \cite[Subsection 2.1.6, Corollary 3]{M_book}.
\end{proof}

Arguing as in \cite[Proposition 2.5]{CFV}, we can prove the following density result.
\begin{proposition}\label{prop_density_0}
If $n=2$ then $\tilde{H}^1(B_R,\mb{C})$ corresponds to the completion of
\begin{equation}
\{\varphi \in C^\infty(\overline{B_R}\setminus \Sigma,\mb{C}): \varphi=0 \text{ in a neighbourhood of } \Sigma\}
\end{equation}
with respect to the norm in \eqref{def_norm_H10Sigma}.
\end{proposition}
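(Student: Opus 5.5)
The plan is a cut-off argument near $\Sigma_0$, using only that the $\tilde H^1$-norm controls $\int|\varphi|^2/|y|^2$. Call $\mathcal D$ the completion of $\{\varphi \in C^\infty(\overline{B_R}\setminus \Sigma_0,\mb{C}):\varphi=0 \text{ near } \Sigma_0\}$ in the norm \eqref{def_norm_H10Sigma}. Every such $\varphi$ has finite $\tilde H^1$-norm and can be approximated by functions in $C^\infty(\overline{B_R},\mb{C})$ that vanish near $\Sigma_0$. Hence $\mathcal D\subset \tilde H^1(B_R,\mb{C})$, and only the reverse inclusion needs proof. By density, it suffices to take $\varphi\in C^\infty(\overline{B_R},\mb{C})$ with $\norm{\varphi}_{\tilde H^1}<\infty$ and approximate it in the $\tilde H^1$-norm by functions vanishing near $\Sigma_0$. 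The point is that when $n=2$ the condition $\int_{B_R}|\varphi|^2|y|^{-2}\,dz<\infty$ forces $\varphi$ to vanish on $\Sigma_0\cap B_R$ in an integrated sense. This is exactly what lets a logarithmic cut-off work.

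I will use the logarithmic cut-off typical of dimension two. For $\e\in(0,1)$ let $\eta_\e(y)=\eta_\e(|y|)$ be $0$ for $|y|\le\e^2$ and $1$ for $|y|\ge\e$, with $\eta_\e(t)=\log(t/\e^2)/\log(1/\e)$ in between. Set $\varphi_\e=\eta_\e\varphi$, after a mild mollification in $|y|$ to make it smooth, which does not affect the estimates. Then
\[
\int_{B_R}\frac{|\varphi-\varphi_\e|^2}{|y|^2}\,dz\le \int_{B_R\cap\{|y|<\e\}}\frac{|\varphi|^2}{|y|^2}\,dz\to0
\]
by dominated convergence, since this integrand is integrable. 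The gradient term splits as $(1-\eta_\e)\nabla\varphi$, which tends to $0$ in $L^2$ for the same reason, plus $\varphi\nabla\eta_\e$. For the second piece, $|\nabla\eta_\e|\le (|y|\log(1/\e))^{-1}$ on $\{\e^2<|y|<\e\}$, so
\[
\int_{B_R}|\varphi|^2|\nabla\eta_\e|^2\,dz\le \frac{1}{\log^2(1/\e)}\int_{B_R}\frac{|\varphi|^2}{|y|^2}\,dz\to0.
\]
For $n=2$ one can even use a linear cut-off, because the Hardy term already absorbs $|\nabla\eta_\e|\lesssim |y|^{-1}$:
\[
\int|\varphi|^2|\nabla \eta_\e|^2\lesssim \int_{\{\e/2<|y|<\e\}}|\varphi|^2|y|^{-2}\to 0 .
\]
So no logarithm is actually needed, and the argument is simpler than in the classical capacity setting.

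The main obstacle is mostly bookkeeping. The first issue is that the density class in the definition of $\tilde H^1$ consists of smooth functions on $\overline{B_R}$ with finite norm, so the dominated convergence arguments are legitimate. The second is smoothness of $\varphi_\e$: $|y|$ is smooth away from $\Sigma_0$, and $\eta_\e$ is constant near $\Sigma_0$, so a smooth cut-off $\eta(|y|/\e)$ with $\eta=0$ on $[0,1/2]$, $\eta=1$ on $[1,\infty)$ and $|\eta'|\le 3$ works directly. This gives $\varphi_\e\in C^\infty(\overline{B_R}\setminus\Sigma_0)$, vanishing near $\Sigma_0$, with $\varphi_\e\to\varphi$ in $\tilde H^1$. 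This completes the plan, following \cite[Proposition 2.5]{CFV}.
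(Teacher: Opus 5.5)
Your proof is correct and follows essentially the route the paper indicates, by reference to \cite[Proposition 2.5]{CFV} and as carried out for the analogous Proposition \ref{prop_density_n=2_H=W}: you multiply by a cut-off vanishing near $\Sigma_0$, and the Hardy term $\int|\varphi|^2|y|^{-2}$, which is already part of the norm \eqref{def_norm_H10Sigma}, absorbs both the error on the shrinking tube and the term $|\varphi|^2|\nabla\eta_\e|^2\lesssim|\varphi|^2|y|^{-2}$, via dominated convergence. Your observation that a plain cut-off $\eta(|y|/\e)$ suffices, with no logarithm needed because the Hardy weight is built into the norm, is right; the argument in fact never uses $n=2$.
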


\begin{proposition}\label{prop_density_n=2_H=W}
If $n>2$ then $\tilde{H}^1(B_R,\mb{C})$ corresponds to 
\begin{equation}
W^{1,\, 2}(B_R,\mb{C}):=\{v \in W^{1,1}_{loc}(B_R,\mb{C}): \norm{v}_{\tilde{H}^1(B_R,\mb{C})}<+\infty\}.
\end{equation}
\end{proposition}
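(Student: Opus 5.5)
The plan is to prove the two inclusions separately. The key tool is a Hardy–Maz'ya inequality for general $H^1$ functions, which is available exactly because $n>2$. Throughout, write $W$ for the space $W^{1,2}(B_R,\mb{C})$ defined in the statement.

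\emph{Step 1: the inclusion $\tilde{H}^1(B_R,\mb{C})\subset W$.} Let $v\in\tilde{H}^1(B_R,\mb{C})$, represented by a Cauchy sequence $\varphi_k\in C^\infty(\overline{B_R},\mb{C})$ in the norm \eqref{def_norm_H10Sigma}. Since $|y|\le R$ on $B_R$, we have $\int_{B_R}|\varphi|^2\,dz\le R^2\int_{B_R}|\varphi|^2|y|^{-2}\,dz$. Hence $(\varphi_k)$ is also Cauchy in $H^1(B_R,\mb{C})$, and its limit is a genuine function $v\in H^1(B_R,\mb{C})\subset W^{1,1}_{loc}(B_R,\mb{C})$. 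Passing to an a.e.\ converging subsequence and applying Fatou's lemma to $|\varphi_k|^2/|y|^2$ gives $\int_{B_R}|v|^2/|y|^2\,dz<+\infty$. It follows that $v\in W$, and the identification of the abstract completion with this function is injective, since $\tilde H^1$-convergence implies $L^2$-convergence.

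\emph{Step 2: a Hardy inequality on $H^1(B_R)$ for $n>2$.} We aim to show that there is $C=C(n,d,R)$ with
\begin{equation}
\int_{B_R}\frac{|w|^2}{|y|^2}\,dz\le C\int_{B_R}\left(|\nabla w|^2+|w|^2\right)dz \qquad \text{for all } w\in H^1(B_R,\mb{C}).
\end{equation}
\begin{enumerate}[(a)]
\item For $\psi\in C^\infty_c(\R^d,\mb{C})$, apply the one-dimensional radial Hardy inequality in $y\in\R^n$ for a.e.\ fixed $x$, exactly as in the proof of Proposition \ref{prop_hardy} with $a\equiv0$. This gives $\left(\frac{n-2}{2}\right)^2\int|\psi|^2|y|^{-2}\le\int|\nabla_y\psi|^2$. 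This is where $n>2$ is essential; alternatively one can quote \cite[Subsection 2.1.6, Corollary 3]{M_book}.
\item For $w\in H^1(B_R,\mb{C})$, use a bounded extension operator $E:H^1(B_R)\to H^1_0(B_{2R})$ and approximate $Ew$ in $H^1$ by $\psi_j\in C^\infty_c(B_{2R})$.
\item Pass to the limit in (a) along an a.e.\ converging subsequence, using Fatou's lemma on the left-hand side.
\end{enumerate}
This yields the displayed inequality with $C$ depending on the norm of $E$.

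\emph{Step 3: the inclusion $W\subset\tilde{H}^1(B_R,\mb{C})$.} Let $v\in W$. As in Step 1, $v\in L^2(B_R)$ and $\nabla v\in L^2(B_R)$, so $v\in H^1(B_R,\mb{C})$. Since $B_R$ is a Lipschitz domain, $C^\infty(\overline{B_R},\mb{C})$ is dense in $H^1(B_R,\mb{C})$, so we may pick $\varphi_k\in C^\infty(\overline{B_R},\mb{C})$ with $\varphi_k\to v$ in $H^1$. Applying Step 2 to $w=\varphi_k-v$ gives $\int_{B_R}|\varphi_k-v|^2|y|^{-2}\,dz\to0$. Thus $\varphi_k\to v$ in the norm \eqref{def_norm_H10Sigma}; in particular each $\varphi_k$ has finite $\tilde H^1$-norm, and $v\in\tilde H^1(B_R,\mb{C})$.

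The only genuinely delicate point is Step 2, namely justifying the Hardy inequality for arbitrary $H^1$ functions rather than for smooth functions vanishing near $\Sigma_0$. This works via extension and Fatou precisely because, for $n>2$, the weight $|y|^{-2}$ is locally integrable in $\R^n$ and $\Sigma_0$ has zero capacity (Proposition \ref{prop_capa_ranges}). For $n=2$ the argument breaks down, as the constant function $1$ shows.
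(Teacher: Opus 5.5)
Your proof is correct, but it takes a different route from the paper's. The paper calls $\tilde H^1(B_R,\mb{C})\subset W^{1,2}(B_R,\mb{C})$ trivial; your Step 1 supplies the details it omits. For the reverse inclusion the paper truncates near $\Sigma_0$ instead of using a Hardy inequality:
\begin{itemize}
\item it multiplies $u\in W^{1,2}$ by logarithmic cut-offs $\Psi_h(y)$, which vanish for $|y|\le e^{-2h}$, equal $1$ for $|y|\ge e^{-h}$, and satisfy $|\nabla\Psi_h|\le C/(h|y|)$;
\item it shows $u\Psi_h\to u$ in the norm \eqref{def_norm_H10Sigma} by dominated convergence, the term $|u|^2|\nabla\Psi_h|^2$ being controlled directly by $\int_{B_R}|u|^2|y|^{-2}\,dz<+\infty$;
\item it then mollifies to obtain smooth functions vanishing near $\Sigma_0$.
\end{itemize}

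That argument never uses $n>2$. It gives density of functions vanishing near $\Sigma_0$ for every $n\ge 2$, which is the mechanism behind Proposition \ref{prop_density_0}. So the identity $W^{1,2}=\tilde H^1$ holds also for $n=2$, even though your Hardy step genuinely fails there, as the constant $1$ shows. In exchange, the paper's argument gives no comparison with the plain $H^1$ norm, and it leaves implicit the mollification up to $\partial B_R$.

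Your route uses $n>2$ essentially, through the Hardy inequality for arbitrary $H^1(B_R)$ functions obtained via extension and Fatou. What you gain is the quantitative bound $\int_{B_R}|v|^2|y|^{-2}\,dz\le C\norm{v}_{H^1(B_R,\mb{C})}^2$. Hence $W^{1,2}=\tilde H^1=H^1(B_R,\mb{C})$ with equivalent norms, which is the $n>2$ part of Proposition \ref{prop_H1MA_H1} that the paper only cites from Maz'ya. Your argument also needs only the standard density of $C^\infty(\overline{B_R})$ in $H^1(B_R)$, rather than a cut-off-plus-mollification construction.
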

\begin{proof}
The inclusion $\tilde{H}^1(B_R,\mb{C}) \subset W^{1,\, 2}(B_R,\mb{C}) $ is trivial. Let us prove the other inclusion. Let $u \in  W^{1,\, 2}(B_R,\mb{C})$.
We are going to approximate $u$ with a family ${u_h} \in W^{1,1}_{loc}(B_R,\mb{C})$ with support disjoint from $\Sigma$. Then, by a standard mollification, we may approximate each ${u_h}$ with a smooth function $\varphi_h \in C_c^\infty(\overline{B_R}\setminus \Sigma,\mb{C})$ thus concluding the proof.

To this end, 
let us define for any $h \in \mathbb{N} \setminus \{0\}$, the cut-off functions
\begin{equation}\label{def_Psi_h}
\Psi_h(x,y):=1-\eta_h(y), \quad \text{ with }\quad \eta_h(y):= 1-\eta\left(-\frac{\log|y|}{h}\right),
\end{equation}
where $\eta  \in C^{\infty}(\R)$, $0\le \eta \le 1$,  $\eta(t)=1$  if $t\in (2,+\infty)$ and  $\eta(t)=0$  if $t\in (-\infty,1)$.
It is easy to check that $0\le\Psi_h\le1$, $\Psi_h=0$ in $B_R \cap \{|y|\le e^{-2h}\}$ and $\Psi_h=1$ in $B_R \cap \{|y|\ge e^{-h}\}$.
Furthermore,
\begin{equation}\label{proof_density_n=2_H=W_1}
|\nabla \Psi_h|= \left|\frac{\eta'\left(-\frac{\log|y|}{h}\right)}{h |y|^2}(0,y)\right| \le \frac{\norm{\eta'}_{L^\infty(\mb{R})}}{h|y|}.
\end{equation}
Let us define $u_h:= u \Psi_h$. Then  $u_h \in W^{1,1}_{loc}(B_R)$ and 
\begin{equation}
\lim_{h \to +\infty}\int_{B_R}\frac{|u_h-u|^2}{|y|^2} dz=\lim_{h \to +\infty} \int_{B_R \cap\{|y|\le e^{-h}\} }\frac{|u|^2|1-\Psi_h|^2}{|y|^2} dz=0,
\end{equation}
by the Dominated Convergence Theorem.
Furthermore by \eqref{proof_density_n=2_H=W_1} and the Dominated Convergence Theorem
\begin{equation}
\lim_{h \to +\infty}\int_{B_R} |\nabla u- \nabla u_h|^2 \, dz= \lim_{h \to +\infty} \int_{B_R} |1-\Psi_h|^2|\nabla u|^2+|u|^2 |\nabla \Psi_h|^2  \, dz=0.
\end{equation}
Hence, we have proved that $u_h \to u$ strongly with respect to the norm in \eqref{def_norm_H10Sigma}.
\end{proof}

Since in Section \ref{sec_approx_domains} we are going to deal with Dirichlet boundary conditions, for any compact set $K\subset B_R$, let us also define $\tilde{H}_{0,K}^1(B_R,\mb{C})$ as the completion of
\begin{equation}
\left\{\varphi \in C_c^\infty(\overline{B_R}\setminus K,\mb{C}): \norm{\varphi}_{\tilde{H}^1(B_R,\mb{C})}<+\infty\right\}
\end{equation}
with respect to the  $\tilde{H}^1(B_R,\mb{C})$-norm. Equivalently
\begin{equation}\label{Hqe}
\tilde{H}_{0,K}^1(B_R,\mb{C})= \{v \in \tilde{H}^1(B_R,\mb{C}): v=0 \text{ q.e. on } K \},
\end{equation}
where the q.e. condition means quasi-everywhere in the sense of \eqref{def_capa_magnetic_simple}; that is, up to sets of zero magnetic capacity.

\begin{proposition}[Poincar\'e inequality]\label{prop_poinc_boundary}
There exists a constant $C>0$, depending only on  $a,b,\la,\Lambda,d,n$ and $R$, such that
\begin{equation}\label{ineq_poinc_partial_BR}
\int_{B_R} |v|^2 \, dz \le C \int_{B_R} |iM\nabla v+A v|^2 \, dz \quad  \text{ for any }  v\in\tilde{H}_{0, \partial B_R}^1(B_R,\mb{C}).
\end{equation}
\end{proposition}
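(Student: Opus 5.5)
The plan is to argue by contradiction, combined with the norm equivalence of Proposition \ref{prop_equiv_norms}, the Hardy inequality with boundary term (Proposition \ref{prop_hardy_ball}) and the diamagnetic inequality \eqref{ineq_ani_dia}. Suppose \eqref{ineq_poinc_partial_BR} fails. Then there is a sequence $v_k\in\tilde{H}_{0,\partial B_R}^1(B_R,\mb{C})$ with
\[
\int_{B_R}|v_k|^2\,dz=1 \quad\text{and}\quad \int_{B_R}|iM\nabla v_k+Av_k|^2\,dz\to0 .
\]
By density we may take $v_k$ smooth and vanishing near $\partial B_R$, so \eqref{ineq_ani_dia} applies pointwise and gives $\sqrt{\la}\,\norm{\nabla|v_k|}_{L^2}\to0$. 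Since $|v_k|\in H^1_0(B_R)$, the classical Poincar\'e inequality then yields $\norm{|v_k|}_{L^2(B_R)}\to0$, which contradicts $\norm{v_k}_{L^2}=1$.

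This argument is in fact direct rather than by contradiction. For $v\in\tilde{H}^1_{0,\partial B_R}$ with $v$ smooth and compactly supported in $B_R$ (possibly up to $\Sigma_0$ where it is only $H^1$), the function $|v|$ is Lipschitz with zero trace on $\partial B_R$. Hence
\[
\int_{B_R}|v|^2\,dz\le C_P(R)\int_{B_R}|\nabla|v||^2\,dz\le \frac{C_P(R)}{\la}\int_{B_R}|iM\nabla v+Av|^2\,dz .
\]
This is \eqref{ineq_poinc_partial_BR} with $C=C_P(R)/\la$, and $C$ depends only on $\la,d,R$. The remaining step is to extend the inequality to the whole space $\tilde{H}^1_{0,\partial B_R}$ by density, using the definition of this space as a completion.

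The main obstacle is the density step. We must check that convergence in the $\tilde H^1$ norm implies convergence of both sides of \eqref{ineq_poinc_partial_BR}. The left side is harmless because the $L^2$ norm is controlled by the $\tilde H^1$ norm. For the right side we use the upper bound in Proposition \ref{prop_equiv_norms}, namely $\int|\nabla_{M,A}\varphi|^2\le C_2\norm{\varphi}^2_{\tilde H^1}$, applied to differences; $b\in L^d$ is handled there through Sobolev's inequality. There is also a subtlety in applying the diamagnetic inequality on the zero set of $v$. This is resolved in the standard way, for instance by using $\sqrt{|v|^2+\delta^2}-\delta$ and letting $\delta\to0$, or by recalling that $\nabla|v|=0$ a.e. on $\{v=0\}$. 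The Hardy inequality is not needed here, though it could alternatively give the bound when $c_{n,a,M}>0$, since $|y|\le R$ in $B_R$.
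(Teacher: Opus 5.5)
Your proof is correct, but it takes a different route from the paper's. The paper argues by contradiction via compactness. Since $\tilde H^1_{0,\partial B_R}(B_R,\mathbb{C})\subset H^1(B_R,\mathbb{C})$ embeds compactly into $L^2$, the norm equivalence of Proposition \ref{prop_equiv_norms} turns a normalized sequence with $\int|iM\nabla v_k+Av_k|^2\to0$ into one that converges strongly in $L^2$ to a nonzero limit with vanishing magnetic gradient, and that limit is then excluded. The last step, showing that an element of $\tilde H^1_{0,\partial B_R}$ with $\nabla_{M,A}v=0$ must vanish, itself essentially needs the diamagnetic inequality, which the paper leaves implicit.

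Your argument is the quantitative version of that qualitative step. You apply \eqref{ineq_ani_dia} pointwise and the Dirichlet Poincar\'e inequality to $|v|\in H^1_0(B_R)$. This uses neither compactness nor Proposition \ref{prop_equiv_norms} nor \eqref{hp_hardy}. It also gives an explicit constant $C=C_P(d,R)/\la$, independent of $a$, $b$, $n$ and $\Lambda$. That is sharper than the stated dependence and could matter wherever uniformity of constants is needed.

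Your density step is fine: both sides are continuous in the $\tilde H^1$ norm by the upper bound in Proposition \ref{prop_equiv_norms}. You could skip it altogether. Every $v\in\tilde H^1_{0,\partial B_R}$ already lies in $H^1_0(B_R,\mathbb{C})$, and a.e. one has $\nabla|v|=\Rea\bigl(\bar v\,\nabla v/|v|\bigr)\chi_{\{v\neq0\}}$. So the diamagnetic bound holds a.e. for $v$ itself, with no approximation.

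One small caveat concerns your closing aside. Proposition \ref{prop_hardy} bounds $\int|\varphi|^2/|y|^2$ only by the energy built from the singular part $a/|y|$, not from the full $A=a/|y|+b$. Using Hardy as an alternative would therefore also require absorbing the $b$ term. Your main argument does not rely on this, so the proof is unaffected.
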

\begin{proof}
Since $\tilde{H}_{0, \partial B_R}^1(B_R,\mb{C}) \subset H^1(B_R,\mb{C})$, the embedding into $L^2(B_R)$ is compact. Hence, by Proposition \ref{prop_equiv_norms}, we may prove \eqref{ineq_poinc_partial_BR} with a standard contradiction argument.
\end{proof}

\section{Weak solutions and $L^\infty$-bounds}\label{sec_weak_sol_bounded}
In this section, we define the weak solutions of \eqref{eq_magnetic_strong} and obtain $L^\infty$-bounds via a Moser iteration scheme.

We say that $u \in \tilde{H}^1(B_R,\mb{C})$ is a weak solution of \eqref{eq_magnetic_strong} in $B_R$ if
\begin{equation}\label{eq_magnetic_weak}
\int_{B_R} \nabla_{M,A} u \cdot \overline{\nabla_{M,A} v} \, dz=0, \quad  \text{ for any } v \in  \tilde{H}^1_0(B_R,\mb{C}).
\end{equation}
The definition is well posed in view of Subsection \ref{subsec_magnetic_capa_Sobolev}.
The following Cacciopoli-type inequality is the key result of this section.

\begin{proposition}[Caccioppoli inequality]\label{prop_caccio}
Let $u$ be a weak solution of \eqref{eq_magnetic_strong}. Then, for any $r \in (0,R)$ and  for any  $t\in [1,+\infty)$,
\begin{equation}
\text{ if } u \in L^{2t}(B_R,\mathbb C) \quad \text{ then } \quad  |u|^t \in H^1(B_r).
\end{equation}
Furthermore, for any $\tau > 1$
\begin{equation}\label{ineq_caccio}
\int_{B_R}   |\nabla( |u|^t \eta^\tau)|^2 \, dz
\le  2\tau^2 \left(1+4\frac{\Lambda^2}{\la^2}t^2\right)\frac{1}{(R-r)^2}\int_{B_R}     |u|^{2t}  \eta^{2\tau-2}\, d z,
\end{equation}
 where $\eta\in C^{\infty}_c(B_R)$ is a cut-off function  such that $0\leq\eta\leq1$, $\eta=1$ in $B_r$ and $|\nabla \eta| \le 2(R-r)^{-1}$. 
\end{proposition}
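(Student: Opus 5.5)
The plan is to test the weak formulation \eqref{eq_magnetic_weak} against $v=u\,|u|_k^{2t-2}\eta^{2\tau}$ and use the diamagnetic structure to reduce everything to the modulus $|u|$. Here $|u|_k:=\min\{|u|,k\}$ is a truncation that makes $v$ admissible; the truncation is removed at the end by monotone convergence. To see that $v\in\tilde H^1_0(B_R,\mb{C})$, note that $|u|_k^{2t-2}\eta^{2\tau}$ is bounded and Lipschitz in $|u|$. Multiplying $u\in\tilde H^1$ by such a function keeps both the weighted term $|v|^2/|y|^2$ and the gradient in $L^2$. The gradient is controlled since $\nabla|u|_k$ is dominated by $|\nabla u|$ by \eqref{eq_Re_nabla}, and $\eta$ gives compact support. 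For $n=2$ one can also argue through the density in Proposition \ref{prop_density_0}.

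The key computation is pointwise and uses $\nabla_{M,A}(u\phi)=\phi\nabla_{M,A}u+iuM\nabla\phi$ with $\phi$ real. The equation becomes
\begin{equation}
\int_{B_R}|\nabla_{M,A}u|^2|u|_k^{2t-2}\eta^{2\tau}\,dz=-\Rea\int_{B_R}\nabla_{M,A}u\cdot\overline{iuM\nabla(|u|_k^{2t-2}\eta^{2\tau})}\,dz .
\end{equation}
In the right-hand side, $\Rea\big(\nabla_{M,A}u\cdot\overline{iu}\big)$ paired with the real vector $M\nabla\phi$ equals $\Rea(\overline{u}\,M\nabla u)\cdot M\nabla\phi=|u|M\nabla|u|\cdot M\nabla\phi$, because the term containing $A$ is real times $|u|^2$ times $i$, and so it drops out when taking real parts. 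This is the magnetic cancellation that makes $A$, including its singular part $a/|y|$, disappear from the estimate. The Hardy hypothesis is used only for the functional setting.

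We then split $\nabla\phi$. The part where $\nabla|u|_k^{2t-2}$ appears gives a term of the form $-(2t-2)|u|_k^{2t-2}|M\nabla|u||^2\eta^{2\tau}$ on $\{|u|<k\}$, which has a favorable sign and can be discarded or moved to the left. The cutoff part is bounded by $2\tau\int|u|\,|u|_k^{2t-2}\eta^{2\tau-1}|M\nabla|u||\,|M\nabla\eta|$. By the diamagnetic inequality \eqref{ineq_ani_dia}, the left side dominates $\int|M\nabla|u||^2|u|_k^{2t-2}\eta^{2\tau}$. Young's inequality then yields
\begin{equation}
\int|\nabla|u||^2|u|_k^{2t-2}\eta^{2\tau}\le C\,\frac{\Lambda^2}{\la^2}\tau^2\int|u|^2|u|_k^{2t-2}\eta^{2\tau-2}|\nabla\eta|^2 .
\end{equation}

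To conclude, we expand $\nabla(|u|^t\eta^\tau)=t|u|^{t-1}\eta^\tau\nabla|u|+\tau|u|^t\eta^{\tau-1}\nabla\eta$ after letting $k\to\infty$. This produces a factor $t^2$ on the first piece, and we use $|\nabla\eta|\le2(R-r)^{-1}$. That the integrals are finite follows from $u\in L^{2t}$, which also gives $|u|^t\in H^1(B_r)$.

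The main obstacle is the bookkeeping with the truncation. For $t>1$ the term coming from differentiating $|u|_k^{2t-2}$ has the right sign only after the real-part identity is applied. Obtaining the factor $t^2$ rather than a worse power requires controlling $|u|^{t-1}\nabla|u|$ for the untruncated power, which we would handle via the estimate with $|u|_k$ and Fatou's lemma. Tracking the exact constant $2\tau^2(1+4\Lambda^2t^2/\la^2)$ is routine once these steps are in place.
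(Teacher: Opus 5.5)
Your strategy is the paper's: test \eqref{eq_magnetic_weak} with $u$ times a truncated power of $|u|$ times $\eta^{2\tau}$, and take real parts so that $A$ drops out, singular part included. Then use \eqref{ineq_ani_dia} and Young's inequality, and finally remove the truncation. Your absorption step is, if anything, cleaner than the paper's. You keep the whole lower bound $\int\phi\,|M\nabla|u||^2\le\int\phi\,|\nabla_{M,A}u|^2$ on the left, which matches the Young term exactly and gives $\Lambda/\la$ in place of $\Lambda^2/\la^2$.

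The step that fails as written is the truncation $|u|_k=\min\{|u|,k\}$. For $1<t<3/2$ the map $s\mapsto\min\{s,k\}^{2t-2}$ is only H\"older continuous at $s=0$. So $\phi=|u|_k^{2t-2}\eta^{2\tau}$ is not Lipschitz in $|u|$, contrary to what you claim, and the factor $|u|_k^{2t-3}$ in $\nabla\phi$ is unbounded near $\{u=0\}$. Hence the splitting $\nabla_{M,A}(u\phi)=\phi\nabla_{M,A}u+iuM\nabla\phi$, on which your real-part computation rests, is not justified by the chain rule you invoke. The test function $v$ itself is still admissible, because $w\mapsto w\min\{|w|,k\}^{2t-2}$ is Lipschitz on $\C$, so the identity could be recovered with extra work.

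This is exactly why the paper uses $T_k|u|=\min\{k,|u|+k^{-1}\}$. Since $T_k|u|\ge k^{-1}$, the weight $(T_k|u|)^{2t-2}$ is Lipschitz in $|u|$ and $(T_k|u|)^{2t-3}$ is bounded. With this choice your computation goes through verbatim. The $(2t-2)$-term is still nonpositive, because $\nabla T_k|u|$ equals $\nabla|u|$ times an indicator. However, $T_k|u|$ is no longer monotone in $k$, so you cannot use monotone convergence. Remove the truncation with Fatou on the left and dominated convergence on the right, using $|u|^2(T_k|u|)^{2t-2}\le|u|^2(|u|+1)^{2t-2}\in L^1(B_R)$ for $u\in L^{2t}(B_R)$.

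Finally, tracking the exact constant is not quite ``routine''. With $|\nabla\eta|\le2(R-r)^{-1}$, this bookkeeping reproduces the stated bound only up to the factor $4$ coming from $|\nabla\eta|^2$. The paper's proof has the same gap, since it stops at $\int|u|^{2t}\eta^{2\tau-2}|\nabla\eta|^2$. The discrepancy is irrelevant for the Moser iteration.
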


\begin{proof}
Let $t\in  [1,+\infty)$, $\tau > 1$,  and $\eta\in C^{\infty}_c(B_R)$ be the cut-off function of the statement. Clearly
\begin{equation}\label{proof_prop_caccio_1}
|\nabla( |u|^t \eta^\tau)|^2 \le 2 [t^2 |u|^{2t-2}\eta^{2 \tau} |\nabla |u||^2+\tau^2|u|^{2t} \eta^{2 \tau-2}|\nabla \eta|^2].
\end{equation}
In order to deal with $ |u|^{2t-2}\eta^{2 \tau} |\nabla |u||^2$, let us consider the test function 
$\varphi \in  \tilde{H}_0^1(B_R , \mb{C})$ defined by
\begin{equation}\label{proof_prop_caccio_2}
\varphi:= (T_k|u|)^{2t-2} u \eta^{2\tau},
\end{equation}
where, for any $k \in \mathbb{N}\setminus \{0\}$, $T_k |u|:= \min\{k, |u|+k^{-1}\}$.
Then
\begin{equation}\label{proof_prop_caccio_3}
\nabla  \varphi= (2t-2)(T_k|u|)^{2t-3} u \eta^{2\tau} \nabla (T_k|u|)+ (T_k|u|)^{2t-2} \eta^{2\tau} \nabla  u 
+2 \tau (T_k|u|)^{2t-2} u \eta^{2\tau-1} \nabla \eta.
\end{equation}
Testing \eqref{eq_magnetic_weak} with $\varphi$ we obtain
\begin{multline}
\int_{B_R}   
[(2t-2)(T_k|u|)^{2t-3} \overline{u} \eta^{2\tau}  N\nabla(T_k|u|) \cdot \nabla u
+ (T_k|u|)^{2t-2} \eta^{2\tau}  N\nabla  u \cdot \nabla \overline{u}\\
+2 \tau (T_k|u|)^{2t-2}\overline{u} \eta^{2 \tau -1}   N\nabla u \cdot \nabla \eta
-i(T_k|u|)^{2t-2} \overline{u} \eta^{2\tau} M^TA \cdot \nabla u\\
+iM^TA \cdot [(2t-2)(T_k|u|)^{2t-3} |u|^2 \eta^{2\tau} \nabla(T_k|u|)+ (T_k|u|)^{2t-2} u \eta^{2\tau} \nabla \overline{u} 
+2 \tau (T_k|u|)^{2t-2} |u|^2 \eta^{2\tau-1} \nabla \eta] \\
+|A|^2 (T_k|u|)^{2t-2} |u|^2 \eta^{2\tau}] \, d z=0.
\end{multline}
Considering the real part and adding $\int_{B_R}   (T_k|u|)^{2t-3} N |u|\eta^{2\tau} \nabla(T_k|u|) \cdot \nabla |u| \, dz$ on both sides we conclude that
\begin{multline}
(2t-1)\int_{B_R}   
(T_k|u|)^{2t-3}  |u|\eta^{2\tau}N\nabla(T_k|u|) \cdot \nabla |u| \,dz\\
=\int_{B_R}    [ (T_k|u|)^{2t-3}  |u|\eta^{2\tau} N\nabla(T_k|u|) \cdot \nabla |u|
- (T_k|u|)^{2t-2} \eta^{2\tau} |M\nabla  u+iAu|^2]\, d z\\
-2 \int_{B_R} \tau (T_k|u|)^{2t-2}|u| \eta^{2 \tau -1} N \nabla |u| \cdot \nabla \eta \, d z,
\end{multline}
thanks to  \eqref{eq_Re_nabla}.
Hence by \eqref{hp_M_elliptic} and \eqref{ineq_ani_dia}
\begin{equation}\label{proof_prop_caccio_4}
\la\int_{B_R}   
(T_k|u|)^{2t-3}  |u|\eta^{2\tau}  \nabla(T_k|u|) \cdot \nabla |u| \,dz\\
\le 2\Lambda \tau \int_{B_R}    [(T_k|u|)^{2t-2}|u| \eta^{2 \tau -1}  |\nabla |u|| |\nabla \eta|\, d z,
\end{equation}
and by the Young inequality 
\begin{multline}\label{proof_caccio_4.5}
2  \tau\int_{B_R}    (T_k|u|)^{2t-2}|u| \eta^{2 \tau -1}  |\nabla |u|| |\nabla \eta|  \, dz \\
\le \frac{\lambda}{2\Lambda} \int_{B_R}    (T_k|u|)^{2t-2}\eta^{2 \tau }  |\nabla |u||^2  \, dz
+  \frac{2\Lambda}{\la}\tau^2\int_{B_R}    |u|^{2t} \eta^{2 \tau -2}  |\nabla \eta|^2  \, dz.
\end{multline}
Hence,
\begin{equation}
\int_{B_R} |u|^{2t-2} \eta^{2\tau} |\nabla |u||^2 \,dz \\
\le \frac{4\Lambda^2}{\la^2}\tau^2\int_{B_R}    |u|^{2t} \eta^{2 \tau -2}  |\nabla \eta|^2  \, dz,
\end{equation}
and so, by \eqref{proof_prop_caccio_1}, it follows that
\begin{multline}
\int_{B_R}   |\nabla |u|^t \eta^\tau|^2 \, dz
\le  2\int_{B_R}     [t^2 |u|^{2t-2}\eta^{2 \tau} |\nabla |u||^2+\tau^2|u|^{2t} \eta^{2 \tau-2}|\nabla \eta|^2]\, dz\\
\le 2\tau^2 \left(1+4\frac{\Lambda^2}{\la^2}t^2\right)\int_{B_R}    |u|^{2t} \eta^{2 \tau -2}  |\nabla \eta|^2  \, dz,
\end{multline}
that is, we have proved \eqref{ineq_caccio}.
\end{proof}

In view of Proposition \ref{prop_H1MA_H1}, we may use  a classical Moser iteration scheme, see for example \cite[Chapter 3]{Beck}, to obtain the boundedness of  weak solutions.
\begin{corollary}\label{corollary_bound_Harn}
There exists a constant $\kappa>0$, depending only on  $d, \la$ and $\Lambda$, such that for any solution $u$ of \eqref{eq_magnetic_strong}
\begin{equation}\label{ineq_u_bounded}
\norm{u}_{L^\infty(B_r,\mathbb C)} \le \frac{\kappa}{(R-r)^d}\norm{u}_{L^2(B_R,\mathbb C)} \quad \text{ for any } r \in (0,R).
\end{equation}
\end{corollary}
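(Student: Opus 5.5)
We apply the standard Moser iteration to the scalar function $w:=|u|$, using Proposition \ref{prop_caccio} as the only input that depends on the magnetic structure. The point is that \eqref{ineq_caccio} involves only $|u|^t$ and the ellipticity constants $\la,\Lambda$. Neither $A$ nor $M$ enters beyond \eqref{hp_M_elliptic}, so the iteration yields a constant depending only on $d,\la,\Lambda$. By Proposition \ref{prop_H1MA_H1}, $u\in H^1(B_R,\mb{C})$, and by the diamagnetic inequality \eqref{ineq_ani_dia}, $w\in H^1(B_R)$. By Sobolev embedding $w\in L^{2\chi}(B_R)$ with $\chi:=d/(d-2)$ for $d\ge3$; for $d=2$ we take any fixed $\chi>1$. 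This starts the iteration. Since Proposition \ref{prop_caccio} applies with exponent $t$ as soon as $u\in L^{2t}$, at each step we get $|u|^t\in H^1(B_{r'})$, and by Sobolev again $|u|\in L^{2t\chi}(B_{r'})$.

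The key step combines \eqref{ineq_caccio}, applied to a cutoff $\eta$ between balls $B_{\rho_{k+1}}\subset B_{\rho_k}$, with the Sobolev inequality for the compactly supported function $|u|^t\eta^\tau$ (taking $\tau=1$, or any fixed $\tau>1$ with $\eta^{2\tau-2}\le1$). This gives the reverse Hölder step
\[
\norm{u}_{L^{2t\chi}(B_{\rho_{k+1}})}\le \Big(\frac{C(d)\,\tau^2(1+4\Lambda^2\la^{-2}t^2)}{(\rho_k-\rho_{k+1})^2}\Big)^{\frac{1}{2t}}\norm{u}_{L^{2t}(B_{\rho_k})}.
\]
We choose $t_k=\chi^k$ and $\rho_k=r+(R-r)2^{-k}$, and iterate. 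The product of constants is
\[
\prod_k\big(C\,4^{k}\chi^{2k}(R-r)^{-2}\big)^{1/(2\chi^k)}.
\]
It converges to $C(d,\la,\Lambda)\,(R-r)^{-\sum_k\chi^{-k}}$, since $\sum_k k\chi^{-k}<\infty$. Here $\sum_k\chi^{-k}=\chi/(\chi-1)=d/2$ when $d\ge3$. Letting $k\to\infty$ gives $\norm{u}_{L^\infty(B_r)}\le C(R-r)^{-d/2}\norm{u}_{L^2(B_R)}$, with the dependence on $R$ coming only through the Sobolev constant on bounded balls.

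It remains to reach the weaker form stated in the corollary, with $(R-r)^{-d}$. If $R-r\le1$, then $(R-r)^{-d/2}\le(R-r)^{-d}$ and we are done. For general radii we use scaling: both the equation's structure constants $\la,\Lambda$ and the Caccioppoli inequality are scale invariant. We also note that the Sobolev inequality for compactly supported functions carries no dependence on $R$, because we use the homogeneous Gagliardo--Nirenberg--Sobolev form $\norm{f}_{L^{2\chi}}\le C(d)\norm{\nabla f}_{L^2}$ (for $d=2$ we use a localized version, inserting the $L^2$ term via Hölder on the support). In this way the constant $\kappa$ depends only on $d,\la,\Lambda$.

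I expect the only delicate point to be justifying each step of the iteration rigorously. The hypothesis ``$u\in L^{2t}(B_R)$'' of Proposition \ref{prop_caccio} must be replaced by membership in a smaller ball, which the localized version of its proof allows (equivalently, we can apply the proposition on $B_{\rho_k}$ in place of $B_R$, as $u$ solves the equation there too). The truncation $T_k$ already used in that proof makes the test functions admissible. Everything else is the classical scheme of \cite[Chapter 3]{Beck}.
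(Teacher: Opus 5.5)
Your core argument is the paper's. The paper's proof is only the remark that, by Proposition \ref{prop_H1MA_H1} and the Caccioppoli inequality of Proposition \ref{prop_caccio} (whose constant involves only $\la,\Lambda,t,\tau$), the classical Moser scheme of \cite[Chapter 3]{Beck} applies to $|u|$. Your iteration with $t_k=\chi^k$, $\rho_k=r+(R-r)2^{-k}$ carries this out correctly and yields
\[
\norm{u}_{L^\infty(B_r,\mathbb C)}\le \frac{C(d,\la,\Lambda)}{(R-r)^{d/2}}\norm{u}_{L^2(B_R,\mathbb C)}\qquad\text{for all } 0<r<R .
\]
(In $d=2$, the clean scale-invariant substitute for Sobolev is Ladyzhenskaya's inequality $\norm{f}_{L^4}^2\le C\norm{f}_{L^2}\norm{\nabla f}_{L^2}$, which again gives exponent $d/2=1$.)

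The step that fails is your last paragraph. Scale invariance is exactly what produces the exponent $d/2$ with an $R$-independent constant. It cannot turn $(R-r)^{-d/2}$ into $(R-r)^{-d}$. When $R-r>1$, the quantity $(R-r)^{-d}$ is the \emph{smaller} one, so the stated bound is stronger there, not weaker.

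In fact no argument can give the $(R-r)^{-d}$ form with $\kappa=\kappa(d,\la,\Lambda)$ for all radii. Take $n\ge 3$, $a=0$, $b=0$, $M={\rm Id}_d$; then $c_{n,a,M}=(n-2)^2/4>0$, and $u\equiv 1\in\tilde H^1(B_R,\mb C)$ is a solution. With $r=R/2$ the inequality reads $1\le \kappa\,2^d|B_1|^{1/2}R^{-d/2}$, which is false for large $R$. More generally, if $a$ is independent of $x$, $b=0$ and $M$ is constant, then $u(\rho\,\cdot)$ solves the same equation in $B_{R/\rho}$. Applying the $(R-r)^{-d}$ bound to it would give $\norm{u}_{L^\infty(B_r)}\le \kappa\rho^{d/2}(R-r)^{-d}\norm{u}_{L^2(B_R)}$ for every $\rho\in(0,1]$, forcing $u\equiv 0$.

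So what your proof establishes is the scale-invariant estimate above. It implies the stated one only for $R-r\le 1$, or with $\kappa$ also depending on an upper bound for $R$. That is all the paper needs, since it always normalizes $R=1$. You should state your result in that form rather than claim the $(R-r)^{-d}$ version for general radii.
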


We conclude this section with a Caccioppoli-type inequality with lower order terms that is needed in the following sections, see for example the proof of Theorem \ref{prop_blow_up_holder_a=0} below.

\begin{proposition}\label{prop_caccio_simple}
Suppose that $u \in H^1(B_R,\mb{C})$ is a weak solution of the equation
\begin{equation}
\int_{B_R} \left[\nabla_{M,A} u \cdot \overline{\nabla_{M,A} v}  +Vu\overline{v}\right]\, dz=\int_{B_R}  F \cdot \nabla  \overline{v}+ (f_1+f_2)  \overline{v}\, dz \quad \text{ for any } v  \in \tilde{H}_0^1(B_R,\mb{C}),
\end{equation}
where $F \in L^2(B_R,\R^d)$, $f_1 \in L^2(B_R)$ and $f_2 \in L^q(B_R)$, with $q>d/2$. Then, for any $r \in (0,R)$ 
\begin{multline}\label{ineq_caccio_simple}
\norm{\nabla u}_{L^2(B_r,\mb{C})}
\le  C\Bigg(\frac{1}{(R-r)^2}\norm{u}_{L^2(B_R,\mb{C})}+\norm{F}_{L^2(B_R,\R^d)}\\
+\norm{f_1}_{L^2(B_R)}
+\norm{f_2}_{L^q(B_R)}\norm{u}_{L^{q'}(B_R,\mb{C})}\Bigg),
\end{multline}
 where $q'=q/(q-1)$, for some constant $C>0$ depending only on  $d$, $\la$ and $\Lambda$.
\end{proposition}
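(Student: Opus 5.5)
The plan is a standard energy estimate: test the equation with $v=u\eta^2$. Here $\eta\in C_c^\infty(B_R)$ satisfies $0\le\eta\le1$, $\eta=1$ on $B_r$ and $|\nabla\eta|\le 2(R-r)^{-1}$, so $v\in\tilde H^1_0(B_R,\mb{C})$. After that, the first-order structure of the magnetic form is reduced to the pure gradient $\nabla u$ and the remaining terms are absorbed. We compute
\[
\nabla_{M,A}(u\eta^2)=\eta^2\nabla_{M,A}u+2i\eta u M\nabla\eta ,
\]
so the left-hand side becomes
\[
\int_{B_R}\eta^2|\nabla_{M,A}u|^2+\int_{B_R}V|u|^2\eta^2-2i\int_{B_R}\eta\,\overline{u}\,\nabla_{M,A}u\cdot M\nabla\eta .
\]
We take real parts. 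The cross term is bounded via Young's inequality by $\tfrac12\int\eta^2|\nabla_{M,A}u|^2+C\Lambda\int|u|^2|\nabla\eta|^2$. I would take $V\ge0$ (or at least assume its contribution has a sign or is included among the data), so that it can be dropped. If this is not the case, $V$ can be moved to the right-hand side and treated like $f_2$.

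For the right-hand side, $\nabla\overline{v}=\eta^2\nabla\overline u+2\eta\overline u\nabla\eta$. Hence
\[
\Big|\int F\cdot\nabla\overline v\Big|\le \delta\int\eta^2|\nabla u|^2+C_\delta\|F\|_{L^2}^2+\|F\|_{L^2}\|u\nabla\eta\|_{L^2}.
\]
For the other terms we have $|\int f_1\overline v|\le\|f_1\|_{L^2}\|u\|_{L^2}$ and $|\int f_2\overline v|\le\|f_2\|_{L^q}\|u\|_{L^{q'}}$, the latter by H\"older with $|v|\le|u|$. Here $q>d/2$ ensures via Sobolev that $u\in L^{q'}$ is meaningful for $u\in H^1$, though only the H\"older step is actually used.

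The main obstacle is passing from control of $\int\eta^2|\nabla_{M,A}u|^2$ to control of $\int\eta^2|\nabla u|^2$, since $A$ contains the singular term $a/|y|$ and $b$. I would avoid expanding $A$ altogether by using the diamagnetic inequality \eqref{ineq_ani_dia} only for $|u|$. That, however, does not yield $\nabla u$. A better route is Proposition \ref{prop_equiv_norms} applied to $u\eta$:
\[
C_1\int|\nabla(u\eta)|^2\le\int\big(|\nabla_{M,A}(u\eta)|^2+|u\eta|^2\big).
\]
Since $\nabla_{M,A}(u\eta)=\eta\nabla_{M,A}u+iuM\nabla\eta$, this is bounded by $2\int\eta^2|\nabla_{M,A}u|^2+C\int|u|^2(|\nabla\eta|^2+1)$. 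The constant then depends on $a,b$ through $C_1$, consistent with the equivalence of norms. Choosing $\delta$ small absorbs $\delta\int\eta^2|\nabla u|^2$ on the left.

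Collecting everything gives
\[
\|\nabla u\|^2_{L^2(B_r)}\le C\big((R-r)^{-2}\|u\|^2_{L^2}+\|F\|^2_{L^2}+\|f_1\|_{L^2}\|u\|_{L^2}+\|f_2\|_{L^q}\|u\|_{L^{q'}}\big).
\]
Taking square roots and using $\sqrt{xy}\le x+y$ (a homogeneity/normalization convention for the mixed products, as in the stated form \eqref{ineq_caccio_simple}) yields the claim, with $(R-r)^{-2}$ allowed to dominate $(R-r)^{-1}$ for $R-r\le1$.
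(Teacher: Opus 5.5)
Your energy computation is sound, and at the decisive step it takes a different route from the paper. It does not prove the statement as written, in two places, and in both it is the statement rather than your argument that is at fault.

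\textbf{The constant.} After testing with $u\eta^2$, the real part of the left-hand side is exactly $\int\eta^2|\nabla_{M,A}u|^2+2\int\eta|u|\,N\nabla|u|\cdot\nabla\eta+\int V|u|^2\eta^2$. This identity controls $\nabla_{M,A}u$, and $\nabla|u|$ by \eqref{ineq_ani_dia}, but not $\nabla u$.

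You pass to $\nabla u$ through Proposition~\ref{prop_equiv_norms} applied to $\eta u$ (Hardy for $a/|y|$, Sobolev plus the diamagnetic inequality for $b$). The absorption of $\delta\int\eta^2|\nabla u|^2$ then works. The price is that $C$ depends on $\norm{a}_{L^\infty}$, $\norm{b}_{L^d}$, $c_{n,a,M}$, $n$ and $R$, not only on $d,\la,\Lambda$.

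The paper instead writes, after taking real parts, an inequality for $\int\eta^2N\nabla u\cdot\nabla\overline{u}$ with $-\int\eta^2|M\nabla u+iAu|^2$ on the right, and discards that term by sign. This does not follow from the identity above, and it is exactly what produces a constant independent of $A$.

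No such constant exists. Take $M=\mathrm{Id}$, $n\ge3$ (so that \eqref{hp_hardy} holds with $a=0$), $A=b\in\R^d$ constant, $V=0$ and $F=f_1=f_2=0$. The function $u=e^{ib\cdot z}$ satisfies $\nabla_{M,A}u=0$, so it is a solution. Yet $\norm{\nabla u}_{L^2(B_r)}=|b|\,|B_r|^{1/2}$, while the right-hand side equals $C(R-r)^{-2}|B_R|^{1/2}$.

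So your detour is necessary, not a weakness, and you should state the $A$-dependence as part of the result rather than as an aside. It is harmless for the later blow-up arguments as long as these quantities stay bounded along the rescaled sequences.

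\textbf{The final step.} Taking square roots of your squared inequality gives $\sqrt{\norm{f_2}_{L^q}\norm{u}_{L^{q'}}}$. Applying $\sqrt{xy}\le x+y$ turns this into $\norm{f_2}_{L^q}+\norm{u}_{L^{q'}}$, not into the product, and no normalization convention bridges that gap.

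The unsquared product form is false even without a magnetic field. For $A=0$ and $n\ge3$, the functions $u=k^{-2}\cos(kz_1)$ and $f_2=\cos(kz_1)$ satisfy $-\Delta u=f_2$. Then $\norm{\nabla u}_{L^2(B_r)}\sim k^{-1}$, while the stated right-hand side is $O(k^{-2})$.

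The right conclusion is your squared estimate. Equivalently, up to constants, it is the bound with $(R-r)^{-1}\norm{u}_{L^2}$ and $\sqrt{\norm{f_2}_{L^q}\norm{u}_{L^{q'}}}$, which is all that is needed later.

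\textbf{The potential $V$.} Your restriction $V\ge0$ is genuinely needed; the paper's computation simply omits $V$. Moving $Vu$ to the right-hand side as a datum, as you suggest, produces a term that is absent from the estimate. For $V=-k^2$ and $u=\cos(kz_1)$ the estimate fails outright.
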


\begin{proof}
Let $\eta\in C^{\infty}_c(B_R)$ be a cut-off function such that $0\leq\eta\leq1$, $\eta=1$ in $B_r$ and $|\nabla \eta| \le 2(R-r)^{-1}$. Then 
\begin{equation}\label{proof_prop_caccio_simple_1}
|\nabla( u \eta)|^2 \le 2 [ \eta^2 |\nabla u|^2+|u|^2  |\nabla \eta|^2].
\end{equation}
Let us consider the test function $\varphi \in  \tilde{H}_0^1(B_R , \mb{C})$ defined as
\begin{equation}\label{proof_prop_caccio_simple_2}
\varphi:=  u \eta^2,
\end{equation}
so that
\begin{equation}\label{proof_prop_caccio_simple_3}
\nabla  \varphi= \eta^2 \nabla  u +2 u \eta \nabla \eta.
\end{equation}
Testing \eqref{eq_magnetic_weak} with $\varphi$ we obtain
\begin{multline}
\int_{B_R}   \Bigg[ \eta^2  N\nabla  u \cdot \nabla \overline{u}+2 \overline{u} \eta   N\nabla u \cdot \nabla \eta+i \overline{u} \eta^2 M^TA \cdot \nabla u\\
-iM^TA \cdot [u \eta^2 \nabla \overline{u} +2   |u|^2 \eta \nabla \eta] +|A|^2  |u|^2 \eta^2  \Bigg] \, d z\\
=\int_{B_R}  \eta^2 F \cdot \nabla  \overline{u}+ 2\overline{u} \eta F \cdot \nabla  \eta + (f_1+f_2)\overline{u} \eta^2\,dz.
\end{multline}
Considering the real part,  
\begin{multline}
\int_{B_R} \eta^2  N  \nabla u \cdot \nabla \overline{u} \, dz
\le -\int_{B_R}    \left[\eta^2 |M\nabla  u+iAu|^2 \right]\, d z -4 \int_{B_R}  |u| \eta N \nabla u \cdot \nabla \eta \, d z\\
+\int_{B_R}  \eta^2 |F| |\nabla u|+ 2\overline{u} \eta  |F| |\nabla  \eta| + (|f_1|+|f_2|) |u| \eta^2\,dz,
\end{multline}
thanks to  \eqref{eq_Re_nabla}.
It follows that, by \eqref{hp_M_elliptic} and \eqref{ineq_ani_dia},
\begin{equation}\label{proof_prop_caccio_simple_4}
\la\int_{B_R} \eta^2 |\nabla u|^2 \, dz
\le 4 \Lambda\int_{B_R}  |u| \eta  |\nabla u| |\nabla \eta| \, d z
+\int_{B_R}  \eta^2 |F| |\nabla u|+ 2|u| \eta  |F| |\nabla  \eta|  + (|f_1|+|f_2|) \eta^2\,dz.
\end{equation}
Let us fix $\e>0$ to be chosen small later on. By the Young inequality 
\begin{equation}\label{proof_caccio_simple_4.5}
\int_{B_R} |u| \eta|\nabla u|| |\nabla \eta|  \, dz \le \e \int_{B_R}    \eta^2  |\nabla u|^2  \, dz+  \e^{-1}\int_{B_R}    |u|^2  |\nabla \eta|^2  \, dz.
\end{equation}
Similarly
\begin{equation}
\int_{B_R}  \eta^2 |F| |\nabla u|dz \le  \e \int_{B_R}    \eta^2  |\nabla u|^2  \, dz+  \e^{-1}\int_{B_R}   \eta^2|F|^2  \, dz.
\end{equation}
Furthermore, applying the Young inequality again,
\begin{equation}
\int_{B_R}  |u| \eta  |F| |\nabla  \eta|dz \le \frac{1}{2(R-r)}  \int_{B_R}    |u|^2  \, dz+ \frac{1}{2(R-r)} \int_{B_R}   |F|^2  \, dz,
\end{equation}
and 
\begin{equation}
\int_{B_R}   |f_1| |u| \eta^2\le \frac{1}{2}  \int_{B_R} |u|^2 \eta^2  \, dz+ \frac{1}{2} \int_{B_R}   |f_1|^2  \, dz. 
\end{equation}
Letting $1/q+1/q'=1$,  by the H\"older inequality
\begin{equation}
\int_{B_R}   |f_2| |u| \eta^2\le  \norm{f_2}_{L^q(B_R)} \norm{u}_{L^{q'}(B_R,\mb{C})}.
\end{equation}
In conclusion, taking $\e>0$  small enough, 
\begin{multline}
\int_{B_R}\eta^2 |\nabla (\eta u)|^2\, dz\le 2\int_{B_R}[\eta^2 |\nabla u|^2+|u|^2 |\nabla \eta|^2]\, dz\\
\le C\left(\frac{1}{(R-r)^2}\int_{B_R} |u|^2   \, dz + \int_{B_R}   |F|^2  \, dz+ \norm{f_2}_{L^q(B_R)} \norm{u}_{L^{q'}(B_R,\mb{C})}+\int_{B_R}   |f_2|^2  \, dz\right),
\end{multline}
for some constant $C>0$ depending only on $\la, \Lambda$ and $ d$, that is, we have proved \eqref{ineq_caccio_simple}.
\end{proof}

\section{Regularity in the case $a=0$}\label{sec_a=0}
In this section we discuss interior and boundary regularity (under Dirichlet boundary conditions) in H\"older spaces for solutions of \eqref{eq_magnetic_strong} in the simpler case $a=0$, so that $A=b$ (see \eqref{def_A}).

Concerning interior regularity, we have the following results.

\begin{proposition}\label{prop_blow_up_holder_a=0}
Let $R>0$ and $q>d$. Let $\alpha \le 1-d/q$. Suppose that $b \in L^q(B_R,\R^d)$ with $\|b\|_{L^q(B_R,\R^d)}\leq m$, $M$ is continuous with a modulus of continuity $\omega$ and $\norm{M}_{C^{0,\omega}(B_R,\R^{d,d})}\leq L$. 

Then, given $r\in(0,R)$, if $u$ is a weak solution of \eqref{eq_magnetic_strong}, there exists a constant $c>0$ depending only on the dimension $d$, $R$, $r$, $q$, $\alpha$, $\la$, $\Lambda$, $L$ and $m$  such that
\begin{equation}\label{ineq_u_holder_a=0}
\norm{u}_{C^{0,\alpha}(B_r,\mb{C})} \le c \norm{u}_{L^2(B_R,\mb{C})}.
\end{equation}
\end{proposition}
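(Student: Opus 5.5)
We argue in two steps, following the strategy described in the introduction: first an a priori estimate under the extra assumption that $u$, $M$, $b$ are smooth, obtained by contradiction and blow-up in the spirit of Simon; then a regularization--approximation argument to remove the smoothness.

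\emph{Reduction and a priori estimate.} By Corollary \ref{corollary_bound_Harn}, $\norm{u}_{L^\infty(B_{r'})}\le C\norm{u}_{L^2(B_R)}$ for any $r'\in(r,R)$. It therefore suffices to bound the H\"older seminorm $[u]_{C^{0,\alpha}(B_r)}$ by $\norm{u}_{L^\infty(B_{r'})}$, up to constants.

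Suppose this fails. Then there are sequences $u_k$, $M_k$, $b_k$ satisfying the uniform bounds of the statement with $\norm{u_k}_{L^\infty(B_{r'})}\le 1$ and $[\eta u_k]_{C^{0,\alpha}}=:L_k\to\infty$, where $\eta$ is a fixed cut-off; the cut-off is the standard device that makes the supremum be attained in the interior. Choose points $z_k\ne\zeta_k$ almost realizing $L_k$, and set $\rho_k=|z_k-\zeta_k|$. Since $|u_k|\le1$, we have $\rho_k^\alpha L_k\le 2$, so $\rho_k\to0$.

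Define the blow-up
\[
w_k(z)=\frac{\eta(z_k+\rho_kz)u_k(z_k+\rho_kz)-\eta(z_k)u_k(z_k)}{L_k\rho_k^\alpha},
\]
together with the auxiliary sequence $\bar w_k$ in which $\eta$ is frozen at $z_k$. The functions $w_k$ have unit H\"older seminorm, vanish at $0$, and satisfy $|w_k(\xi_k)|\ge 1/2$ at $\xi_k=(\zeta_k-z_k)/\rho_k\in\mathbb S^{d-1}$. By Ascoli--Arzel\`a they converge locally uniformly to some $w$ with $[w]_{C^{0,\alpha}}\le1$ and $|w(\xi)|\ge1/2$. Moreover $w_k-\bar w_k\to0$ locally uniformly, using $|u_k|\le1$ and $\rho_k^{1-\alpha}\to0$.

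\emph{Passing to the limit.} The rescaled $\bar w_k$ solves a magnetic equation with matrix $M_k(z_k+\rho_k\cdot)$ and potential $\rho_k b_k(z_k+\rho_k\cdot)$, plus a constant-shift term coming from $\eta(z_k)u_k(z_k)/(L_k\rho_k^\alpha)$. We expand $L_m$ as in \eqref{def_L_Af}. The potential satisfies $\norm{\rho_k b_k(z_k+\rho_k\cdot)}_{L^q(B_R)}=\rho_k^{1-d/q}\norm{b_k}_{L^q}$, so on fixed balls it behaves like $\rho_k^{1-d/q}\to0$.

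Here lies the main obstacle. The zero-order terms act on $u_k$ itself, which is bounded but not small, and after division by $L_k\rho_k^\alpha$ they produce right-hand sides of order $\rho_k^{1-d/q-\alpha}/L_k$ times $\norm{b_k}_{L^q}$ (and of order $\rho_k^{2(1-d/q)-\alpha}$ for $|b|^2$, which is in $L^{q/2}$ with $q/2>d/2$). These vanish precisely because $\alpha\le1-d/q$ and $L_k\to\infty$. The terms $iMA\cdot\nabla u$ and $\dive(M^TA\,u)$ must be written in divergence form, $F=M^TA\,u$, to make sense weakly, and are handled similarly.

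Proposition \ref{prop_caccio_simple}, applied with $F$, $f_1$, $f_2$ of this type, gives uniform local $H^1$ bounds on $\bar w_k$. We can then pass to the limit in the weak formulation: $M_k(z_k+\rho_k\cdot)$ converges locally uniformly to a constant matrix $M_\infty$, by the uniform modulus $\omega$ and a subsequence of $M_k(z_k)$. Hence $w$ satisfies $-\dive(M_\infty^TM_\infty\nabla w)=0$ on $\R^d$.

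\emph{Liouville and approximation.} After a linear change of variables, the real and imaginary parts of $w$ are entire harmonic functions with growth $|w(z)|\le|z|^\alpha$, $\alpha<1$. By the classical Liouville theorem they are constant, and since $w(0)=0$ we get $w\equiv0$, contradicting $|w(\xi)|\ge1/2$.

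Finally, for a general weak solution we mollify $M$ and $b$, obtaining $M_\e$, $b_\e$ with the same moduli and norms. We solve the Dirichlet problems with the smooth data and boundary datum $u$ on $\partial B_{R'}$; these are uniquely solvable by Lax--Milgram and Proposition \ref{prop_poinc_boundary}, and their solutions are smooth by classical theory. The a priori estimate applies to them uniformly, and they converge to $u$ in $H^1$ by the energy estimate. The bound \eqref{ineq_u_holder_a=0} then passes to the limit.
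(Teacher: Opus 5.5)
Your proposal is correct and follows essentially the same route the paper sketches for this proposition. It first proves an a priori estimate by contradiction and Simon-type blow-up: the $L^\infty$ bound from the Moser iteration and a cut-off fix the blow-up points, the rescaled $b$-terms vanish because $\alpha\le 1-d/q$ and the blow-up seminorms diverge, and the classical Liouville theorem is applied to the real and imaginary parts of the constant-coefficient limit. It then regularizes $M$ and $b$, solves the Dirichlet problems with boundary datum $u$, and passes to the limit. The only detail to tighten is that mollifying $M$ preserves uniform ellipticity of $M^TM$ only up to slightly worse constants, obtained from $\|M_\e-M\|_{L^\infty}\le\omega(\e)$ for small $\e$; this is harmless for the argument.
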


\begin{proposition}\label{prop_blow_up_C1_holder_a=0}
Let $R>0$. Let $\alpha \in (0,1)$. Suppose that $b \in C^{0,\alpha}(B_R,\R^d)$ with $\|b\|_{C^{0,\alpha}(B_R,\R^d)}\leq m$ and  that  $M \in C^{0,\alpha}(B_R,\R^{d,d})$   with  $\norm{M}_{C^{0,\alpha}(B_R,\R^{d,d})}\leq L$.

Then, if $u$ is a weak solution of \eqref{eq_magnetic_strong}, there exists a constant $c>0$ depending only on $d, R, r, \alpha, \la, \Lambda$, $m$ and $L$, such that
\begin{equation}\label{ineq_u_C1_holder_a=0}
\norm{u}_{C^{1,\alpha}(B_r,\mb{C})} \le c \norm{u}_{L^2(B_R,\mb{C})}.
\end{equation}
\end{proposition}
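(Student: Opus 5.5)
We follow the scheme used for Proposition \ref{prop_blow_up_holder_a=0}: first an a priori estimate by contradiction and blow-up, then removal of the a priori assumption by regularization. We first prove \eqref{ineq_u_C1_holder_a=0} assuming in addition that $u\in C^{1,\alpha}_{loc}(B_R,\mb{C})$. Two reductions come first. By Corollary \ref{corollary_bound_Harn}, $\norm{u}_{L^\infty}$ on intermediate balls is controlled by $\norm{u}_{L^2(B_R)}$. By a standard interpolation/covering argument, it then suffices to bound the seminorm $[\nabla u]_{C^{0,\alpha}(B_r)}$ by $\norm{u}_{L^\infty(B_{r'})}$ plus a small multiple of the seminorm itself. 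To make this precise we use a cut-off $\eta$ and work with $\eta u$, which solves an equation of the same form with lower-order right-hand side terms involving $u$, $\nabla u$ and $\nabla\eta$.

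Suppose the estimate fails. Then there are sequences $M_k,b_k$ with uniformly bounded $C^{0,\alpha}$ norms, solutions $u_k$ with $\norm{u_k}_{L^2}\le1$, and $L_k:=[\nabla(\eta u_k)]_{C^{0,\alpha}}\to\infty$. Pick points $z_k,\zeta_k$ realizing at least half of $L_k$, and set $r_k:=|z_k-\zeta_k|\to0$; this follows from the interpolation $\norm{\nabla(\eta u_k)}_\infty\le \e L_k+C_\e$. Define the blow-up sequences
\begin{equation}
v_k(z):=\frac{\eta u_k(z_k+r_kz)-\eta u_k(z_k)-r_k\nabla(\eta u_k)(z_k)\cdot z}{L_k r_k^{1+\alpha}},\qquad w_k(z):=\frac{\eta(z_k)\,(u_k(z_k+r_kz)-\ldots)}{L_kr_k^{1+\alpha}},
\end{equation}
where $w_k$ is the analogous quantity without the cut-off. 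These satisfy $v_k(0)=0$, $\nabla v_k(0)=0$, $[\nabla v_k]_{C^{0,\alpha}}\le1$, and $|\nabla v_k(\xi_k)-\nabla v_k(0)|\ge1/2$ with $\xi_k=(\zeta_k-z_k)/r_k\in\mb S^{d-1}$. By Arzel\`a--Ascoli, $v_k\to\bar v$ in $C^{1}_{loc}(\R^d)$. The limit satisfies $|\bar v(z)|\le C|z|^{1+\alpha}$ and $[\nabla\bar v]_{C^{0,\alpha}}\le1$, and $\nabla\bar v$ is nonconstant.

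Next we pass to the limit in the equation, and this is the main obstacle. The rescaled equation for $w_k$ reads
\begin{equation}
-\dive(N_k(z_k+r_k\cdot)\nabla w_k)=\text{lower order terms},
\end{equation}
where the lower-order terms are $r_k M_kb_k\cdot\nabla u_k$, $r_k^2|b_k|^2u_k$, and the divergence term $r_k\,\dive(M_kb_k u_k)$. Each is multiplied by $r_k^{1-\alpha}/L_k$ and rescaled. Since we only know $b_k\in C^{0,\alpha}$, the term $iM\nabla\cdot A\,u$ must be kept in divergence form. We also need the cut-off errors and $N_k(z_k+r_k\cdot)-N_k(z_k)$ to vanish. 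For the latter, subtract the frozen term, writing $\dive((N_k(z_k+r_kz)-N_k(z_k))\nabla w_k)$. We control it using the $C^{0,\alpha}$ seminorm of $N_k$, which gives a factor $r_k^\alpha|z|^\alpha$ times $|\nabla w_k|\le C(1+|z|^\alpha)$. This forces us to use the difference $\nabla w_k-\nabla w_k(0)$ and to absorb the constant part $N_k(z_k+r_k z)\nabla w_k(0)$ into a divergence term that is $O(r_k^\alpha)$ in $C^{0,\alpha}$ after normalization. We also check that $v_k-w_k\to0$ locally, which follows from the Lipschitz bounds on $\eta$ together with $L_k\to\infty$.

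In the limit, $N_k(z_k)\to N_\infty$, which is constant and uniformly elliptic, and $-\dive(N_\infty\nabla\bar v)=0$ weakly in $\R^d$. Since this holds separately for the real and imaginary parts, a linear change of variables turns each into a harmonic function of growth $O(|z|^{1+\alpha})$ with $\alpha<1$. The polynomial Liouville theorem then makes each part affine, so $\nabla\bar v$ is constant, contradicting $|\nabla\bar v(\xi)-\nabla\bar v(0)|\ge1/2$.

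Finally we remove the a priori assumption. Mollify $M$ and $b$ to $M_\delta$ and $b_\delta$, which are smooth with uniformly bounded $C^{0,\alpha}$ norms. On a slightly smaller ball, solve the Dirichlet problem with operator $L_{M_\delta,b_\delta}$ and boundary datum $u$; this is well posed by Proposition \ref{prop_poinc_boundary} and Lax--Milgram. Classical Schauder theory for smooth coefficients gives $u_\delta\in C^{1,\alpha}_{loc}$. The a priori estimate bounds $u_\delta$ uniformly in terms of $\norm{u_\delta}_{L^2}$, which is uniformly bounded by the energy estimate. Moreover $u_\delta\to u$ in $H^1$ by uniqueness and the convergence of the coefficients, so passing to the limit yields \eqref{ineq_u_C1_holder_a=0}.
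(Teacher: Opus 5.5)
Your strategy is the one the paper sketches for this proposition: an a priori estimate by contradiction and blow-up, a polynomial Liouville theorem for the frozen constant-coefficient operator, and then mollification of $M,b$ with approximation by Dirichlet problems. Your use of interpolation to get $r_k\to0$ is fine.

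\textbf{The gap.} The passage to the limit rests on a scaling error. You treat the normalization $L_kr_k^\alpha$ as if it diverged, but the choice of $z_k,\zeta_k$ only gives $L_kr_k^\alpha\le 4\norm{\nabla(\eta u_k)}_{L^\infty}$, which may stay bounded or even tend to $0$. Substitute $u_k(z_k+r_kz)=u_k(z_k)+r_k\nabla u_k(z_k)\cdot z+L_kr_k^{1+\alpha}w_k(z)/\eta(z_k)$ into the weak formulation. The divergence-form drift term $-i\int u\,M^Tb\cdot\nabla\overline{\varphi}$ then produces
\begin{equation*}
-i\,\frac{\eta(z_k)u_k(z_k)}{L_kr_k^{\alpha}}\int_{\R^d}(M_k^Tb_k)(z_k+r_kz)\cdot\nabla\overline{\varphi}(z)\,dz .
\end{equation*}
This carries the same critical factor $r_k^{-\alpha}/L_k$ as the principal part, not $r_k^{1-\alpha}/L_k$ as you claim. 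It tends to zero only after two steps:
\begin{itemize}
\item subtract the frozen vector $(M_k^Tb_k)(z_k)$, which is legitimate since $\int_{\R^d}\nabla\overline{\varphi}\,dz=0$;
\item use $[M_k^Tb_k]_{C^{0,\alpha}}\le C$, which gives the bound $C|u_k(z_k)|/L_k$.
\end{itemize}
This is the paper's device of adding $\int\nu\cdot\nabla\overline{\varphi}\,dz=0$ with $\nu=-b_k(z_k)$, see \eqref{proof_theorem_reg_C2alpha_alpha_crit_2}.

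This is exactly where $b\in C^{0,\alpha}$ enters. In your accounting it is never used beyond boundedness, and that cannot be right. Take $M={\rm Id}_d$ and $b=(\beta(z_1),0,\dots,0)$. The gauge function $u=e^{i\int_0^{z_1}\beta(t)\,dt}$ satisfies $i\nabla u+bu=0$, so it solves \eqref{eq_magnetic_strong}. Yet $\nabla u=i\beta u\,e_1$ is discontinuous when $\beta$ is merely bounded.

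\textbf{Other steps with the same oversight.}
\begin{itemize}
\item \emph{Frozen principal term.} It is not $O(r_k^\alpha)$. Since $\nabla w_k(0)=0$, the relevant constant is $c_k:=\eta(z_k)\nabla u_k(z_k)/(L_kr_k^{\alpha})$, which need not be bounded. One has $|(N_k(z_k+r_kz)-N_k(z_k))c_k|\le C|z|^\alpha\eta(z_k)|\nabla u_k(z_k)|/L_k$. This does tend to $0$, but because of your interpolation inequality applied to $\eta\nabla u_k=\nabla(\eta u_k)-u_k\nabla\eta$. This interpolation is what replaces the paper's preliminary pass with $\beta<\alpha$ in Theorem \ref{theorem_reg_C1alpha}.
\item \emph{The claim $v_k-w_k\to0$.} It does not follow from Lipschitz bounds on $\eta$ alone. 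The cross term $(u_k(z_k+r_kz)-u_k(z_k))\,\nabla\eta(z_k)\cdot z/(L_kr_k^\alpha)$ needs a uniform $C^{0,\alpha}$ bound on $u_k$ near ${\rm supp}\,\eta$. This is available from Proposition \ref{prop_blow_up_holder_a=0} with $q\ge d/(1-\alpha)$.
\item \emph{Limit in the principal term.} Since $N_k$ is only H\"older continuous, passing to the limit in $\int N_k(z_k+r_kz)\nabla w_k\cdot\nabla\overline{\varphi}\,dz$ requires an $H^1_{loc}$ bound on $w_k$. This follows from Proposition \ref{prop_caccio_simple} once the right-hand-side terms are estimated as above.
\end{itemize}
With these corrections, the contradiction and the approximation step go through as you describe.
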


We are not going to provide a detailed proof of Propositions \ref{prop_blow_up_holder_a=0}-\ref{prop_blow_up_C1_holder_a=0},  since, if $a=0$, we can proceed as in  the classical non-magnetic  elliptic setting, see for example \cite{S_reg} and \cite{V_reg}. Furthermore, in Section \ref{sec_Holder_estimates}, we are going to present a detailed argument in the more general case $a \neq 0$. Nevertheless, let us discuss the main ideas behind the proof of Proposition \ref{prop_blow_up_holder_a=0} (being the proof of Proposition \ref{prop_blow_up_C1_holder_a=0} fairly similar). 

The first step is to obtain a priori estimates as in \eqref{ineq_u_holder_a=0}  by the means of a blow-up argument and supposing that $u \in C^{0,\alpha}(B_r,\mb{C})$. More precisely, we argue by contradiction assuming that there exists a sequence of solutions $\{u_k\}$ such that 
\begin{equation}
\norm{u_k}_{C^{0,\alpha}(B_r,\mb{C})} \ge k \norm{u_k}_{L^2(B_R,\mb{C})}.
\end{equation}
Then we show that a suitable scaling of $u_k$ converge to a limit $u$ which is defined in the whole of $\R^d$ and it is globally H\"older continuous, $u \not \equiv 0$ and sublinear in the sense that $|u(z)| \le |z|^\alpha$ globally. Since $a=0$, the magnetic potential is subcritical with respect to the scaling of the Laplacian and so passing to the limit in \eqref{eq_magnetic_weak} we also conclude that both the real and imaginary part of $u$ are entire harmonic functions. Then, classic polynomial Liouville theorems force $u\equiv 0$, which is a contradiction.   

Once the a priori estimates have been established, we consider the regularized problems
\begin{equation}
\begin{cases}
(iM_k\nabla +b_k)^2w=0, &\text{ in } B_r\\
w=u, &\text{ on } \partial B_r,    
\end{cases}
\end{equation}
where $M_k$ and $b_k$ are smooth and converge to $M$ and $b$ respectively.
The existence and uniqueness of solutions follows by standard variational arguments, while the regularity of solutions is a consequence of elliptic regularity theory for linear systems, see for example \cite{GM_reg_book},  since $\Rea w$ and $\Ima w$ solve the system
\begin{equation}
\begin{cases}
-\dive(N_k\nabla \Rea w) -2M_kb_k \cdot \nabla \Ima w -M_k\nabla \cdot b_k \Ima w+|b_k|^2 \Rea w=0, &\text{ in } B_r,\\
-\dive(N_k\nabla \Ima w) -2M_kb_k \cdot \nabla \Rea w -M_k\nabla \cdot b_k \Rea w+|b_k|^2 \Ima w=0, &\text{ in } B_r,\\
\Rea w=\Rea u, \;\Ima w=\Ima u &\text{on } \partial B_r.\\
\end{cases}
\end{equation}
Finally, we can show that any solution $u$ of \eqref{eq_magnetic_strong} can be approximated with solutions of the  regularized problems. Since \eqref{ineq_u_holder_a=0} holds for solutions of the regularized problems and is stable with respect to this regularization (by the a priori regularity estimates) we can pass to the limit  as $k \to \infty$ and conclude that $u \in C^{0,\alpha}(B_r,\mb{C})$  and \eqref{ineq_u_holder_a=0} holds for general weak solutions.

We refer to \cite{V_reg} for all the details of this approach to regularity in the case of the classical elliptic setting.

Next we turn our attention to boundary regularity under Dirichlet boundary conditions. These regularity results are needed for Section \ref{sec_approx_domains}, which in turn introduces the approximating problems on which the proof of Theorems \ref{theorem_C0alpha_main}-\ref{theorem_C1alpha_main} is based.

Let us consider a bounded domain $\Omega\subset \R^d$, $T\subset \partial \Omega$, $T$ open in $\partial \Omega$  and the following problem
\begin{equation}
\begin{cases}
L_mu=0, &\text{ in }  \Omega,\\
u=0, &\text{ on } T,
\end{cases}
\end{equation}
in the sense that $u \in \tilde{H}^1_{0,T}(\Omega,\mb{C})$ and
\begin{equation}\label{eq_dirchlet}
\int_{\Omega} (M\nabla u +ibu) \cdot \overline{(M\nabla v +ibv)} \,dz=0 \quad \text{ for any } v \in \tilde{H}^1_{0,T}(\Omega,\mb{C})
\end{equation}
where 
\begin{equation}
\tilde{H}^1_{0,T}(\Omega):=\{u \in \tilde{H}^1(\Omega,\mb{C}): u=0 \text{ on } T\}.
\end{equation}
Now we state the regularity results for weak solutions, as an extension of Propositions \ref{prop_blow_up_holder_a=0} and \ref{prop_blow_up_C1_holder_a=0}, up to $T\subset \partial\Omega$ under suitable assumptions on $M,b$ and $T$. Also in this occasion, we omit the proofs since they are very similar to the classical non-magnetic case.

We say that $T$ is $C^{1,\omega}$ if for any $x_0 \in T$ there exists a local parametrization $\Psi$ of $T \cap B_R(x_0)$  such that $\Psi$ is $C^1$ and the Jacobian $D \Psi$ has modulus of continuity $\omega$ for some $R>0$.

\begin{proposition}\label{prop_blow_up_holder_a=0_Diri}
Let   $x_0 \in T$ and  $R>0$ be such that $\partial \Omega \cap B_R(x_0)=T \cap B_R(x_0)$  and  $r \in (0,R)$. Suppose that $T$ is $C^{1,\omega_1}$ for some modulus of continuity $\omega_1$ and that 
$\alpha \le 1-d/q$. Suppose that $b \in L^q(B_R,\R^d)$ with $\|b\|_{L^q(B_R(x_0) \cap \Omega,\R^d)}\leq m$, $M$  is 
continuous with a modulus of continuity $\omega_2$ and $\norm{M}_{C^{0,\omega_2}(B_R(x_0)\cap \Omega),\R^{d,d})}\leq L$. Then, if $u$ is weak solution of 
\eqref{eq_dirchlet}, there exists a constant $c>0$ depending only on $d$, $r$, $R$, $q$ $\alpha$, $T$, $\la$, $\Lambda, m$ and  $L$ such that
\begin{equation}
\norm{u}_{C^{0,\alpha}(B_r(x_0) \cap \Omega,\mb{C})} \le c \norm{u}_{L^2(B_R(x_0)\cap \Omega,\mb{C})}.
\end{equation}
\end{proposition}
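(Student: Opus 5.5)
The plan follows the interior scheme sketched after Proposition \ref{prop_blow_up_holder_a=0}: first an a priori estimate by blow-up and contradiction, then removal of the a priori regularity by approximation. First we flatten the boundary. Using the $C^{1,\omega_1}$ parametrization $\Psi$ of $T\cap B_R(x_0)$, we pass to coordinates in which $\Omega\cap B_R(x_0)$ becomes a subset of the half space $\{z_d>0\}$ and $T$ becomes a piece of $\{z_d=0\}$. In the weak formulation \eqref{eq_dirchlet} this change of variables replaces $M$ by $\tilde M=M\circ\Psi\,(D\Psi)^{-1}$ (up to the Jacobian factor), which is still continuous with a modulus controlled by $\omega_1,\omega_2$. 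It replaces $b$ by $\tilde b=b\circ\Psi$ times a bounded Jacobian, which stays in $L^q$. Uniform ellipticity \eqref{hp_M_elliptic} is preserved with constants depending on $T$. So it is enough to prove the estimate for the flat Dirichlet problem in a half ball $B_R^+$.

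For the a priori estimate we assume $u\in C^{0,\alpha}(\overline{B_r^+})$ and argue by contradiction. Suppose there are data $(M_k,b_k)$ satisfying the bounds and solutions $u_k$ with $[u_k]_{C^{0,\alpha}(B_r^+)}\ge k\norm{u_k}_{L^2(B_R^+)}$. We localize with a cutoff $\eta$ and take points $z_k,w_k$ that almost realize the seminorm $[\eta u_k]_{C^{0,\alpha}}$; the $L^\infty$ bound (Corollary \ref{corollary_bound_Harn}, whose Moser iteration extends to the Dirichlet case because $|u|$ vanishes on $T$) forces $\rho_k=|z_k-w_k|\to0$. We then set
\[
v_k(z)=\frac{\eta(z_k+\rho_k z)\,u_k(z_k+\rho_k z)}{\rho_k^{\alpha}[\eta u_k]_{C^{0,\alpha}}}
\]
and subtract $v_k(0)$ if $\mathrm{dist}(z_k,T)/\rho_k\to\infty$. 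The rescaled equation has coefficient $M_k(z_k+\rho_k\cdot)$, which converges locally uniformly to a constant elliptic matrix by equicontinuity. It has potential $\rho_k b_k(z_k+\rho_k\cdot)$, whose $L^q$ norm on bounded sets is $\rho_k^{1-d/q}\norm{b_k}_{L^q}\to0$ because $q>d$. The cutoff produces error terms that also vanish after scaling, since $\alpha\le 1-d/q<1$ and the low-norm assumption kills them. By Ascoli--Arzel\`a and the Caccioppoli inequality of Proposition \ref{prop_caccio_simple}, $v_k\to v$ locally uniformly and weakly in $H^1_{loc}$. The limit $v$ satisfies $[v]_{C^{0,\alpha}}\le1$, and its oscillation between $0$ and the limit direction of $(w_k-z_k)/\rho_k$ equals one, so $v$ is nonconstant.

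The limit profile is of one of two kinds. If $\mathrm{dist}(z_k,T)/\rho_k\to\infty$, then $\Rea v$ and $\Ima v$ solve a constant coefficient elliptic equation in $\R^d$ and grow at most like $|z|^\alpha$ with $\alpha<1$, so by the Liouville theorem they are constant, a contradiction. Otherwise $v$ lives on a half space, vanishes on its flat boundary, and solves a constant coefficient equation there. Odd reflection, after a linear change of variables that maps the constant matrix to the identity and keeps the boundary a hyperplane, gives an entire harmonic function of sublinear growth that vanishes on a hyperplane, hence $v\equiv0$, again a contradiction. The main obstacle is this boundary case. We must show that the Dirichlet condition passes to the limit even though the limiting boundary may be at a finite positive distance from the origin, and that the flattened domains $\rho_k^{-1}(\Omega-z_k)$ converge to a half space. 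The first works because uniform H\"older bounds transfer $v_k=0$ on the rescaled $T$ to $v=0$ on the limit hyperplane. The second is where the $C^{1,\omega_1}$ regularity of $T$ is essential, since it makes the rescaled boundary graphs converge in $C^1_{loc}$ to a hyperplane.

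Finally, to remove the a priori assumption, we mollify $M$, $b$ and the parametrization of $T$. We then solve the regularized Dirichlet problems with boundary datum $u$ on the remaining part of the boundary; existence comes from Lax--Milgram, using coercivity from Proposition \ref{prop_poinc_boundary}. These regularized solutions are smooth up to $T$ by classical elliptic systems theory for $(\Rea w,\Ima w)$, so the a priori estimate applies to them with uniform constants. They converge to $u$ in $H^1$ by uniqueness and energy estimates, and the H\"older bound passes to the limit, which yields the claim.
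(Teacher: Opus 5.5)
Your proposal is correct and is essentially the argument the paper intends. The paper omits this proof as a routine adaptation of the classical non-magnetic scheme: a Simon-type blow-up with the interior/half-space dichotomy closed by Liouville theorems (exactly as in Cases 1 and 2 of the proof of Theorem \ref{theorem_reg_C0alpha}), followed by regularization of the data and passage to the limit. The one point worth making explicit is that the magnetic operator does not annihilate constants, so subtracting $v_k(0)$ in the interior case leaves the term $\rho_k^{1-\alpha}\,b_k(z_k+\rho_k\,\cdot)\,\eta(z_k)u_k(z_k)/[\eta u_k]_{C^{0,\alpha}}$ in the rescaled equation; its $L^2_{loc}$ norm is $O\bigl(\rho_k^{1-\alpha-d/q}\,\|u_k\|_{L^\infty}/[\eta u_k]_{C^{0,\alpha}}\bigr)\to 0$, and this is where $\alpha\le 1-d/q$ is genuinely used.
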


\begin{proposition}\label{prop_blow_up_C1_holder_a=0_Diri}
Let   $x_0 \in T$  and $R>0$ be such that $\partial \Omega \cap B_R(x_0)=T \cap B_R(x_0)$.
Let  $r \in (0,R)$,  $\alpha \in (0,1)$ and assume  that $T$ is $C^{1,\alpha}$.
Suppose that $b \in C^{0,\alpha}(B_R,\R^d)$ with $\|b\|_{C^{0,\alpha}(B_R(x_0) \cap \Omega,\R^d)}\leq m$, $M \in C^{0,\alpha}(B_R(x_0)\cap \Omega,\R^{d,d})$  with  $\norm{M}_{C^{0,\alpha}(B_R(x_0)\cap \Omega,\R^{d,d})}\leq L$. Then, if $u$ is weak solution of \eqref{eq_dirchlet},  there exists a constant $c>0$ depending only on $d$, $r$, $R$, $\alpha$, $T$, $\la$, $\Lambda, m$ and  $L$ such that
\begin{equation}
\norm{u}_{C^{1,\alpha}(B_r(x_0)\cap \Omega ,\mb{C})} \le c \norm{u}_{L^2(B_R(x_0)\cap \Omega,\mb{C})}.
\end{equation}
\end{proposition}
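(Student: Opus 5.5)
We would prove Proposition \ref{prop_blow_up_C1_holder_a=0_Diri} by the same two-step scheme used for Proposition \ref{prop_blow_up_holder_a=0}: an a priori estimate obtained by blow-up and contradiction, followed by a regularization-approximation argument. First we flatten the boundary. Take a $C^{1,\alpha}$ local parametrization $\Psi$ of $T\cap B_R(x_0)$ and straighten $T$ to a piece of the hyperplane $\{z_d=0\}$. The pulled-back equation keeps the same magnetic structure. The new matrix is $\tilde M=(M\circ\Psi)(D\Psi)^{-T}$, and it is still $C^{0,\alpha}$ and uniformly elliptic with constants controlled by $\lambda,\Lambda,T$. The new potential $\tilde b=b\circ\Psi$ is $C^{0,\alpha}$. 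The Jacobian factor is $C^{0,\alpha}$ and can be absorbed into $\tilde M$ and $\tilde b$ after multiplying through. So it suffices to treat a half ball $B_R^+$ with homogeneous Dirichlet data on the flat part.

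\textbf{A priori estimate.} Assume $u\in C^{1,\alpha}$ up to the flat boundary. Argue by contradiction: suppose there are solutions $u_k$ with data bounded by $m,L$ and $[\nabla u_k]_{C^{0,\alpha}(B_r^+)}\ge k\|u_k\|_{L^2(B_R^+)}$. A standard interpolation and cut-off argument reduces this to controlling the seminorm of $\eta u_k$, where $\eta$ is a cut-off. Pick points $z_k,\zeta_k$ and a radius $r_k=|z_k-\zeta_k|\to0$ that almost realize the seminorm. Define
\[
w_k(z)=\frac{\eta u_k(z_k+r_kz)-P_k(z)}{r_k^{1+\alpha}[\nabla(\eta u_k)]_{C^{0,\alpha}}},
\]
where $P_k$ is the first-order Taylor polynomial at a boundary-adapted base point. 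Take it at the projection of $z_k$ onto $\{z_d=0\}$ if $\mathrm{dist}(z_k,\{z_d=0\})/r_k$ stays bounded, and at $z_k$ otherwise. In the boundary case $P_k$ is linear in $z_d$ only, because $u_k$ and its tangential gradient vanish on the flat part.

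The rescaled equation has coefficients $\tilde M(z_k+r_k\cdot)\to \tilde M_\infty$, a constant matrix, locally uniformly. The magnetic and lower-order terms carry positive powers of $r_k$: $b$ enters as $r_k b$ in first order and $r_k^2|b|^2$ in zero order. These terms, together with the error terms produced by $\eta$ and by $P_k$, vanish as $k\to\infty$. Here the $C^{0,\alpha}$ bounds on $b$ and $\tilde M$ are used to show that the commutator terms divided by $r_k^{1+\alpha}$ tend to $0$. This is the step we expect to be the main obstacle. The drift term $2i\tilde M b\cdot\nabla$ applied to the polynomial part must be shown to be $o(1)$ after the rescaling. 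This works because $\|u_k\|_{C^1}$ is controlled by the blow-up normalization, via a Caccioppoli estimate (Proposition \ref{prop_caccio_simple}), and that control gives the required smallness.

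\textbf{Limit and Liouville argument.} The rescaled functions have uniformly bounded $C^{1,\alpha}$ seminorm and are normalized at two points. By Ascoli--Arzel\`a they converge in $C^1_{loc}$ to a limit $w$ that is nonconstant in gradient, with $|\nabla w(z)-\nabla w(0)|\le|z|^\alpha$. The real and imaginary parts of $w$ solve $\mathrm{div}(\tilde N_\infty\nabla\cdot)=0$ either in $\R^d$ or in a half space with zero Dirichlet data. After a linear change of variables and an odd reflection in the half-space case, both become entire harmonic functions of growth $|z|^{1+\alpha}$. The polynomial Liouville theorem forces them to be affine, and this contradicts $[\nabla w]_{C^{0,\alpha}}=1$.

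\textbf{Removing the regularity assumption.} Mollify $\tilde M$ and $\tilde b$ and solve the Dirichlet problems with boundary values $u$ on smooth domains between $B_r^+$ and $B_R^+$. Existence follows from coercivity (Proposition \ref{prop_poinc_boundary}). Classical theory for linear elliptic systems, applied to $(\Rea w,\Ima w)$, gives the smoothness needed to invoke the a priori bound. The bound is uniform in the regularization, so we can pass to the limit using weak $H^1$ convergence and uniqueness, and obtain $u\in C^{1,\alpha}$ with the stated estimate.
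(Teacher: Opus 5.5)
Your proposal is correct and follows the route the paper indicates; the paper omits this proof as a routine adaptation of the Simon-type scheme sketched after Propositions \ref{prop_blow_up_holder_a=0}--\ref{prop_blow_up_C1_holder_a=0}. That route is: flatten $T$, obtain the a priori bound by blow-up and contradiction, using the Liouville theorem for $\dive(N_\infty\nabla\cdot)$ on $\R^d$ or on a half-space after odd reflection, and then remove the a priori regularity assumption by regularizing $M$ and $b$. One precision: the smallness $\|u_k\|_{C^1}=o(E_k)$ does not follow from Caccioppoli alone; it follows from the $L^\infty$ (or Caccioppoli $L^2$) bound combined with an interpolation inequality against $E_k$ on the fixed flattened domain, which is also why you can use the sharp exponent directly, without the two-step $\beta<\alpha$ device needed in Theorem \ref{theorem_reg_C1alpha}.
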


\section{Approximating with perforated domains}\label{sec_approx_domains}
In order to deal with the far more delicate situation $a \neq 0$, we are going to approximate solutions $u$ to \eqref{eq_magnetic_strong} with solutions of the same equation on a one parameter family of perforated domains around $\Sigma_0$, having a prescribed homogeneous Dirichlet boundary condition on the boundary of the hole. The construction is strongly inspired by the approach developed in \cite{CFV2,CFV,fio}. Then we are going to prove regularity estimates which are stable with respect to this approximation.

Let us define, for $N$ as in \eqref{hp_M_structure} and $\e>0$, the closed set
\begin{equation}\label{def_SigmaeN}
\Sigma_{\e,N}:=\{z=(x,y) \in \R^{d-n}\times \R^n:N_3^{-1}(x,y) y \cdot y \le \e^2\},
\end{equation}
with its boundary
\begin{equation}\label{def_boundary_SigmaeN}
\partial \Sigma_{\e,N}:=\{z=(x,y) \in \R^{d-n}\times \R^n:N_3^{-1}(x,y) y \cdot y =\e^2\}.
\end{equation}
If $N_3=\mathop{\rm Id}_{n}$, we simply let $\Sigma_\e:=\Sigma_{\e,\mathop{\rm Id}}$.
We note that for any matrix $N$ the set $\Sigma_{\e,N}$ shrinks onto $\Sigma_0$ as $\e \to 0^+$, where $\Sigma_0$ is defined in \eqref{def_Sigma}.
Let  $\Sigma_{\e,N,R}:=\Sigma_{\e,N}\cap B_{3R/4}$, and  let us consider the space $\tilde{H}^1_{0, \Sigma_{\e,N,R}}(B_R,\mb{C})$, defined as in \eqref{Hqe}. Then,
\begin{equation}\label{def_norm_H10}
\norm{v}_{\tilde{H}^1_{0, \Sigma_{\e,N,R}}(B_R,\mb{C})}^2:=\int_{B_R} |iM\nabla v+A v|^2 \, dz
\end{equation}
defines a norm on $\tilde{H}^1_{0, \Sigma_{\e,N,R}}(B_R,\mb{C})$ by the next result. Its proof is standard and we omit it.

\begin{proposition}\label{prop_poinc}
For any $\e>0$ there exists a constant $C_\e>0$, depending on $\e$, such that
\begin{equation}\label{ineq_poinc}
\int_{B_R} |v|^2 \, dz \le C_\e \int_{B_R} |iM\nabla v+A v|^2\, dz \quad  \text{ for any }  v \in\tilde{H}_{0, \Sigma_{\e,N,R}}^1(B_R,\mb{C}).
\end{equation}
\end{proposition}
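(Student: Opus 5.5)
We argue by contradiction, using compactness. Suppose that for some $\e>0$ no such constant exists. Then there is a sequence $v_k \in \tilde{H}^1_{0,\Sigma_{\e,N,R}}(B_R,\mb{C})$ with
\begin{equation}
\int_{B_R}|v_k|^2\,dz=1 \quad\text{and}\quad \int_{B_R}|iM\nabla v_k+Av_k|^2\,dz\to 0 .
\end{equation}

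\textbf{Step 1 (boundedness).} By Proposition \ref{prop_equiv_norms}, $\{v_k\}$ is bounded in $\tilde{H}^1(B_R,\mb{C})$. In particular it is bounded in $H^1(B_R,\mb{C})$, by Proposition \ref{prop_H1MA_H1}.

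\textbf{Step 2 (passage to the limit).} By Rellich's theorem, up to a subsequence, $v_k\rightharpoonup v$ weakly in $\tilde{H}^1$ and $v_k\to v$ strongly in $L^2(B_R)$, so $\norm{v}_{L^2(B_R)}=1$. The closed subspace $\tilde{H}^1_{0,\Sigma_{\e,N,R}}$ is weakly closed, hence $v$ belongs to it. The functional $w\mapsto \int_{B_R}|\nabla_{M,A}w|^2\,dz$ is a continuous convex quadratic form on $\tilde{H}^1$, hence weakly lower semicontinuous. Therefore $\nabla_{M,A}v=0$ a.e. in $B_R$.

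\textbf{Step 3 (identification of the limit).} By the diamagnetic inequality \eqref{ineq_ani_dia}, $\sqrt{\la}\,|\nabla|v||\le|\nabla_{M,A}v|=0$. Since $|v|\in H^1(B_R)$ and $B_R$ is connected, $|v|$ is a constant $c$, and $c>0$ because $\norm{v}_{L^2}=1$.

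\textbf{Step 4 (contradiction).} The set $\Sigma_{\e,N,R}$ has nonempty interior: for $\e>0$ it contains a neighbourhood of $\Sigma_0\cap B_{3R/4}$, because $N_3^{-1}y\cdot y\le \la^{-1}|y|^2$. Since $v=0$ q.e. on $\Sigma_{\e,N,R}$, it vanishes a.e. on a set of positive measure. This contradicts $|v|\equiv c>0$.

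The main technical point is Step 4: we must check that the q.e. vanishing in \eqref{Hqe} implies a.e. vanishing on the interior of $\Sigma_{\e,N,R}$, and that this property passes to weak limits. Both follow once we note that the trace or quasi-continuous representative on an open set coincides with the a.e. representative.

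Alternatively, one can avoid capacity altogether. Restricted to the open set $\mathrm{int}\,\Sigma_{\e,N,R}$, the functions $v_k$ vanish identically, and strong $L^2$ convergence then gives $v=0$ a.e. there.

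Note also that $c_{n,a,M}>0$ is not even needed here. The role of $\e$ is only to guarantee that the hole has positive measure, which is why $C_\e$ depends on $\e$ and may blow up as $\e\to0$.
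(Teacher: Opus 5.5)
Your proof is correct and is the standard compactness-by-contradiction argument the paper has in mind. The paper omits the proof of this statement, but it proves the analogous Proposition \ref{prop_poinc_boundary} the same way, via Proposition \ref{prop_equiv_norms} and Rellich, and in your version the positive-measure hole $\{|y|<\sqrt{\la}\,\e\}\cap B_{3R/4}\subset\Sigma_{\e,N,R}$ rules out the nonzero constant-modulus limit.

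One small inconsistency: your closing claim that $c_{n,a,M}>0$ is not needed does not follow from your own argument. Step 1 relies on Proposition \ref{prop_equiv_norms}, whose proof uses the Hardy inequality. The claim does become true if you skip Step 1 and instead apply the Poincar\'e inequality for $H^1$ functions vanishing on a set of positive measure directly to $|v|$, combined with the diamagnetic inequality \eqref{ineq_ani_dia}.
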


\begin{proposition}\label{prop_ue}
Let $u \in \tilde{H}^1(B_R,\mb{C})$ be a solution of \eqref{eq_magnetic_strong}. Then there exists $\e_0>0$ such that for any $\e \in (0,\e_0)$ the problem
\begin{equation}\label{prob_ue}
\begin{cases}
L_{m} u_\e=0, & \text{ on } B_R \setminus\Sigma_{\e,N,R},\\
u_\e=0, & \text{ on } \Sigma_{\e,N,R},\\
u_\e=u, & \text{ on } \partial B_R,
\end{cases}
\end{equation}
admits a unique weak solution in the sense that  $u_\e \in \tilde{H}_{0, \Sigma_{\e,N,R}}^1(B_R,\mb{C})$, $u_\e=u$ on $\partial B_R$ and
\begin{equation}\label{eq_ue}
\int_{B_R} (iM\nabla u_\e+Au_\e)\overline{(iM\nabla v+Av)}  \, dz=0
\quad  \text{ for any }  v \in \tilde{H}_{0, \Sigma_{\e,N,R}}^1(B_R,\mb{C}).
\end{equation} 
Furthermore,
\begin{equation}\label{limit_ue}
u_\e \to u \quad \text{ strongly in } \tilde{H}^1(B_R,\mb{C}) \text{ as } \e \to 0^+.
\end{equation}
\end{proposition}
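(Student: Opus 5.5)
The plan is to obtain $u_\e$ as the minimizer of the magnetic energy over an affine space encoding both boundary conditions. Convergence will then follow from the orthogonality relation between $u_\e$ and $u$, together with a cut-off approximation of $u$ by functions vanishing near $\Sigma_0$.

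\emph{Step 1 (choice of $\e_0$).} By \eqref{hp_M_elliptic}, the eigenvalues of $N_3^{-1}$ are at least $\Lambda^{-1}$. Hence
\[
\Sigma_{\e,N}\subset\{|y|\le \sqrt{\Lambda}\,\e\}.
\]
Choose $\e_0$ small, so that for $\e<\e_0$ the set $\Sigma_{\e,N,R}$ is a thin closed neighbourhood of $\Sigma_0\cap B_{3R/4}$, contained in $\{|y|\le\sqrt\Lambda\e\}\cap B_{3R/4}$ and far from $\partial B_R$.

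\emph{Step 2 (existence and uniqueness).} Fix $\phi\in C^\infty_c(B_R)$ with $\phi=1$ on $B_{3R/4}$. Set $g:=(1-\phi)u\in\tilde H^1(B_R,\mb C)$. Then $g=u$ on $\partial B_R$ and $g=0$ on $\Sigma_{\e,N,R}$. Look for $u_\e=g+w$ with
\[
w\in X_\e:=\tilde H^1_{0,\Sigma_{\e,N,R}\cup\partial B_R}(B_R,\mb C),
\]
and require \eqref{eq_ue}, which is a linear problem for $w$ with right-hand side $-\int_{B_R}\nabla_{M,A}g\cdot\overline{\nabla_{M,A}v}\,dz$. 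By Proposition \ref{prop_poinc_boundary} (or Proposition \ref{prop_poinc}) and Proposition \ref{prop_equiv_norms}, the sesquilinear form $\int_{B_R}\nabla_{M,A}w\cdot\overline{\nabla_{M,A}v}\,dz$ is coercive and continuous on $X_\e$. The constant here is uniform in $\e$ if one uses the Poincaré inequality with respect to $\partial B_R$. Lax--Milgram then gives existence and uniqueness of $w$, hence of $u_\e$. Here I interpret the test space in \eqref{eq_ue} as also vanishing on $\partial B_R$. Equivalently, $u_\e$ is the unique minimizer of $v\mapsto\int_{B_R}|\nabla_{M,A}v|^2\,dz$ over
\[
K_\e:=\{v:\ v-u\in\tilde H^1_{0,\partial B_R},\ v=0 \text{ on }\Sigma_{\e,N,R}\}.
\]

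\emph{Step 3 (orthogonality).} Since $u$ solves \eqref{eq_magnetic_weak} and $v-u\in\tilde H^1_{0,\partial B_R}$ for every $v\in K_\e$, expanding the square gives
\[
\int_{B_R}|\nabla_{M,A}v|^2\,dz=\int_{B_R}|\nabla_{M,A}u|^2\,dz+\int_{B_R}|\nabla_{M,A}(v-u)|^2\,dz .
\]
Therefore
\[
\norm{\nabla_{M,A}(u_\e-u)}_{L^2}=\min_{v\in K_\e}\norm{\nabla_{M,A}(v-u)}_{L^2}.
\]
By Proposition \ref{prop_poinc_boundary} and Proposition \ref{prop_equiv_norms}, this controls $\norm{u_\e-u}_{\tilde H^1(B_R,\mb C)}$. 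So it suffices to show $\operatorname{dist}(u,K_\e)\to0$ as $\e\to0^+$.

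\emph{Step 4 (density, the main obstacle).} Take $\Psi_h$ as in \eqref{def_Psi_h}, and define
\[
v_h:=u\bigl(1-\phi(1-\Psi_h)\bigr).
\]
Then $v_h=u$ near $\partial B_R$. Moreover $v_h=0$ on $\{|y|\le e^{-2h}\}\cap B_{3R/4}$, so $v_h\in K_\e$ whenever $\sqrt\Lambda\,\e<e^{-2h}$. Convergence $v_h\to u$ in $\tilde H^1$ follows exactly as in the proof of Proposition \ref{prop_density_n=2_H=W}. The terms $|u|^2|1-\Psi_h|^2/|y|^2$ and $|1-\Psi_h|^2|\nabla u|^2$ tend to zero by dominated convergence. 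The extra term satisfies $|u|^2|\nabla\Psi_h|^2\le\norm{\eta'}_\infty^2|u|^2/(h^2|y|^2)$, which tends to zero since $|u|^2/|y|^2\in L^1$ because $u\in\tilde H^1$; this holds for every $n\ge2$, so no separate capacity argument is needed. The delicate point is only to keep the boundary datum on $\partial B_R$ untouched (hence the factor $\phi$) while vanishing on the hole. Given $\delta>0$, first pick $h$ with $\norm{v_h-u}<\delta$, then take $\e$ small so that $v_h\in K_\e$. This yields \eqref{limit_ue}.
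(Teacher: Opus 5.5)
Your proof is correct and follows essentially the paper's route. Existence and uniqueness come from minimizing the magnetic energy with the boundary datum. Convergence comes from the fact that $u-u_\e$ minimizes the energy among functions vanishing on $\partial B_R$ and equal to $u$ on the hole; your Pythagorean identity makes explicit this minimality, which the paper only asserts. You then test against a cutoff of $u$ near $\Sigma_0$, controlled by $|u|^2/|y|^2\in L^1$.

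The differences are cosmetic. The paper uses an $\e$-scale cutoff $\eta_\e$ with $|\nabla\eta_\e|\lesssim \e^{-1}\lesssim |y|^{-1}$ on $\Sigma_{2\e,N}$, whereas you use the fixed logarithmic cutoff $\Psi_h$ and choose $h$ before $\e$. You also handle explicitly, via $\phi$, the need to leave the datum on $\partial B_R$ untouched, which the paper leaves implicit.
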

\begin{proof}
In view of Proposition \ref{prop_poinc}, minimizing the  coercive functional
\begin{equation}
J(v):=\int_{B_R} |iM\nabla v+A v|^2 \, dz \quad 
\text{ on the convex set  } \{v\in   \tilde{H}_{0, \Sigma_{\e,N,R}}^1(B_R,\mb{C}):v=u \text{ on } \partial B_R\}
\end{equation}
one can conclude that there exists a weak solution $u_\e$ to \eqref{prob_ue}. Furthermore, taking the difference of two solutions of \eqref{prob_ue} and testing \eqref{eq_ue} with the difference, it's clear that $u_\e$ is unique.
Let $w_\e:=u-u_\e$. Then, the function $w_\e$ is a solution of 
\begin{equation}
\begin{cases}
L_{m} w_\e=0, & \text{ in } B_R \setminus\Sigma_{\e,N,R},\\
w_\e=u, & \text{ on } \Sigma_{\e,N,R},\\
w_\e=0, & \text{ on } \partial B_R,
\end{cases}
\end{equation}
in the sense that $w_\e \in \tilde{H}_{0,\partial B_R}^1(B_R,\mb{C})$, $w_\e=u$ on $\Sigma_{\e,N,R}$ and  
\begin{equation}
\int_{B_R} (iM\nabla w_\e+Aw_\e)\overline{(iM\nabla v+Av)}\, dz=0 
\quad  \text{ for any }  v \in \tilde{H}_{0, \partial B_R \cup \Sigma_{\e,N,R}}^1(B_R,\mb{C}).
\end{equation}
Furthermore, $w_\e$ minimizes 
\begin{equation}
\int_{B_R} |iM\nabla v+A v|^2 \, dz 
\quad \text{ on }\{v \in   \tilde{H}_{0,\partial B_R}^1(B_R,\mb{C}):v=u \text{ on } \Sigma_{\e,N,R}\}.
\end{equation}
By \eqref{hp_M_elliptic}, if $(x,y_1) \in \partial \Sigma_{\e,N}$ and $(x,y_2) \in \partial \Sigma_{2\e,N}$
\begin{equation}
\frac{3}{\Lambda}\e^2 \le  \frac{1}{\Lambda}N_3^{-1}(x,y)(y_1-y_2)\cdot(y_1-y_2)\e^2\le |y_1-y_2|^2 \le \frac{1}{\la}N_3^{-1}(x,y)(y_1-y_2) \cdot(y_1-y_2) 
\le \frac{7}{\la}\e^2.
\end{equation}
Hence, there exists a sequence  of cut-off functions   $\eta_\e \in C_c^\infty(B_R)$ such that $\eta_\e=1$ on $\Sigma_{\e,N}$, $\eta_\e=0$ on $B_R\setminus \Sigma_{\e,N}$ and  $|\nabla \eta_\e| \le \frac{c}{\e}$  for some positive constant $C>0$ that does not depend on $\e$. 

We are going to test the minimality of $w_\e$ with $\eta_\e u$ and, after some estimates, pass to the limit as $\e \to 0^+$ to conclude that  $w_\e \to 0$ strongly in $\tilde{H}^1(B_R,\mb{C})$. 
More precisely, by Proposition \ref{prop_equiv_norms}, there exist constants $C_1,C_2, C_3,C_4>0$ (that do not depend on $\e$) such that 
\begin{multline}
\int_{B_R} |iM\nabla w_\e+A w_\e|^2 \, dz \le \int_{B_R} |iM\nabla(\eta_\e u)+A (\eta_\e u)|^2\, dz\\
\le C_1 \int_{B_R} \left(|\nabla (\eta_\e u)|^2 + \frac{|\eta_\e u|^2}{|y|^2}\right)\, dz \le C_2 \frac{1}{\e^2}\int_{B_R\cap \Sigma_{2\e,N} }|u|^2 \, dz\\
+C_3 \int_{B_R\cap \Sigma_{2\e,N} } \left(|\nabla u|^2 + \frac{ |u|^2}{|y|^2}\right)\, dz
\le  C_4\int_{B_R\cap \Sigma_{2\e,N} } \left(|\nabla u|^2 + \frac{ |u|^2}{|y|^2}\right)\, dz,
\end{multline}
since, by \eqref{hp_M_elliptic},  $\frac{1}{\e^2} \le \frac{\Lambda}{|y|^2}$ on $\Sigma_{\e,N}$ for any $\e>0$. Passing to the limit as $\e \to 0^+$, we obtain   $w_\e \to 0$ strongly in $\tilde{H}^1(B_R,\mb{C})$ in view of Proposition \ref{prop_poinc_boundary}. Hence, we have proved \eqref{limit_ue}. 
\end{proof}

\begin{proposition}\label{prop_ue_unif_bounded}
Suppose that $u_\e$ is a solution of \eqref{eq_ue}.
There exists a constant $\kappa>0$, depending only on the dimension $d$, on  $\Lambda$ and $\la$, such that 
\begin{equation}\label{ineq_ue_bounded}
\norm{u_\e}_{L^\infty(B_r,\mathbb C)} \le \frac{\kappa}{(R-r)^d}\norm{u_\e}_{L^2(B_R,\mathbb C)} \quad \text{ for any } r \in (0,R).
\end{equation}
\end{proposition}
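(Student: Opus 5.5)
The plan is to repeat the argument of Proposition \ref{prop_caccio} and Corollary \ref{corollary_bound_Harn}, now on the perforated domain. The only new ingredient is the homogeneous Dirichlet condition on $\Sigma_{\e,N,R}$, and we must check that it never spoils the Moser iteration.

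\textbf{Step 1: extension by zero.} Since $u_\e\in \tilde{H}_{0, \Sigma_{\e,N,R}}^1(B_R,\mb{C})$ vanishes q.e. on $\Sigma_{\e,N,R}$, we regard $u_\e$ as a function on all of $B_R$ that is identically zero on the hole. By Proposition \ref{prop_H1MA_H1} it belongs to $H^1(B_R,\mb{C})$.

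\textbf{Step 2: admissibility of the Moser test functions.} For $\eta\in C^\infty_c(B_R)$ a cut-off with $0\le\eta\le1$, $\eta=1$ on $B_r$ and $|\nabla\eta|\le 2(R-r)^{-1}$, consider
\[
\varphi=(T_k|u_\e|)^{2t-2}u_\e\eta^{2\tau}.
\]
We claim $\varphi\in \tilde{H}_{0, \Sigma_{\e,N,R}}^1(B_R,\mb{C})$, so it is admissible in \eqref{eq_ue}. Indeed:
\begin{itemize}
\item $\varphi$ vanishes wherever $u_\e$ does, in particular q.e. on $\Sigma_{\e,N,R}$;
\item $\varphi$ vanishes near $\partial B_R$ because of $\eta$;
\item $T_k|u_\e|$ is bounded and Lipschitz in $|u_\e|$, so $\varphi$ has the required regularity.
\end{itemize}
To make the membership rigorous, we approximate $u_\e$ in $\tilde H^1$ by smooth functions vanishing near $\Sigma_{\e,N,R}$ (density, as in the definition of the space), compose with the bounded Lipschitz truncation, and pass to the limit. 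This step is the only genuine point to check, and I expect it to be the main (mild) obstacle. The worry is that truncation composed with multiplication by $u_\e$ stays in the closed subspace; it does, because that subspace is characterized by \eqref{Hqe} as the set of functions vanishing q.e. on the hole, and this property is preserved under the operations above.

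\textbf{Step 3: Caccioppoli inequality.} Testing \eqref{eq_ue} with $\varphi$ and repeating verbatim the computation of Proposition \ref{prop_caccio} gives
\[
\int_{B_R}|\nabla(|u_\e|^t\eta^\tau)|^2\,dz\le 2\tau^2\Big(1+4\frac{\Lambda^2}{\la^2}t^2\Big)\frac{1}{(R-r)^2}\int_{B_R}|u_\e|^{2t}\eta^{2\tau-2}\,dz .
\]
The computation uses only \eqref{hp_M_elliptic}, the diamagnetic identities \eqref{eq_Re_nabla}, \eqref{ineq_ani_dia}, and the fact that the magnetic term $|M\nabla u+iAu|^2$ enters with a favorable sign. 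Crucially, no property of $A$ enters the constant. All integrals over the hole vanish because $u_\e=0$ there, and no boundary terms appear since $\varphi$ is an admissible test function. Letting $k\to\infty$ by monotone convergence removes the truncation.

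\textbf{Step 4: Moser iteration.} Combining the Caccioppoli inequality with the Sobolev embedding for $|u_\e|^t\eta^\tau\in H^1_0(B_R)$, we iterate over radii $r_j=r+(R-r)2^{-j}$ and exponents $t_j=\chi^j$ with $\chi=d/(d-2)$. (For $d=2$, use any $\chi>1$.) Exactly as in \cite[Chapter 3]{Beck}, this yields \eqref{ineq_ue_bounded} with $\kappa$ depending only on $d,\la,\Lambda$. In particular the constant is independent of $\e$, $a$, $b$, because none of these entered the Caccioppoli constant.
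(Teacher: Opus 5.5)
Your proposal is correct and is essentially the paper's own argument. The paper simply notes that $(T_k|u_\e|)^{2t-2}u_\e\eta^{2\tau}$ is an admissible test function for \eqref{eq_ue}, reruns the Caccioppoli computation of Proposition \ref{prop_caccio} (whose constant sees neither $A$ nor the hole), and concludes by Moser iteration; your justification of admissibility via the quasi-everywhere characterization \eqref{Hqe} fills in a detail the paper leaves implicit. One caveat, shared with the paper: the iteration naturally gives the factor $(R-r)^{-d/2}$ for $d\ge 3$, and for $d=2$ the Sobolev constant brings in a power of $R$, so the stated $(R-r)^{-d}$ with $\kappa=\kappa(d,\la,\Lambda)$ only holds for bounded radii, e.g.\ $R\le 1$, which covers the normalized case $R=1$ where the estimate is used.
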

\begin{proof}
Noticing that  for  any cut-off function $\eta \in C^\infty_c(B_R)$,  any $t \in [1,+\infty)$ and any $\tau>1$
\begin{equation}
\varphi:= (T_k|u_\e|)^{2t-2} u \eta^{2\tau}, 
\end{equation}
is a test function for \eqref{ineq_ue_bounded}, where $T_k |u_\e|:= \min\{k, |u_\e|+k^{-1}\}$ for any $k \in \mathbb{N}\setminus \{0\}$,
we may prove a Cacciopoli-type inequality as in Proposition \ref{prop_caccio} and then conclude with a Moser iteration scheme.
\end{proof}

\section{Magnetic Liouville theorems}\label{sec_Liou}
In this section we  prove  some magnetic Liouville-type theorems. To deal with the dependence in the variable $y$, we make use of a decomposition in Fourier series with respect to the eigenfunctions of a suitable magnetic problem on the unitary $(n-1)$-dimensional sphere in $\R^n$. This is the content  of the next section.

\subsection{Fourier decomposition on $\mb{S}^{n-1}$}
Let us consider a magnetic potential $P:\R^n \to \R^n$ such that
\begin{equation}
P(y):=\frac{p(y/|y|)}{|y|}.
\end{equation}
Here $p \in L^\infty(\mb{S}^{n-1},\mb{\R}^n)$ and $p\not \equiv 0$. We also assume the following transversality condition 
\begin{equation}\label{hp_transversality_p}
p(y)\cdot y=0.
\end{equation}
Let us consider the eigenvalue problem on the $(n-1)$-dimensional unit sphere $\mb{S}^{n-1}$ 
\begin{equation}\label{prop_eigen_sphere}
(i\nabla_{\mb{S}^{n-1}} \psi+ p \psi)^2+h \psi=\mu \psi,
\end{equation}
where $h \in L^\infty(\mb{S}^{n-1})$ and $h \ge 0$.
More precisely,   $\psi \in H^1(\mb{S}^{n-1}, \mb{C})\setminus\{0\}$ is an eigenfunction if 
\begin{equation}\label{eq_eigen_sphere_liou}
\int_{\mb{S}^{n-1}} (i\nabla_{\mb{S}^{n-1}} \psi +p\psi) \cdot 
\overline{(i\nabla_{\mb{S}^{n-1}} v   +p v)}+ h \psi \overline{v} \, d \sigma=
\mu \int_{\mb{S}^{n-1}}\psi\overline{v}\, d \sigma, \quad \text{ for any } v \in H^1(\mb{S}^{n-1}, \mb{C}).
\end{equation}
In view of the compactness of the embedding of $L^2(\mb{S}^{n-1}, \mb{C})$ into $H^1(\mb{S}^{n-1}, \mb{C})$, the spectrum of \eqref{eq_eigen_sphere_liou} is given by an 
increasing and diverging sequence of non-negative eigenvalues $\{\mu_k(p,h)\}_{k \in \mathbb{N}\setminus \{0\}}$ with finite multiplicity $m_k$. 
Let $\{\psi_{k,j}\}_{j=1,\dots,m_k}$ be an orthonormal basis of the eigenspace associated to $\mu_k$ and 
let $\{\psi_{k,j}\}$, with  $k \in \mathbb{N}\setminus \{0\}$ and $j=1, \dots, m_k$, be the correspondent orthonormal basis   of $L^2(\mb{S}^{n-1})$. 

Finally, it is possible to characterize $\mu_k(p,h)$ with the Courant–Fischer min-max theorem, so that, in particular, $\mu_1(p,h)$ is as in  \eqref{def:mu1}. 
To simplify the statement of the next proposition we set 
\begin{equation}
\tilde{H}^1_{0,B''_0}(B_R'',\mb{C}):=\tilde{H}^1(B_R'',\mb{C}).
\end{equation}

\begin{proposition}\label{prop_liouville_rho}
Let $\e\ge 0$. Assume that $u \in \tilde{H}^1_{0,B''_{\e}}(B_R'',\mb{C})$ for any $R >0$ and it  satisfies the equation 
\begin{equation}\label{eq_magnetic_weak_entire_Rn_e>0}
\int_{\R^n} \left(i\nabla u+\frac{p}{|y|}u\right) \cdot \overline{\left(i\nabla v+\frac{p}{|y|} v\right)} +\frac{h}{|y|^2}u \overline{v} \, dz=0
\quad  \text{ for any } v \in  \tilde{H}^1_{0, B''_\e}(B''_R,\mb{C}) 
\end{equation}
for any $R>0$, with $p\not \equiv 0$. Suppose that \eqref{hp_transversality_p} holds and that there exist $\gamma,c>0$ such that 
\begin{equation}\label{ineq_u_fourier}
|u(y)| \le c(1+|y|)^{\gamma}.
\end{equation}
Then there exists $k(\gamma) \in \mb{N}\setminus \{0\}$ such that for any $y \in \R^n$
\begin{equation}\label{eq_u_Fourier_e>0}
u(y)=\sum_{k=1}^{k(\gamma)} \sum_{j=1}^{m_k}(c_{k,j}^+|y|^{\gamma_k^+}+c_{k,j}^{-}|y|^{\gamma_k^-})\psi_{k,j}(y/|y|)
\end{equation} 
with  $c^{\pm}_{k,j} \in \R$  and 
\begin{equation}
\gamma_k^{\pm}(p,h):=-\frac{n-2}{2} \pm \sqrt{\left(\frac{n-2}{2}\right)^2+\mu_k(p,h)}.
\end{equation}
If $\e=0$ then $c_{k,j}^{-}=0$ for any  $k\in \mathbb{N}\setminus \{0\}$ and  any $j=1,\dots,m_k$.
\end{proposition}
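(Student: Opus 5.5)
We expand $u$ in the orthonormal basis $\{\psi_{k,j}\}$ of $L^2(\mb{S}^{n-1})$. For $r=|y|>\e$ (or $r>0$ if $\e=0$) we set
\[
u_{k,j}(r):=\int_{\mb{S}^{n-1}} u(r\theta)\overline{\psi_{k,j}(\theta)}\,d\sigma(\theta).
\]
The plan has three steps. First, show that each $u_{k,j}$ solves an Euler-type ODE. Second, use the growth bound \eqref{ineq_u_fourier} to kill all modes with large $k$. Third, when $\e=0$, use the finiteness of the $\tilde H^1$ norm near the origin to rule out the singular branch.

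\textbf{Step 1: the radial ODE.} We test \eqref{eq_magnetic_weak_entire_Rn_e>0} with $v(y)=f(|y|)\psi_{k,j}(y/|y|)$, where $f\in C^\infty_c((\e,+\infty))$; such $v$ is admissible in $\tilde{H}^1_{0,B''_\e}(B''_R,\mb{C})$ for $R$ large. By \eqref{hp_transversality_p}, Lemma \ref{lemma_nabla_polar} applies, and it splits the magnetic gradient into a radial part and a spherical part $r^{-1}(i\nabla_{\mb{S}^{n-1}}+p)$. Using the eigen-equation \eqref{eq_eigen_sphere_liou} (with $\overline{v}$ replaced by $\overline{u(r\cdot)}$) to handle the angular integrals, we obtain in the distributional sense on $(\e,\infty)$
\[
-(r^{n-1}u_{k,j}')'+\mu_k r^{n-3}u_{k,j}=0 .
\]
Its solutions are exactly $c^+ r^{\gamma_k^+}+c^- r^{\gamma_k^-}$ with $\gamma_k^\pm$ as in the statement. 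When $\mu_k=0$ with $n=2$ the two exponents collide and a $\log r$ term would appear; to avoid this we would note that $\mu_1>0$ under \eqref{hp_hardy}, or, in the general case, handle this mode separately. The coefficients may a priori be complex; since the basis is complex, real coefficients should be read as complex constants, or absorbed into the choice of the $\psi_{k,j}$. Regularity of $u_{k,j}$ in $r$ follows from $u\in H^1_{loc}$, which ensures that the weak formulation makes sense.

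\textbf{Step 2: truncation via the growth bound.} By Parseval and \eqref{ineq_u_fourier}, $|u_{k,j}(r)|\le \|u(r\cdot)\|_{L^2(\mb{S}^{n-1})}\le C(1+r)^\gamma$. Since $\gamma_k^+\to+\infty$, pick $k(\gamma)$ such that $\gamma_k^+>\gamma$ for $k>k(\gamma)$. For such $k$, growth forces $c^+_{k,j}=0$, leaving $u_{k,j}=c^- r^{\gamma_k^-}$.

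\begin{itemize}
\item If $\e>0$, the Dirichlet condition $u=0$ on $\partial B''_\e$ gives $u_{k,j}(\e)=0$, hence $c^-=0$. Indeed, a solution vanishing at $r=\e$ has the form $c(r^{\gamma^+}-\e^{\gamma^+-\gamma^-}r^{\gamma^-})$, and this grows like $r^{\gamma^+}$ unless $c=0$.
\item If $\e=0$, we argue as in the third step below.
\end{itemize}

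This yields the finite expansion \eqref{eq_u_Fourier_e>0}, with convergence in $L^2(\mb{S}^{n-1})$ for each $r$, hence pointwise after continuity.

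\textbf{Step 3: the case $\e=0$.} By Lemma \ref{lemma_nabla_polar} and orthogonality,
\[
\int_{B''_1}\frac{|u|^2}{|y|^2}\,dy\ \ge\ \int_0^1 r^{n-3}|u_{k,j}|^2\,dr .
\]
For the branch $r^{\gamma_k^-}$ the integrand is $r^{n-3+2\gamma_k^-}$, and
\[
n-3+2\gamma_k^-=-1-2\sqrt{\left(\tfrac{n-2}{2}\right)^2+\mu_k}\le -1 .
\]
So this integral diverges unless $c^-_{k,j}=0$. The borderline case $\mu_k=0$, $n=2$ is the same issue as in Step 1. Finiteness of the $\tilde H^1$ norm, which controls the Hardy term through Proposition \ref{prop_equiv_norms}, therefore forces $c^-_{k,j}=0$ for every $k,j$.

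The main obstacle is Step 1: justifying the separation of variables rigorously in the weak setting. The key points there are the admissibility of the product test functions and the interchange that yields the ODE for $u_{k,j}$ from \eqref{eq_eigen_sphere_liou}.
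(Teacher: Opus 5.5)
Your proposal is correct and is essentially the paper's proof. It uses the same expansion in the eigenbasis $\{\psi_{k,j}\}$ and the same Euler equation for $u_{k,j}$, obtained by testing with $f(|y|)\psi_{k,j}(y/|y|)$; it then uses the growth bound to discard the $r^{\gamma_k^+}$ branch for large $k$, and $u_{k,j}(\e)=0$ to discard the $r^{\gamma_k^-}$ branch when $\e>0$.

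The only variation is for $\e=0$: you use finiteness of $\int |u|^2/|y|^2$, while the paper uses $u_{k,j}'\in L^2((0,1),r^{n-1})$. Both lead to the same non-integrable power $r^{n-3+2\gamma_k^-}$. Your version also rules out constants in the degenerate case $n=2$, $\mu_k=0$ that you rightly flag. The paper ignores that case, and it cannot occur in Theorem \ref{theor_liou_y}, since there $0<\gamma<\gamma_1$ forces $\mu_1>0$.
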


\begin{proof}
Let $u$ be a solution to \eqref{eq_magnetic_weak_entire_Rn_e>0}.
Let $f \in C^\infty_c((\e,+\infty))$, $k \in \mathbb{N}\setminus\{0\}$, $j=1,\dots,m_k$ and $\varphi \in H^1(\mb{S}^{n-1},\mb{C})$. Testing \eqref{eq_magnetic_weak_entire_Rn_e>0} with $f(|y|)\varphi(y/|y|)$ and then passing to polar coordinates $(r,\theta)$, we obtain, in view of \eqref{hp_transversality_p},
\begin{equation}\label{proof_prop_liouville_rho_1}
\int_0^\infty r^{n-1}\int_{\mb{S}^{n-1}}\pd{u}{r} f' \varphi
+\frac{f}{r^2}[(i\nabla_{\mb{S}^{n-1}}u + pu)\overline{(i\nabla_{\mb{S}^{n-1}}\varphi+p\varphi)}+hu \varphi]\, d\sigma dr =0.
\end{equation}
Let us decompose $u$ in Fourier series with respect to $\{\psi_{k,j}\}$, for any $k \in \mathbb{N}\setminus \{0\}$ and $j=1, \dots, m_k$, which is an orthonormal basis of $L^2(\mb{S}^{n-1})$. More precisely, we  write
\begin{equation}
u(r,\theta)= \sum_{k=1}^\infty \sum_{j=1}^{m_k} u_{k,j}(r) \psi_{k,j}(\theta),
\end{equation}
where the Fourier coefficients $u_{k,j}$ are  given by
\begin{equation}
 u_{k,j}(r) := \int_{\mb{S}^{n-1}} u(r \cdot)  \psi_{k,j} \, d\sigma \quad  \text{ and }  \quad  u_{k,j} \in H_{loc}^1((0,+\infty),r^{n-1}).
\end{equation}
Hence, taking $\psi_{k,j}$ as $\varphi$ in \eqref{proof_prop_liouville_rho_1}, we obtain that for any $k \in \mathbb{N}\setminus \{0\}$ and $j=1, \dots, m_k$
\begin{equation}
\int_0^\infty r^{n-1}\int_{\mb{S}^{n-1}} u'_{k,j}  f' \psi^2_{k,j}+\frac{\mu_k(p,h)}{r^2}u_{k,j}f \psi^2_{k,j}\, d\sigma dr =0,
\end{equation}
in view of \eqref{eq_eigen_sphere_liou} tested with $\psi^2_{k,j}$.
Equivalently, since $\norm{\psi_{k,j}}_{L^2(\mb{S}^{n-1},\mb{C})}=1$,
\begin{equation}
\int_0^\infty r^{n-1} [u'_{k,j}  f' +\mu_k(p,h)r^{-2}u_{k,j}f] \, dr =0.
\end{equation}
An integration by parts yields that 
\begin{equation}
\int_0^\infty r^{n-1} [-u''_{k,j}-(n-1) r^{-1} u'_{k,j}+\mu_k(p,h)r^{-2}u_{k,j}]f \, dr =0
\end{equation}
for any $f \in C^\infty_c((\e,+\infty))$, so that 
\begin{equation}
-u''_{k,j}(r)-(n-1) r^{-1} u'_{k,j}(r)+\mu_k(p,h)r^{-2}u_{k,j}(r)=0 \quad  \text{ in } (\e,+\infty).
\end{equation}
Hence, letting
\begin{equation}
\gamma^{\pm}_k:=-\frac{n-2}{2} \pm\sqrt{\left(\frac{n-2}{2}\right)^2 +\mu_k(p,h)},
\end{equation}
we conclude that
\begin{equation}
u_{k,j}(r)=c^-_{k,j} r^{\gamma_k^-}+c^+_{k,j}r^{\gamma_k^+}
\end{equation}
for some constants $c^{\pm}_{k,j} \in \R$.
Let $k(\gamma)$ be the smallest integer such that   $\gamma_k^+>\gamma$. By \eqref{ineq_u_fourier}, we must have $c^+_{k,j}=0$ for any $k\ge k(\gamma)$ and  any $j=1,\dots,m_k$. 
If $\e>0$, the boundary condition $u_{k,j}(\e)=0$ also implies that $c^-_{k,j}=0$ for any $k\ge  k(\gamma)$ and  any $j=1,\dots,m_k$.
On the other hand, if $\e=0$, then  $u_{k,j}' \in L^2((0,1),r^{n-1})$, and so 
\begin{equation}
(c^{-}_{k,j}\gamma_{k}^-)^2 \int_0^1 r^{n-3+2\gamma_{k}^-} \, dr <+\infty,
\end{equation}
which means that $c^{-}_{k,j}=0$ for any  $k \in \mathbb{N}\setminus \{0\}$ and $j=1, \dots, m_k$.
\end{proof}

\subsection{A Liouville-type Theorem}
Suppose that
\begin{equation}\label{hp_AMg_Liouville}
A(x,y)= \frac{a(x,y/|y|)}{|y|},  \quad \text{ and } \quad M \text{ is constant},
\end{equation}
where $a \in L^\infty(B_R'\times\mb{S}^{n-1}, \R^d)$ does not depend on $x$ and satisfies the transversality condition \eqref{hp_transversality}. Suppose that  $M$ satisfies \eqref{hp_M_elliptic}-\eqref{hp_M_structure}.
Recall that $N:=M^{T}M$, and that 
\begin{equation}
\Sigma_{\e,N}=\{(x,y) \in \R^d: N_3^{-1}y \cdot y \le\e^2\}.
\end{equation}
To avoid repetitions, let us set
\begin{equation}\label{def_H10Sigma0}
    \tilde{H}^1_{0,\Sigma_{0,N}}(B_R,\mb{C}):=\tilde{H}^1(B_R,\mb{C}).
\end{equation}
For any $\e\ge 0$, we say that $u$ is an entire weak solution to 
\begin{equation}\label{prob_magnetic_weak_entire}
\begin{cases}
L_mu=0, &\text{ in }\R^d\setminus\Sigma_{\e,N},\\
u=0, &\text{ in } \Sigma_{\e,N},
\end{cases}
\end{equation}
if $u \in \tilde{H}^1_{0,\Sigma_{\e,N}}(B_R,\mb{C})$ for any $R>0$ and
\begin{equation}\label{eq_magnetic_weak_entire}
\int_{\R^d} \nabla_{M,A} u \cdot \overline{\nabla_{M,A} v}  \, dz=0, \quad  \text{ for any } v \in  \tilde{H}^1_{0,\Sigma_{\e,N}}(B_R,\mb{C}) \text{ and any } R>0.
\end{equation}
We are going to classify all the entire solutions having some polynomial bound on the growth rate at infinity. First, we deal we the possible dependence with respect to  the variable $x$. Arguing as in  \cite[Lemma 4.1, Corollary 4.2, Lemma 4.3]{TTV_harnack}, that is,  by  means of a Cacciopoli inequality (Proposition \ref{prop_caccio_simple})  and Nirenberg differential quotients methods, we can  prove the following. 
\begin{proposition}\label{prop_entire_sol_x}
Let $\e\ge 0$. Suppose that $u$ is an entire solution to \eqref{prob_magnetic_weak_entire}. Then
\begin{enumerate}[(i)]
\item $\pd{u}{x_i}$ is an entire weak solution to \eqref{prob_magnetic_weak_entire} for any $i=1, \dots, d-n$;
\item if there exists constants $C,\gamma>0$ such that
\begin{equation}
|u(z)| \le C(1+|z|^{\gamma}) \text{ for a.e. } z \in \R^d,
\end{equation}
then $u$ is a polynomial in $x$ of degree at most $\lfloor \gamma \rfloor$ with coefficients that depend only on the variable $y$.
\end{enumerate}
\end{proposition}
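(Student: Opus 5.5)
The plan has two parts. Part (i) uses Nirenberg difference quotients in the $x$-directions. Part (ii) iterates (i) and uses a growth-controlled Caccioppoli bound.

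\emph{Translation invariance.} In the setting \eqref{hp_AMg_Liouville}, $M$ is constant and $A=a(y/|y|)/|y|$ does not depend on $x$. The constraint set $\Sigma_{\e,N}$ is cylindrical in $x$, since $N_3$ is constant. Hence for $h\in\R\setminus\{0\}$ and $i\le d-n$, the translate $u(\cdot+he_i)$ is again an entire weak solution of \eqref{prob_magnetic_weak_entire}. So is the difference quotient
\[
D_h^i u:=\frac{u(\cdot+he_i)-u}{h}.
\]
Translating test functions in $x$ also preserves $\tilde H^1_{0,\Sigma_{\e,N}}(B_R,\mb{C})$.

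\emph{Part (i).} Apply the Caccioppoli inequality of Proposition \ref{prop_caccio_simple} (with $F=0$, $f_1=f_2=0$) to $D^i_h u$ on $B_{2\rho}$. I expect the proof there to carry over verbatim to test functions vanishing on $\Sigma_{\e,N}$, since $u\eta^2$ is admissible. Adding the Hardy control from Proposition \ref{prop_equiv_norms}, this gives
\[
\norm{D_h^iu}_{\tilde H^1(B_\rho)}\le C\rho^{-1}\norm{D_h^iu}_{L^2(B_{2\rho})}\le C\rho^{-1}\norm{\partial_{x_i}u}_{L^2(B_{3\rho})}.
\]
The last step is the standard difference-quotient estimate, valid because $u\in H^1_{loc}$. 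The bound is uniform in $h$, so along a subsequence $D_h^iu\rightharpoonup \partial_{x_i}u$ weakly in $\tilde H^1(B_\rho)$. Closed subspaces are weakly closed, so $\partial_{x_i}u\in\tilde H^1_{0,\Sigma_{\e,N}}$, and passing to the limit in the weak formulation shows that $\partial_{x_i}u$ is an entire solution.

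\emph{Part (ii).} First combine Caccioppoli with the $L^\infty$ bound of Corollary \ref{corollary_bound_Harn} (or Proposition \ref{prop_ue_unif_bounded} when $\e>0$), both applied on balls $B_\rho(z_0)$ with rescaled constants; scaling invariance holds because $A$ is $(-1)$-homogeneous. For any entire solution $w$ with $|w|\le C(1+|z|)^\beta$ this yields
\[
\norm{\partial_{x_i}w}_{L^2(B_\rho)}\le C\rho^{-1}\norm{w}_{L^2(B_{2\rho})}\le C\rho^{\beta-1+d/2}.
\]
Moser iteration then upgrades this to $\sup_{B_\rho}|\partial_{x_i}w|\le C\rho^{\beta-1}$ for $\rho\ge1$.

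Iterating $\lfloor\gamma\rfloor+1$ times gives, for every multi-index $\alpha$ in $x$ with $|\alpha|=\lfloor\gamma\rfloor+1$, that $w=\partial_x^\alpha u$ is an entire solution with $\sup_{B_\rho}|w|\le C\rho^{\gamma-|\alpha|}\to0$ as $\rho\to\infty$. Applying the bound at larger scales, $w$ is bounded on every fixed ball by $C\rho^{\gamma-|\alpha|}$ for all large $\rho$, so $w\equiv0$. Hence all $x$-derivatives of order $\lfloor\gamma\rfloor+1$ vanish in the distributional sense. By Taylor's formula in $x$, for a.e. $y$ the function $u(\cdot,y)$ is a polynomial of degree at most $\lfloor\gamma\rfloor$ in $x$, whose coefficients are functions of $y$ only.

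\emph{Main obstacle.} The delicate step is the growth bound for the derivatives: the $L^\infty$ estimate must hold uniformly on arbitrary balls $B_\rho(z_0)$, including balls meeting $\Sigma_0$ and, when $\e>0$, the hole $\Sigma_{\e,N}$. My approach is to rescale to unit size, noting that $\Sigma_{\e,N}$ rescales to $\Sigma_{\e/\rho,N}$. The Moser constants depend only on $d,\la,\Lambda$, as stated in Proposition \ref{prop_ue_unif_bounded}, so the bound is uniform in the hole size.
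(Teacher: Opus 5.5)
Your proposal is correct and follows essentially the route the paper indicates; the paper omits the proof and refers to Terracini--Tortone--Vita. That route is: $x$-translation invariance plus a Caccioppoli inequality and Nirenberg difference quotients give (i), and iterating (i) together with Caccioppoli and scale-invariant $L^\infty$ bounds forces all $x$-derivatives of order $\lfloor\gamma\rfloor+1$ to vanish, which gives (ii). One step should be written out carefully: to control the Hardy term in the $\tilde H^1$ norm (essential when $n=2$), keep the magnetic energy $\int\eta^2|\nabla_{M,A}D^i_hu|^2$ from the Caccioppoli test and apply Proposition \ref{prop_hardy} to $\eta D^i_hu$, since the gradient-only bound of Proposition \ref{prop_caccio_simple} does not supply it.
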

In particular, we remark that the homogeneous Dirichlet boundary condition on $\partial\Sigma_{\e,N}$ is satisfied by $\pd{u}{x_i}$. Indeed $\{y=0\}$ is tangent to $\partial \Sigma_{\e,N}$ since $N$ is constant.

Now we study the dependence in the variable $y$. 
Let us define  $\tilde{a} \in  L^\infty(B_R'\times \mb{S}^{n-1},\R^d)$ as
\begin{equation}\label{def_am_liouville}
\tilde{a}(x,y/|y|):=\frac{N^{\frac{1}{2}}(M^{-1} a)(x,N_{3}^{\frac{1}{2}}y/|y|)}{{|N_3^{\frac{1}{2}}y/|y||}}. 
\end{equation}
Clearly $\tilde{a}$ does not depend on $x$ since $a$ does not, and letting $N_3^{\frac{1}{2}}y= \tilde y$
\begin{equation}\label{hp_trans_lioville_am}
\tilde{a}''(x,y) \cdot y= \frac{N_3^{\frac{1}{2}}(M^{-1} a)''(x,N_{3}^{\frac{1}{2}}y)}{{|N_3^{\frac{1}{2}}y|^2}} \cdot  N_3^{-\frac{1}{2}} \tilde y 
= \frac{(M^{-1}a)''(x,\tilde y)}{{|\tilde y}|^2} \cdot   \tilde y =0
\end{equation}
holds since  $a$ satisfies  the transversality condition \eqref{hp_transversality}.

\begin{theorem}[Magnetic subquadratic Liouville]\label{theor_liou_y}
Let $u$ be an entire solution of \eqref{prob_magnetic_weak_entire} such that 
\begin{equation}\label{def_growth_u}
|u(z)| \le C (1+|z|^\gamma) \quad \text{ for any } z \in \R^d
\end{equation}
for some $C>0$, and some $\gamma \in (0,2)$ such that
\begin{equation}
\gamma < \gamma_1:=-\frac{n-2}{2} + \sqrt{\left(\frac{n-2}{2}\right)^2 +\mu_1(\tilde{a}'',|\tilde{a}'|^2)}.
\end{equation}
Then, $u$ is trivial.
\end{theorem}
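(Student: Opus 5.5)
The plan is to combine the $x$-dependence result (Proposition \ref{prop_entire_sol_x}) with the one-dimensional Fourier classification in $y$ (Proposition \ref{prop_liouville_rho}), after a linear change of variables that makes the constant matrix $N$ the identity in the relevant block.

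\textbf{Step 1: reduce to a function of $y$ alone.} Since $\gamma<2$, Proposition \ref{prop_entire_sol_x}(ii) shows that $u$ is a polynomial in $x$ of degree at most $1$:
\[
u(x,y)=u_0(y)+\sum_{i=1}^{d-n} x_i\, u_i(y).
\]
By Proposition \ref{prop_entire_sol_x}(i), each $u_i=\partial_{x_i}u$ is itself an entire solution of \eqref{prob_magnetic_weak_entire}, and it is independent of $x$. Restricting the growth bound to $x=0$ gives $|u_i(y)|\le C(1+|y|^{\gamma})$. The same holds for $u_0=u(0,\cdot)$, once we know that $u_0$ also solves the equation. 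This will follow from the equation for $u$, after subtracting the term $\sum_i x_iu_i$, whose contribution must be checked: because $u_i$ is $x$-independent, the $x$-linear part of the identity gives the equation for $u_i$, and the constant part gives the equation for $u_0$ with an extra term involving $\partial_{x}$ acting through $N_1,N_2$. I expect this to work because $N$ is constant and $N_2(0)=0$, so $N_2\equiv0$ and the equation decouples. It therefore suffices to prove: \emph{an $x$-independent entire solution $w(y)$ with $|w|\le C(1+|y|^\gamma)$ vanishes.} In fact it is enough to show this, since then every $u_i\equiv0$, hence $u=u_0$ is $x$-independent, and we apply it once more.

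\textbf{Step 2: equation satisfied by $w(y)$.} For $x$-independent $w$, test with $v(x,y)=\chi(x)\phi(y)$ and use $N_2=0$. Writing $\nabla_{M,A}w=iM(\nabla w+M^{-1}Aw)$, the integrand becomes
\[
N_3\bigl(\nabla_y w-i a_M''w/|y|\bigr)\cdot\overline{(\cdots)}+N_1\bigl(a_M'w/|y|\bigr)\cdot\overline{(\cdots)}.
\]
Now change variables $y=N_3^{1/2}\tilde y$. This maps $\Sigma_{\e,N}$ onto the round cylinder $\{|\tilde y|\le\e\}$. A direct computation, which is where the definition \eqref{def_am_liouville} of $\tilde a$ enters, turns the equation into \eqref{eq_magnetic_weak_entire_Rn_e>0} on $\R^n$ with $p=\tilde a''$ and $h=|\tilde a'|^2$. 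The Jacobian and the factor $|N_3^{1/2}\tilde y/|\tilde y||$ are exactly what is needed to rewrite $|y|^{-1}$ as $|\tilde y|^{-1}$. Transversality of $p$ is \eqref{hp_trans_lioville_am}, and the growth bound persists.

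\textbf{Step 3: apply the Fourier classification.} Proposition \ref{prop_liouville_rho} expresses $\tilde w$ as a finite sum of terms $c^{\pm}_{k,j}|\tilde y|^{\gamma_k^\pm}\psi_{k,j}$. The growth bound with $\gamma<\gamma_1=\gamma_1^+\le\gamma_k^+$ forces $c^+_{k,j}=0$ for all $k$.
\begin{itemize}
\item If $\e=0$, then $c^-_{k,j}=0$ as well, so $w\equiv0$.
\item If $\e>0$, the boundary condition $w=0$ at $|\tilde y|=\e$ gives $c^-_{k,j}\e^{\gamma_k^-}=0$. (As written, the proof of Proposition \ref{prop_liouville_rho} only kills the terms with $k\ge k(\gamma)$; I will redo that last step, noting that once $c^+=0$, the boundary condition kills $c^-$ for every $k$.)
\end{itemize}
Hence $w\equiv0$ in both cases.

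\textbf{Main obstacle.} The delicate point is Step 2 together with the decoupling in Step 1. One must verify that the anisotropic change of variables produces exactly the potential $\tilde a$ with $h=|\tilde a'|^2$. In particular, the $x$-components of $M^{-1}A$ must become a pure nonnegative potential once $N_2=0$; there are no cross terms, since $w$ does not depend on $x$. Alongside this, one must justify that the $x$-linear decomposition preserves the Dirichlet condition and the $\tilde H^1$ regularity on each slice.
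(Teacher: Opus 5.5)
Your proposal is correct and follows the paper's argument. Proposition~\ref{prop_entire_sol_x} gives affine dependence on $x$. Testing with $f(x)\varphi(y)$ plus the linear change of variables yields \eqref{eq_magnetic_weak_entire_Rn_e>0} with $p=\tilde a''$ and $h=|\tilde a'|^2$, and Proposition~\ref{prop_liouville_rho} with $\gamma<\gamma_1$ then kills every Fourier mode. You change variables only in $y$, after decomposing, which is legitimate because $N$ constant and $N_2(x,0)=0$ force $N_2\equiv 0$; the paper instead uses $z=N^{1/2}\xi$ globally before decomposing.

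Your device of first killing the $u_i=\partial_{x_i}u$ and then reapplying the $x$-independent case to $u=u_0$ is a nice touch. It avoids having to check that $u_0$ solves the equation, which the paper leaves implicit. One small correction: the growth bound for $u_i$ comes from $u_i(y)=u(e_i,y)-u(0,y)$, not from restricting to $x=0$.
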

\begin{proof}
Let $u$ be an entire solution. Let us define $w(\xi):=u(N^{\frac{1}{2}}\xi)$, and recall that $\Sigma_\e=\Sigma_{{\rm Id}_d,\e}$.
By the definition of $\Sigma_{N,\e}$, see \eqref{def_SigmaeN}, the change of variables $z=N^{\frac{1}{2}}\xi$ in \eqref{eq_magnetic_weak_entire} yields, for any $\e\ge0$,
\begin{equation}\label{proof_theor_liou_y_1}
\int_{\R^d}\left(i\nabla w+\frac{\tilde{a}}{|y|} w\right)\cdot\overline{\left(i\nabla v+\frac{\tilde{a}}{|y|}v\right)}\, dz=0  \quad \text{ for any } v \in  \tilde{H}^1_{0,\Sigma_\e}(B_R,\mb{C}) \text{ and } R>0.
\end{equation}
By Proposition \ref{prop_entire_sol_x}, since $\gamma <2$,
\begin{equation}
w(x,y)=w_0(y)+\sum_{j=1}^{d-n}w_j(y) x_j.   
\end{equation}
For any $j=0,\dots, d-n,$ the function $w_j$ still satisfies the growth condition \eqref{def_growth_u} since
\begin{equation}
w_0(y)= w(0,y) \quad \text{ and  } \quad w_j(y)=w(x_je_j,y)-w(0,y).
\end{equation}
For any  $f \in C^\infty_c(\R^{d-n},\mb{C})$,  and  $\varphi \in  C^\infty_c(\R^n,\mb{C})$ we may test \eqref{proof_theor_liou_y_1} with $v(x,y)=f(x) \varphi(y)$ to get 
\begin{multline}
\int_{\R^d} \left(i\nabla w_j+\frac{\tilde{a}}{|y|} w_j\right) \cdot \overline{\left(i\nabla(f \varphi)+\frac{\tilde{a}}{|y|} f \varphi\right)}  \, dz\, dz\\
=\int_{\R^d}\left[\left(i\nabla w_j+\frac{\tilde{a}''}{|y|} w_j\right) \cdot \overline{\left(i\nabla \varphi+\frac{\tilde{a}''}{|y|}  \varphi\right)}
+ \frac{|\tilde{a}'|^2}{|y|^2} w_j \overline{\varphi}\right] \overline{f}dz
-i \int_{\R^d}   w_j\overline \varphi \frac{\tilde{a}'}{|y|} \cdot \nabla \overline{f} \, dz=0.
\end{multline}
By the Divergence Theorem, since only $\nabla \overline{f}$ depends on $x$,  we conclude that
\begin{equation}
\int_{\R^d}   w_j\overline \varphi \frac{\tilde{a}'}{|y|} \cdot \nabla \overline{f} \, dz=0.
\end{equation}
Hence, since 
\begin{equation}
\int_{\R^{d-n}}\int_{\R^n}\left[\left(i\nabla w_j+\frac{\tilde{a}''}{|y|} w_j\right) \cdot \overline{\left(i\nabla \varphi+\frac{\tilde{a}''}{|y|}  \varphi\right)}
+ \frac{|\tilde{a}'|^2}{|y|^2} w_j \overline{\varphi}\right] \, dy \overline{f}dx=0
\end{equation}
for any $f \in C^\infty_c(\R^{d-n},\mb{C})$, we must have 
\begin{equation}
\int_{\R^n}\left(i\nabla w_j+\frac{\tilde{a}''}{|y|} w_j\right) \cdot \overline{\left(i\nabla \varphi+\frac{\tilde{a}''}{|y|^2}\varphi\right)}
+ \frac{|\tilde{a}'|^2}{|y|^2} w_j \overline{\varphi}=0,
\end{equation}
for any $j=0,\dots, d-n$.
Hence, $w_j$ is a solution to \eqref{eq_magnetic_weak_entire_Rn_e>0} with $p=\tilde{a}''$ and $h=|\tilde{a}'|^2$, thus \eqref{eq_u_Fourier_e>0} and \eqref{def_gamma} readily imply that $w_j=0$ for any $j=0, \dots, d-n$ so that $w$ is trivial. Hence, also  $u$ is trivial.
\end{proof}

\section{H\"older and Schauder estimates}\label{sec_Holder_estimates}
In this section, we prove H\"older and Schauder estimates for solutions to \eqref{eq_magnetic_strong}. We first need to prove stable estimates for solutions of the regularized problems in perforated domains, defined in Section \ref{sec_approx_domains}.  Then, the desired regularity is obtained in the limit of approximating sequences. A similar  strategy has been successfully employed in different contexts when dealing with singular or degenerate elliptic equations, see for instance \cite{CFV,fio,STV_even_sol,TTV_harnack}.

\subsection{Uniform H\"older and Schauder estimates in perforated domains}\label{subsec_unif_hold}
Let us consider the problem
\begin{equation}\label{prob_ue_total}
\begin{cases}
L_{m} u_\e=0, & \text{ on } B_R \setminus \Sigma_{\e,N},\\
u_\e=0, & \text{ on } \Sigma_{\e,N}.\\
\end{cases}
\end{equation}
Weak solutions to the latter problem are functions $u_\e \in \tilde{H}_{0, \Sigma_{\e,N}}^1(B_R,\mb{C})$ such that it holds
\begin{equation}\label{eq_ue_total}
\int_{B_R} (iM\nabla u_\e+Au_\e)\overline{(iM\nabla v+Av)} \, dz=0
\quad  \text{ for any }  v \in \tilde{H}_{0, \Sigma_{\e,N}}^1(B_R,\mb{C}).
\end{equation}
In this section, we prove uniform (with respect to $\e$) H\"older and Schauder estimates for weak solutions. First, we deal with $C^{0,\alpha}$ estimates and then with $C^{1,\alpha}$ estimates.

\begin{theorem}[$\e$-stable $C^{0,\alpha}$ estimate]\label{theorem_reg_C0alpha}
Let $R >0$  and $r \in (0,R)$. Suppose that  $q >d$,    $\gamma_1(M,a) \ge \gamma_1^*$ for some $\gamma_1^* >0$  with  $\gamma_1(M,a)$ as in \eqref{def_gamma} and 
\begin{equation}\label{def_alpha_holder}
\alpha\in (0,1) \cap (0,\gamma_1^*) \cap (0, 1-d/q].
\end{equation}
Let  $M$ be   continuous with modulus of continuity $\omega_1$ and assume that  \eqref{hp_M_structure} holds. Suppose that $a$  is continuous with modulus of continuity $\omega_2$.
Let $b \in L^q(B_R,\R^d)$ with $q >d$.
Suppose that $N_3^{-1}$ is $C^{1,\omega_3}(B_R,\R^{n,n})$  for some modulus of continuity $\omega_3$. 
Finally let $L$ be such that
\begin{equation}
\norm{M}_{C^{0,\omega_1}(B_R,\R^{d,d})}+\norm{a}_{C^{0,\omega_2}(B'_R\times\mathbb S^{n-1},\R^{d})} + \norm{b}_{L^q(B_R,\R^{d})}+ \norm{N_3^{-1}}_{C^{1,\omega_3}(B_R,\R^{n,n})} \le L
\end{equation}
and let  $u_\e$ be a solution of \eqref{prob_ue_total}.

There exist $c>0$ and $\e_1>0$, depending only on  $n,d, \la, \Lambda, L, r,R,\alpha,$ and $ \gamma_1^*$,  
such that  for  any $\e \in (0,\e_1)$ there holds
\begin{equation}
\norm{u_\e}_{C^{0,\alpha}(B_r,\mb{C})} \le c \norm{u_\e}_{L^2(B_R,\mb{C})}.
\end{equation}
\end{theorem}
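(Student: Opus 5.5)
We argue by contradiction through a blow-up procedure in the spirit of \cite{S_reg}, combined with the magnetic Liouville Theorem \ref{theor_liou_y}. Since for each fixed $\e>0$ the solution $u_\e$ is already $C^{0,\alpha}$ up to $\partial\Sigma_{\e,N}$ by Propositions \ref{prop_blow_up_holder_a=0} and \ref{prop_blow_up_holder_a=0_Diri} (away from $\Sigma_0$ the potential $A$ is regular, and $\partial\Sigma_{\e,N}$ is $C^{1,\omega_3}$ thanks to the assumption on $N_3^{-1}$), it suffices to prove a uniform bound on the H\"older seminorm. By Proposition \ref{prop_ue_unif_bounded} we control $\norm{u_\e}_{L^\infty(B_{r'})}$ for $r<r'<R$. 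Arguing by contradiction, we get sequences $\e_k\to0$ and data $M_k,a_k,b_k$ with norms bounded by $L$, together with solutions $u_k$ normalized so that $\norm{u_k}_{L^2(B_R)}=1$, but with $[\eta u_k]_{C^{0,\alpha}(B_R)}\to\infty$, where $\eta$ is a cut-off. We then choose points $z_k,\zeta_k$ almost realizing the seminorm and set $r_k=|z_k-\zeta_k|\to0$ (the $L^\infty$ bound forces this). The blow-up sequences are
\[
v_k(z)=\frac{\eta(\hat z_k+r_kz)\big(u_k(\hat z_k+r_kz)-u_k(\hat z_k)\big)}{\eta(\hat z_k)\,L_k r_k^\alpha},\qquad w_k(z)=\frac{\eta(\hat z_k)\big(u_k(\hat z_k+r_kz)-u_k(\hat z_k)\big)}{\eta(\hat z_k)\,L_k r_k^\alpha},
\]
where $L_k$ is the seminorm. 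These have uniformly bounded $C^{0,\alpha}$ seminorm, satisfy $|v_k(0)-v_k((\zeta_k-z_k)/r_k)|\ge 1/2$, and $v_k-w_k\to0$ locally uniformly. The base point $\hat z_k$ is chosen either as $z_k$ or as its projection onto $\Sigma_{\e_k,N_k}$, depending on the case distinction below.

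Let $d_k=\mathrm{dist}(z_k,\Sigma_0)$ and $\rho_k=\e_k$. We distinguish cases according to the behaviour of $d_k/r_k$ and $\e_k/r_k$ (up to subsequences).

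\emph{Case (i): $d_k/r_k\to\infty$.} The blow-up domain is all of $\R^d$. The singular term satisfies $r_k|A(\hat z_k+r_k\cdot)|\lesssim r_k/d_k+r_k^{1-d/q}\norm{b}_{L^q}\to0$ locally, so the potential disappears. Using the Caccioppoli inequality (Proposition \ref{prop_caccio_simple}) for local $H^1$ bounds, and the continuity of $M$ to freeze the coefficients, the limit $w$ solves $-\dive(N_\infty\nabla w)=0$ in $\R^d$ with $|w(z)|\le C(1+|z|^\alpha)$, $\alpha<1$. The polynomial Liouville theorem for the real and imaginary parts gives that $w$ is constant, contradicting the oscillation lower bound.

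\emph{Case (ii): $d_k/r_k$ bounded.} We recentre at the projection $\hat z_k=(x_k,0)$. Then $u_k(\hat z_k)=0$ whenever $\e_k>0$, since $u_k$ vanishes on $\Sigma_{\e_k,N_k}$; this justifies subtracting $0$ and keeping the growth bound. The rescaled potential keeps its critical structure, $r_kA(\hat z_k+r_kz)\to a_\infty(x_\infty,y/|y|)/|y|$ locally uniformly away from $\Sigma_0$, by continuity of $a$ in $x$ (modulus $\omega_2$), while the $b$-part vanishes. The rescaled holes $\{N_3^{-1}y\cdot y\le(\e_k/r_k)^2\}$, frozen at $x_\infty$ by the $C^1$ regularity of $N_3^{-1}$, converge to $\Sigma_{\bar\e,N_\infty}$, where $\bar\e=\lim\e_k/r_k\in[0,\infty]$. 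If $\bar\e=\infty$, the points eventually lie inside the hole or in a half-space-type limit, and the Dirichlet boundary Liouville theorem of the $a=0$ theory (via Proposition \ref{prop_blow_up_holder_a=0_Diri}) applies, noting that then $d_k/r_k\to\infty$ relative to $\Sigma_0$. If $\bar\e<\infty$, the limit $w$ is an entire solution of \eqref{prob_magnetic_weak_entire} with constant $M_\infty$ and $a_\infty$ independent of $x$. It satisfies $|w(z)|\le C(1+|z|^\alpha)$ with $\alpha<\gamma_1^*\le\gamma_1(M_\infty,a_\infty)$; the lower semicontinuity of $\gamma_1$ under these limits follows because $\gamma_1$ is defined as an infimum over $x$. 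Theorem \ref{theor_liou_y} then forces $w\equiv0$, again a contradiction.

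The main obstacle is the passage to the limit in the weak formulation in Case (ii). It requires uniform $\tilde H^1_{loc}$ bounds on $w_k$; we plan to obtain them from Caccioppoli together with the Hardy inequality (Proposition \ref{prop_hardy}), using the uniform constant $c_{n,a,M}$. It also requires convergence of test functions across moving holes, for which we would first map the holes to fixed cylinders via the change of variables $z=N^{1/2}\xi$ used in Theorem \ref{theor_liou_y}, and then use Proposition \ref{prop_density_0} and Proposition \ref{prop_density_n=2_H=W} to approximate limit test functions. Finally, $\e_1$ is chosen so that $\Sigma_{\e,N}\cap B_R$ is a tubular neighbourhood where these uniform geometric bounds hold.
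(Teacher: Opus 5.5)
Your overall strategy is the paper's: contradiction, blow-up at scale $r_k$, frozen coefficients, and then the classical Liouville theorem away from the singular set and Theorem \ref{theor_liou_y} near it. Your Case (ii) is essentially the paper's Case 3.

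\textbf{Main gap: a regime is missing from your case split.} You split according to $\mathrm{dist}(z_k,\Sigma_0)=|y_k|$ instead of $\mathrm{dist}(z_k,\partial\Sigma_{\e_k,N_k})$, and this misplaces a whole regime. Suppose $|y_k|/r_k\to\infty$ but $\mathrm{dist}(z_k,\partial\Sigma_{\e_k,N_k})\le Cr_k$; by ellipticity this forces $\e_k/r_k\to\infty$.
\begin{itemize}
\item This situation falls under your Case (i). However, the rescaled domains $(B_1\setminus\Sigma_{\e_k,N_k}-z_k)/r_k$ do not exhaust $\R^d$. The rescaled hole boundary stays at bounded distance from the origin and flattens to a hyperplane, so the limit is defined only on a half-space $\Pi_\nu$.
\item Your claim that the limit solves $-\dive(N_\infty\nabla w)=0$ in $\R^d$ is therefore false there. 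Without a boundary condition the Liouville step fails: for instance $\Rea(w^\alpha)$, with $\arg w\in(0,\pi)$, is a non-constant harmonic function on the upper half-plane with growth $|w|^\alpha$.
\item Your sub-branch ``$\bar\e=\infty$'' inside Case (ii) cannot host this regime; it is vacuous. In Case (ii), $|y_k|\le Cr_k$ together with $z_k\notin\Sigma_{\e_k,N_k}$ gives $\e_k<|y_k|/\sqrt{\la}\le Cr_k$, as you half-notice yourself.
\end{itemize}
What is needed is the paper's Case 2:
\begin{itemize}
\item recentre at a projection $z_k^0$ of $z_k$ onto $\partial\Sigma_{\e_k,N_k}$, so that $u_k(z_k^0)=0$;
\item use the $C^1$ regularity of $N_3^{-1}$ to show that the rescaled boundaries converge to $\partial\Pi_\nu$;
\item deduce $v_\infty=0$ on $\partial\Pi_\nu$ from the uniform H\"older convergence;
\item conclude by odd reflection, after the change of variables by $N_\infty^{-1/2}$, and the classical Liouville theorem.
\end{itemize}

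\textbf{Second omission: the source term in the genuine whole-space case.} There $u_k(z_k)\neq0$, so $w_k$ solves an equation with a source term $r_k^{1-\alpha}A_k(z_k+r_k\cdot)\,u_k(z_k)/L_k$. Since $|u_k(z_k)|/(L_kr_k^\alpha)$ may diverge, the smallness of $r_k|A_k|$ alone does not make this term vanish.
\begin{itemize}
\item For the singular part, use that $u_k$ vanishes on the hole, e.g.\ at $(x_k,0)$. This gives $|u_k(z_k)|\lesssim L_k|y_k|^\alpha$, so that part of the source is $O\big((r_k/|y_k|)^{1-\alpha}\big)\to0$, using $\alpha<1$.
\item For the $b$-part, use $\alpha\le 1-d/q$ together with $L_k\to\infty$.
\item This estimate is what yields both the uniform $H^1_{loc}$ bound and the limit equation in that case.
\end{itemize}

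\textbf{Minor points.}
\begin{itemize}
\item You do not need a uniform Hardy constant $c_{n,a_k,M_k}$, and for $n=2$ the hypotheses do not provide one. Proposition \ref{prop_caccio_simple} already bounds $\nabla w_k$ when $u_k(\tilde z_k)=0$.
\item In Case (ii) with $\bar\e=0$, the $|y|^{-2}$-integrability of the limit follows from $v_\infty=0$ on $\Sigma_0$ and $[v_\infty]_{C^{0,\alpha}}\le1$.
\item Avoid dividing by $\eta(\hat z_k)$ in your $v_k,w_k$, since it may tend to $0$. Multiply by it instead, as the paper does, so that the seminorm bound $[v_k]_{C^{0,\alpha}}\le1$ holds directly.
\end{itemize}
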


\begin{proof}
It is not restrictive to assume that $R=1$ and $r=1/2$. Let $\alpha$ be as in \eqref{def_alpha_holder}.
In view of Propositions \ref{prop_blow_up_holder_a=0}-\ref{prop_blow_up_holder_a=0_Diri}, any solution $u_\e$ of \eqref{eq_ue} in $B_1$ belongs to $C^{0,\alpha}(B_{1/2},\mb{C})$ for any $\alpha \in (0,1-d/q]$ and there exists a constant $c_\e$, depending on $\e$, such that
\begin{equation}
\norm{u_\e}_{C^{0,\alpha}(B_{1/2},\mb{C})} \le c_\e \norm{u_\e}_{L^2(B_1,\mb{C})}.
\end{equation}
We are going to prove that $c_\e$ can be chosen uniformly with respect to $\e$ arguing by contradiction. 
Suppose that there exist  a sequence $\e_k \to 0^+$ and a sequence of associated solutions $u_k$ with
\begin{equation}
 \norm{u_k}_{C^{0,\alpha}(B_{1/2},\mb{C})} \ge k \norm{u_k}_{L^2(B_1,\mb{C})}.
\end{equation}
More precisely, $\{u_k\}$ weakly solves \eqref{prob_ue_total}  with 
\begin{equation}
A_k(x,y):=\frac{a_k(x,y/|y|)}{|y|}+b_k(x,y) \quad \text{ and } \quad  d\times d \text{ matrices } M_k
\end{equation}
such that:
\begin{itemize}
    \item $M_k$  is uniformly bounded in $C^{0,\omega_1}(B_1, \R^{d,d})$ and satisfies \eqref{hp_M_elliptic} with  $\la, \Lambda>0$ uniform in $k$;
    \item $a_k$ is uniformly bounded in $C^{0,\omega_2}(B'_1\times\mathbb S^{n-1}, \R^d)$ and $b_k$ is uniformly bounded in $L^q(B_1,\R^{d})$;
    \item  $(N_k)_3^{-1}$ is uniformly bounded in $C^{1,\omega_3}(B_1,\R^{n,n})$;
    \item  $\gamma_1(M_k,a_k)\ge \gamma_1^*$ for any $k \in \mb{N}\setminus \{0\}$.
\end{itemize}

We divide the rest of the proof into several steps.

\textbf{Step 1. Definition of the blow-up sequences.}
In view of Proposition \ref{prop_ue_unif_bounded}, there exists a constant $C>0$, that does not depend on $k$, such that 
\begin{equation}
[u_k]_{C^{0,\alpha}(B_{1/2},\mb{C})} \ge C k   \norm{u_k}_{L^\infty(B_{4/5},\mb{C})}.
\end{equation}
Let $\eta \in C^\infty_c(B_{3/4})$ be a cut-off function with $0\leq\eta\leq1$, $\eta\equiv1$ in $B_{1/2}$ and 
\begin{equation}
E_k:=[\eta u_k]_{C^{0,\alpha}(B_1,\mb{C})}.
\end{equation}
Thus,
\begin{equation}\label{proof_theorem_reg_C0alpha_05}
E_k\ge [u_k]_{C^{0,\alpha}(B_{1/2},\mb{C})} \ge C k \norm{u_k}_{L^\infty(B_{4/5},\mb{C})}.
\end{equation}
Let us consider two sequences $z_k:=(x_k,y_k),\hat z_k:= (\hat x_k, \hat y_k) \in B_{3/4}\setminus \Sigma_{\e_k,N_k}$ such that
\begin{equation}\label{proof_theorem_reg_C0alpha_2}
\frac{|(\eta u_k)(z_k)-(\eta u_k)(\hat z_k)|}{|z_k-\hat z_k|^{\alpha}}\ge \frac{E_k}{2}.
\end{equation}
We need to check that two such sequences actually exist, since a priori we only know that \eqref{proof_theorem_reg_C0alpha_2} holds for some  $z_k,\hat z_k \in 
B_1$. It is clear that we cannot have both $z_k,\hat z_k \in   \Sigma_{\e_k,N_k}$ because then $(\eta u_k)(z_k)-(\eta u_k)(\hat z_k)=0$. Furthermore, if one  
between $z_k$ and $\hat z_k$, say  $\hat z_k$ belongs to $\Sigma_{\e_k,N_k}$, then  we may replace $\hat z_k$ with a projection  
of $z_k$ on $\partial \Sigma_{\e_k, N_k}$. Indeed, with this substitution, the numerator in \eqref{proof_theorem_reg_C0alpha_2} stays the same while the denominator decreases.
Furthermore, since  $\eta \in  C^\infty_c(B_{3/4})$ we cannot have both $z_k,\hat z_k \in   B_1 \setminus B_{3/4} $ otherwise we would have  $(\eta u_k)(z_k)-(\eta u_k)(\hat z_k)=0$. Then we can suppose that $z_k\in B_{3/4}$. Moreover, we may suppose that also $\hat z_k \in   B_{3/4}$ up to replacing it with the projection of $z_k$ on $\partial B_{3/4}$. 

Let us define $r_k:=|z_k-\hat z_k|$. From \eqref{proof_theorem_reg_C0alpha_05} and \eqref{proof_theorem_reg_C0alpha_2}, 
it follows that
\begin{equation}
\frac{C}{2} k \norm{u_k}_{L^\infty(B_{3/4},\mb{C})} \le \frac{|(\eta u_k)(z_k)-(\eta u_k)(\hat z_k)|}{r_k^{\alpha}} \le \frac{2\norm{u_k}_{L^\infty(B_{3/4},\mb{C})} }{r_k^\alpha},
\end{equation}
thus 
\begin{equation}
r_k\le (4/C)^{\frac {1}{\alpha}} k^{-\frac {1}{\alpha}}\to 0^+ \quad  \text{ as } k \to \infty.
\end{equation}
Let $d_k:=\di(z_k,\partial \Sigma_{\e_k, N_k})$. From now on we need to distinguish three cases:
\begin{align}
&\textbf{Case 1: } \frac{d_k}{r_k} \to +\infty\quad \text{ as } k \to \infty, \label{case_blowup_1}\\
&\textbf{Case 2: } \frac{d_k}{r_k} \le C  \quad \text{ and }  \quad \frac{|y_k|}{r_k}\to +\infty  \quad  \text{ as } k \to \infty,\label{case_blowup_2}\\
&\textbf{Case 3: } \frac{|y_k|}{r_k} \le C, \label{case_blowup_3}
\end{align}
for some constant $C>0$ that does not depend on $k$. Let us define
\begin{equation}
Z_k:=(x_k,\tau_k y_k/|y_k|),
\end{equation}
with $\tau_k$ such that $Z_k \in \partial \Sigma_{\e_k, N_k}$. The existence and  uniqueness of $Z_k$ and the fact that $\tau_k <|y_k|$ are proven in
\cite[Lemma A.1]{CFV}. It follows that
\begin{equation}
|y_k| > |y_k|-\tau_k =\left|y_k-\tau_k \frac{y_k}{|y_k|}\right|=|z_k-Z_k| \ge d_k.
\end{equation}
Hence, the cases above are mutually exclusive and cover all the possible scenarios in the asymptotics involving $r_k,d_k,|y_k|$.  Moreover it is easy to check that 
\begin{equation}\label{relation_er}
\begin{cases}
\frac{\e_k}{r_k}\to+\infty, & \mathrm{in} \ \textbf{Case \ 2},\\
\frac{\e_k}{r_k}\leq C, & \mathrm{in} \ \textbf{Case \ 3}.
\end{cases}
\end{equation}
Let us denote by $z_k^0:=(x^0_k,y^0_k)$ a projection of $z_k$ on $\partial \Sigma_{\e_k, N_k}$ (possibly not unique) and let 
\begin{equation}\label{tildezk}
\tilde{z}_k:=(\tilde{x}_k,\tilde{y}_k):=
\begin{cases}
(x_k,y_k), &\text{ in \textbf{Case 1}},\\
(x^0_k,y^0_k), &\text{ in \textbf{Case 2}},\\
(x_k,0), &\text{ in \textbf{Case 3}}.
\end{cases}
\end{equation}
By the definition of the \textbf{Cases 1,\, 2,} and \textbf{3}, $|z_k-\tilde{z}_k|\le C r_k$. 
Let us define the rescaled domains
\begin{multline}\label{def_Omega_k}
\Omega_k:=\frac{B_1\setminus \Sigma_{\e_k,N_k}-\tilde{z}_k}{r_k}\\
=\left\{ z=(x,y) \in \R^d: |\tilde{z}_k+r_kz| <1, (N_k)_3^{-1}(\tilde{z}_k+r_kz)(\tilde{y}_k+r_ky) \cdot (\tilde{y}_k+r_ky)> \e^2_k \right\},
\end{multline}
and the limit domain
\begin{equation}\label{def_Omega_infty}
\Omega_\infty:=\left\{ z=(x,y) \in \R^d: \text{ there exist } \hat k, r>0: B_r(z) \subset \Omega_k \text{ for any } k \ge \tilde{k} \right\}.
\end{equation}
Let us define the limits (up to further subsequences)
\begin{equation}\label{def_bar_x_N_e}
M_\infty:=\lim_{k \to \infty}M_k(\tilde{z}_k), \quad \tilde z_\infty=\lim_{k \to \infty}\tilde z_k,
\end{equation}
and in \textbf{Case 3}  also 
\begin{equation}\label{bareps}
 \bar\e:=\lim_{k \to \infty} \frac{\e_k}{r_k}. 
\end{equation}
By \eqref{relation_er}, $\bar\e \in [0,C]$ and let $N_\infty$ be as in \eqref{hp_M_structure} for the matrix $M_\infty$.
Then, as shown in \cite[Proof of Theorem 1.3, i), Step 2]{CFV}, the domain $\Omega_\infty$ is actually given by 
\begin{equation}\label{proof_theorem_reg_C0alpha_3}
\Omega_\infty=
\begin{cases}
\R^d,   &\text{ in } \textbf{Case 1},\\
\Pi_\nu,   &\text{ in } \textbf{Case 2},\\
\R^d\setminus\Sigma_{\bar\e,N_\infty},  &\text{ in } \textbf{Case 3},
\end{cases}
\end{equation}
where the set $\Pi_\nu$ is the half space defined as 
\begin{equation}
\Pi_\nu:=\{z=(x,y): \nu \cdot y >0\}
\end{equation}
for some $\nu \in \mb{S}^{d-1}$. Furthermore, $\R^d\setminus\Sigma_{\bar\e,N_\infty}$ is the whole space perforated by a $\bar \e$-cylinder defined as in \eqref{def_SigmaeN}. It may happen that  $\bar \e=0$ in which case $\Sigma_{\bar\e,N_\infty}$ is reduced to $\Sigma_0$, see \eqref{def_Sigma}.

Finally, in $\Omega_k$ let 
\begin{equation}
v_k(z):=\frac{(\eta u_k)(\tilde{z}_k+r_k z)-(\eta u_k)(\tilde{z}_k)}{r_k^{\alpha}E_k}, \quad \text{ and } 
\quad w_k(z):=\frac{\eta(\tilde{z}_k)( u_k(\tilde{z}_k+r_k z)-u_k(\tilde{z}_k))}{r_k^{\alpha}E_k}.
\end{equation}
We notice that  $u_k(\tilde{z}_k)=0$ in  \textbf{Cases 2} and \textbf{3}.

\textbf{Step 2. Convergence of the blow-up sequences in H\"older spaces.}
For any  $z, z' \in \Omega_k$
\begin{equation}
|v_k(z)-v_k(z')| =\frac{|(\eta u_k)(\tilde{z}_k+r_k z)-(\eta u_k)(\tilde{z}_k+r_kz')|}{r_k^{\alpha}E_k} \le |z-z'|^{\alpha},
\end{equation}
so that 
\begin{equation}
[v_k]_{C^{0,\alpha}(\Omega_k,\mb{C})}\le 1.
\end{equation}
Furthermore, for any compact set $K \subset \Omega_\infty$, since $K \subset \Omega_k$ for $k$ large enough, and $v_k(0)=0$, 
\begin{equation}
|v_k(z)|\le |z|^{\alpha} \le C(K),
\end{equation}
for some positive constant $C(K)>0$ depending only on $K$. Hence 
\begin{equation}
\norm{v_k}_{C^{0,\alpha}(K,\mb{C})}\le 1+C(K).
\end{equation}
With a diagonal argument over an exhaustion of $\Omega_\infty$ by compact sets $K$  we can show that there exists $v_\infty \in C^{0,\alpha}(\Omega_\infty\cap B_R,\mb{C})$ for any $R>0$ such that $v_k \to v_\infty$ in $C^{0,\beta}(\Omega_\infty\cap B_R,\mb{C})$ for any $R>0$ and any $\beta \in (0,\alpha)$. Furthermore, 
\begin{equation}\label{proof_theorem_reg_C0alpha_2.5}
|v_\infty(z)|\le |z|^{\alpha} \quad \text{ for any } z \in \Omega_\infty.
\end{equation}
For any compact set $K \subset \ \Omega_\infty$ and any $z \in K$,
\begin{multline}
|v_k(z)-w_k(z)|=\frac{\eta(z_k)}{r_k^{\alpha}E_k}|u_k(\tilde z_k+r_k z)||\eta(\tilde z_k+r_k z) -\eta(\tilde z_k)|\\
\le \frac{\norm{u_k}_{L^\infty(B_{4/5},\mathbb C)}}{E_k}\norm{\nabla \eta}_{L^\infty(B_1)}r_k^{1-\alpha} \le C \norm{\nabla \eta}_{L^\infty(B_1)}k^{-1}r_k^{1-\alpha},
\end{multline}
by \eqref{proof_theorem_reg_C0alpha_05}. Since $r_k \to 0^+$, we conclude that also $w_k \to v_\infty$ uniformly on compact sets.
Finally, we note that, letting
\begin{equation}
\hat\xi_k:=\frac{\hat z_k-\tilde{z}_k}{r_k}, \quad \xi_k:=\frac{z_k-\tilde{z}_k}{r_k}
\end{equation}
we have
\begin{equation}
|\hat \xi_k| = \frac{|\hat z_k-\tilde{z}_k|}{r_k} \le \frac{|\hat z_k-z_k|}{r_k}  +\frac{|\tilde{z}_k-z_k|}{r_k} \le (1+C),
\end{equation}
thus, up to passing to a subsequences,  $\hat\xi_k \to \hat \xi$ for some $\hat \xi \in \R^d$ as $k \to \infty$. 
Similarly we can show that $\xi_k$ is bounded and so $\xi_k \to \xi$ for some $\xi \in \R^d$ as $k \to \infty$. 
Then, since $\xi_k,\hat \xi_k \in \Omega_k$,
\begin{equation}\label{proof_theorem_reg_C0alpha_2.8}
|v_\infty(\xi)-v_\infty(\hat \xi)|=\lim_{k\to \infty}|v_k(\xi_k)-v_k(\hat \xi_k)|=\lim_{k \to \infty}\frac{|(\eta u_k)(z_k)-(\eta u_k)( \hat z_k)|}{r_k^{\alpha}E_k} \ge \frac{1}{2},
\end{equation}
in view of \eqref{proof_theorem_reg_C0alpha_2}. In conclusion, $v_\infty\not \equiv 0$. 

\textbf{Step 3. The limit is an entire solution.}
With a change of variables in \eqref{eq_ue}, we can see that for any $\varphi \in C^\infty_c(\Omega_\infty,\mb{C})$, for large enough $k$
\begin{multline}\label{proof_theorem_reg_C0alpha_3_5}
\int_{\Omega_k} \Bigg[i M_k(\tilde{z}_k+r_kz)\nabla w_k(z)+\left(r_k\frac{a_k(\tilde{x}_k+r_kx,(\tilde{y}_k+r_ky)/|\tilde{y}_k+r_ky|)}{|\tilde{y}_k+r_ky|}
+ r_kb_k(\tilde{z}_k+r_kz)\right)w_k(z)\\
+\left(\frac{a_k(\tilde{x}_k+r_kx,(\tilde{y}_k+r_ky)/|\tilde{y}_k+r_ky|)}{|\tilde{y}_k+r_ky|} 
+b_k(\tilde{z}_k+r_kz)\right) \frac{r_k^{1-\alpha}\eta(\tilde z_k)u_k(\tilde{z}_k)}{E_k}\Bigg] \\
\cdot \Big(-i M_k(\tilde{z}_k+r_kz) \nabla \overline{\varphi}(z)\\
+ \left(r_k\frac{a_k(\tilde{x}_k+r_kx,(\tilde{y}_k+r_ky)/|\tilde{y}_k+r_ky|)}{|\tilde{y}_k+r_ky|}
+ r_kb_k(\tilde{z}_k+r_kz)\right)\overline{\varphi}(z)\Big)\, dz=0.
\end{multline}
Let us show that $\{w_k\}_{k \in \mb N \setminus \{0\}}$ is bounded in $\tilde{H}^1(\Omega_\infty\cap B_R,\mb{C})$ for any $R>0$  so that, in particular,
$w_k \rightharpoonup v_\infty$ weakly in $\tilde{H}^1(\Omega_\infty\cap B_R,\mb{C})$ as $k \to \infty$ for any $R>0$. 

We recall that $u_k(\tilde{z}_k)$=0 in \textbf{Case 2} and in  \textbf{Case 3}, so that 
\begin{equation}
\norm{\nabla w_k}_{L^2(B_R\cap\Omega_k,\mb{C})} \le C_1 \norm{w_k}_{L^2(B_{2R}\cap\Omega_k,\mb{C})},
\end{equation}
thanks to \eqref{ineq_caccio_simple}.

On the other hand, in \textbf{Case 1}, $\tilde{z}_k=z_k$ and for $k$ large enough
\begin{equation}
|y_k+r_ky| \ge |y_k| -r_k|y| \ge \frac{1}{2} |y_k|,
\end{equation}
since $|y_k|/r_k \ge d_k/r_k \to +\infty$ which implies $r_k|y| \le \frac{1}{2} |y_k|$ for $k$ large enough. It follows that
\begin{multline}
r_k^{1-\alpha}\frac{|a_k(x_k+r_kx,(y_k+r_ky)/|y_k+r_ky|)||u_k( z_k)|}{E_k|y_k+r_ky|}\\
\le C_2\norm{a_k}_{L^\infty(B_1,\R^d)}\frac{r_k^{1-\alpha}|u_k(z_k)-u_k(x_k,0)|}{E_k|y_k|} 
\le C_2\norm{a_k}_{L^\infty(B_1,\R^d)}\left(\frac{r_k}{|y_k|}\right)^{1-\alpha}\le C_3\left(\frac{r_k}{d_k}\right)^{1-\alpha},
\end{multline}
for some positive constants $C_1,C_2,C_3$ that do not depend on $k$. 
Hence,  by \eqref{ineq_caccio_simple},
\begin{multline}
\norm{\nabla w_k}_{L^2(B_R\cap \Omega_k,\mb{C})} \le C \norm{w_k}_{L^2(B_{2R}\cap \Omega_k,\mb{C})} \\
+C  \left(\frac{r_k}{d_k}\right)^{1-\alpha}+C  \left(\frac{r_k}{d_k}\right)^{2-\alpha}
+C\left(\frac{r_k^{1-\alpha}|u_k(z_k)|}{E_k}
+\frac{r_k^{2-\alpha}}{d_k^{1-\alpha}}\right)\left(\int_{B_{2R\cap \Omega_k}}|b_k(z_k+r_kz)|^2 \,dz\right)^{\frac{1}{2}}\\
+C\frac{r_k^{1-\alpha}|u_k(z_k)|}{E_k}\left(\int_{B_{2R\cap \Omega_k}}|b_k(z_k+r_kz)|^q \,dz\right)^{\frac{2}{q}},
\end{multline}
for some positive constant $C$ that does not depend on $k$.
Furthermore, by the H\"older inequality and \eqref{proof_theorem_reg_C0alpha_05},
\begin{multline}
\left(\frac{r_k^{1-\alpha}|u_k(z_k)|}{E_k}
+\frac{r_k^{2-\alpha}}{d_k^{1-\alpha}}\right)\left(\int_{B_{2R}}|b_k( z_k+r_kz)|^2 \,dz\right)^{\frac{1}{2}}\\
\le \left(\frac{r_k^{1-\alpha-\frac{d}{2}}}{k}+\frac{r_k^{2-\alpha-\frac{d}{2}}}{d_k^{1-\alpha}}\right) \left( \int_{B_{2r_kR}( z_k)}|b_k(y)|^2 \,dy\right)^{\frac{1}{2}} \\
\le C  r_k^{1-\frac{d}{q}-\alpha} \norm{b_k}_{L^q(B_1,\R^d)}k^{-1}+ r_k^{1-\frac{d}{q}} \norm{b_k}_{L^q(B_1,\R^d)} \to 0^+, \text{ as } k \to \infty 
\end{multline}
and
\begin{multline}
\frac{r_k^{2-\alpha}|u_k(z_k)|}{E_k} \left(\int_{B_{2R}}|b_k(z_k+r_kz)|^q \,dz\right)^{\frac{2}{q}}\\
\le\frac{r_k^{2-\alpha-\frac{2d}{q}}}{k}\left( \int_{B_{2r_kR}(z_k)}|b_k(y)|^q \,dy\right)^{\frac{2}{q}}
\le Cr_k^{1-\frac{d}{q}} \norm{b_k}_{L^q(B_1,\R^d)}^2k^{-1} \to 0^+, \text{ as } k \to \infty. 
\end{multline}
Hence, we conclude that $\{w_k\}_{k \in \mb N \setminus \{0\}}$ is bounded in $\tilde{H}^1(\Omega_\infty\cap B_R,\mb{C})$  also in \textbf{Case 1}.

Since  $M_k(\tilde z_k+r_k\cdot)$ is bounded in $C^{0,\omega_1}(B_R,\R^{d,d})$   for any $R>0$
\begin{equation}
M_k(\tilde z_k+r_kz) \to M_\infty, \quad \text{ uniformly in  } B_R,
\end{equation}
where $M_\infty$ is the $d\times d$ constant matrix defined in \eqref{def_bar_x_N_e}.
Then, the same computations that we made to prove the boundedness of $\{w_k\}$ in $\tilde{H}^1(\Omega_\infty \cap B_R,\mathbb{C})$ for any $R>0$ allow us to pass to the limit in \eqref{proof_theorem_reg_C0alpha_3_5}. Let us treat separately \textbf{Cases 1,\, 2 } and  \textbf{3}.

\textbf{Case 1.} In this case, passing to the limit as $k \to \infty$ in \eqref{proof_theorem_reg_C0alpha_3_5},
we obtain, 
\begin{equation}\label{proof_theorem_reg_C0alpha_4_5}
\int_{\R^d}   M_\infty\nabla v_\infty \cdot  M_\infty\nabla \overline\varphi \, dz=0
\end{equation}
for any $\varphi \in C_c^\infty(\R^d,\mb{C})$.

\textbf{Case 2.}
Passing to the limit  as $k \to \infty$ in \eqref{proof_theorem_reg_C0alpha_3_5}, we have  
\begin{equation}\label{proof_theorem_reg_C0alpha_5}
\int_{\Pi_\nu} M_\infty\nabla v_\infty \cdot M_\infty\nabla \overline{\varphi} \, dz=0
\end{equation}
for any $\varphi \in C_c^\infty(\Pi_\nu, \mb{C})$ and $v_\infty=0$ on $\partial \Pi_\nu$ by uniform convergence of $w_k$ to $v_\infty$.

\textbf{Case 3.} 
In this case, since $\tilde{y}_k=0$,  up to passing to a subsequence,
\begin{equation}
a_k(\tilde{x}_k+r_kx,y/|y|) \to a_\infty( y/|y|),  \quad \text{ uniformly in  } B_R,
\end{equation}
for some  continuous function $a_\infty $  with  modulus of continuity $\omega_2$. Then passing to the limit as $k \to \infty$ in \eqref{proof_theorem_reg_C0alpha_3_5}
\begin{equation}\label{proof_theorem_reg_C0alpha_6}
\int_{\R^d}\left(i  M_\infty\nabla v_\infty+\frac{a_\infty(y/|y|))}{|y|}v_\infty\right)
\cdot\left(\overline{i  M_\infty\nabla \varphi+\frac{a_\infty(y/|y|))}{|y|}\varphi}  \right) \, dz=0
\end{equation}
for any  $\varphi \in C^\infty_c(\R^d\setminus \Sigma_{\bar \e,N_\infty},\mb{C})$  with $\bar \e \ge 0$ and  $\Sigma_{\bar \e,N_\infty}$ as in \eqref{proof_theorem_reg_C0alpha_3}.  Furthermore,  $v_\infty=0$ on $\partial \Sigma_{\bar \e,N_\infty}$ by uniform convergence of $w_k$ to $v_\infty$.

\textbf{Step 4. Liouville theorems and conclusion.}
In \textbf{Case 1}, since $M_\infty$ is real valued, \eqref{proof_theorem_reg_C0alpha_4_5} is equivalent to 
\begin{equation}
\int_{\R^d}   M_\infty\nabla \Rea v_\infty \cdot  M_\infty\nabla \varphi \, dz=0 \quad \text{ and } \int_{\R^d}   M_\infty\nabla \Ima v_\infty \cdot  M_\infty\nabla \varphi \, dz=0,
\end{equation}
for any $\varphi \in C_c^\infty(\R^d)$.
The change of variables $t=N_\infty^{-\frac{1}{2}}z$ yields
\begin{equation}
\int_{\R^d}   \nabla \Rea (v_\infty\circ N_\infty^{\frac{1}{2}}) \cdot  \nabla \varphi \, dt=0 \quad \text{ and } \int_{\R^d} \nabla \Ima (v_\infty\circ N_\infty^{\frac{1}{2}}) \cdot  \nabla \varphi \, dt=0,
\end{equation}
for any $\varphi \in C_c^\infty(\R^d)$. By classical Liouville theorems, see for example \cite[Theorem 4.2]{V_reg}, it follows that $v_\infty  \equiv 0$.

In \textbf{Case 2} we can do the same after an even reflection around the plane $\Pi_\nu$.
Finally, in \textbf{Case 3}, we can invoke Theorem \ref{theor_liou_y}  to  conclude that $v_\infty  \equiv 0$. 
In any case we have reached a contradiction with \eqref{proof_theorem_reg_C0alpha_2.5} or \eqref{proof_theorem_reg_C0alpha_2.8}.
\end{proof}

Now we turn to stable Schauder estimates. We need a preliminary lemma, which is a result about $C^{1,\beta}$ functions vanishing on $\partial \Sigma_{\e,N}$. We borrow some ideas from \cite[Theorem 5.1]{fio} carefully adapted for our setting.

\begin{lemma}\label{lemma_gradient_u}
Suppose that $u \in C^{1,\beta}(B_R\setminus \Sigma_{\e,N},\mb{C})$ for some $\beta \in (0,1)$ and that $u=0$ on $\partial\Sigma_{\e,N} \cap B_R$. Then there exists a constant $C>0$, depending only on $\la,\Lambda, d$ and $n$ such that
\begin{equation}\label{ineq_gradient_u}
\norm{\nabla u}_{L^\infty( \partial  \Sigma_{\e,N} \cap B_R,\mathbb C)} \le C [\nabla u]_{C^{0,\beta}(\partial  \Sigma_{\e,N} \cap B_R,\C)} \e^\beta.
\end{equation}
\end{lemma}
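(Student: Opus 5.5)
The idea is that $u$ vanishes on the whole closed hypersurface $\partial\Sigma_{\e,N}\cap\{x=x_0\}$, which is a small ellipsoid-like sphere of diameter comparable to $\e$ around $\Sigma_0$. Fix $z_0=(x_0,y_0)\in\partial\Sigma_{\e,N}\cap B_R$. The gradient of $u$ at $z_0$ splits into a tangential part and a normal part. The tangential part vanishes because $u\equiv 0$ on the hypersurface. So it suffices to control the normal derivative $\partial_\nu u(z_0)$, or more conveniently, the full gradient restricted to directions in the $y$-plane.

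\textbf{Step 1: a point with vanishing $y$-gradient in the same slice.} Restrict to the slice $\{x=x_0\}$. There $S_{x_0}:=\partial\Sigma_{\e,N}\cap\{x=x_0\}$ is the $(n-1)$-dimensional closed set $\{y: N_3^{-1}(x_0,y)y\cdot y=\e^2\}$. By \eqref{hp_M_elliptic} it is contained in the annulus $\sqrt{\la}\,\e\le|y|\le\sqrt{\Lambda}\,\e$, and it is star-shaped with respect to $0$ by \cite[Lemma A.1]{CFV}. Take the unit direction $e:=y_0/|y_0|$ in the $y$-variables. The line $\{(x_0,te):t\in\R\}$ meets $S_{x_0}$ at $z_0$ and at a second point $z_1=(x_0,-\tau e)$ with $\tau\sim\e$, so $|z_0-z_1|\le 2\sqrt{\Lambda}\,\e$. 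Since $u(z_0)=u(z_1)=0$, the mean value theorem along any path from $z_0$ to $z_1$ inside $\partial\Sigma_{\e,N}$ forces the derivative along that path to vanish at some point. For $n\ge 2$, the slice $S_{x_0}$ is connected. Using this more directly: the restriction of $u$ to the compact manifold $S_{x_0}$ is identically zero, so every tangential derivative vanishes everywhere. For the normal component, I would instead use this: the unit normal $\nu(z)$ to $S_{x_0}$ within the $y$-plane, as $z$ ranges over the closed hypersurface $S_{x_0}$, takes both a value $\nu$ and (approximately) $-\nu$. More precisely, the Gauss map of a closed star-shaped hypersurface is onto $\mb{S}^{n-1}$. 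Hence there exists $z_1\in S_{x_0}$ whose normal $\nu(z_1)$ is orthogonal to the fixed vector $w:=\nabla_yu(z_0)$ (project, or pick $\nu(z_1)\perp w$, possible since $n\ge2$). Then $w\cdot\nabla_yu(z_1)$ has no normal contribution, and the tangential contribution vanishes, so $\nabla_y u(z_1)\cdot w=0$.

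\textbf{Step 2: Hölder comparison.} Now $|z_0-z_1|\le 2\sqrt{\Lambda}\,\e$, so
\[
|\nabla_y u(z_0)|^2=\nabla_y u(z_0)\cdot w=(\nabla_yu(z_0)-\nabla_yu(z_1))\cdot w\le [\nabla u]_{C^{0,\beta}}(2\sqrt\Lambda\,\e)^\beta|w|,
\]
which gives $|\nabla_yu(z_0)|\le C[\nabla u]_{C^{0,\beta}}\e^\beta$.

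\textbf{Step 3: the $x$-derivatives.} These are not purely tangential, because the surface $\partial\Sigma_{\e,N}$ tilts with $x$ through $N_3(x,y)$. Differentiating the identity $u(x,Y(x))=0$, where $Y(x)$ is a parametrization of the surface on the ray in direction $e$, gives $\nabla_xu=-(D_xY)^T\nabla_yu$. It remains to bound $|D_xY|$ by a constant depending on $\la,\Lambda$. I expect this to be the main obstacle, since $D_xY\sim \e\,\partial_xN_3^{-1}$ depends on derivatives of $N_3^{-1}$. I would handle it with the $C^1$ bound on $N_3^{-1}$ available in the setting where the lemma is used, or, if possible, choose $z_1$ with its full normal in $\R^d$ orthogonal to $\nabla u(z_0)$ and run Step 2 on the full gradient directly. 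Because the surface is the boundary of a tube around $\Sigma_0$ of width $\sim\e$, its normals still sweep all of the $y$-directions. Combining the three steps gives \eqref{ineq_gradient_u}.
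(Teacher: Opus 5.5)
Your route is essentially the paper's. You restrict to the slice $\{x_0\}\times\R^n$ and use that the Gauss map of the $C^1$ section of $\partial\Sigma_{\e,N}$ is onto $\mb{S}^{n-1}$. This gives, in the same slice and at distance $O(\e)$ from $z_0$, a point where a prescribed $y$-direction is tangent, so the corresponding derivative of $u$ vanishes there. You then compare the two gradients through the H\"older seminorm, and control the $x$-components through the $O(\e)$ tilt of the tube.

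One step fails as written. Since $u$ is complex-valued, $w=\nabla_y u(z_0)\in\C^n$ and $|w|^2=\nabla_y u(z_0)\cdot\overline{w}$. To kill $\nabla_y u(z_1)\cdot\overline{w}$ you need the real normal $\nu(z_1)$ to be orthogonal to both $\Rea w$ and $\Ima w$. That is impossible in general when $n=2$, which is exactly the Aharonov--Bohm case. The repair is immediate, in either of two ways:
\begin{enumerate}[(i)]
\item run Steps 1--2 on $\Rea u$ and $\Ima u$ separately; both vanish on $\partial\Sigma_{\e,N}$, and the H\"older seminorms of their gradients are bounded by $[\nabla u]_{C^{0,\beta}}$;
\item do as the paper does: bound each $\partial_{y_j}u(z_0)$ by comparison with a point of the slice whose section normal is $e_i$, $i\neq j$, so that the real vector $e_j$ is tangent there.
\end{enumerate}

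For Step 3, the first of your two options is the right one, and it plays the same role as the paper's argument. Your identity $\nabla_x u=-(D_xY)^T\nabla_y u$ with $|D_xY|\le C\e$ corresponds to the paper's bound $|\nu_i|\le C\e$ for $i\le d-n$, taken from \cite[Lemma A.1]{CFV}, combined with $\nabla u\parallel\nu$ on $\partial\Sigma_{\e,N}$. Your version gives $|\nabla_x u|\le C\e|\nabla_y u|$ directly. The paper instead gets $|\nabla_x u|\le C\e\norm{\nabla u}_{L^\infty}$ and absorbs it for $\e$ small.

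Your worry about the constant is legitimate, and it applies to the paper's proof as well. Both routes need $\e$ small and a bound on $\nabla N_3^{-1}$. For instance, with $N_3^{-1}=(1+\tfrac12\sin(Kx_1)){\rm Id}_n$ one gets $|D_xY|=K\e/4$ at $x=0$. Such a bound is available in Theorem \ref{theorem_reg_C1alpha}, where the lemma is applied. It is not reflected in the stated dependence on $\la,\Lambda,d,n$ alone.
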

\begin{proof}
We begin the proof with a simple and general geometric observation. For any open bounded set $D \subset \R^n$ with  $C^1$ boundary  
and any $\nu \in  \mb{S}^{n-1}$ there exists  a point $y \in \partial D$ such that the unit outer normal vector to $D$ in $y$ is $\nu$. Indeed, it is enough to take a hyperplane  orthogonal to $\nu$  and disjoint  from $D$ and choose $y$ as a point of minimal distance on $\partial D$. Since $D$ is a $C^1$ domain the outer normal vector to $D$ in $y$ exists and it is $\nu$.

Let $\nu(z)$  be  the unit outer normal vector to $\Sigma_{\e,N}$ in $z \in \partial \Sigma_{\e,N}$ and $\{e_i\}_{i=1,\dots, d}$ be the canonical basis of $\R^d$. For any  $x\in \R^{d-n}$, if there exists $(x,y) \in \partial \Sigma_{\e,N}$ then there also exists $(x,y^{(i)}) \in \partial \Sigma_{\e,N}$ such that 
$\nu(x,y^{(i)})=(\nu_1(x,y^{(i)}), \dots, \nu_{d-n}(x,y^{(i)}), 0, \dots, 0 )+e_i$ for any $i=d-n+1,\dots , d$. Indeed, 
the section $\{x\} \times \R^{n}$ of $\Sigma_{\e,N}$ is a $C^1$ bounded domain and so we can  apply the geometric observation.
Hence, in particular, $e_j$ is tangent to $\Sigma_{\e,N}$ in $(x, y^{(i)})$ for any $j=d-n+1,\dots , d$, with $j \neq i$.

Let $j,i\in\{d-n+1,\dots , d\}$ with $i \neq j$, $(x,y) \in \partial\Sigma_{\e,N}$ and let $(x, y^{(i)}) \in \partial \Sigma_{\e,N}$  be such that  $e_j$ is tangent to $\partial\Sigma_{\e,N}$ in $(x, y^{(i)})$. Since 
\begin{equation}
\nabla u(z)\cdot q =0\quad  \text{ for any  } q \in \R^d \text{ such that } q\cdot  \nu(z)=0,
\end{equation}
we have that 
\begin{equation}
|\nabla u(x,y) \cdot e_j| = |(\nabla u(x,y)-\nabla u(x,y^{(i)}))\cdot e_j| \le C [\nabla u]_{C^{0,\beta}(\partial \Sigma_{\e,N}\cap B_R,\mathbb C)} \e^\beta.
\end{equation}
By \cite[Lemma A.1]{CFV}, $|\nu_i(z)| \le C \e$ for any $i=1, \dots, d-n$ for some constant $C>0$ depending only on $\la,\Lambda, d$ and $n$.
Hence,
\begin{equation}
|\nabla u(x,y)| \le \sum_{i=1}^d |\nabla u(x,y) \cdot e_i| \le C \e  \norm{\nabla u}_{L^\infty( \partial\Sigma_{\e,N}\cap B_R,\mathbb C)}
+C [\nabla u]_{C^{0,\beta}(\partial \Sigma_{\e,N}\cap B_R,\mathbb C)} \e^\beta.
\end{equation}
Passing to the supremum over $(x,y) \in    \partial\Sigma_{\e,N} \cap B_R$ 
\begin{equation}
 \norm{\nabla u}_{L^\infty( \partial\Sigma_{\e,N}\cap B_R,\mathbb C)}\le C [\nabla u]_{C^{0,\beta}( \partial\Sigma_{\e,N}\cap B_R,\C)} \e^\beta,
\end{equation}
for some $C>0$, depending only on $\la,\Lambda, d$ and $n$.
\end{proof}

We are now in position to prove $C^{1,\alpha}$ uniform estimates.

\begin{theorem}[$\e$-stable $C^{1,\alpha}$ estimate]\label{theorem_reg_C1alpha}
Let $R>0$ and $r \in (0,R)$.  Suppose that $\gamma_1(M,a) \ge \gamma_1^*$ for 
some $\gamma_1^*>1$, with  $\gamma_1(M,a)$ as in \eqref{def_gamma}. 
Let $\alpha \in (0,1)\cap (0,\gamma_1^*-1)$.

Assume $M \in C^{0,\alpha}(B_R,\R^{d,d})$, that  \eqref{hp_M_structure} holds and  that $N_3^{-1}$ is $C^{1,\alpha}(B_R,\R^{n,n})$.   Suppose  $a \in C^{0,\bar \alpha}(B'_R\times\mathbb S^{n-1},\R^d)$  for some $\bar \alpha \in (\alpha,1)$, and $b \in  C^{0,\alpha}(B_R,\R^d)$.
Finally let $L$ be such that
\begin{equation}
\norm{M}_{C^{0,\alpha}(B_R,\R^{d,d})}+\norm{a}_{C^{0, \bar \alpha}(B'_R\times\mathbb S^{n-1},\R^{d})} + \norm{b}_{C^{0,\alpha}(B_R,\R^{d})}+ \norm{N_3^{-1}}_{C^{1,\alpha}(B_R,\R^{n,n})} \le L
\end{equation}
and let  $u_\e$ be a solution of \eqref{prob_ue_total}.

There exist $c>0$ and $\e_1>0$, depending only on  $n,d, \la, \Lambda, L, r,R,\alpha,\overline\alpha$, and $\gamma_1^*$   such that for  any $\e \in (0,\e_1)$ there holds
\begin{equation}\label{ineq_reg_C1alpha}
\norm{u_\e}_{C^{1,\alpha}(B_r\setminus \Sigma_{\e,N},\mb{C})} \le c \norm{u_\e}_{L^2(B_R,\mb{C})},
\end{equation}
and 
\begin{equation}\label{ineq_nabla_C1alpha}
|\nabla u_\e(z)| \le c \norm{u_\e}_{L^2(B_R,\mb{C})} \e^{\alpha} \quad \text{ for any } z \in \partial \Sigma_{\e,N} \cap B_r.
\end{equation}
\end{theorem}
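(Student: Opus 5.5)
The proof follows the scheme of Theorem \ref{theorem_reg_C0alpha}, now at the level of gradients. We may assume $R=1$, $r=1/2$. For fixed $\e$, Propositions \ref{prop_blow_up_C1_holder_a=0} and \ref{prop_blow_up_C1_holder_a=0_Diri} give $u_\e\in C^{1,\alpha}(B_{1/2}\setminus\Sigma_{\e,N})$ with some constant $c_\e$. Indeed, away from $\Sigma_0$ the full potential $A$ is $C^{0,\alpha}$, and $\partial\Sigma_{\e,N}$ is $C^{1,\alpha}$ because $N_3^{-1}\in C^{1,\alpha}$. Once \eqref{ineq_reg_C1alpha} holds uniformly, \eqref{ineq_nabla_C1alpha} follows directly from Lemma \ref{lemma_gradient_u} with $\beta=\alpha$. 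So the task is to show that $c_\e$ can be taken independent of $\e$.

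We argue by contradiction with sequences $\e_k\to0$, data $M_k,a_k,b_k$ bounded as in the statement with $\gamma_1(M_k,a_k)\ge\gamma_1^*$, and solutions $u_k$ satisfying $\norm{u_k}_{C^{1,\alpha}}\ge k\norm{u_k}_{L^2}$. By Theorem \ref{theorem_reg_C0alpha}, applied with some exponent in $(\alpha,\gamma_1^*)\cap(0,1)$, together with interpolation, the quantity $E_k:=[\nabla(\eta u_k)]_{C^{0,\alpha}}$ dominates $k\norm{u_k}_{C^{0,\alpha'}}$. We then pick points $z_k,\hat z_k$ that almost realise $E_k$, set $r_k:=|z_k-\hat z_k|\to0$, and split into Cases 1--3 exactly as before, with $\tilde z_k$ defined as in \eqref{tildezk}. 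The blow-up sequence is
\[
v_k(z):=\frac{(\eta u_k)(\tilde z_k+r_kz)-(\eta u_k)(\tilde z_k)-r_k\nabla(\eta u_k)(\tilde z_k)\cdot z}{r_k^{1+\alpha}E_k},
\]
with the analogous $w_k$ obtained by freezing $\eta$ at $\tilde z_k$. In Cases 2 and 3 we still have $u_k(\tilde z_k)=0$. In Case 3, $\tilde z_k=(x_k,0)$ need not lie in the domain, so we subtract only the $x$-part of the gradient, or equivalently use the bound $|\nabla u_k|\lesssim E_k\e_k^\alpha$ on $\partial\Sigma_{\e_k,N_k}$, which Lemma \ref{lemma_gradient_u} gives. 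By construction $[\nabla v_k]_{C^{0,\alpha}}\le1$, $v_k(0)=0$ and $\nabla v_k(0)=0$, which yields $|v_k(z)|\le C|z|^{1+\alpha}$. By Ascoli--Arzelà, $v_k\to v_\infty$ in $C^{1,\beta}_{loc}(\Omega_\infty)$, and the near-extremality of $z_k,\hat z_k$ guarantees $v_\infty\not\equiv0$.

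To pass to the limit in the equation, we rescale \eqref{eq_ue_total} as in \eqref{proof_theorem_reg_C0alpha_3_5}. The main obstacle is controlling the extra terms produced by subtracting the affine part, namely
\[
\frac{A_k\big(u_k(\tilde z_k)+r_k\nabla u_k(\tilde z_k)\cdot z\big)}{r_k^{1+\alpha}E_k}
\]
and the commutator terms coming from $M_k(\tilde z_k+r_kz)-M_k(\tilde z_k)$. The $M_k$-terms are $O(r_k^\alpha\cdot r_k^{-\alpha})$ times gradients that are bounded at $\tilde z_k$. They are handled as in Simon's argument \cite{S_reg}: we use the divergence form, test against $\nabla\varphi$, and exploit that constant vectors are divergence-free, so only $[M_k]_{C^{0,\alpha}}$ times $|z|^\alpha$ survives, multiplied by $|\nabla u_k(\tilde z_k)|/E_k$. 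This product is small by the choice of $E_k$ in Cases 1 and 2, and by Lemma \ref{lemma_gradient_u} in Case 3. The singular term $a_k/|y|$ is where $\bar\alpha>\alpha$ enters. In Case 1 we bound $|\nabla u_k(z_k)|$ and $|u_k(z_k)|$ against $|y_k|$ through the vanishing of $u_k$ and of its gradient near $\Sigma$, which gives factors $(r_k/d_k)^{1-\alpha}\to0$ as in the previous proof. In Case 3 the freezing $a_k(\tilde x_k+r_kx,\cdot)\to a_\infty$ costs $r_k^{\bar\alpha}$, which beats $r_k^{\alpha}$.

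In the limit, $v_\infty$ is an entire solution of a constant-coefficient harmonic-type equation in Cases 1 and 2; in Case 2 it vanishes on $\partial\Pi_\nu$, so we reflect oddly. It grows at most like $|z|^{1+\alpha}$ with $1+\alpha<2$. In Case 3 it solves \eqref{proof_theorem_reg_C0alpha_6} on $\R^d\setminus\Sigma_{\bar\e,N_\infty}$ with zero boundary data. The classical subquadratic Liouville theorem forces $v_\infty$ to be affine, and the normalisations $v_\infty(0)=0$, $\nabla v_\infty(0)=0$ then give $v_\infty\equiv0$. In Case 3, Theorem \ref{theor_liou_y} applies because $1+\alpha<\gamma_1^*\le\gamma_1(M_\infty,a_\infty)$, where the lower semicontinuity of $\mu_1$ under uniform convergence of $a_k,M_k$ is used. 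This again gives $v_\infty\equiv0$, contradicting $v_\infty\not\equiv0$.
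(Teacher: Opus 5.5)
The step that fails as written is your treatment of the singular zeroth-order term in Case 1. After subtracting the affine part, the rescaled equation contains the coefficient $\eta(z_k)u_k(z_k)A_k(z_k+r_kz)/(r_k^{\alpha}E_k)$, paired with $\overline{iM_k(z_k+r_kz)\nabla\varphi}$. Since $|u_k(z_k)|\le CE_k|y_k|^{1+\alpha}$ and $|A_k(z_k+r_kz)|\le C/|y_k|$, this coefficient is of order $(|y_k|/r_k)^{\alpha}$, which diverges because $|y_k|\ge d_k\gg r_k$. The sharper bound $|\eta u_k(z_k)|\le d_k\norm{\nabla(\eta u_k)}_{L^\infty}$ only improves this to $o(r_k^{-\alpha})$.

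So this term is not $O((r_k/d_k)^{1-\alpha})$ ``as in the previous proof''. In the $C^{0,\alpha}$ blow-up the same term carries the factor $r_k^{1-\alpha}$; here it carries $r_k^{-\alpha}$. The missing idea is to apply to the potential the same divergence trick you use for $M_k$:
\begin{itemize}
\item Subtract the constant vector $\eta(z_k)u_k(z_k)M_k^T(z_k)A_k(z_k)/(r_k^{\alpha}E_k)$, which integrates to zero against $\nabla\overline{\varphi}$.
\item Bound the oscillation of $a_k(x,y/|y|)/|y|$ on $B_{r_kR}(z_k)$, at distance $\sim|y_k|$ from $\Sigma_0$, by $C(r_k|y_k|^{-2}+r_k^{\bar\alpha}|y_k|^{-1-\bar\alpha})$.
\item After multiplication this gives $(r_k/|y_k|)^{1-\alpha}+(r_k/|y_k|)^{\bar\alpha-\alpha}\to0$.
\end{itemize}
This is exactly where $\bar\alpha>\alpha$ is used. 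With $\bar\alpha=\alpha$, a bounded divergence-form remainder could survive in the limit and spoil the Liouville step. The remaining oscillations of $M_k$ and $b_k$ against $u_k(z_k)$ are then harmless, since $\norm{\nabla(\eta u_k)}_{L^\infty}/E_k\to0$ and $\norm{u_k}_{L^\infty}/E_k\to0$.

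Conversely, Case 3 does not need $\bar\alpha$ at all. There $\tilde z_k=(x_k,0)$ lies inside the hole, so $u_k$ and $\nabla u_k$ vanish at $\tilde z_k$. Nothing is subtracted (there is no ``$x$-part of the gradient''), the rescaled equation has no extra terms, and uniform convergence of $a_k(\tilde x_k+r_kx,\cdot)$ suffices.

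You also skip the boundary condition in Case 2. There the blow-ups do not vanish on the rescaled boundary: on it $v_k(z)=-\nabla(\eta u_k)(z_k^0)\cdot z/(r_k^{\alpha}E_k)$, and Lemma~\ref{lemma_gradient_u} only bounds the coefficient by $C(\e_k/r_k)^{\alpha}\to\infty$. Your odd reflection requires $v_\infty=0$ on $\partial\Pi_\nu$. To get it you need two facts. First, $\nabla(\eta u_k)(z_k^0)$ is normal to $\partial\Sigma_{\e_k,N_k}$. Second, on compact sets the rescaled boundary stays within $O(r_k/\e_k)$ of its tangent hyperplane at $0$; this is a curvature estimate that uses the regularity of $N_3^{-1}$. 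Together they give $|v_k|\le C(r_k/\e_k)^{1-\alpha}\to0$ there.

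Apart from these points, your control of the $M_k$-commutator is a legitimate alternative to the paper's. The paper runs the contradiction argument twice. It first uses a solution exponent $\beta<\alpha$, so the commutator is $O(r_k^{\alpha-\beta})$ and a uniform Lipschitz bound follows. It then takes $\beta=\alpha$, using that bound to get $\norm{\nabla u_k}_{L^\infty}\le Ck^{-1}E_k$.

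The interpolation you allude to is $\norm{\nabla(\eta u_k)}_{L^\infty}\le C\delta^{-1}\norm{\eta u_k}_{L^\infty}+CE_k(\delta^{\alpha}+\e_k^{\alpha})$. Combined with $E_k/\norm{u_k}_{L^2}\to\infty$, it gives $\norm{\nabla(\eta u_k)}_{L^\infty}/E_k\to0$ in a single pass. It also yields $r_k\to0$ in all three cases, which you assert without proof and the paper proves separately in Case 3. The interpolation must be carried out across the perforation; the $\e_k^{\alpha}$ term comes from Lemma~\ref{lemma_gradient_u}, and you should say so.
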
 

\begin{proof}
It is not restrictive to assume that $R=1$ and $r=1/2$.

We are going to prove the result by a contradiction argument which shares the same spirit of the proof of the previous Theorem \ref{theorem_reg_C0alpha}. However, here we do not have a uniform $L^\infty$-bound for the gradients of the solutions (i.e. a uniform Lipschitz bound). This fact forces us to prove the result in two steps. First, we prove the uniform estimate in \eqref{ineq_reg_C1alpha} for a slightly smaller exponent $0<\beta<\alpha$, where $\beta$ denotes the exponent of regularity of the solution and $\alpha$ still denotes the exponent of regularity of the data. In this way, the uniform Lipschitz bound is unlocked, and we can redo the same contradiction argument by choosing the sharp $\beta=\alpha$. We would like to stress that \textbf{Step 6} is the only part in the proof which needs this two-steps procedure.

Thanks to  Propositions \ref{prop_blow_up_holder_a=0_Diri}-\ref{prop_blow_up_C1_holder_a=0_Diri}, any weak solution $u_\e$ of \eqref{prob_ue_total} in $B_1$ belongs to $C^{1,\beta}(B_{1/2}\setminus \Sigma_{\e,N},\mb{C})$. In particular, there exists a constant $c_\e$, depending on $\e$, such that
\begin{equation}
\norm{u_\e}_{C^{1,\beta}(B_{1/2}\setminus \Sigma_{\e,N},\mb{C})} \le c_\e \norm{u_\e}_{L^2(B_1,\mb{C})}.
\end{equation}
We are going to show that $c_\e$ can be chosen uniformly with respect to $\e$ arguing by contradiction.

Suppose that there exist a sequence $\e_k \to 0^+$ and a sequence of associated solutions $u_k$ with
\begin{equation}
\norm{u_k}_{C^{1,\beta}(B_{1/2}\setminus \Sigma_{\e_k,N_k},\mb{C})}\ge k \norm{u_k}_{L^2(B_1,\mb{C})}.
\end{equation}
More precisely, $\{u_k\}$ weakly solves \eqref{prob_ue_total}  with 
\begin{equation}
A_k(x,y):=\frac{a_k(x,y/|y|)}{|y|}+b_k(x,y) \quad \text{ and } \quad  d\times d \text{ matrices } M_k
\end{equation}
such that:
\begin{itemize}
    \item $M_k$  is uniformly bounded in $C^{0,\alpha}(B_1, \R^{d,d})$ and satisfies \eqref{hp_M_elliptic} with  $\la, \Lambda>0$ uniform in $k$;
    \item $a_k$ uniformly bounded in $C^{0,\overline{\alpha}}(B'_1\times\mathbb S^{n-1}, \R^d)$ and  $b_k$ uniformly bounded in  $C^{0,\alpha}(B_1,\R^d)$;
    \item  $(N_k)_3^{-1}$ is uniformly bounded in $C^{1,\alpha}(B_1,\R^{n,n})$;
    \item  $\gamma_1(M_k,a_k)\ge \gamma_1^*$ for any $k \in \mb{N}\setminus \{0\}$.
\end{itemize}
We divide the rest of the proof into several steps.  To simplify the exposition we will denote with $C>0$ any constant that does not depend on $k$.

\textbf{Step 1. Definition of the blow-up sequences.}
In view of Theorem \ref{theorem_reg_C0alpha}, for any $\gamma \in (0,1)$, there exists a constant $C>0$ such that
\begin{equation}\label{boundcgamma}
\norm{u_k}_{C^{0,\gamma}(B_{4/5},\mb{C})} \le C   \norm{u_k}_{L^2(B_1,\mb{C})}.
\end{equation}
Let $\eta \in C^\infty_c(B_{3/4})$ be a cut-off function with $0\leq\eta\leq1$, $\eta\equiv1$ in $B_{1/2}$ and let
\begin{equation}
E_k:=[|\nabla( \eta u_k)|]_{C^{0,\beta}(B_1\setminus \Sigma_{\e_k,N_k},\mb{C})}.
\end{equation}
Since $\eta u_k=0$ in $B_1\setminus B_{3/4}$ it cannot happen that
\begin{equation}
\frac{\norm{ \nabla (\eta u_k)}_{L^\infty(B_1\setminus\Sigma_{\e_k,N_k},\mb{C})}}{\norm{u_k}_{L^2(B_1,\mb{C})}}\to +\infty \quad \text{ as } k \to \infty,
\end{equation}
while 
\begin{equation}
\frac{[|\nabla( \eta u_k)|]_{C^{0,\beta}(B_1\setminus \Sigma_{\e_k,N_k},\mb{C})}}{ \norm{u_k}_{L^2(B_1,\mb{C})}}\le C.
\end{equation}
Hence, we must have 
\begin{equation}
E_k \to +\infty.
\end{equation}

Let us consider two sequences $z_k:=(x_k,y_k),\hat z_k:= (\hat x_k, \hat y_k) \in B_1\setminus \Sigma_{\e_k,N_k}$ such that 
\begin{equation}\label{proof_theorem_reg_C2alpha_2}
\frac{|\nabla (\eta u_k)(z_k)-\nabla(\eta u_k)(\hat z_k)|}{|z_k-\hat z_k|^{\beta}}\ge \frac{E_k}{2}.
\end{equation}
Just as in Theorem \ref{theorem_reg_C0alpha}, it is not restrictive to assume that $z_k, \hat z_k \in B_{3/4}$. We are going to use the same notation of Theorem \ref{theorem_reg_C0alpha}. Let $r_k:=|z_k-\hat z_k|$ and  $d_k:=\di(z_k,\partial \Sigma_{\e_k, N_k})$.
From now on, we need to distinguish the same three cases of Theorem \ref{theorem_reg_C0alpha}; that is,  \eqref{case_blowup_1}, \eqref{case_blowup_2}  and \eqref{case_blowup_3}. Furthermore, also \eqref{relation_er} holds.  
Unlike Theorem \ref{theorem_reg_C0alpha}, at the moment we can only prove that $r_k \to 0^+$ in \textbf{Cases 1-2}.

Let us denote with $z_k^0:=(x^0_k,y^0_k)$ a projection of $z_k$ on $\partial \Sigma_{\e_k, N_k}$ and let $\tilde z_k$ be the points defined in \eqref{tildezk}.

By the definition of the \textbf{Cases 1,\, 2,} and \textbf{3}, $|z_k-\tilde{z}_k|\le C r_k$. Furthermore, in \textbf{Cases 1-2} we have
\begin{equation}
|z_k^0| \le |z_k|+d_k \le \frac{3}{4}+Cr_k,
\end{equation}
and so for any $\tau \in (3/4,1)$ we have that $z_k^0 \in B_\tau\setminus \Sigma_{\e_k,N_k}$ for $k$ large enough. 
Let $\Omega_k$ be as in \eqref{def_Omega_k} and the limit domain $\Omega_\infty$ (defined in \eqref{def_Omega_infty}).

Finally, in $\Omega_k$ let us define in \textbf{Cases 1} and \textbf{2}
\begin{align}
&v_k(z):=\frac{(\eta u_k)(\tilde{z}_k+r_k z)-(\eta u_k)(\tilde{z}_k)-\nabla (\eta u_k)(\tilde{z}_k)\cdot r_kz}{r_k^{1+\beta}E_k},\\
&w_k(z):=\frac{\eta(\tilde{z}_k)(u_k(\tilde{z}_k+r_k z)-u_k(\tilde{z}_k)-\nabla  u_k(\tilde{z}_k)\cdot r_kz)}{r_k^{1+\beta}E_k},
\end{align}
while in \textbf{Case 3} we let
\begin{equation}
v_k(z):=\frac{(\eta u_k)(\tilde{z}_k+r_k z)}{r_k^{1+\beta}E_k}, \quad \text{ and } 
\quad w_k(z):=\frac{\eta(\tilde{z}_k)u_k(\tilde{z}_k+r_k z)}{r_k^{1+\beta}E_k}.
\end{equation}
Notice that in \textbf{Case 3} we do not need to subtract the first order expansion since $u_k(\tilde z_k)=0$ and $\nabla u_k(\tilde z_k)=0$.

\textbf{Step 2. Convergence of the blow-up sequences in H\"older spaces.}
For any $z,z' \in \Omega_k$
\begin{equation}
|\nabla v_k(z)-\nabla v_k(z')|\le\frac{|\nabla(\eta u_k)(\tilde{z}_k+r_k z)-\nabla(\eta u_k)(\tilde{z}_k+r_k z')|}{r_k^{\beta}E_k}\le  |z-z'|^{\beta},
\end{equation}
so that
\begin{equation}\label{proof_theorem_reg_C2alpha_2_6}
[\nabla v_k]_{C^{0,\beta}(\Omega_k,\mb{C})}\le 1.   
\end{equation}
In \textbf{Cases 1} and \textbf{2} for any compact set $K\subset \Omega_\infty$, noticing that $K \subset \Omega_k$ for $k$ large enough,
\begin{equation}
\norm{\nabla v_k}_{L^\infty(K,\mathbb C)}\le \sup_{z \in\Omega_k \cap K} |\nabla v_k(z)- \nabla v_k(0)| \le\sup_{z \in K} |z|^\beta,
\end{equation}
since $\nabla v_k(0)=0$ and $0 \in \overline{\Omega}_k$.
Furthermore, arguing as in  \cite[Proof of Theorem 1.3, ii), Step 2]{CFV}, we can  prove that for any compact set $K \subset \Omega_\infty$, there exists a compact set $K'\subset\R^d$ with $K \subset K'$ and $k_0\in\mathbb N$ such that  for $k\geq k_0$ and for any $z \in K$ there exists a curve $\gamma_k$, depending on $z$, satisfying
\begin{equation}\label{eq_curves}
\gamma_k:[0,1] \to \overline{\Omega}_k, \;\;\gamma_k(0)=0,\; \;\gamma_k(1)=z,\;\; \norm{\gamma_k}_{C^{0,1}(0,1)} \le C(1+|z|),
\;\; \mathop{\rm spt}(\gamma_k) \subset K'\cap \overline \Omega_k.
\end{equation}
Hence, thanks to \eqref{eq_curves}, for any $z \in K\cap \Omega_k$ 
\begin{equation}\label{eq_vk_ineq_cureves}
|v_k(z)|=\left|\int_0^1\nabla v_k(\gamma_k(t))\cdot \gamma_k'(t)\, dt \right|\le \norm{\nabla v_k}_{L^\infty(K'\cap \Omega_k,\mathbb C)}\norm{\gamma_k'}_{L^\infty(0,1)}
\le C(1+|z|^{\beta+1}).
\end{equation}
We conclude that for any compact set $K\subset \Omega_\infty$, since $K \subset \Omega_k$ for $k$ large enough,
\begin{equation}
\norm{v_k}_{C^{1,\beta}(K,\mb{C})}\le C.
\end{equation}
We still need to  deal with \textbf{Case 3}. In this case, it is harder to show a uniform bound on the gradient of $v_k$ and we do so 
by the means of Lemma \ref{lemma_gradient_u} which implies that 
\begin{equation}\label{proof_theorem_reg_C2alpha_2_7}
|\nabla u_k(z)| \le CE_k \e_k^{\beta} \quad \text{ for any } z \in B_1 \cap \partial \Sigma_{\e_k,N_k}.
\end{equation}
In particular, recalling that in \textbf{Case 3} $\e_k/r_k\leq C$ by \eqref{relation_er}, 
\begin{equation}
|\nabla v_k((z_k^0-\tilde{z}_k)/r_k)|=\frac{|\nabla(\eta u_k)(z_k^0)|}{r_k^{\beta} E_k} \le C \frac{\e_k^{\beta}}{r_k^\beta}\le C.
\end{equation}
Hence, for any compact set $K \subset \Omega_\infty$  we have that, by \eqref{proof_theorem_reg_C2alpha_2_6},
\begin{multline} 
\norm{\nabla v_k}_{L^\infty(K,\mathbb C)}\le \sup_{z \in K} | \nabla v_k(z)- \nabla v_k((z_k^0-\tilde{z}_k)/r_k)|+|\nabla v_k((z_k^0-\tilde{z}_k)/r_k)|
\le C\sup_{z \in K} (1+|z|^\beta),
\end{multline}
since 
\begin{equation}
\frac{|z_k^0-\tilde{z}_k|}{r_k} \le \frac{|z_k^0-z_k|}{r_k}+\frac{|z_k-\tilde{z}_k|}{r_k}=\frac{d_k}{r_k}+\frac{|y_k|}{r_k} \le C.
\end{equation}
As in  \textbf{Cases 1} and \textbf{2},  we can show that for any compact set $K \subset \Omega_\infty$, there exists a compact set $K'\subset\R^d$ with $K \subset K'$ and $k_0\in\mathbb N$ such that  for $k\geq k_0$ and for any $z \in K$ there exists a curve $\gamma_k$, depending on $z$, satisfying
\begin{equation}
\gamma_k:[0,1] \to \overline{\Omega}_k, \;\;\gamma_k(0)=\frac{z_k^0-\tilde{z}_k}{r_k},\; \;\gamma_k(1)=z,\;\; \norm{\gamma_k}_{C^{0,1}(0,1)} \le C(1+|z|),
\;\; \mathop{\rm spt}(\gamma_k) \subset K'\cap \overline \Omega_k.
\end{equation}
It follows that  also in \textbf{Case 3} we may estimate  $|v_k|$ as in \eqref{eq_vk_ineq_cureves} to conclude that
\begin{equation}
\norm{v_k}_{C^{1,\beta}(K,\mb{C})}\le C.
\end{equation}
Hence, up to passing to subsequences and with a standard diagonal argument, there exists a function $v_\infty \in C^{1,\beta}(\Omega_\infty \cap B_R,\mathbb C)$ for any $R>0$ such that $v_k \to v_\infty$ in $C^{1,\gamma}(\Omega_\infty \cap B_R,\mathbb C)$ for any $R>0$ and any $\gamma<\beta$. Furthermore,
\begin{equation}
[\nabla v_\infty]_{C^{0,\beta}(\Omega_\infty,\mathbb C)}\le 1
\end{equation}
so that 
\begin{equation}
|v_\infty(z)| \le C(1+|z|^{\beta+1}) \quad \text{ in } \Omega_\infty.
\end{equation}
Letting 
\begin{equation}
\hat\xi_k:=\frac{\hat z_k-\tilde{z}_k}{r_k}, \quad \xi_k:=\frac{z_k-\tilde{z}_k}{r_k}
\end{equation}
as in Theorem \ref{theorem_reg_C0alpha}, we can show that, 
up to passing to a subsequences,  $\hat\xi_k \to \hat \xi$ 
and  $\xi_k \to \xi$ for some $\hat \xi, \xi \in \mb{R}^{d}$ as $k \to \infty$. 
Then, since $\xi_k,\hat \xi_k \in \Omega_k$,
\begin{equation}\label{proof_theorem_reg_C2alpha_2_8}
|\nabla v_\infty(\xi)-\nabla v_\infty(\hat \xi)|=\lim_{k\to \infty}|\nabla v_k(\xi_k)-\nabla v_k(\hat \xi_k)|=\lim_{k \to \infty}\frac{|\nabla(\eta u_k)(z_k)-\nabla(\eta u_k)( \hat z_k)|}{r_k^{\beta}E_k} \ge \frac{1}{2}.
\end{equation}
In conclusion, $v_\infty\not \equiv \rm{const}$ and in particular  $v_\infty\not \equiv 0$.

\textbf{Step 3. We have that  $r_k \to 0^+$ in Case 3.} If by contradiction $r_k \to r_\infty\in(0,3/2]$,  then for any $z \in \Omega_\infty$, since $\eta \in C^\infty_c(B_{3/4})$,
\begin{equation}
|v_k(z)|=\left|\frac{(\eta u_k)(\tilde z_k+r_kz)}{E_k r_k^{1+\beta}}\right|\le 
\frac{2 \norm{u_k}_{L^\infty(B_{3/4}\setminus \Sigma_{\e_k,N_k},\mathbb C)}}{E_k r_k^{1+\beta}}\le \frac{C}{E_k} \to 0^+,
\end{equation}
a contradiction since $|v_\infty(z)|=\lim_{k \to \infty}|v_k(z)|=0$ but $v_\infty\not \equiv 0$.

\textbf{Step 4. $w_k \to v_\infty$ uniformly on compact sets.}
In \textbf{Cases 1} and \textbf{2}, if we choose $\gamma =\beta$ in \eqref{boundcgamma},
\begin{multline}
|v_k(z)-w_k(z)|=\frac{|(\eta u_k)(\tilde z_k+r_kz)-\eta(\tilde{z}_k)u_k(\tilde z_k+r_kz)-(u_k\nabla \eta)(\tilde z_k)\cdot r_k z|}{r_k^{1+\beta}E_k}\\
\le\frac{|u_k(\tilde z_k+r_kz)(\eta(\tilde z_k+r_kz)-\eta(\tilde z_k)-\nabla \eta(\tilde z_k)\cdot r_k z)|}{r_k^{1+\beta}E_k}
+C\frac{|\nabla \eta(\tilde z_k)||u_k(\tilde z_k+r_kz)-u_k(\tilde z_k)|}{r_k^{\beta}E_k}\\
\le C\frac{\norm{u_k}_{C^{0,\beta}(B_{4/5},\mathbb C)}r_k^{1-\beta}}{E_k}+C\frac{\norm{u_k}_{C^{0,\beta}(B_{4/5},\mathbb C)}}{E_k}
\le \frac{C}{E_k}\to 0^+.
\end{multline}
In \textbf{Case 3}, again with the choice $\gamma=\beta$ in \eqref{boundcgamma},
\begin{multline}
|v_k(z)-w_k(z)|=\frac{|u_k(\tilde z_k+r_kz) (\eta(\tilde z_k+r_kz) -\eta(\tilde{z}_k))|}{r_k^{1+\beta}E_k}
\le C \frac{|u_k(\tilde z_k+r_kz) -u_k(z_k^0)|}{r_k^{\beta}E_k}\\
\le C\frac{\norm{u_k}_{C^{0,\beta}(B_{4/5},\mathbb C)}|\tilde{z}_k+r_k z -z_k^0|^\beta}{r_k^{\beta}E_k}
\le C\frac{\norm{u_k}_{C^{0,\beta}(B_{4/5},\mathbb C)}}{E_k}\to 0^+,
\end{multline}
since $|\tilde{z}_k+r_k z -z_k^0| \le |y_k^0|+r_k|z|\le Cr_k+\e_k \le Cr_k$.

\textbf{Step 5. $v_\infty$ satisfies homogeneous Dirichlet boundary conditions in Cases 2 and 3.}
In \textbf{Case 3}, since $v_k=0$ on $\partial \Sigma_{\e_k/r_k,N_k}$ from the uniform convergence of $v_k \to v_\infty$ we conclude that $v_\infty=0$ on $\partial \Sigma_{\bar \e, N_\infty}$, where $\bar\e=\lim_{k\to\infty}\e_k/r_k$ as defined in \eqref{bareps}.

In \textbf{Case 2}, the situation is far more delicate since $v_k \not \equiv 0$ on $\partial \Sigma_{\e_k/r_k,N_k}$. 
Take any $z \in \partial (\Sigma_{\e_k,N_k}-\tilde{z}_k)/r_k$, and let $P_kz$ be the projection of $z$ on the tangent hyperplane to $\partial \Sigma_{\e_k,N_k}$ in $z_k^0$.

Since $\tilde{z}_k=z_k^0 \in  \Sigma_{\e_k,N_k}$, it follows that $\nabla (\eta u_k)(z_k^0)\cdot z=\nabla (\eta u_k)(z_k^0)\cdot (z-P_kz)$ and so 
\begin{equation}\label{proof_theorem_reg_C2alpha_71}
|v_k(z)|=\frac{|\nabla (\eta u_k)(z_k^0)\cdot z|}{r_k^{\beta}E_k}\le \frac{|\nabla (\eta u_k)(z_k^0)|}{r_k^{\beta}E_k}|z-P_kz| 
\le C \left(\frac{\e_k}{r_k}\right)^{\beta}|z-P_kz| 
\end{equation}
thanks to \eqref{proof_theorem_reg_C2alpha_2_7}.
We claim that
\begin{equation}\label{proof_theorem_reg_C2alpha_3}
|z-P_kz| \le C \frac{r_k}{\e_k},
\end{equation}
which, combined with the  uniform convergence of $v_k \to v_\infty$ on compact sets, \eqref{relation_er}  and \eqref{proof_theorem_reg_C2alpha_71}, will yield 
$v_\infty\equiv0$ on $\partial \Pi_\nu$.

To prove \eqref{proof_theorem_reg_C2alpha_3}, let $\Psi_k:\R^d \to \R$
\begin{equation}\label{def_Psi_k}
\Psi_k(z):= \sqrt{(N_k)_3^{-1}(z)y\cdot y}
\end{equation}
so that 
\begin{equation}
\Psi_k(z)=\e_k \text{ for any } z \in \partial \Sigma_{\e_k,N_k}.
\end{equation}
We can compute $\nabla \Psi_k$ as 
\begin{equation}
\nabla \Psi_k(z)=\frac{1}{2}\frac{(0,(N_k)_3^{-1}(z)y)}{\Psi_k(z)}+\frac{G_k(z)}{\Psi_k(z)}
\end{equation}
where $G_k:\R^d \to \R$ is defined as
\begin{equation}\label{def_G_k}
G_k(z):=\frac{1}{2}\left(\sum_{i,j=1}^n \pd{(q_k)_{i,j}}{z_l}y_iy_j\right)_{l=1, \dots, d} \text{ where } (N_k)_3^{-1}(z)=:(q_k)_{i,j=1, \dots n}.
\end{equation}
In particular,  $G_k \in C^{0,1}(B_1)\cap C^1(B_1\setminus \Sigma)$ and $\norm{\nabla \Psi_k}_{L^\infty(B_1)}\le C_1L$, for some positive constant $C_1>0$ that does not depend on $k$ in view of the uniform ellipticity of $N_k$.

We notice that  the outer normal vector to $\partial \Sigma_{\e_k,N_k}$ in $\tilde{z}_k$ is given by
\begin{equation}
\nu_k:=\frac{\nabla \Psi_k(\tilde{z}_k)}{|\nabla \Psi_k(\tilde{z}_k)|}
=\frac{1}{|\nabla \Psi_k(\tilde{z}_k)|}\left[\frac{1}{2}\frac{(0,(N_k)_3^{-1}(\tilde{z}_k)y_k^0)}{\Psi_k(z_k^0)}+\frac{G_k(z_k^0)}{\Psi_k(z_k^0)}\right].
\end{equation}
Furthermore, $\Psi_k(z_k^0)=\e_k$ and
\begin{equation}
|\nabla \Psi_k(\tilde{z}_k)| \ge  \frac{C}{\e_k}(\e_k-\e_k^2) \ge C_1
\end{equation}
for some constant $C_1>0$ that does not depend on $k$, since $z_k^0 \in \partial \Sigma_{\e_k,N_k}$.  Moreover, 
\begin{equation}
(N_k)_3^{-1}(z_k^0+r_kz)(y_k^0+r_ky)\cdot (y_k^0+r_ky)=\e_k^2 \quad  \text{ or equivalently } \quad \Psi_k(z_k^0+r_kz)=\e_k
\end{equation}
since $z \in \partial (\Sigma_{\e_k,N_k}-z_k^0)/r_k$. Hence, by the Lagrange Theorem there exists $z_k^*=z_k^0+\theta r_kz$ with $\theta \in [0,1]$ such that
\begin{multline}
|z-P_kz| =|z\cdot \nu_k| \le C\left|\frac{\Psi_k(z_k^0+r_kz)-\Psi_k(z_k^0)}{r_k}-\left[\frac{1}{2}\frac{(0,(N_k)_3^{-1}(z_k^0)y_k^0)}{\Psi_k(z_k^0)}+\frac{G_k(z_k^0)}{\Psi_k(z_k^0)}\right]\cdot z\right| \\
\le C\left|\left[\frac{1}{2}\frac{(0,(N_k)_3^{-1}(z_k^*)y_k^*)}{\Psi_k(z_k^*)}+\frac{G_k(z_k^*)}{\Psi_k(z_k^*)}\right]\cdot z
-\left[\frac{1}{2}\frac{(0,(N_k)_3^{-1}(z_k^0)y_k^0)}{\Psi_k(z_k^0)}+\frac{G_k(z_k^0)}{\Psi_k(z_k^0)}\right]\cdot z\right|
\end{multline}
By \eqref{def_G_k}, letting
\begin{equation}
Q_k:=\frac{1}{2}\left(\sum_{i,j=1}^n \pd{(q_k)_{i,j}}{z_l}\right)_{l=1, \dots, d},
\end{equation}
 we thus have
\begin{multline}
\left|\frac{G_k(z_k^*)}{\Psi_k(z_k^*)}-\frac{G_k(z_k^0)}{\Psi_k(z_k^0)}\right| 
\le \left|\frac{Q_k(z_k^*) y_k^*\cdot y_k^*}{\Psi_k(z_k^*)}-\frac{Q_k(z_k^0) y_k^*\cdot y_k^*}{\Psi_k(z_k^0)}\right|
+\left|\frac{Q_k(z_k^0) y_k^*\cdot y_k^*}{\Psi_k(z_k^0)}-\frac{Q_k(z_k^0) y_k^0\cdot y_k^0}{\Psi_k(z_k^0)}\right|\\
\le \left|\frac{Q_k(z_k^*) y_k^*\cdot y_k^*}{\Psi_k(z_k^*)}-\frac{Q_k(z_k^0) y_k^*\cdot y_k^*}{\Psi_k(z_k^*)}\right|
+\left|\frac{Q_k(z_k^0) y_k^*\cdot y_k^*}{\Psi_k(z_k^*)}-\frac{Q_k(z_k^0) y_k^*\cdot y_k^*}{\Psi_k(z_k^0)}\right|+\frac{C}{\e_k}|y_k^*-y_k^0|\\
\le C\left(|z_k^*-z_k^0| \frac{|y_k^*|^2}{|\Psi(z_k^*)|}+c |y_k^*|\frac{|\Psi_k(z_k^0)-\Psi_k(z_k^*)|}{|\Psi_k(z_k^0)||\Psi_k(z_k^*)|}+\frac{r_k}{\e_k}\right)
\le C\left(r_k +\frac{r_k}{\e_k}\right).
\end{multline}
Then 
\begin{multline}\label{ineq_N31_ek_rk}
\left|\frac{(N_k)_3^{-1}(z_k^*) y_k^*}{\Psi_k(z_k^*)}-\frac{(N_k)_3^{-1}(z_k^0) y_k^0}{\Psi_k(z_k^0)}\right|\\
\le \left|\frac{(N_k)_3^{-1}(z_k^*) y_k^*}{\Psi_k(z_k^*)}-\frac{(N_k)_3^{-1}(z_k^0) y_k^*}{\Psi_k(z_k^0)}\right|
+\left|\frac{(N_k)_3^{-1}(z_k^0) y_k^*}{\Psi_k(z_k^0)}-\frac{(N_k)_3^{-1}(z_k^0) y_k^0}{\Psi_k(z_k^0)}\right|\\
\le \left|\frac{(N_k)_3^{-1}(z_k^*) y_k^*}{\Psi_k(z_k^*)}-\frac{(N_k)_3^{-1}(z_k^0) y_k^*}{\Psi_k(z_k^*)}\right|
+\left|\frac{(N_k)_3^{-1}(z_k^0) y_k^*}{\Psi_k(z_k^*)}-\frac{(N_k)_3^{-1}(z_k^0) y_k^*}{\Psi_k(z_k^0)}\right|+\frac{C}{\e_k}|y_k^*-y_k^0|\\
\le C\left(|z_k^*-z_k^0| \frac{|y_k^*|}{|\Psi(z_k^*)|}+c |y_k^*|\frac{|\Psi_k(z_k^0)-\Psi_k(z_k^*)|}{|\Psi_k(z_k^0)||\Psi_k(z_k^*)|}+\frac{r_k}{\e_k}\right)
\le C\left(r_k +\frac{r_k}{\e_k}\right).
\end{multline}
In conclusion, we have proved  \eqref{proof_theorem_reg_C2alpha_3}.

\textbf{Step 6. Boundedness in $H^1$.}
With a change of variables in \eqref{eq_ue}, we can see that for any $\varphi \in C^\infty_c(\Omega_\infty,\mb{C})$,  for any $R>0$ such that the support of $\varphi$ is contained in $B_R$ and  for large enough $k$, in \textbf{Cases 1} and \textbf{2}
\begin{multline}\label{proof_theorem_reg_C2alpha_4}
\int_{\Omega_k} \Bigg[i M_k(\tilde{z}_k+r_kz)\nabla w_k(z)+\left(r_k\frac{a_k(\tilde{x}_k+r_kx,(\tilde{y}_k+r_ky)/|\tilde{y}_k+r_ky|)}{|\tilde{y}_k+r_ky|}
+ r_kb_k(\tilde{z}_k+r_kz)\right)w_k(z)\\
+ i  M_k(\tilde z_k+r_kz) \frac{r_k^{-\beta}\eta(\tilde z_k)\nabla u_k(\tilde z_k)}{E_k}
+\left(\frac{a_k(\tilde{x}_k+r_kx,(\tilde{y}_k+r_ky)/|\tilde{y}_k+r_ky|)}{|\tilde{y}_k+r_ky|} +b_k(\tilde{z}_k+r_kz)\right)\\
\times\left(\frac{r_k^{-\beta}\eta(\tilde z_k)u_k(\tilde z_k)}{E_k}+  \frac{r_k^{1-\beta}\eta(\tilde z_k)\nabla u_k(\tilde z_k)\cdot z}{E_k}\right)\Bigg] \\
\cdot \left(-i M_k(\tilde{z}_k+r_kz) \nabla \overline{\varphi}(z)
+ \left(r_k\frac{a_k(\tilde{x}_k+r_kx,(\tilde{y}_k+r_ky)/|\tilde{y}_k+r_ky|)}{|\tilde{y}_k+r_ky|}
+ r_kb_k(\tilde{z}_k+r_kz)\right)\overline{\varphi}(z)\right) \\
+r_k^2\frac{g_k(\tilde{x}_k+r_kx,(\tilde{y}_k+r_ky)/|\tilde{y}_k+r_ky|)}{|\tilde{y}_k+r_ky|^2}w_k(z) \overline{\varphi}(z)\\
+\frac{g_k(\tilde{x}_k+r_kx,(\tilde{y}_k+r_ky)/|\tilde{y}_k+r_ky|)}{|\tilde{y}_k+r_ky|^2}\\
\times\left(\frac{r_k^{1-\beta}\eta(\tilde z_k)u_k(\tilde{z}_k)}{E_k}
+\frac{r_k^{2-\beta}\eta(\tilde z_k)\nabla u_k(\tilde z_k)\cdot z}{E_k}\right)
\overline{\varphi}(z)\Bigg]\, dz=0.
\end{multline}
In \textbf{Case 3}
\begin{multline}\label{proof_theorem_reg_C2alpha_5}
\int_{\Omega_k} i M_k(\tilde{z}_k+r_kz)\nabla w_k(z)+\left(r_k\frac{a_k(\tilde{x}_k+r_kx,(\tilde{y}_k+r_ky)/|\tilde{y}_k+r_ky|)}{|\tilde{y}_k+r_ky|}
+ r_kb_k(\tilde{z}_k+r_kz)\right)w_k(z)\\
\cdot \Big(-i M_k(\tilde{z}_k+r_kz) \nabla \overline{\varphi}(z)\\
+ \left(r_k\frac{a_k(\tilde{x}_k+r_kx,(\tilde{y}_k+r_ky)/|\tilde{y}_k+r_ky|)}{|\tilde{y}_k+r_ky|}
+ r_kb_k(\tilde{z}_k+r_kz)\right)\overline{\varphi}(z)\Big) \, dz=0.
\end{multline}
We now prove that $\{w_k\}_{k \in \mb N \setminus \{0\}}$ is bounded in $\tilde{H}^1(\Omega_\infty\cap B_R,\mb{C})$ for any $R>0$  so that,
in particular, $w_k \rightharpoonup v_\infty$ weakly in $\tilde{H}^1(\Omega_\infty\cap B_R,\mb{C})$ as $k \to \infty$ for any $R>0$.

Since \eqref{proof_theorem_reg_C2alpha_4} is in the form of \eqref{ineq_caccio_simple} after some multiplications, it is enough to estimate each  terms on the right hand side of   \eqref{ineq_caccio_simple} in a suitable manner. To this end, it's clearly sufficient to control the $L^\infty$ norms of all the terms involved since we are integrating over  $B_{2R}\cap \Omega_k$ which has finite measure.

Let us start with \textbf{Case 1} and some preliminary considerations. In this case $\tilde z_k=z_k$ thus, since $u_k(z_k^0)=0$, by \eqref{proof_theorem_reg_C2alpha_2_7}
\begin{equation}\label{proof_theorem_reg_C2alpha_6}
|u_k(z_k)| \le C |\nabla u_k(z_k^0)| |z_k-z_k^0| \le C E_kd_k\e_k^\beta \le C E_k|y_k|^{1+\beta}
\end{equation}
and similarly
\begin{equation}\label{proof_theorem_reg_C2alpha_6_5}
|\nabla u_k(z_k)| \le |\nabla u_k(z_k)-\nabla u_k(z_k^0)|+|\nabla u_k(z_k^0)| \le E_kd_k^\beta+CE_k \e_k^\beta\le CE_k|y_k|^{\beta}.
\end{equation}
Furthermore, as in Theorem \ref{theorem_reg_C0alpha},
\begin{equation}
|y_k+r_ky| \ge \frac{1}{2} |y_k|,
\end{equation}
since $d_k/r_k \to +\infty$,  thus
\begin{equation}\label{proof_theorem_reg_C2alpha_7}
\frac{|a_k(x_k+r_kx,(y_k+r_ky)/|y_k+r_ky|)|}{|y_k+r_ky|}
\le \frac{C}{|y_k|}.
\end{equation}
Moreover, since $\{a_k\}$ are equi-H\"older continuous,
\begin{multline}\label{proof_theorem_reg_C2alpha_8}
\left|\frac{a_k(x_k+r_kx,(y_k+r_ky)/|y_k+r_ky|)}{|y_k+r_ky|}-\frac{a_k(x_k,y_k/|y_k|)}{|y_k|}\right|\\
=\frac{1}{|y_k+r_ky||y_k|}\left||y_k|a_k(x_k+r_kx,(y_k+r_ky)/|y_k+r_ky|)-|y_k+r_ky|a_k(x_k,y_k/|y_k|)\right| \\
\le C\frac{||y_k|-|y_k+r_ky||}{|y_k+r_ky||y_k|}+\frac{|a_k(x_k+r_kx,(y_k+r_ky)/|y_k+r_ky|)-a_k(x_k,y_k/|y_k|)|}{|y_k|}\\
\le C\frac{r_k}{|y_k|^2}+C\frac{r_k^{\bar \alpha} +\left|\frac{(y_k+r_ky)}{|y_k+r_ky|}-\frac{y_k}{|y_k|}\right|^{\bar \alpha}}{|y_k|}\\
\le C\frac{r_k}{|y_k|^2}+C\frac{r_k^{\bar \alpha}}{|y_k|}+C\frac{\left||y_k|(y_k+r_ky)-|y_k+r_ky|y_k\right|^{\bar \alpha}}{|y_k|^{1+2\bar \alpha}}\\
\le C\frac{r_k}{|y_k|^2}+C\frac{r_k^{\bar \alpha}}{|y_k|}+C\frac{\left|r_k |y_k| y +||y_k|-|y_k+r_ky|| y_k\right|^{\bar \alpha}}{|y_k|^{1+2\bar \alpha}}
\le  C\frac{r_k}{|y_k|^2}+C\frac{r_k^{\bar \alpha}}{|y_k|^{1+\bar \alpha}}.
\end{multline}
Finally, since we are testing \eqref{proof_theorem_reg_C2alpha_4} with test functions $\varphi$ with compact support, we may add terms of the form 
\begin{equation}\label{proof_theorem_reg_C2alpha_9}
\int_{\Omega_k}\nu\cdot \nabla \overline{\varphi} \, dz =0
\end{equation}
to \eqref{proof_theorem_reg_C2alpha_4} for any  $\nu \in \R^d$.

Using this last idea with $\nu =-N_k(z_k) \nabla u_k( z_k)$, we may estimate, by \eqref{proof_theorem_reg_C2alpha_6_5}
\begin{equation}\label{proof_theorem_reg_C2alpha_alpha_crit_1}
\frac{r_k^{-\beta}\eta(z_k)|\nabla u_k( z_k)|}{E_k}
|N_k(z_k+r_kz)-N_k(z_k)|\le Cr_k^{\alpha-\beta}.
\end{equation}
Furthermore,  by \eqref{proof_theorem_reg_C2alpha_6}, \eqref{proof_theorem_reg_C2alpha_8} and \eqref{proof_theorem_reg_C2alpha_9} with $\nu= -\frac{a_k(x_k,y_k/|y_k|)}{|y_k|}$,
\begin{multline}
\left|M_k(z_k+r_kz) \frac{\eta(z_k)r_k^{-\beta}u_k(z_k)}{E_k}\right|
\cdot\left|\frac{a_k(x_k+r_kx,(y_k+r_ky)/|y_k+r_ky|)}{|y_k+r_ky|} -\frac{a_k(x_k,y_k/|y_k|)}{|y_k|}\right|\\
\le Cr_k^{-\beta}|y_k|^{1+\beta}\left[\frac{r_k}{|y_k|^2}+C\frac{r_k^{\bar \alpha}}{|y_k|^{1+\bar \alpha}}\right] 
\le C\left[\frac{r_k^{1-\beta}}{|y_k|^{1-\beta}}+\frac{r_k^{\bar \alpha -\beta}}{|y_k|^{\bar \alpha -\beta}}\right],
\end{multline}
while, similarly, by  \eqref{proof_theorem_reg_C2alpha_6_5}, \eqref{proof_theorem_reg_C2alpha_8} and \eqref{proof_theorem_reg_C2alpha_9} 
\begin{multline}
\left|M_k(z_k+r_kz) \frac{r_k^{1-\beta}\eta(z_k)\nabla u_k(z_k)}{E_k}
\cdot\left[\frac{a_k(x_k+r_kx,(y_k+r_ky)/|y_k+r_ky|)}{|y_k+r_ky|} -\frac{a_k(x_k,y_k/|y_k|)}{|y_k|}\right]\right|\\
\le Cr_k^{1-\beta}|y_k|^{\beta}\left[\frac{r_k}{|y_k|^2}+C\frac{r_k^{\bar \alpha}}{|y_k|^{1+\bar \alpha}}\right] 
\le C\left[\frac{r_k^{2-\beta}}{|y_k|^{2-\beta}}+\frac{r_k^{1+\bar \alpha -\beta}}{|y_k|^{1+\bar \alpha -\beta}}\right].
\end{multline}
In simpler fashion, by \eqref{proof_theorem_reg_C2alpha_6},  \eqref{proof_theorem_reg_C2alpha_6_5} and   \eqref{proof_theorem_reg_C2alpha_9} with $\nu= -b_k(z_k)$,
\begin{equation}\label{proof_theorem_reg_C2alpha_alpha_crit_2}
\left|M_k(z_k+r_kz) \frac{\eta(z_k)[r_k^{-\beta}u_k(z_k)+r_k^{1-\beta}\nabla u(z_k)]}{E_k}
\cdot\left(b_k(z_k+r_kz)-b_k(z_k)\right)\right|\le Cr_k^{\alpha-\beta}.
\end{equation}
Moreover, by  \eqref{proof_theorem_reg_C2alpha_6_5} and \eqref{proof_theorem_reg_C2alpha_7}
\begin{equation}
\left|M_k(z_k+r_kz) \frac{\eta(z_k)r_k^{1-\beta}\nabla u_k(z_k)}{E_k}
\cdot \frac{a_k(x_k+r_kx,(y_k+r_ky)/|y_k+r_ky|)}{|y_k+r_ky|}\right|\le C\frac{r_k^{1-\beta}}{|y_k|^{1-\beta}}.
\end{equation}
By \eqref{proof_theorem_reg_C2alpha_6} and \eqref{proof_theorem_reg_C2alpha_7}
\begin{equation}
\left|\frac{r_k^{1-\beta}\eta(z_k)u_k(z_k)}{E_k}
\frac{|a_k(x_k+r_kx,(y_k+r_ky)/|y_k+r_ky|)|^2}{|y_k+r_ky|^2}\right|
\le C\frac{r_k^{1-\beta}}{|y_k|^{1-\beta}}
\end{equation}
and similarly  thanks to \eqref{proof_theorem_reg_C2alpha_6_5} and \eqref{proof_theorem_reg_C2alpha_7}
\begin{equation}
\left|\frac{r_k^{2-\beta}\eta(z_k)\nabla u_k(z_k)\cdot z}{E_k}
\frac{|a_k(x_k+r_kx,(y_k+r_ky)/|y_k+r_ky|)|^2}{|y_k+r_ky|^2}\right|
\le C\frac{r_k^{2-\beta}}{|y_k|^{2-\beta}}.
\end{equation}
In the exact same way  by \eqref{proof_theorem_reg_C2alpha_6} and \eqref{proof_theorem_reg_C2alpha_7}
\begin{equation}
\left|\frac{r_k^{1-\beta}\eta(z_k)u_k(z_k)}{E_k}
\frac{a_k(x_k+r_kx,(y_k+r_ky)/|y_k+r_ky|)\cdot b_k(z_k)}{|y_k+r_ky|}\right|
\le Cr_k^{1-\beta}
\end{equation}
and by \eqref{proof_theorem_reg_C2alpha_6_5} and \eqref{proof_theorem_reg_C2alpha_7}
\begin{equation}
\left|\frac{r_k^{2-\beta}\eta(z_k)\nabla u_k(z_k)\cdot z}{E_k}
\frac{a_k(x_k+r_kx,(y_k+r_ky)/|y_k+r_ky|) \cdot b_k(z_k)}{|y_k+r_ky|}\right|
\le C\frac{r_k^{2-\beta}}{|y_k|^{1-\beta}}.
\end{equation}
Furthermore, by  \eqref{proof_theorem_reg_C2alpha_6_5} 
\begin{equation}
\left|M_k(z_k+r_kz) \frac{\eta(z_k)r_k^{1-\beta}\nabla u_k(z_k)}{E_k} \cdot b_k(z_k+r_kz)\right|
\le Cr_k^{1-\beta},
\end{equation}
and also thanks to \eqref{proof_theorem_reg_C2alpha_6},
\begin{equation}
\left|\frac{\eta(z_k)[r_k^{1-\beta}u_k(z_k)+r_k^{1-\beta}\nabla u_k(z_k)\cdot z]}{E_k}
|b_k(z_k+r_kz)|^2\right|\le Cr_k^{1-\beta}.
\end{equation}
We conclude that $\{w_k\}_{k \in \mb N \setminus \{0\}}$ is bounded in $\tilde{H}^1(\Omega_\infty\cap B_R,\mb{C})$ for any $R>0$ in \textbf{Case 1}.
We can proceed in a similar manner in \textbf{Case 2}
while in \textbf{Case 3}  by  \eqref{ineq_caccio_simple} and \eqref{proof_theorem_reg_C2alpha_5} we directly get
\begin{equation}
\norm{\nabla w_k}_{L^2(B_R\cap\Omega_k,\mb{C})} \le C \norm{w_k}_{L^2(B_{2R}\cap \Omega_k,\mb{C})}.
\end{equation}

\textbf{Step 7. Passing to the limit.}
Just as in Theorem \ref{theorem_reg_C0alpha} the computation made in \textbf{Step 5} allow us to pass to the limit in \eqref{proof_theorem_reg_C2alpha_4} and \eqref{proof_theorem_reg_C2alpha_5}. Distinguishing the $3$ cases we obtain the following.

\textbf{Case 1.} In this case, passing to the limit as $k \to \infty$ in \eqref{proof_theorem_reg_C2alpha_5},
we obtain, 
\begin{equation}
\int_{\R^d}   M_\infty\nabla v_\infty \cdot  M_\infty\nabla \overline\varphi \, dz=0
\end{equation}
for any $\varphi \in C_c^\infty(\R^d,\mb{C})$.

\textbf{Case 2.}
Passing to the limit  as $k \to \infty$ in \eqref{proof_theorem_reg_C0alpha_3_5}, we have  
\begin{equation}
\int_{\Pi_\nu} M_\infty\nabla v_\infty \cdot M_\infty\nabla \overline{\varphi} \, dz=0
\end{equation}
for any $\varphi \in C_c^\infty(\Pi_\nu, \mb{C})$ and $v_\infty=0$ on $\partial \Pi_\nu$ by \textbf{Step 5}.

\textbf{Case 3} 
In this case, since $\tilde{y}_k=0$,  up to passing to a subsequence,
\begin{equation}
a_k(\tilde{x}_k+r_kx,y/|y|) \to a_\infty( y/|y|)  \quad \text{ uniformly in  } B_R,
\end{equation}
for some   $a_\infty \in C^{0,\beta}(B_R,\R^d)$. Then passing to the limit as $k \to \infty$ in \eqref{proof_theorem_reg_C2alpha_5}
\begin{equation}\
\int_{\R^d}\left(i  M_\infty\nabla v_\infty+\frac{a_\infty(y/|y|))}{|y|}v_\infty\right)
\cdot\left(\overline{i  M_\infty\nabla \varphi+\frac{a_\infty(y/|y|))}{|y|}\varphi}  \right)\, dz=0
\end{equation}
for any  $\varphi \in C^\infty_c(\R^d\setminus \Sigma_{\bar \e,N_\infty},\mb{C})$  with $\bar \e \ge 0$ and  $\Sigma_{\bar \e,N_\infty}$ as in \eqref{proof_theorem_reg_C0alpha_3}.

Hence, arguing as in Theorem  \ref{theorem_reg_C0alpha}, in all cases $v_\infty\equiv 0$ by Theorem \ref{theor_liou_y}  
which contradicts \eqref{proof_theorem_reg_C2alpha_2_8}.

To finish the proof we need to  sharpen \eqref{ineq_reg_C1alpha}, which for the moment has been proved for any $\beta \in(0,\alpha)$, to the the case $\beta=\alpha$. 
We can do the same contradiction argument, which can be adapted simply substituting $\alpha$ to $\beta$ from \textbf{Step 1} to \textbf{Step 5}. Concerning \textbf{Step 6}, by 
\eqref{ineq_reg_C1alpha} with any $\beta<\alpha$, we must have 
\begin{equation}\label{proof_theorem_reg_C1alpha_1}
E_k\ge C k \norm{u_k}_{L^2(B_1,\mb{C})}.
\end{equation}
It follows that, by \eqref{ineq_reg_C1alpha} with any $\beta<\alpha$, for any $\gamma \in (0,1)$
\begin{equation}\label{proof_theorem_reg_C1alpha_1_5}
\norm{u_k}_{C^{0,\gamma}(B_{1/2},\mb{C})} +\norm{ \nabla u_k}_{L^\infty(B_{1/2}\setminus\Sigma_{\e_k,N_k},\mb{C})} \le Ck^{-1} E_k.
\end{equation}
Using \eqref{proof_theorem_reg_C1alpha_1_5}, estimate \eqref{proof_theorem_reg_C2alpha_alpha_crit_1} becomes
\begin{equation}
\frac{r_k^{-\beta}\eta(\tilde z_k)|\nabla u_k(\tilde z_k)|}{E_k}
|N_k(\tilde z_k+r_kz)-N_k(\tilde z_k)|\le \frac{C}{k},
\end{equation}
while \eqref{proof_theorem_reg_C2alpha_alpha_crit_2}
\begin{equation}
\left|M_k(\tilde z_k+r_kz) \frac{\eta(\tilde z_k)[r_k^{-\beta}u_k( \tilde z_k)+r_k^{1-\beta}\nabla u( \tilde z_k)]}{E_k}
\cdot\left(b_k(\tilde z_k+r_kz)-b_k(\tilde z_k)\right)\right|\le \frac{C}{k}.
\end{equation}
Hence, we can also show the validity of \textbf{Step 6}. Since  \textbf{Step 7} needs no modifications, we have completed the proof of \eqref{ineq_reg_C1alpha}.
Finally, \eqref{ineq_reg_C1alpha} and Lemma \ref{lemma_gradient_u} readily imply \eqref{ineq_nabla_C1alpha}.
\end{proof}

\subsection{A priori estimates, approximation and proof of the main results}\label{subsec_proof_main}

The next two results are needed to prove Theorems \ref{theorem_C0alpha_main} and \ref{theorem_C1alpha_main} without any extra assumption on the regularity of the block $N_3$ of $N$. The proof is a much simpler version of the one of Theorems \ref{theorem_reg_C0alpha}  and \ref{theorem_reg_C1alpha}, since we are not perforating the domain, and hence we skip it.

\begin{proposition}\label{proposition_reg_C0alpha_matrix}
Let $R >0$  and $r \in (0,R)$. Suppose that  $q >d$,    $\gamma_1(M,a) \ge \gamma_1^* >0$  with  $\gamma_1(M,a)$ as in \eqref{def_gamma} and 
\begin{equation}
\alpha\in (0,1) \cap (0,\gamma_1^*) \cap (0, 1-d/q].
\end{equation}
Let $M$ be  continuous with modulus of continuity $\omega_1$  and assume that  \eqref{hp_M_structure} holds.   Suppose that $a$ is continuous with modulus of continuity $\omega_2$.
Let $b \in L^q(B_R,\R^d)$ with $q >d$. Finally let $L$ be such that
\begin{equation}
\norm{M}_{C^{0,\omega_1}(B_R,\R^{d\times d})}+\norm{a}_{C^{0,\omega_2}(B'_R\times\mathbb S^{n-1},\R^{d})} + \norm{b}_{L^q(B_R,\R^d)} \le L
\end{equation}
and let  $u \in C^{0,\alpha}(B_r,\mb{C})$ for any $r<R$  be a solution of \eqref{eq_magnetic_strong} with 
\begin{equation}\label{eq_C0alpha_u0}
u=0 \quad \text{ on } \Sigma \cap B_R.
\end{equation}
Then, there exists $C>0$, depending only on  $n,d,q, \la, \Lambda, L$, $r,R,\alpha$, and $\gamma_1^*$,  such that  
\begin{equation}
\norm{u}_{C^{0,\alpha}(B_r,\mb{C})} \le C \norm{u}_{L^2(B_R,\mb{C})}.
\end{equation}
\end{proposition}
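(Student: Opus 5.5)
We argue by contradiction, following the blow-up scheme of Theorem \ref{theorem_reg_C0alpha}, but now in the unperforated ball; this is possible because $u$ is already assumed to be $C^{0,\alpha}$ and to vanish on $\Sigma_0$. Take $R=1$, $r=1/2$, and suppose there are data $M_k,a_k,b_k$ satisfying the bounds uniformly, with $\gamma_1(M_k,a_k)\ge\gamma_1^*$, and solutions $u_k$ vanishing on $\Sigma_0$ with $\norm{u_k}_{C^{0,\alpha}(B_{1/2})}\ge k\norm{u_k}_{L^2(B_1)}$.

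\textbf{Step 1 (blow-up sequence).} By Corollary \ref{corollary_bound_Harn}, $E_k:=[\eta u_k]_{C^{0,\alpha}(B_1)}\ge Ck\norm{u_k}_{L^\infty(B_{4/5})}$, with $\eta$ the same cut-off as in Theorem \ref{theorem_reg_C0alpha}; note $E_k<\infty$ precisely because $u_k\in C^{0,\alpha}$. Choose $z_k,\hat z_k\in B_{3/4}$ almost realizing $E_k$ and set $r_k=|z_k-\hat z_k|$. As before, $r_k\to0$. Only two cases now occur: \textbf{Case 1}, $|y_k|/r_k\to\infty$, where we center at $\tilde z_k=z_k$; and \textbf{Case 3}, $|y_k|/r_k\le C$, where we center at $\tilde z_k=(x_k,0)\in\Sigma_0$, so that $u_k(\tilde z_k)=0$ by \eqref{eq_C0alpha_u0}. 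The boundary-half-space Case 2 disappears, since there is no hole. Define $v_k$ and $w_k$ as in Theorem \ref{theorem_reg_C0alpha}. Then $[v_k]_{C^{0,\alpha}}\le1$, $v_k(0)=0$, and Ascoli gives $v_k\to v_\infty$ locally uniformly on $\R^d$, with $|v_\infty(z)|\le|z|^\alpha$ and $v_\infty\not\equiv 0$ because of the choice of $z_k,\hat z_k$.

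\textbf{Step 2 (energy bounds and limit equation).} We show $w_k$ is bounded in $\tilde H^1(B_R)$ for every $R$, using the Caccioppoli inequality, Proposition \ref{prop_caccio_simple}. In Case 3 the rescaled equation has no inhomogeneous term, since $u_k(\tilde z_k)=0$, and the Hardy-type norm is controlled through Proposition \ref{prop_equiv_norms}, whose constants are scale invariant for the $a$-part; the rescaled $b$-part tends to zero in $L^q$ since $q>d$. In Case 1 the constant term $u_k(z_k)\,a_k/|y|$ is handled exactly as in Theorem \ref{theorem_reg_C0alpha}, using $|u_k(z_k)|=|u_k(z_k)-u_k(x_k,0)|\le E_k|y_k|^\alpha$, which gives a bound of order $(r_k/|y_k|)^{1-\alpha}\to0$; the $b_k$ terms vanish thanks to $\alpha\le1-d/q$.

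Passing to the limit, using uniform continuity of $M_k$ and $a_k$, we obtain two possibilities. In Case 1, $v_\infty$ is $M_\infty$-harmonic in $\R^d$, so the classical Liouville theorem applied to its real and imaginary parts gives $v_\infty\equiv0$. In Case 3, $v_\infty$ is an entire weak solution of \eqref{prob_magnetic_weak_entire} with $\e=0$ and limit data $(M_\infty,a_\infty)$, where $a_\infty$ depends only on $y/|y|$. Then Theorem \ref{theor_liou_y} applies with growth exponent $\alpha<\gamma_1^*\le\gamma_1(M_\infty,a_\infty)$, and $v_\infty\equiv0$. In either case we contradict $v_\infty\not\equiv0$.

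\textbf{Main obstacle.} The delicate point is Case 3: we must check that $v_\infty$ lies in $\tilde H^1_{loc}$, with $\int|v_\infty|^2/|y|^2<\infty$ locally. This is needed to invoke Theorem \ref{theor_liou_y} with $\e=0$, which kills the $|y|^{\gamma_k^-}$ modes. We obtain it by lower semicontinuity of the uniform bound on $\int|\nabla_m w_k|^2$, together with the Hardy inequality, Proposition \ref{prop_hardy_ball}. The positivity \eqref{hp_hardy} should pass to the limit data because it is a uniform assumption on the sequence. A second point to verify is lower semicontinuity of $\gamma_1$ under uniform convergence of $a_k$ and $M_k$, so that $\gamma_1(M_\infty,a_\infty)\ge\gamma_1^*$. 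For this we would use the min-max characterization \eqref{def:mu1} together with continuity of the transformation \eqref{def_tildeax} in $(M,a)$.
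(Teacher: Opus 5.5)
Your proposal is correct and is essentially the argument the paper has in mind. The paper omits the proof as a simpler, unperforated version of Theorem \ref{theorem_reg_C0alpha}: Case 2 disappears, Case 3 is centered on $\Sigma_0$ where $u_k$ vanishes, and the contradiction comes from the classical Liouville theorem in Case 1 and from Theorem \ref{theor_liou_y} with $\e=0$ in Case 3.

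Your ``main obstacle'' is easier than you suggest. In Case 3 one has $v_k(x,0)=0$ and $[v_k]_{C^{0,\alpha}}\le 1$, so $|v_\infty(x,y)|\le |y|^\alpha$ and $\int_{B_R}|v_\infty|^2|y|^{-2}\,dz\le\int_{B_R}|y|^{2\alpha-2}\,dz<\infty$, since $n\ge 2>2-2\alpha$. This avoids relying on a lower bound for the Hardy constant \eqref{hp_hardy} that is uniform along the sequence, which is not among the listed dependencies of $C$.
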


\begin{proposition}\label{proposition_reg_C1alpha_matrix}
Let $R>0$ and $r \in (0,R)$.  Suppose that $\gamma_1(M,a) \ge \gamma_1^* >0$ for 
some $\gamma_1^*>1$, with  $\gamma_1(M,a)$ as in \eqref{def_gamma}. 
Let $\alpha \in (0,1)\cap (0,\gamma_1^*-1)$.

Assume that $M \in C^{0,\alpha}(B_R,\R^{d,d})$, that  \eqref{hp_M_structure} holds and that $a \in C^{0,\bar \alpha}(B'_R\times\mathbb S^{n-1},\R^d)$.
Suppose that $b \in  C^{0,\alpha}(B_R,\R^d)$.
Finally let $L$ be such that
\begin{equation}
\norm{M}_{C^{0,\alpha}(B_R,\R^{d\times d})}+\norm{a}_{C^{0, \bar \alpha}(B'_R\times\mathbb S^{n-1},\R^{d})} + \norm{b}_{C^{0,\alpha}(B_R,\R^{d})}\le L
\end{equation}
and let  $u \in C^{1,\alpha}(B_r,\mb{C})$ for any $r<R$  be a solution of \eqref{eq_magnetic_strong} with
\begin{equation}\label{eq_C1alpha_u0_nablau0}
u=0 \text{ and } \nabla u=0 \quad \text{ on } \Sigma \cap B_R.
\end{equation}
Then, there exists $C>0$, depending only on  $n,d, \la, \Lambda, L, r,R,\alpha$, and $\gamma_1^*$,  such that
\begin{equation}\label{ineq_reg_C1alpha_matrix}
\norm{u}_{C^{1,\alpha}(B_r,\mb{C})} \le C \norm{u}_{L^2(B_R,\mb{C})}.
\end{equation}
\end{proposition}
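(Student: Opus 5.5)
We argue by contradiction through a blow-up, following the scheme of Theorem \ref{theorem_reg_C1alpha} with the perforation removed ($\e=0$). Normalize $R=1$, $r=1/2$. Suppose there are sequences $M_k,a_k,b_k$ satisfying the stated bounds uniformly, with $\gamma_1(M_k,a_k)\ge\gamma_1^*>1$, and solutions $u_k\in C^{1,\alpha}$ with $u_k=\nabla u_k=0$ on $\Sigma_0$ and $\norm{u_k}_{C^{1,\alpha}(B_{1/2})}\ge k\norm{u_k}_{L^2(B_1)}$. The bounded $C^{0,\gamma}$ norms from Proposition \ref{proposition_reg_C0alpha_matrix} (note $\gamma_1^*>1$, so every $\gamma<1$ is admissible) together with the $L^\infty$ bound of Corollary \ref{corollary_bound_Harn} give that $E_k:=[\nabla(\eta u_k)]_{C^{0,\alpha}(B_1)}\to+\infty$ relative to $\norm{u_k}_{L^2}$. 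We take the first-order gradient bound from interpolation: $\norm{\nabla(\eta u_k)}_\infty\le \delta E_k+C_\delta\norm{u_k}_\infty$. Choosing points $z_k,\hat z_k$ nearly realizing $E_k$ with $r_k=|z_k-\hat z_k|$, interpolation then forces $r_k\to0$.

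We split into two cases according to the size of $|y_k|/r_k$. In \textbf{Case 1}, $|y_k|/r_k\to\infty$, we blow up around $z_k$ by setting
\[
v_k(z)=\frac{(\eta u_k)(z_k+r_kz)-(\eta u_k)(z_k)-\nabla(\eta u_k)(z_k)\cdot r_kz}{r_k^{1+\alpha}E_k}.
\]
In \textbf{Case 2}, $|y_k|\le Cr_k$, we blow up around $(x_k,0)$ by setting $v_k(z)=(\eta u_k)((x_k,0)+r_kz)/(r_k^{1+\alpha}E_k)$, with no correction needed since $u_k=\nabla u_k=0$ on $\Sigma_0$. In both cases $[\nabla v_k]_{C^{0,\alpha}}\le1$ and $\nabla v_k$ vanishes at some bounded point, namely $0$ in Case 1 and $\Sigma_0$ in Case 2. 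Hence $v_k$ converges in $C^{1,\gamma}_{loc}(\R^d)$ to some $v_\infty$ with $|v_\infty(z)|\le C(1+|z|^{1+\alpha})$, and $v_\infty$ is not affine because $|\nabla v_\infty(\xi)-\nabla v_\infty(\hat\xi)|\ge1/2$. Replacing $\eta(\cdot)$ by $\eta(\tilde z_k)$ produces the companion sequence $w_k$, which has the same limit.

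We then show that $w_k$ is bounded in $\tilde H^1_{loc}$ via Proposition \ref{prop_caccio_simple}, and pass to the limit in the rescaled equation. This is the main obstacle in Case 1, exactly as in Step 6 of Theorem \ref{theorem_reg_C1alpha}. The key inputs are the following.
\begin{enumerate}[(i)]
\item Since $u_k$ and $\nabla u_k$ vanish at $(x_k,0)$, the mean value theorem gives $|u_k(z_k)|\le CE_k|y_k|^{1+\alpha}$ and $|\nabla u_k(z_k)|\le CE_k|y_k|^\alpha$. These replace \eqref{proof_theorem_reg_C2alpha_6}--\eqref{proof_theorem_reg_C2alpha_6_5}, and are in fact simpler here because no lemma on $\partial\Sigma_{\e,N}$ is needed.
\item The Hölder continuity of $a_k$ with exponent $\bar\alpha>\alpha$ gives \eqref{proof_theorem_reg_C2alpha_8}.
\item We may add constant vector fields $\nu$ to the flux, as in \eqref{proof_theorem_reg_C2alpha_9}.
\end{enumerate}
With these, every remainder term is bounded by powers of $r_k/|y_k|$ with positive exponent, or by $r_k^{\alpha-\beta}$. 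The borderline terms with $\beta=\alpha$ must be handled by the two-step trick: first prove the estimate for all $\beta<\alpha$, then use the resulting bound $\norm{\nabla u_k}_\infty\le Ck^{-1}E_k$ to make the critical terms $O(1/k)$.

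Passing to the limit, in Case 1 we obtain $\dive(N_\infty\nabla v_\infty)=0$ on $\R^d$. The classical Liouville theorem then says $v_\infty$ is affine, contradicting the nondegeneracy $|\nabla v_\infty(\xi)-\nabla v_\infty(\hat\xi)|\ge1/2$. In Case 2 the term $a_k/|y|$ survives with $a_k(x_k+r_kx,\cdot)\to a_\infty$. Then $v_\infty$ is an entire solution of \eqref{prob_magnetic_weak_entire} with $\e=0$, constant $M_\infty$, and growth exponent $1+\alpha<\gamma_1^*\le\gamma_1(M_\infty,a_\infty)$. Here we use lower semicontinuity of $\mu_1$ under uniform convergence of the coefficients, so the bound $\gamma_1\ge\gamma_1^*$ passes to the limit. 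Since $1+\alpha<2$, Theorem \ref{theor_liou_y} gives $v_\infty\equiv0$, again a contradiction.
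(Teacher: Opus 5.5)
Your proposal is correct and follows essentially the paper's argument: the paper omits this proof, calling it the unperforated (simpler) version of the blow-up proof of Theorem \ref{theorem_reg_C1alpha}. Your two cases are exactly what remains of its three cases when $\e=0$, with the hypothesis $u=\nabla u=0$ on $\Sigma_0$ replacing Lemma \ref{lemma_gradient_u}, and the two-step $\beta<\alpha$ / $\beta=\alpha$ trick handling the critical terms. One small precision: $E_k$ only controls the seminorm of $\nabla(\eta u_k)$, so the bounds in (i) should be stated for $(\eta u_k)(z_k)$ and $\eta(z_k)\nabla u_k(z_k)$, which are the quantities that actually enter the rescaled equation; the latter follows from writing it as $\nabla(\eta u_k)(z_k)-u_k(z_k)\nabla\eta(z_k)$ and using the $C^{0,\gamma}$ bound with $\gamma\in(\alpha,1)$.
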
 

To prove Theorems \ref{theorem_C0alpha_main}--\ref{theorem_C1alpha_main} we need an approximation lemma on the matrix $M$.
\begin{lemma}\label{lemma_approximation_smooth}
Let  $M \in C^{0,\omega}(B_R,\R^{d,d})$ for some modulus of continuity $\omega$ and suppose that $M$ satisfies \eqref{hp_M_structure}. Let $u$ be a solution of \eqref{eq_magnetic_strong} in $B_R$.
Let $r \in (0,R)$ and let $\{M_k\}_{k \in \N} \subset C^\infty(B_r,\R^{d,d})$, uniformly bounded in $C^{0,\omega}(B_r,\R^{d,d})$ be such that $M_k \to M$ in $L^{\infty}(B_r,\R^{d,d})$. For any $k \in \N$ the problem  
\begin{equation}\label{prob_approx_matrix}
\begin{cases}
(iM_k\nabla+A)^2 v= 0,  &\text{ in } B_r,\\
v=u, &\text{ on } \partial B_r,
\end{cases}
\end{equation}
admits a unique weak solution $u_k \in  \tilde{H}^1_0(B_r,\mb{C})$ and 
\begin{equation}
u_k \to u  \text{ strongly  in }  \tilde{H}^1_0(B_r,\mb{C}) \quad \text{ as } k \to \infty.
\end{equation}
\end{lemma}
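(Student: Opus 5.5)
Existence and uniqueness of $u_k$ come from the direct method. We minimize
\[
J_k(v):=\int_{B_r}|iM_k\nabla v+Av|^2\,dz
\]
over the affine set $\{v\in\tilde H^1(B_r,\mb C): v-u\in\tilde H^1_{0,\partial B_r}(B_r,\mb C)\}$.

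Coercivity, uniformly in $k$, follows from Proposition \ref{prop_poinc_boundary} and Proposition \ref{prop_equiv_norms} applied with $M_k$ in place of $M$. For this we need the constants there to be independent of $k$. They depend only on $\la,\Lambda$ and on the Hardy constant $c_{n,a,M_k}$, so we must make sure $c_{n,a,M_k}\ge c>0$ for $k$ large. Uniform ellipticity of $M_k^TM_k$ follows from the uniform convergence $M_k\to M$ (up to slightly worse $\la/2$, $2\Lambda$). The transversality condition \eqref{hp_transversality} and the structure \eqref{hp_M_structure} for $M_k$ are not automatic. So we will choose $M_k$ by mollifying $M$ in a way that preserves them, or simply assume the approximating family inherits them. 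Then $\mu_1(a''_{M_k}(x,\cdot),0)\to\mu_1(a''_M(x,\cdot),0)$ uniformly in $x$, since $a_{M_k}\to a_M$ in $L^\infty$ and $\mu_1$ is Lipschitz in $p$ on bounded sets (the Rayleigh quotients differ by $O(\|p_1-p_2\|_\infty)$). Hence the Hardy constant stays bounded below. Uniqueness follows by testing the difference of two solutions against itself.

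For the convergence, set $e_k:=u_k-u\in\tilde H^1_{0,\partial B_r}(B_r,\mb C)$. Subtracting the weak equations for $u_k$ (with $M_k$) and for $u$ (with $M$), and testing with $e_k$, gives
\[
\int_{B_r}|iM_k\nabla e_k+Ae_k|^2\,dz=-\int_{B_r}\big(i(M_k-M)\nabla u\big)\cdot\overline{(iM_k\nabla e_k+Ae_k)}\,dz .
\]
Cauchy--Schwarz then yields
\[
\|iM_k\nabla e_k+Ae_k\|_{L^2}\le\|M_k-M\|_{L^\infty(B_r)}\|\nabla u\|_{L^2(B_r)}\to0 .
\]
By the uniform norm equivalence above, $e_k\to0$ in $\tilde H^1$. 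The same bound also gives $\|u_k\|_{\tilde H^1}$ bounded a priori. This step is routine once testing with $e_k$ is admissible for both equations, which holds because $u$ solves \eqref{eq_magnetic_weak} against all of $\tilde H^1_0$.

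The main obstacle is therefore the $k$-uniformity of the Hardy constant, i.e. verifying \eqref{hp_transversality}, \eqref{hp_M_structure} and \eqref{hp_hardy} for $M_k$. I would first try to build $M_k$ so that $M_k^{-1}a\cdot(0,y)=0$ is preserved. If that is not possible, I would bypass it by bounding $|y|^{-2}$-weighted terms with the Hardy inequality for $M$ itself plus a perturbation $\|M_k-M\|_\infty$ times the weighted norm, absorbed for $k$ large.
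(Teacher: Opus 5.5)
Your route differs from the paper's, but the central identity in your convergence step is wrong as written. Your cancellation needs $\int_{B_r}(iM\nabla u+Au)\cdot\overline{(iM_k\nabla v+Av)}\,dz=0$. The equation for $u$ only gives this with $M$ in both slots, $\int_{B_r}(iM\nabla u+Au)\cdot\overline{(iM\nabla v+Av)}\,dz=0$. Set $X_k:=iM_k\nabla e_k+Ae_k$ and $Q:=iM\nabla u+Au$. The correct identity is
\[
\int_{B_r}|X_k|^2\,dz=-\int_{B_r}i(M_k-M)\nabla u\cdot\overline{X_k}\,dz-\int_{B_r}Q\cdot\overline{i(M_k-M)\nabla e_k}\,dz ,
\]
so $\|X_k\|_{L^2}\le\|M_k-M\|_{L^\infty}\|\nabla u\|_{L^2}$ does not follow. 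The repair is short, but it needs the $k$-uniform coercivity
\[
\|w\|_{\tilde H^1(B_r)}\le C\|iM_k\nabla w+Aw\|_{L^2(B_r)}\quad\text{for } w \text{ vanishing on } \partial B_r .
\]
Apply it to $\|\nabla e_k\|_{L^2}$ in the second term. Cauchy--Schwarz then gives $\|X_k\|_{L^2}\le\|M_k-M\|_{L^\infty}\big(\|\nabla u\|_{L^2}+C\|Q\|_{L^2}\big)\to0$, hence $e_k\to0$ in $\tilde H^1$.

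For that coercivity, use your fallback, not your first option. The sequence $\{M_k\}$ is given in the statement, so you cannot mollify to preserve \eqref{hp_transversality}. Without transversality of $M_k^{-1}a$, the polar computation of Lemma \ref{lemma_nabla_polar} is unavailable, and with it the constant $c_{n,a,M_k}$; your Lipschitz-in-$p$ argument for $\mu_1$ presupposes it. The perturbation argument needs only the Hardy inequality for $M$. Propositions \ref{prop_equiv_norms} and \ref{prop_poinc_boundary} (on $B_r$) give
\[
\|w\|_{\tilde H^1}\le C_0\|iM\nabla w+Aw\|_{L^2}\le C_0\|iM_k\nabla w+Aw\|_{L^2}+C_0\|M_k-M\|_{L^\infty}\|w\|_{\tilde H^1}.
\]
The last term is absorbed once $C_0\|M_k-M\|_{L^\infty}\le 1/2$. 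Note that it is the gradient term that gets perturbed, not the $|y|^{-2}$-weighted one. This gives existence, uniqueness and uniform bounds for all large $k$, which is all the convergence claim requires.

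The paper argues softly instead. It shows $u_k$ is bounded in $\tilde H^1$ and takes a weak limit. That limit solves the $M$-equation with boundary datum $u$, so it equals $u$ by uniqueness. The energies then converge via
\[
\int|iM_k\nabla u_k+Au_k|^2=\int(iM_k\nabla u_k+Au_k)\cdot\overline{(iM_k\nabla u+Au)}\to\int|iM\nabla u+Au|^2 ,
\]
and the Urysohn subsequence principle removes the subsequence. Its uniform bound is justified only by the uniform ellipticity of $M_k^TM_k$, so the issue you raised is real, and your perturbation argument is what makes either route rigorous. Once the missing term is restored, your direct estimate is shorter, avoids the compactness and subsequence steps, and yields the rate $\|u_k-u\|_{\tilde H^1}\le C\|M_k-M\|_{L^\infty}$.
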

\begin{proof}
The existence and uniqueness of $u_k$ can be proved by standard variational methods. 
Indeed we can simply minimize the functional 
\begin{equation}
J_k:\tilde{H}^1_0(B_r,\mb{C}) \to [0,+\infty), \quad  J_k(w):=\int_{B_r} |iM_k\nabla w+Aw|^2 \, dz,
\end{equation}
over the convex set $\{w\in \tilde{H}^1_0(B_r,\mb{C}): w=u \text{ on } \partial B_r\}$.
Furthermore, \eqref{hp_M_elliptic} holds with constants uniform with respect to $k$ since $\{M_k\}$ is bounded in  $C^{0, \omega}(B_r,\R^{d,d})$. Thus, ${u_k}$ is bounded in $\tilde{H}^1_0(B_r,\mb{C})$. Hence, up to passing to a subsequence, it converges weakly in  $\tilde{H}^1_0(B_r,\mb{C})$ to some $u_\infty \in \tilde{H}^1_0(B_r,\mb{C})$. Passing to the limit in the weak formulation of \eqref{prob_approx_matrix}, we conclude that $u_\infty$ is a solution of \eqref{eq_magnetic_weak} and that $u=u_\infty$ on $\partial B_r$. Thus, $u=u_\infty$. Finally, testing with $u-u_k$ the equation of $u_k$ we obtain 
\begin{equation}
\int_{B_r}|iM_k\nabla u_k+Au_k|^2 \, dz=
\int_{B_r}(iM_k\nabla u_k+Au_k) \cdot\overline{(iM_k\nabla u+Au)}   \, dz \to \int_{B_r}|iM\nabla u+Au|^2  \, dz.
\end{equation}
Thus, $u_k \to u$ strongly in  $\tilde{H}^1_0(B_r,\mb{C})$ as $k \to \infty$. By the Urysohn subsequence principle such convergence holds as $k \to\infty$ and not just along a subsequence.
\end{proof}

\begin{proof}[\textbf{Proof of Theorem \ref{theorem_C0alpha_main}}]
Suppose that $M \in C^\infty(B_R,\R^{d,d})$.  Let $u_\e$ be the weak solution of \eqref{prob_ue}. Then by Theorem \ref{theorem_reg_C0alpha}, for any $r<R$ 
\begin{equation}
\norm{u_\e}_{C^{0,\alpha}(B_r,\C)} \le c \norm{u_\e}_{L^2(B_R,\C)}
\end{equation}
In particular, up to passing to a subsequence, $u_\e \to u$ in $C^{0,\beta}(B_r,\C)$ for any $\beta \in (0,\alpha)$, by Proposition \ref{prop_ue} and $u \in C^{0,\alpha}(B_r,\C)$. 
Furthermore  the uniform convergence of $u_\e$ to $u$ implies that $u(x,0)=0$ for any $(x,0) \in B_R$.
Hence, by Proposition \ref{proposition_reg_C0alpha_matrix}, we have shown that 
\begin{equation}
\norm{u}_{C^{0,\alpha}(B_r,\C)} \le c \norm{u}_{L^2(B_R,\C)}
\end{equation}
if  $M \in C^\infty(B_R,\R^{d,d})$. To complete the proof, let $M \in C^{0,\omega_1}(B_R,\R^{d,d})$, $r \in (0,R)$, $r_1 \in (r,R)$ and a sequence $M_k \to M$ in $ L^\infty(B_{r_1},\R^{d,d})$ bounded in $C^{0,\omega_1}(B_{r_1},\R^{d,d})$ with $\{M_k\}_{k \in \N} \subset C^\infty(B_{r_1},\R^{d,d})$. 
Letting $u_k$ as in Lemma \ref{lemma_approximation_smooth},
\begin{equation}
\norm{u_k}_{C^{0,\alpha}(B_r,\C)} \le c \norm{u_k}_{L^2(B_{r_1},\C)}.
\end{equation}
Hence,  up to passing to a subsequence, by Lemma \ref{lemma_approximation_smooth}, $u_k \to u$ in $C^{0,\beta}(B_r,\C)$ for any $\beta \in (0,\alpha)$ thus $u \in C^{0,\alpha}(B_r,\C)$.  Since $u_k(x,0)=0$ for any $(x,0) \in B_R$ the uniform convergence of $u_\e$ to $u$ yields $u(x,0)=0$ for any $(x,0) \in B_R$. Hence, by Proposition \ref{proposition_reg_C0alpha_matrix} also in this case
 \begin{equation}
\norm{u}_{C^{0,\alpha}(B_r,\C)} \le c \norm{u}_{L^2(B_R,\C)},
\end{equation}
which finishes the proof.
\end{proof}

\begin{proof}[\textbf{Proof of Theorem \ref{theorem_C1alpha_main}}]
Assume that $M \in C^\infty(B_R,\R^{d,d})$.  Letting $u_\e$ be  the weak solution of \eqref{prob_ue},  by Theorem \ref{theorem_reg_C1alpha}, for any $r<R$
\begin{equation}
\norm{u_\e}_{C^{1,\alpha}(B_r\setminus \Sigma_{\e,N},\C)} \le c \norm{u_\e}_{L^2(B_R,\C)}.
\end{equation}
Let $\delta>0$. Up to passing to a subsequence,  $u_\e \to u$ in $C^{1,\beta}(B_r\setminus \Sigma_{\delta,N},\C)$ for any $\beta \in (0,\alpha)$  and $u \in C^{1,\alpha}(B_r\setminus \Sigma_{\delta,N},\C)$, thanks to  Proposition \ref{prop_ue}. By a diagonal argument, $u \in C^{1,\alpha}(B_r,\C)$. 
Furthermore, $u_\e \to u$ in $C^{0,\beta}(B_r,\C)$ for any $\beta \in (0,1)$ so that
 $u(x,0)=0$ for any $(x,0) \in B_R$ since $u_\e(x,0)=0$. Letting $\delta>0$ and  $(x,y) \in \partial \Sigma_{\delta,N}$,
\begin{equation}
|\nabla u(x,0)|\le |\nabla u(x,0)-\nabla u(x,y)|+ |\nabla u(x,y)-\nabla u_\e(x,y)|+ |\nabla u_\e(x,y)| \le C [\nabla u]_{C^{1,\alpha}(B_r,\C)} \delta^{\alpha}+ \delta +C\delta^{\alpha}
\end{equation}
for $\e$ small enough taking \eqref{ineq_nabla_C1alpha} and the convergence of  $u_\e \to u$ in $C^{1,\beta}(B_r\setminus \Sigma_{\delta/2,N},\C)$ into account. Hence, also  $\nabla u(x,0)=0$ for any $(x,0) \in B_R$.
Proposition \ref{proposition_reg_C1alpha_matrix} then yields
\begin{equation}
\norm{u}_{C^{1,\alpha}(B_r,\C)} \le c \norm{u}_{L^2(B_R,\C)},
\end{equation}
under the  hypothesis  $M \in C^\infty(B_R,\R^{d,d})$. To complete the proof, let us consider  $M \in C^{0,\alpha}(B_R,\R^{d,d})$,  $r \in (0,R)$, $r_1 \in (r,R)$
and a sequence $M_k \to M$ in $ L^\infty(B_{r_1},\R^{d,d})$ bounded in $C^{0,\alpha}(B_{r_1},\R^{d,d})$ with $\{M_k\}_{k \in \N} \subset C^\infty(B_{r_1},\R^{d,d})$. 
If $u_k$ is as in Lemma \ref{lemma_approximation_smooth},
\begin{equation}
\norm{u_k}_{C^{1,\alpha}(B_r,\C)} \le c \norm{u_k}_{L^2(B_{r_1},\C)}.
\end{equation}
Hence, by Lemma \ref{lemma_approximation_smooth}, up to passing to a subsequence,  $u_k \to u$ in $C^{1,\beta}(B_r,\C)$ for any $\beta \in (0,\alpha)$ and $u \in C^{1,\alpha}(B_r,\C)$.  Since $u_k(x,0)=0$ and $\nabla u_k(x,0)=0$ for any $(x,0) \in B_R$ the $C^1$ convergence of $u_\e$ to $u$ implies that also  $u(x,0)=0$ and $\nabla u(x,0)=0$ for any $(x,0) \in B_R$. By Proposition \ref{proposition_reg_C0alpha_matrix}
 \begin{equation}
\norm{u}_{C^{1,\alpha}(B_r,\C)} \le c \norm{u}_{L^2(B_R,\C)},
\end{equation}
thus completing the proof.
\end{proof}

\appendix
\section{Physical models}\label{sec:solenoid} 
In this section, we compute the magnetic potentials generated by the current passing through shrinking solenoids with the Biot-Savart law to underline the physical relevance of a magnetic covariant gradient of the form $\nabla_mu=(M\nabla+iA)u$ with a magnetic potential as in \eqref{def_A} and an anisotropy matrix $M$.

\subsection{Shrinking solenoid with variable surface current density and general loop geometry}\label{sec:A1}
In this first section, we are going to compute the magnetic potential generated by an infinite solenoid with general geometry of the turns and possibly variable circulating current.
Consider a loop $\gamma(\theta)$ around the axis $\Sigma_0=\{y=0\}$; that is, $\gamma=(\gamma_1,\gamma_2,\gamma_3) \in  AC([0,2 \pi], \R^3)$ is a closed curve that represents a single turn of the coil. We are going to compute the magnetic potential at a point $z=(x,y_1,y_2)\in\R^3$ generated by a solenoid obtained by translating the curve $\gamma$ along $\Sigma_0$. In other words, for each $x\in\R$, we consider the translated loop $\gamma_x=(x+\gamma_1,\gamma_2,\gamma_3)$. Notice that $\gamma_0=\gamma$. One can refer to Figure \ref{fig:1}, which describes the particular case of planar loops. Let us stress the fact that any single loop does not need to be planar.

\begin{center}
\includegraphics[page=1,scale=1]{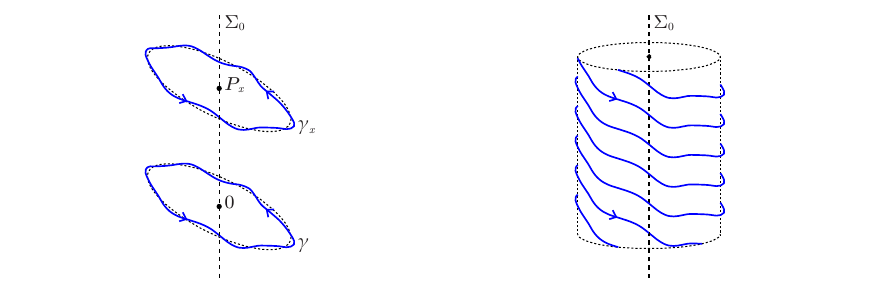}
\captionof{figure}{The picture describes the particular case of an infinite solenoid obtained by translating a planar closed curve $\gamma$ along the axis $\Sigma_0$. For each $x\in\R$, the curve $\gamma_x$ turns around $P_x=(x,0,0)$ and lies in a plane transverse to $\Sigma_0$.}\label{fig:1}
\end{center}

Let us define the physical quantities
\begin{align}
    & \mu_0>0 \quad \text{ permeability of free space},\\
    & I(t) \quad \text{ the current},\\
    & n(t) \quad \text{ the turn density (number of turns of the coil per unit length)},\\
    & K(t,\theta):= n(t) I(t)(\gamma_1'(\theta),\gamma_2'(\theta),\gamma_3'(\theta)) \quad \text{ the surface current density.}
\end{align}
Then, the Biot-Savart law states that the magnetic potential $\tilde A$ can be computed at $z$ as follows 
\begin{equation}\label{def_Adelta}
\tilde A(z):=\frac{\mu_0}{4 \pi}
\int_{-\infty}^{+\infty} \int_0^{2 \pi} \frac{K(t,\theta)}{|z-(t+\gamma_1(\theta),\gamma_2(\theta),\gamma_3(\theta))|}  d\theta dt.
\end{equation}
If the size of the turns is very small, that is, we scale $\gamma_x$ by a small parameter $\delta>0$ (i.e. 
$\gamma^\delta_x=(x+\delta\gamma_1,\delta\gamma_2,\delta\gamma_3)$), we are going to show that a magnetic potential as in \eqref{def_A} is obtained by considering, as $\delta\to0^+$, the normalized leading term in the Taylor expansion of 
\begin{equation}
\tilde A_\delta(z):=\frac{\mu_0}{4 \pi}\delta\int_{-\infty}^{+\infty} \int_0^{2 \pi}
J(t)\frac{\gamma'(\theta)}{|z-(t+\delta\gamma_1(\theta),\delta\gamma_2(\theta),\delta\gamma_3(\theta))|}  d\theta dt.
\end{equation}
Here $J(t):=n(t) I(t)$ is the vertical component of the current density, and we assume
\begin{equation}
tJ,J \in C^{1,1}_{loc}(\R) \cap L^\infty(\R), \quad \text{ and } \quad (tJ)',J' \in L^\infty(\R).
\end{equation}
Letting $f(\delta):=|z-(t+\delta\gamma_1(\theta),\delta\gamma_2(\theta),\delta\gamma_3(\theta))|^{-1}$, a Taylor expansion in $\delta=0$ yields 
\begin{equation}
f(\delta)=|z-(t,0)|^{-1} +\frac{(x-t)\gamma_1(\theta)+\gamma_2(\theta) y_1+\gamma_3(\theta) y_2}{(|y|^2+(x-t)^2)^{\frac{3}{2}}}\delta +  O(\delta^2),  \quad \text{ as } \delta \to 0^+.
\end{equation}
Hence, since the first term in the expansion gives no contribution being $\gamma$ a closed curve,
\begin{multline}
\tilde A_\delta(z)=\frac{\mu_0}{4 \pi} \delta^2
\int_{-\infty}^{+\infty}\frac{J(t)}{(|y|^2+(x-t)^2)^{\frac{3}{2}}} \, dt  
\int_0^{2 \pi} [\gamma_1(\theta)x+\gamma_2(\theta) y_1+\gamma_3(\theta) y_2]\gamma'(\theta)  d\theta\\
-\frac{\mu_0}{4 \pi} \delta^2\int_{-\infty}^{+\infty}\frac{tJ(t)}{(|y|^2+(x-t)^2)^{\frac{3}{2}}} \, dt  
\int_0^{2 \pi} \gamma_1(\theta)\gamma'(\theta)  d\theta
+ O(\delta^3),  \quad \text{ as } \delta \to 0^+.
\end{multline}
With two changes of variables, we obtain
\begin{equation}
\int_{-\infty}^{+\infty}\frac{J(t)}{(|y|^2+(x-t)^2)^{\frac{3}{2}}} \, dt 
=\int_{-\infty}^{+\infty}\frac{J(x+w)}{(|y|^2+w^2)^{\frac{3}{2}}} \, dw
=\frac{1}{|y|^2}\int_{-\infty}^{+\infty}\frac{J(x+|y|\tau)}{(1+\tau^2)^{\frac{3}{2}}} \, d\tau.  
\end{equation}
Let us define $\rho:=|y|$ and 
\begin{equation}
h(\rho ):=\int_{-\infty}^{+\infty}\frac{J(x+\rho\tau)}{(1+\tau^2)^{\frac{3}{2}}} \, d\tau.
\end{equation}
It is easy to see that $h \in C^1([0,+\infty))$ and that

\begin{equation}
|h'(\rho_1)-h'(\rho_2)| \le C\left(\int_{-\infty}^{+\infty}\frac{|\tau|}{(1+\tau^2)^{\frac{3}{2}}}\, d \tau\right)  |\rho_1-\rho_2| \quad \text{ for any } \rho_1,\rho_2 \in [0,+\infty),
\end{equation}
for some constant $C>0$, that is $h \in C_{loc}^{1,1}([0,+\infty))$.
Since we are interested in local results around $0$, with a Taylor expansion around $\rho=0$,
\begin{multline}
h(\rho)= \left(\int_{-\infty}^{+\infty}\frac{1}{(1+\tau^2)^{\frac{3}{2}}}\, d \tau \right) J(x)
- \left(\int_{-\infty}^{+\infty}\frac{\tau}{(1+\tau^2)^{\frac{3}{2}}}\, d \tau\right)J'(x) \rho + O(\rho^2) \\
=2 J(x) + O(\rho^2) \quad \text{ as } \rho \to 0^+.
\end{multline}
Similar computations also yield
\begin{equation}
\int_{-\infty}^{+\infty}\frac{tJ(t)}{(|y|^2+(x-t)^2)^{\frac{3}{2}}} \, dt  = \frac{2x}{|\rho|^2} J(x) + O(1) \quad \text{ as } \rho \to 0^+.
\end{equation}
Let us introduce the coefficients
\begin{equation}
c_{i,j} := \int_0^{2\pi} \gamma_i(\theta)\,\gamma_j'(\theta)\, d\theta,
\qquad i,j \in \{1,2,3\}.
\end{equation}
Since $\gamma$ is closed, integration by parts gives
\[
c_{i,j}
=
\gamma_i \gamma_j\Bigg|^{2 \pi}_0
-
\int_0^{2\pi} \gamma_i' \gamma_j \, d\theta
=
-
c_{j,i}.
\]
Hence the matrix $C=(c_{i,j})$ is antisymmetric, and in particular for $i=1,\dots, 3$
\begin{equation}
c_{i,i}=\int_0^{2 \pi}\gamma_i \gamma_i'\, d \theta= \frac{\gamma_i^2}{2}\Bigg|^{2 \pi}_0=0,
\end{equation}
Then, we may write the normalized leading term in the Taylor expansion of $\tilde A_\delta$ as $\delta\to0^+$ (normalized means divided by a factor $\delta^2$) as
\begin{multline}
\frac{\mu_0 }{2\pi}\frac{J(x)}{\rho^2}(-c_{1,2}y_1-c_{1,3}y_2,c_{1,2}x-c_{2,3}y_2,c_{1,3}x+c_{2,3}y_1)
-\frac{\mu_0 }{2\pi}\frac{xJ(x)}{\rho^2}(0,c_{1,2},c_{1,3})+O(1)\\
= \frac{\mu_0 }{2\pi}\frac{J(x)}{\rho^2}(-c_{1,2}y_1-c_{1,3}y_2,-c_{2,3}y_2,c_{2,3}y_1)+O(1).
\end{multline}
Hence, if we let $b$ be the reminder term in the above equation,
\begin{equation}\label{def_A_loops}
A(x,y_1,y_2):= \frac{1}{\rho}a\left(x,\frac{y_1}{\rho},\frac{y_2}{\rho}\right) +b(x,y)
=\frac{\mu_0 }{2\pi}\frac{J(x)}{\rho^2}(-c_{1,2}y_1-c_{1,3}y_2,-c_{2,3}y_2,c_{2,3}y_1)+b(x,y),
\end{equation}
we obtain a potential as in \eqref{def_A}.
Furthermore,  letting $\theta$ be the angular variable  $\theta:=\left(\frac{y_1}{\rho},\frac{y_2}{\rho}\right)$, the transversality condition \eqref{hp_transversality} is reduced to 
\begin{equation}
a\left(x,\theta\right) \cdot (0,\theta) =0,
\end{equation}
which is easily verified. 
Furthermore, \eqref{hp_hardy} holds as soon as
\begin{equation}
\inf_{x \in B_R'} {\rm{dist}}\left(\frac{c_{2,3}\mu_0J(x) }{2\pi},\mathbb{Z}\right)>0,
\end{equation}
see for example \cite[Section 7]{FFT}.

In the particular case of loops lying on planes $\{x=t\}$, i.e. orthogonal to $\Sigma_0$, and $J\equiv const$ we obtain
\begin{equation}
A(x,y_1,y_2)=\frac{\mu_0 c_{2,3}}{2\pi}\frac{J}{\rho^2}(0,-y_2,y_1)
\end{equation}
which corresponds to the well-known case of the ideal Aharonov--Bohm magnetic potential.

\begin{remark}[Deviation from the ideal AB configuration due to asymmetry and geometric meaning of the spectral shift]\label{rem:phys}
We can canonically identify the antisymmetric matrix $C=(c_{i,j})$ with a vector $\mathbf{C}$ as
\[
C \longleftrightarrow \mathbf{C}
=
\left(
c_{2,3},\,
c_{3,1},\,
c_{1,2}
\right).
\]
Then
\begin{equation}
\mathbf{C}
=
\frac12
\int_0^{2\pi}
\gamma(\theta)\wedge\gamma'(\theta)\, d\theta
\end{equation}
is the oriented area vector associated with the closed curve $\gamma$. Its components represent the oriented areas of the projections of $\gamma$ onto the coordinate planes.

If a steady current $J$ flows along the loop $\gamma$, the associated magnetic dipole moment is given by
\begin{equation}
m=\frac{J}{2}\int_\gamma z \wedge dz=J\,\mathbf{C}.
\end{equation}
Thus, the vector $\mathbf{C}$ coincides, up to the multiplicative factor $J$, with the magnetic moment of the current loop.

Let us consider the magnetic field in \eqref{def_A_loops}. If we assume that $J$ is constant then $b\equiv 0$ and 
\begin{equation}
A(x,y_1,y_2)=\frac{\mu_0 }{2\pi}\frac{J}{\rho^2}(-c_{1,2}y_1-c_{1,3}y_2,-c_{2,3}y_2,c_{2,3}y_1).
\end{equation}
Let us remark that the magnetic field $B=\nabla\times A$ is zero (apart from $\Sigma_0$) if and only if $c_{1,2}=c_{1,3}=0$. In particular, any nontrivial transverse component of the magnetic dipole moment with respect to the chosen blow-up axis breaks the ideal AB regime. Then, the asymmetry of the loops implies a deviation from the pure AB configuration. The same asymmetry induces a spectral shift, which in turn increases the optimal regularity of the wave functions. In fact, no spectral shift occurs if and only if $|a'|=0$; that is, $c_{1,2}=c_{1,3}=0$. We will comment further on this fact in the next Example \ref{ex_big_frequence}.
\end{remark}

\begin{example}[an arbitrarily large spectral shift]\label{ex_big_frequence}
In this example, we show that there is a physical motivation to investigate higher regularity results for solutions to \eqref{eq_magnetic_strong}. In other words, for any $k \in \mathbb{N}$, for a suitable choice of the surface current density and the loops of the solenoid, we show that the associated magnetic potential satisfies $\gamma_1({\rm Id}_d,a)>k$, where ${\rm Id}_d$ is the $d$-dimensional identity matrix and $\gamma_1({\rm Id}_d,a)$ is as in \eqref{def_gamma}. This is consistent with our Theorem \ref{theorem_C1alpha_main}, and would unlock possible $C^k$ regularity results for any $k\geq 2$.

Let us consider the loop $\gamma:[0,2\pi] \to \R^3$ defined as 
\begin{equation}
\gamma(\theta):=(\sin(\beta) \cos(\theta),\cos(\beta) \cos(\theta),\sin(\theta))
\end{equation}
for some chosen $\beta \in (0,\pi/2)$. The support of the loop $\gamma([0,2\pi])$ is a circle centered at $0$ and lying on the plane whose normal vector is $(\cos(\beta),-\sin(\beta),0)$. Notice that the latter plane is tilted from the orthogonal plane (with normal vector $e_1$), exactly by an angle $\beta$. In particular, the requirement $\beta \in (0,\pi/2)$ implies that the plane is transverse ($\beta\neq\pi/2$) but not orthogonal to the axis $\Sigma_0$ ($\beta\neq0$). One can refer to Figure \ref{fig:0} and previous discussions for the construction of the solenoid.
Furthermore,
\begin{align}
&c_{1,2}=\int_0^{2 \pi}\gamma_1 \gamma_2'\, d \theta=-\sin(\beta)\cos(\beta) \int_0^{2 \pi}\cos(\theta) \sin(\theta) \, d \theta=0,\\
&c_{1,3}=\int_0^{2 \pi}\gamma_1 \gamma_3'\, d \theta=\sin(\beta)\int_0^{2 \pi}|\cos(\theta)|^2 \, d \theta=\sin(\beta)\pi,\\
&c_{2,3}=\int_0^{2 \pi}\gamma_2 \gamma_3'\, d \theta=\cos(\beta) \int_0^{2 \pi}|\cos(\theta)|^2\, d \theta=\cos(\beta)\pi.
\end{align}
Thus, if we consider a steady current circulating in the solenoid, the magnetic potential has the form 
\begin{equation}
A(x,y_1,y_2)=\frac{1}{\rho}a\left(x,\frac{y_1}{\rho},\frac{y_2}{\rho}\right)= \frac{\mu_0 }{2}\frac{J}{\rho^2}(-\sin(\beta) y_2,-\cos(\beta)y_2,\cos(\beta)y_1).
\end{equation}
Let us choose the constant current $J$ as follows
\begin{equation}
\frac{\mu_0}{2}J\cos(\beta)= \frac{1}{2},
\end{equation}
so that, with a slight abuse of notations, 
\begin{equation}
a\left(\frac{y_1}{\rho},\frac{y_2}{\rho}\right)= \frac{1}{2\rho}(-\tan(\beta) y_2,-y_2,y_1).
\end{equation}
Letting $\mu_1(a'',0)$ be as in \eqref{def:mu1}, it is clear that $\mu_1(a'',0)>0$, actually $\mu_1(a'',0)=\frac{1}{4}$, see for example \cite[Section 7]{FFT}.
Furthermore, since $n=2$ and $a$ does not depend on $x$, 
\begin{equation}
\gamma_1({\rm Id}_d,a):=\sqrt{\mu_1(a'',|a'|^2)}.
\end{equation}
Moreover, since $a$ depends only on $\beta$, we can denote $\mu_1(\beta):=\mu_1(a'',|a'|^2)$.

Recall that $\mu_1(\beta)$ is actually the first eigenvalue of a magnetic eigenvalue problem on $\mb{S}^1$, see Section \ref{sec_Liou} for details, which admits a well-defined eigenfunction $\varphi \in H^1(\mb{S}^1,\mb{C})\setminus\{0\}$, that is, a  minimizer of  \eqref{def:mu1}. 
More precisely, 
\begin{equation}
(i\nabla_{\mb{S}^1} +a'')^2\varphi+|a'|^2 \varphi=\mu_1(\beta) \varphi.
\end{equation}
By a standard gauge transformation, that is defining $\psi:=e^{-i\Theta} \varphi$, where
\begin{equation*}
\Theta: \mathbb{S}^1\to \R,\quad 
\Theta(\cos t,\sin t)=\frac{1}{2}t \quad  \text{ for any }t\in[0,2\pi),
\end{equation*}
we obtain that $\psi$ solves the problem 
\begin{equation}
\begin{cases}
&\psi''(\theta)+(\mu_1(\beta)-g(\beta)-g(\beta)\cos(2\theta) )\psi(\theta)= 0, \quad \text{ in } (0,2\pi),\\
&\psi(0)=- \psi(2 \pi),
\end{cases}
\end{equation}
where
\begin{equation}
g(\beta):=\frac{1}{8}|\tan(\beta)|^2.
\end{equation}
The latter is the well-know Mathieu equation with parameters 
\begin{equation}
a=\mu_1(\beta)-g(\beta) \quad \text{ and } \quad  q=\frac{1}{2}g(\beta),
\end{equation}
see for example \cite[Chapter 20]{MS_hand}.
Hence, we must have 
\begin{equation}
\mu_1(\beta)=2q+a_{h+\frac{1}{2}}(q)
\end{equation}
for some $h \in \mathbb{N}$, where $a_{h+\frac{1}{2}}(q)$ are the characteristic numbers associated with solutions subject to antiperiodic boundary conditions, see \cite[Chapter 17]{M_book} and \cite[Chapter 20]{MS_hand}.
For any given $h \in \mathbb{N}$, the asymptotic of $a_{h+\frac{1}{2}}(q)$ as $q \to \infty$ is given by
\begin{equation}
a_{h+\frac{1}{2}}(q)=-2q+2(2h+1)\sqrt{q}+o(\sqrt{q}), \quad \text{as } q \to \infty.
\end{equation}
We refer to \cite[Chapter 20, Section 20.2.30]{MS_hand} and \cite[Chapeter 17]{M_book}. Hence,
\begin{equation}
\mu_1(\beta)=2(2h+1)\sqrt{q}+o(\sqrt{q}), \quad \text{as } q \to \infty.
\end{equation}
Let $k \in \mathbb{N}$. In view of the definition of $g$, we conclude that $\mu_1(\beta)>k^2$ for $\beta$ close enough to $\pi/2$ so that 
$\gamma_1({\rm Id}_d,a)>k$.
\end{example}

\subsection{Shrinking solenoid onto a curve}\label{sec:A2}
In this subsection we would like to motivate the presence of the anisotropy matrix $M$ in the covariant magnetic gradient $\nabla_m$.
We consider the more general situation where the solenoid is coiled around a curve $\eta$ and any turn is given by a loop lying in the normal-binormal plane to the tangent to $\eta$. More precisely, let us consider a curve $\eta \in C^2(\R,\R^3)$ such that $\eta'(x)\neq0$, $\eta''(x)\neq0$ for any $x \in \R$. It is not restrictive to suppose that $\eta(0)=0$ and that  $\eta$ is parametrized with respect to the arc length.
Let us define at any $x\in\R$ the triad: unit tangent vector $T(x)$, unit principal normal vector $N(x)$, and unit binormal vector $B(x)$, which are given by
\begin{equation}
T(x):=\frac{\eta'(x)}{|\eta'(x)|}=\eta'(x), \quad N(x):=\frac{\eta''(x)}{|\eta''(x)|} \quad \text{ and }   \quad B(x):=T(x) \wedge   N(x).
\end{equation}
The latter is called the Frenet–Serret frame and is a well-defined orthonormal basis of $\R^3$ for any $x \in \R$. We remark that the Frenet–Serret frame is well defined as long as the curvature of $\eta$ never vanishes. For the sake of simplicity, we use the classic Frenet–Serret frame in this work. Alternatively, one may employ other framings that remain valid even when the curvature vanishes at some points (e.g., the Bishop frame \cite{Bis75} or the Beta frame \cite{Car13}). Without loss of generality, we may also suppose that   $T(0)=e_1$, $N(0)=e_2$, $B(0)=e_3$, up to a rotation.

Let us consider a closed  planar curve $\gamma\in AC([0,2 \pi],\R^2)$ with support in the plane $\{x=0\}$ and coiled around $0$.

Starting from $\gamma$, let us define the  loops coiled around the support of $\eta$ in the plane spanned by $\{N(x),B(x)\}$ (the normal plane to $\eta$ at $x$) defined as  
\begin{equation}
\gamma_x(\theta):=\gamma_1(\theta)N(x)+\gamma_2(\theta) B(x)
\end{equation}
for any $x \in \R$ and any $\theta \in [0,2 \pi]$. The construction of the solenoid is detailed in Figure \ref{fig:2}. In this simplified picture $\gamma$ is a circle; that is, $\gamma(\theta)=(\cos\theta,\sin\theta)$.

\begin{center}
\includegraphics[page=1,scale=1]{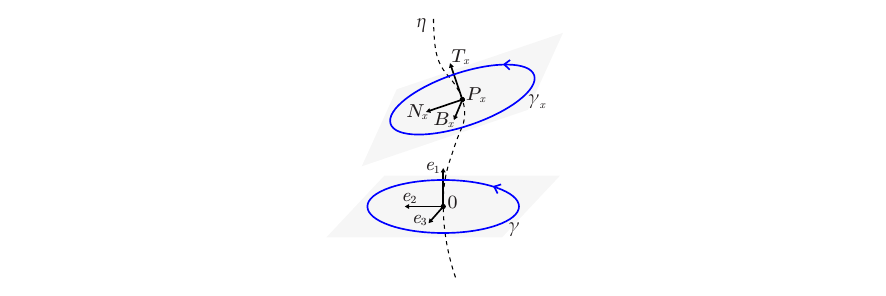}
\captionof{figure}{The picture describes the particular case of an infinite solenoid given by the family of circles $\gamma_x$ coiled along the curve $\eta$ and lying on its normal-binormal plane. In particular, the Frenet–Serret frame is given by $\{T_x,N_x,B_x\}$.}\label{fig:2}
\end{center}

Let us consider 
\begin{equation}\label{def_Phi}
\Phi: \R^3 \to \R^3, \quad \Phi(x,y_1,y_2):= \eta(x)+y_1N(x)+y_2B(x),
\end{equation}
that is, the map $\Phi$ gives  the Fermi-type tubular coordinates  relative to the  Frenet–Serret frame of $\eta$.
A direct computation yields
\begin{equation}\label{def_DPhi}
D\Phi(x,y_1,y_2)= 
\begin{bmatrix}
\eta'(x)+y_1N(x)+y_2B(x), &N(x), &B(x)
\end{bmatrix}
\end{equation}
so that $\det D\Phi \neq 0$ for any $y$ small enough. 
More precisely, there exist $r_0,\e>0$ small enough so that $\Phi$ is a diffeomorphism in $(-\e, \e) \times B''_{r_0}$ onto its image, where 
$B''_{r_0}:=\{y \in \R^2: |y| < r_0\}$.

We are going to shrink the size of the coils with a small parameter $\delta$, and compute the magnetic potential as $\delta \to 0^+$ in tubular coordinates. 
As in the previous subsection, the following are the physical quantities involved in the computation by the Biot-Savart law of the magnetic potential:
\begin{align}
    &\mu_0>0 \quad \text{ permeability of free space},\\
    & I(t) \quad \text{ the current},\\
    & n(t) \quad \text{ the turn density (number of turns of the coil per unit length)},\\
    &K(t,\theta):= n(t) I(t)[\gamma'_1(\theta)N(t)+\gamma'_2(\theta) B(t)] \quad \text{ the surface current density.}
\end{align} 
For any  $z \in (-\e, \e) \times B''_{r_0}$ we have, by the Biot-Savart law, taking as coils $\delta \gamma_x$,
\begin{equation}
\tilde A_\delta(\Phi(z)):= \frac{\mu_0}{4 \pi}
\delta\int_{-\infty}^{+\infty}\int_0^{2 \pi} \frac{K(t,\theta)}{|\Phi(z)-\eta(t)-\delta[\gamma_1(\theta)N(x)+\gamma_2(\theta) B(x)] |} \, d\theta \, dt.
\end{equation}
Let us define 
\begin{equation}
f(\delta):=|\Phi(z)-\eta(t)-\delta[\gamma_1(\theta)N(x)+\gamma_2(\theta) B(x)] |^{-1}.
\end{equation}
Then, a Taylor expansion in $\delta=0$ yields
\begin{equation}
f(\delta)= |\Phi(z)-\eta(t) |^{-1}+ \frac{[\Phi(z)-\eta(t)]\cdot[\gamma_1(\theta)N(t)+\gamma_2(\theta)B(t)]}{|\Phi(z)-\eta(t)|^{3}}\delta+o(\delta), 
\text{ as } \delta \to0^+.
\end{equation}
Since $\gamma$ is closed,  letting $J(t):= n(t) I(t)$,
\begin{equation}
\int_0^{2 \pi} K(t,\theta) \, d\theta
=  J(t)\left[\left(\int_0^{2 \pi} \gamma'_1(\theta) \, d\theta\right) N(t)+\left(\int_0^{2 \pi} \gamma'_2(\theta) \, d \theta\right)  B(t)\right]=0,
\end{equation}
so that
\begin{multline}
\tilde A_\delta(\Phi(z))= \frac{\mu_0}{4 \pi}\delta^2
\int_{-\infty}^{+\infty}\int_0^{2 \pi} \frac{[\Phi(z)-\eta(t)]\cdot[\gamma_1(\theta)N(t)+\gamma_2(\theta)B(t)]}{|\Phi(z)-\eta(t)|^{3}}\\
\times J(t)[\gamma'_1(\theta)N(t)+\gamma'_2(\theta) B(t)] \, d\theta \, dt +o(\delta^2),
\text{ as } \delta \to0^+.
\end{multline}
Letting 
\begin{equation}
c_\gamma:=c_{1,2}=\int_0^{2\pi} \gamma_1 \gamma_2' \, d\theta=-\int_0^{2\pi} \gamma_1' \gamma_2 \, d\theta,
\end{equation}
we conclude that, as $\delta \to 0^+$,
\begin{multline}
\tilde A_\delta(\Phi(z))
= \frac{\mu_0}{4 \pi}c_\gamma\delta^2
\int_{-\infty}^{+\infty}J(t)\frac{-[\Phi(z)-\eta(t)]\cdot B(t) N(t)+[\Phi(z)-\eta(t)]\cdot N(t) B(t)}{|\Phi(z)-\eta(t)|^{3}}\, dt +o(\delta^2).
\end{multline}
Let us now study the normalized leading term of the Taylor expansion of $\tilde A_\delta$ (normalized means divided by a factor $\delta^2$),
\begin{equation}\label{def_A_solenoid_curved}
A(\Phi(z)):= \frac{\mu_0}{4 \pi}c_\gamma
\int_{-\infty}^{+\infty}J(t)\frac{-[\Phi(z)-\eta(t)]\cdot B(t) N(t)+[\Phi(z)-\eta(t)]\cdot N(t) B(t)}{|\Phi(z)-\eta(t)|^{3}}\, dt.
\end{equation}
 A Taylor expansion in $t=x$ with an integral reminder on each component yields
\begin{equation}
 \eta(t)=\eta(x) +(t-x) \eta'(x)+ R(x,t), \quad \text{ where }R(x,t):= \int_x^t\eta''(\sigma)(t-\sigma) \, d \sigma.
\end{equation}
Then, letting  $(y_1,y_2)=\rho (\cos \vartheta, \sin \vartheta)$ and 
\begin{equation}\label{def_Uvartheta}
U(\vartheta,x):=\cos \vartheta N(x)+ \sin \vartheta B(x),
\end{equation}
 with the change of variables  $t=v+x$ we obtain, by \eqref{def_Phi} and  letting $\sigma:=v/\rho$,
\begin{multline}
\int_{-\infty}^{+\infty}J(t)\frac{-[\Phi(z)-\eta(t)]\cdot B(t) N(t)+[\Phi(z)-\eta(t)]\cdot N(t) B(t)}{|\Phi(z)-\eta(t)|^{3}}\, dt\\
=\int_{-\infty}^{+\infty}J(t)\frac{-[\Phi(z)-\eta(t)]\cdot B(t) N(t)+[\Phi(z)-\eta(t)]\cdot N(t) B(t)}{|\rho U(\vartheta,x)+(t-x) \eta'(x)+ R(x,t)|^3}\, dt\\
=-\int_{-\infty}^{+\infty}J(v+x)\frac{[\rho U(\vartheta,x)+v \eta'(x)+ R(x,v+x)]\cdot B(v+x) N(v+x)}{|\rho U(\vartheta,x)+v \eta'(x)+R(x,v+x)|^{3}}\, dv\\
+\int_{-\infty}^{+\infty}J(v+x)\frac{[\rho U(\vartheta,x)+v \eta'(x)+ R(x,v+x)]\cdot N(v+x) B(v+x)}{|\rho U(\vartheta,x)+v \eta'(x)+ R(x,v+x)|^{3}}\, dv\\
=\frac{1}{\rho}\int_{-\infty}^{+\infty}\frac{H(x, \vartheta, \sigma, \rho)}{| U(\vartheta,x)+\sigma \eta'(x)+ \rho^{-1}R(x,\rho \sigma+x)|^{3}}\, d\sigma\\
\end{multline}
where 
\begin{multline}\label{def_J_curved}
H(x,\vartheta, \sigma, \rho):=J(\rho \sigma+x) \{- [U(\vartheta,x)+ \sigma \eta'(x)+ \rho^{-1} R(x,\rho \sigma+x)]\cdot N(\rho \sigma+x) B(\rho \sigma+x)\\
+  [U(\vartheta,x)+ \sigma \eta'(x)+ \rho^{-1} R(x,\rho \sigma+x)]\cdot B(\rho \sigma+x) N(\rho \sigma+x)\}.
\end{multline}
For any $x, \sigma \in \R$ 
\begin{equation}
\rho^{-1}|R(x,\rho \sigma+x)| \le \frac{1}{2} \norm{\eta''}_{L^\infty(\R,\R^3)}\rho \sigma^2,
\end{equation}
and 
\begin{multline}
\lim_{\rho \to 0^+}\pd{}{\rho}[\rho^{-1}R(x,\rho \sigma+x)]
=\lim_{\rho \to 0^+} \left( -\frac{1}{\rho^2}\int_x^{\rho \sigma+x} \eta''(t)(\rho \sigma+x-t) \, dt
+\frac{\sigma}{\rho}\int_x^{\rho \sigma+x} \eta''(t) \ dt\right) \\
=\lim_{\rho \to 0^+} \left(\int_0^{-\sigma} \eta''(x-\rho v)( \sigma+v) \, dv+\frac{\sigma}{\rho} (\eta'(\rho \sigma+x)-\eta'(x)) \ dt\right)\\
=\left(\int_0^{-\sigma}  (\sigma+v) \, dv\right)\eta''(x)+ \sigma^2 \eta''(x)
=\frac{\sigma^2}{2}\eta''(x).
\end{multline}
By \eqref{def_Uvartheta} we also have that 
\begin{equation}
|U(\vartheta,x)+\sigma \eta'(x)|^2 =1+\sigma^2.
\end{equation}
Hence, recalling \eqref{def_J_curved} and \eqref{def_Uvartheta}
\begin{multline}
H(x,\vartheta,\sigma,0)=J(x) \{- [U(\vartheta,x)+ \sigma \eta'(x)]\cdot N(x) B(x)+  [U(\vartheta,x)+ \sigma \eta'(x)]\cdot B(x) N(x)\}\\
=J(x) (- \cos \vartheta B(x)+  \sin \vartheta  N(x)),
\end{multline}
while
\begin{multline}
\pd{H}{\rho}(x,\vartheta, \sigma,0)=\sigma J'(x) \{- [U(\vartheta,x)+ \sigma \eta'(x)]\cdot N(x) B(x)
+ [U(\vartheta,x)+ \sigma \eta'(x)]\cdot B(x) N(x)\}\\
- J(x)\frac{\sigma^2}{2}\eta''(x)\cdot N(x) B(x)+J(x)\frac{\sigma^2}{2}\eta''(x)\cdot B(x) N(x)\\
-\sigma J(x)[U(\vartheta,x)+ \sigma \eta'(x)] \cdot N'(x) B(x)-\sigma J(x)[U(\vartheta,x)+ \sigma \eta'(x)] \cdot N(x) B'(x)\\
+\sigma J(x)[U(\vartheta,x)+ \sigma \eta'(x)] \cdot B'(x) N(x)
+\sigma J(x)[U(\vartheta,x)+ \sigma \eta'(x)] \cdot N(x) B'(x)\\
=\sigma J'(x) [-\cos\vartheta B(x) +\sin \vartheta N(x)] - J(x)\frac{\sigma^2}{2}|\eta''(x)| B(x).
\end{multline}
Furthermore, letting,
\begin{equation}
h(\rho):=\frac{H(x,\vartheta, \sigma, \rho)}{| U(\vartheta,x)+\sigma \eta'(x)+ \rho^{-1}R(x,\rho \sigma+x)|^{3}},
\end{equation}
a Taylor expansion in $\rho=0$ yields
\begin{equation}
h(\rho)= h(0)+\rho h'(0) +o(\rho), \quad  \text{ as } \rho \to 0^+
\end{equation}
with 
\begin{equation}
h(0)=\frac{J(x) (- \cos\vartheta B(x)+  \sin \vartheta  N(x))}{(1+\sigma^2)^{\frac{3}{2}}},
\end{equation}
and 
\begin{multline}
h'(0)=\frac{H'(x,\vartheta,\sigma,0) | U(\vartheta,x)+\sigma \eta'(x)|^3}{| U(\vartheta,x)+\sigma \eta'(x)|^6}\\
\times\frac{-3H(x,\vartheta,\sigma,0)| U(\vartheta,x)+\sigma \eta'(x)|(U(\vartheta,x)+\sigma \eta'(x))\cdot\sigma^2\eta''(x)}{| U(\vartheta,x)+\sigma \eta'(x)|^6}\\
=\frac{H'(x,\vartheta,\sigma,0) (1+\sigma^2)-3H(x,\vartheta,\sigma,0)U(\vartheta,x)\cdot\sigma^2 \eta''(x)}{(1+\sigma^2)^\frac{5}{2}}.
\end{multline}
In conclusion, 
\begin{equation}
A(\Phi(z))= \frac{\mu_0c_\gamma}{2 \pi \rho } J(x) (- \cos\vartheta B(x)+  \sin \vartheta  N(x)) +b(x,y),
\end{equation}
where $b$ is a bounded function.

If we consider now a wave function of a charged particle in the magnetic field $B=\nabla\times A$; that is, a solution $u$ to the equation
\begin{equation}
(i\nabla +A)^2 u=0, \text{ in } B_R
\end{equation}
for some $R>0$, taking $R$ small enough so that  $\Phi:B_R \to \Phi(B_R)$ is a diffeomorphism and letting
\begin{equation}
\tilde{u}:= u \circ \Phi,  \quad M:=D\Phi^{-1} \sqrt{\det D\Phi^{-1}} \quad \text{ and } \quad \tilde{A}=A\circ \Phi,
\end{equation}
it follows that 
\begin{equation}
-\dive(N \nabla \tilde{u})+2i M \tilde{A}\cdot \nabla  \tilde{u} +i M \nabla \cdot \tilde{A} \tilde{u}+ |\tilde{A}|^2 \tilde{u}=0,
\end{equation}
in the weak sense of Section \ref{sec_weak_sol_bounded} with $A\circ \Phi$ as in \eqref{def_A}. This is why the anisotropy $M$ in the operator \eqref{def_L_Af} is particularly interesting from a physical point of view.
Furthermore, by \eqref{def_DPhi}, since $\eta'(x)=T(x)$,
\begin{equation}
D\Phi^{-1}=\begin{bmatrix}
T(x), &N(x), &B(x)
\end{bmatrix}^T+o(|\rho|), \text{ as } \rho \to 0^+,
\end{equation}
thus
\begin{multline}
N=\det D\Phi^{-1} [D\Phi^{-1}]^TD\Phi^{-1} \\
=\begin{bmatrix}
T(x), &N(x), &B(x)
\end{bmatrix}\begin{bmatrix}
T(x), &N(x), &B(x)
\end{bmatrix}^T+o(|\rho|)=\rm{Id}_3+o(|\rho|),  \text{ as } \rho \to 0^+,
\end{multline}
and so, in particular, it is uniformly elliptic, up to taking a smaller $R>0$, that is, it satisfies \eqref{hp_M_elliptic} and \eqref{hp_M_structure}.
We conclude noticing that
\begin{equation}
M^{-1} A(\Phi(z)) =\frac{\mu_0c_\gamma}{2 \pi \rho } J(x) (0, \sin \vartheta , - \cos\vartheta ) +O(1) \quad \text{ as } \rho \to 0^+.
\end{equation}
Hence, the transversality condition \eqref{hp_transversality} is verified and  \eqref{hp_hardy} holds as soon as
\begin{equation}
\inf_{x \in B_R'} {\rm{dist}}\left(\frac{c_{\gamma}\mu_0J(x) }{2\pi},\mathbb{Z}\right)>0,
\end{equation}
see for example \cite[Section 7]{FFT}.\\

{\bf Acknowledgments.}
The authors are research fellows of Istituto Nazionale di Alta Matematica INDAM group GNAMPA. S.V. is supported by the GNAMPA project E5324001950001 \emph{PDE ellittiche che degenerano su variet\`a di dimensione bassa e frontiere libere molto sottili} and supported by the GNAMPA project E53C25002010001 \emph{Struttura fine e regolarita' in problemi variazionali non-lineari}. 
G.S. is supported by the GNAMPA project E53C25002010001  \emph{Asymptotic analysis of variational problems}.
The authors thank Susanna Terracini for many discussions on the physical models in Appendix \ref{sec:solenoid} and Manni for the figures.

\bibliographystyle{acm}
\bibliography{references}	
\end{document}